\newif\ifexample
\begin{document}
\ifshownavigationpage
\section*{Navigation Links}
\noindent
\hyperlink{location of theorem tree}{Theorem Tree}
\bigskip

\noindent
\hyperlink{location of notation index}{Notation Index}
\begin{itemize}
\item[] 
    \hyperlink{location, notation index A}{A},
    \hyperlink{location, notation index B}{B},
    \hyperlink{location, notation index C}{C},
    \hyperlink{location, notation index D}{D},
    \hyperlink{location, notation index E}{E},
    \hyperlink{location, notation index F}{F},
    \hyperlink{location, notation index G}{G},
    \hyperlink{location, notation index H}{H},
    \hyperlink{location, notation index I}{I},
    \hyperlink{location, notation index J}{J},
    \hyperlink{location, notation index K}{K},
    \hyperlink{location, notation index L}{L},
    \hyperlink{location, notation index M}{M},
    \hyperlink{location, notation index N}{N},
    \hyperlink{location, notation index O}{O},
    \hyperlink{location, notation index P}{P},
    \hyperlink{location, notation index Q}{Q},
    \hyperlink{location, notation index R}{R},
    \hyperlink{location, notation index S}{S},
    \hyperlink{location, notation index T}{T},
    \hyperlink{location, notation index U}{U},
    \hyperlink{location, notation index V}{V},
    \hyperlink{location, notation index W}{W},
    \hyperlink{location, notation index X}{X},
    \hyperlink{location, notation index Y}{Y},
    \hyperlink{location, notation index Z}{Z},
    \hyperlink{location, notation index others}{Others}
\end{itemize}

\bigskip

\noindent
\hyperlink{location of equation number list}{Numbered Equations}
\bigskip


\tableofcontents
\fi


\title{Sample-Path Large Deviations for L\'evy Processes and Random Walks with Lognormal Increments}
\author{Zhe Su} 
\author{Chang-Han Rhee}
\affil{Industrial Engineering and Management Sciences, Northwestern University\\
    Evanston, IL, 60613, USA}

\maketitle

\begin{abstract}
\noindent 
The large deviations theory for heavy-tailed processes has seen significant advances in the recent past. 
In particular, \cite{MR4038038} and \cite{MR4187125} established large deviation asymptotics at the sample-path level for L\'evy processes and random walks with regularly varying and (heavy-tailed) Weibull-type increments. 
This leaves the lognormal case---one of the three most prominent classes of heavy-tailed distributions, alongside regular variation and Weibull---open. 
This article establishes the \emph{extended large deviation principle} (extended LDP) at the sample-path level for one-dimensional L\'evy processes and random walks with lognormal-type increments. 
Building on these results, we also establish the extended LDPs for multi-dimensional processes with independent coordinates. 
We demonstrate the sharpness of these results by constructing counterexamples, thereby proving that our results cannot be strengthened to a standard LDP under $J_1$ topology and $M_1'$ topology.

\end{abstract}

\tableofcontents

\section{Introduction}

\theme{What problem has been solved in this paper?}%
\topic{The description of the problem.}%
This paper develops the sample-path large deviations for L\'evy processes and random walks whose increment distributions have lognormal-type tails.
\noindent
Let $\{X(t), t\geq 0\}$ be a centered L\'evy process.
We assume its L\'evy measure $\nu$ is light-tailed on the negative half-line, and heavy-tailed on the positive half-line. 
In particular, we consider the lognormal case, i.e., $\nu[x,\infty) = \exp(-r(\log x))$ where $r(x)$ is a regularly varying function with index $\gamma > 1$ as $x\to \infty$.
Consider a scaled process $\bar{X}_n(t) = X(nt)/ n$ for $t\in [0,1]$. 
Similarly, consider a scaled and centered random walk $\bar W_n (t)  = \frac1n \sum_{i=1}^{\lfloor nt \rfloor} (Z_i - \E {Z_1})$ where $\P(Z_1 > x) = \exp(-r(\log x))$.
This article investigates the sample-path large deviations of $\bar{X}_n$ and $\bar W_n$, their multidimensional extensions, and the limitations of the classical large deviation principle framework.

\theme{Why we want to solve this problem?}%
When the increment distributions are light-tailed, the large deviations of $\bar X_n$ and $\bar W_n$ have been thoroughly studied in probability theory. 
The classical theory of large deviations provide powerful tools for analyzing a wide variety of rare events. 
In particular, the sample-path level LDPs allow one to systematically characterize how a stochastic system deviates from their nominal behaviors for a wide variety of rare events. 
In the heavy-tailed context, the seminal papers \cite{MR0282396,MR0438438} initiated the analysis of the tail aysmptotics of $\bar W_n(1)$, followed by vigorous research activities in the extreme value theory literature; see, for example, \cite{MR2424161,MR2440928,MR1458613}.
In particular, \cite{MR2440928} assumes a very general class of heavy-tailed distributions in $X$ and describes in detail how fast $x$ needs to grow with $n$ for the asymptotic relation
\begin{equation}\label{equation:one-big-jump}
    \pr{X(n)> x } = n\pr{X(1) > x}(1+o(1))
\end{equation}
to hold, as $n\rightarrow\infty$. 
If (\ref{equation:one-big-jump}) is valid, it is said that the \emph{principle of one big jump} holds.
\cite{MR2187307} generalized this insight to a sample-path level of $X$. 
On the other hand, other related works, including \cite{blanchet2012measuring,MR2931277, MR2052908},  investigated the asymptotics of $\pr{f(X)\in A}$ for many functionals $f$ and many sets $A$ using ad-hoc approaches, revealing that rare events can also be governed by multiple jumps, rather than just one big jump.

\topic{Describe the work with SPLD that happen more recently}%
More recently, \cite{MR4038038} and \cite{MR4187125} took a systematic approach inspired by \cite{MR2187307}, but for any number of big jumps, rather than a single big jump:
they established asymptotic estimates of $\pr{\bar{X}_n\in A}$, with $A$ being sufficiently general set of c\'adl\'ag functions, thus facilitating a systematic way of studying rare events defined in terms of continuous functions of $\bar{X}_n$. 
They also clarified how the large deviations in the heavy tailed settings are connected to the standard large-deviations approach. 
More specifically, \cite{MR4187125} examined large deviations for L\'evy measures with Weibull tails, offering the logarithmic asymptotics 
    \begin{equation}\label{inequalities:SPLD_Weibull}
        -\inf_{\xi\in A^\circ} I(x) \leq \liminf_{n\rightarrow\infty} \frac{\log\pr{\bar{X}_n\in A}}{\log n}\leq\limsup_{n\rightarrow\infty} \frac{\log\pr{\bar{X}_n\in A}}{\log n} \leq -\lim_{\epsilon\downarrow 0}\inf_{\xi\in A^\epsilon} I(x)
    \end{equation}
    with the rate function
    \begin{equation}
        I(\xi) = 
        \begin{cases}
            \sum_{t:\xi(t)\neq \xi(t-)} (\xi(t) - \xi(t-))^\alpha& \text{if }\xi\text{ is a non-decreasing pure jump path}\\
            \infty & \text{otherwise}
        \end{cases}
    \end{equation}
For regularly varying L\'evy measures, \cite{MR4038038} established an exact asymptotics
\begin{equation}\label{inequalities:SPLD_regularly_varying}
        C_{\ms{J}(A)}(A^\circ)\leq \liminf_{n\rightarrow\infty}\frac{\pr{\bar{X}_n\in A}}{n^{-\ms{J}(A)(\alpha-1)}}\leq \limsup_{n\rightarrow\infty}\frac{\pr{\bar{X}_n\in A}}{n^{-\ms{J}(A)(\alpha-1)}}\leq C_{\ms{J}(A)}(\bar{A}),
\end{equation}
where $\alpha$ is the index of L\'evy measure's regularly varying tail, $\ms{J}(A)$ is the smallest number of jumps for a step function to be contained in $A$, and $C_{\ms{J}(A)}(\cdot)$ is a measure on the space of c\'adl\'ag functions with $\ms{J}(A)$ or less jumps.
Both asymptotic bounds imply that the rare events are driven by big jumps, hence characterizes the \emph{catastrophe principle}.

\topic{The results in this work fill the gap of lognormal case}%
Among the arguably most important classes of tail distributions for modeling heavy-tailed phenomena---regularly-varying, heavy-tailed Weibull, and log-normal distributions---the characterization of the catastrophe principle in the log-normal case remains open. 
This paper addresses this gap and establishes the sample-path large deviations for L\'evy processes and random walks with lognormal increments.

\theme{Describe the structure and detailed results by sections}%
\topic{Describe the extended LDP of L\'evy process under the $J_1$ topology}%
Specifically, in Section \ref{section:extended-LDP-levy-J_1}, we establish 
the extended large deviation principledev (extended LDP) for $\bar{X}_n$ under the Skorokhod $J_1$ topology:
\begin{equation}\label{inequalities:SPLD_lognomal}
        -\inf_{\xi\in A^\circ} I(x) \leq \liminf_{n\rightarrow\infty} \frac{\log\pr{\bar{X}_n\in A}}{r(\log n)}\leq\limsup_{n\rightarrow\infty} \frac{\log\pr{\bar{X}_n\in A}}{r(\log n)} \leq -\lim_{\epsilon\downarrow 0}\inf_{\xi\in A^\epsilon} I(x)
\end{equation}
where
\begin{equation}\label{rate-function-lornormal-introduction}
    I(\xi) = 
        \begin{cases}
            \xi \text{'s number of jumps } & \text{if }\xi\in\mb{D}_{<\infty}\\
            \infty & \text{otherwise}
            .
        \end{cases}
\end{equation}
Here, $\gamma>1$ is a modeling parameter for the lognormal-type tail, where the standard lognormal distribution corresponds to the case $\gamma = 2$. 
$\mb{D}_{<\infty}$ is the space of non-decreasing step functions with finite number of jumps vanishing at the origin and continuous at 1.
We accomplish this by first establishing the extended LDP for $\hat{J}_n^{\leqslant k}$(with respect to $n$), where $\hat{J}^{\leqslant k}$ represents, roughly speaking, the process constructed by taking the $k$ largest jumps of the L\'evy process. 
Then, we argue that the asymptotic behavior of  $\hat{J}_n^{\leqslant k}$ governs that of $\bar{X}_n$ for sufficiently large $k$'s.
This allows us to obtain the extended LDP for $\bar{X}_n$ from that of $\hat{J}_n^{\leqslant k}$'s by leveraging the approximation lemma in \cite{MR4187125}.

\topic{Briefly mention the standard LDP}%
Extended LDP was first introduced in \cite{MR2797595}. 
Despite its strong-sounding name, extended LDP is actually a weaker statement than the standard LDP, as the upper bound in (\ref{inequalities:SPLD_lognomal}) involves the $\epsilon$-fattening $A^\epsilon$ of the set $A$, hence, increasing the value of the upper bound.
\topic{Describe the the standard LDP is not satisfied even under $M'_1$ topology}%
We show that the standard LDP cannot be satisfied for the lognormal tails even under the coarser Skorokhod $M'_1$ topology. 
Section \ref{section:nonexist-standard-ldp} constructs a closed set in the $M'_1$ topology for which the standard LDP upper bound is violated.
\topic{Compare to the Weibull case}%
This contrasts to the conclusion in \cite{MR4187125}, where the extended LDP of $\bar{X}_n$ under the $J_1$ topology is strenghened to the standard LDP under the $M'_1$ topology.

\topic{Describe the extended LDP for the random walks}%
We also derive an extended LDP for random walks in section \ref{section:randomwalk-ldp-J1}, assuming a lognormal-type tail in its increment distribuion. 
In contrast to the L\'evy process setting, the rate function here takes finite values for step functions that are discontinuous at time 1. 
This difference is due to the fact that rescaled random walk has a jump at the right boundary with a probability bounded from below. 
See Theorem \ref{theorem: extended-ldp-randomwalk} for the precise statement.

\topic{Describe the extended LDP for multi-dimensional processes}%
Many applications require modeling multiple sources of uncertainties. 
In such cases, large deviations for multidimensional processes provide the means to model such systems. 
For example, \cite{MR4032925, MR4493394} analyze the queue length asymptotics for the multiple server queues and stochastic fluid networks with heavy-tailed Weibull service times based on the large deviations results in \cite{MR4187125} for multi-dimensional L\'evy processes and random walks.  
Section \ref{section:extended-ldp-j1-multidimension} obtains extended LDP for multi-dimensional processes with independent components $\bm{\bar{X}}_n = (\bar{X}^{(1)}_n, \cdots, \bar{X}^{(d)}_n)$.
Here $\bar{X}^{(i)}_n$'s are centered and scaled 1-dimensional L\'evy processes or random walks independent of each other.
As the rate function in (\ref{rate-function-lornormal-introduction}) is not good---i.e., does not have compact level set---the standard results such as Theorem 4.14 of \cite{MR2045489} do not apply directly in our context, and the derivation of the extended LDP for $\bm{\bar{X}}_n$ from those of $\bar{X}^{(1)}_n$ requires careful justification.  
We take advantage of the discrete nature of $\bar X_n$ and $\bar W_n$'s rate functions to establish the extended LDP for a $d$-fold product of probability measures, where each coordinate satisfies an extended LDP with a rate function that only takes at most countable values. 

Section~\ref{section:notation-definitions} provides preliminaries and Section~\ref{section:proofs} presents most of the proofs for the results in Sections~\ref{section:notation-definitions} and \ref{section:extended-ldp-levy}.

\section{Preliminaries}\label{section:notation-definitions}
This section provides preliminary results useful for later sections. 
All proofs in this subsection are deferred to Section~\ref{section:preliminary-proof}.
We begin by introducing recurring notations. 
Let $(\ms{X}, d)$ denote a \defnota{metric-space}{metric space $\ms{X}$ equipped with a metric $d$.}
For a set $A$ let $A^c$ denote the complement of $A$. 
Let 
    $
    \newnota
        {d-metric}
        {d(x, A)} 
    \triangleq
    \defnota
        {d-metric}
        {\inf_{y\in A}d(x, y)}
    $,
    \newnota
        {b-open-ball-radius-r}
        {$B_r(x)$}
    \defnota
        {b-open-ball-radius-r}
        {$\delequal \{y\in\ms{X}: d(x,y)<r\}$},
    \newnota
        {a-epsilon-blowup set}
        {$A^\epsilon$}
    \defnota
        {a-epsilon-blowup set}
        {$\delequal\{x\in\ms{X}: d(x,A)\leq \epsilon\}$},
and
    \newnota
        {a-epsilon-shrink-set}
        {$A^{-\epsilon}$}
    \defnota
        {a-epsilon-shrink-set}
        {$\delequal \Big(\big(A^c\big)^\epsilon\Big)^c$}
denote 
    the distance between $x$ and $A$,
    the open ball with radius $r$ centered at $x$,  
    the closed $\epsilon$-fattening of $A$, 
    and the open $\epsilon$-shrinking of $A$, 
    respectively.

We will denote the \defnota{skorokhod-space-domain-E}{the space of real-valued c\`adl\`ag functions from $[0,1]$ to $\R$%
}, i.e., Skorokhod space, 
with \newnota{d-skorokhod-space-domain-E}{$\mb{D}[0,1]$} or simply \newnota{d-skorokhod-space-domain-01}{$\mb{D}$} when the domain $[0,1]$ is clear. 
\mdefnota{d-skorokhod-space-domain-01}{$\D([0,1])$}%
Let 
$$
\newnota{d-metric-uniform}{d_{\|\cdot\|}(\xi, \zeta)}\defnota{d-metric-uniform}{ = \|\xi - \zeta\|} = \sup_{t\in[0,1]} |\xi(t) - \zeta(t)|
$$ 
denote the uniform metric
and recall the $J_1$ metric
\linkdest{notation:j1metric}
    \begin{equation*}\label{definition:j1metric-d1}
        \newnota{d-metric-j1}{d_{J_1}(\xi,\zeta)}\defnota{d-metric-j1}{= \inf_{\lambda\in\Lambda} \|\lambda - e\|\vee\|\xi\circ \lambda - \zeta\|},
    \end{equation*}
where \newnota{lambda}{$\Lambda$} is \defnota{lambda}{the collection of all non-decreasing homeomorphisms on $[0,1]$}, and
\newnota{e-identity_map}{$e$} is \defnota{e-identity_map}{the identity map} on $[0,1]$. 
The $M'_1$ metric on $\mb{D}[0,1]$ is defined as follows.
Let $\Gamma(\xi)$ denote the extended completed graph of $\xi\in\mb{D}$. 
That is,
\begin{equation}\label{defnition:extended-complete-graph}
    \newnota{gamma-extended-complete-graph}{\Gamma(\xi)}\defnota{gamma-extended-complete-graph}{ = \{(y,t)\in \mb{R}\times [0,1]: y\in[\xi(t-)\wedge\xi(t), \xi(t-)\vee \xi(t)]\}}
\end{equation} 
with the convention that $\xi(0-)\triangleq 0$.
Note that if $\xi(0)\neq 0$, then one can consider $\xi$ as a path with a jump at time 0. 
Roughly speaking, $\Gamma(\xi)$ is the union of the graph of $\xi$ and the vertical lines that concatenate the connected pieces. 
Define an order `\newnota{order-on-graph}{$\prec$}'\mdefnota{order-on-graph}{an order on the extended complete graph $\Gamma(\cdot)$} on $\Gamma(\xi)$ as follows: for two points on $\Gamma(\xi)$, we say that $(y_1, t_1) \prec (y_2, t_2)$ if either $t_1 < t_2$, or $t_1 = t_2$ and $|\xi(t_1-) - y_1| < |\xi(t_2-)-y_2|$.
A continuous, nondecreasing (w.r.t.\ $\prec$), and surjective function $(u,r):[0,1]\rightarrow \Gamma(\xi)$ is called a parametrization of $\Gamma(\xi)$.
Let \newnota{pi-parametrization-extended-complete-graph}{$\Pi(\xi)$} denote \defnota{pi-parametrization-extended-complete-graph}{the collection of all parametrizations of $\Gamma(\xi)$}. 
The $M'_1$ metric is 
\begin{equation}
    \newnota{d-metric-m1prime}{d_{M'_1}}(\xi, \zeta)\defnota{d-metric-m1prime}{= \inf_{\substack{(u_1,r_1)\in \Pi(\xi)\\
    (u_2,r_2)\in \Pi(\zeta)}} \|u_1 - u_2\|\vee \|r_1- r_2\|}.
\end{equation}
We will often consider multiple paths in $\D$ and work with their extended completed graphs as subsets of $\R \times [0,1]$. 
We use the $\ell_\infty$ distance in this space, i.e., 
$d\big((x,t), (y,s)\big) = |x-y| \vee |t-s|$ for $(x,t), (y,s) \in \R \times [0,1]$.
See \cite{MR4187125} for the full details of $M_1'$ metric on $\mb{D}[0,1]$ and \cite{MR1442318} or \cite{MR1876437} for $M_1'$ metric on $\mb{D}[0,\infty)$.
It is obvious from their definitions that $d_{\|\cdot\|}\geq d_{J_1}$. 
Note also that the $M_1'$ distance is upper bounded 
by the $M_1$ distance, and hence, Theorem 12.3.2 in \cite{MR1876437} implies that $d_{J_1}\geq d_{M'_1}$.
The next two simple observations are useful for bounding the $M_1'$ distances from below.  
\begin{lemma}
\label{lemma:m1prime-gap-criteria}
\linksinthm{lemma:m1prime-gap-criteria}
Suppose that $\xi,\zeta \in \mb{D}$. 
Then for any $(u,r) \in \Gamma(\xi)$,
$$
d_{M_1'}(\xi, \zeta) \geq d\big((u,r), \Gamma(\zeta)\big)
.
$$
\end{lemma}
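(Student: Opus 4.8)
\textit{Proof proposal.} The plan is to exploit the surjectivity that is built into the definition of a parametrization, so that the point $(u,r)$ on the graph of $\xi$ is always ``visited'' at some parameter value $s$, at which point one can read off the estimate directly from the pair of parametrizations realizing a near-optimal $M_1'$ distance.

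Concretely, fix an arbitrary point $(u,r)\in\Gamma(\xi)$ and let $(u_1,r_1)\in\Pi(\xi)$ and $(u_2,r_2)\in\Pi(\zeta)$ be any two parametrizations; since $\xi,\zeta\in\mb{D}$, both $\Pi(\xi)$ and $\Pi(\zeta)$ are nonempty, so this is not vacuous. Because $(u_1,r_1)\colon[0,1]\to\Gamma(\xi)$ is surjective, there is some $s\in[0,1]$ with $(u_1(s),r_1(s))=(u,r)$. The point $(u_2(s),r_2(s))$ lies in $\Gamma(\zeta)$, so by the definition of the distance from a point to a set and the definition of the $\ell_\infty$ distance on $\R\times[0,1]$,
$$
d\big((u,r),\Gamma(\zeta)\big)\;\le\; d\big((u_1(s),r_1(s)),(u_2(s),r_2(s))\big)\;=\;|u_1(s)-u_2(s)|\vee|r_1(s)-r_2(s)|\;\le\;\|u_1-u_2\|\vee\|r_1-r_2\|.
$$

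Since the left-hand side is independent of the choice of $(u_1,r_1)$ and $(u_2,r_2)$, taking the infimum of the right-hand side over $(u_1,r_1)\in\Pi(\xi)$ and $(u_2,r_2)\in\Pi(\zeta)$ yields $d\big((u,r),\Gamma(\zeta)\big)\le d_{M_1'}(\xi,\zeta)$, as claimed. There is no genuine obstacle in this argument; the only points requiring care are that the infimum defining $d_{M_1'}$ ranges over \emph{pairs} of parametrizations sharing the same parameter domain $[0,1]$, and that every point of $\Gamma(\xi)$ lies in the image of every parametrization of $\Gamma(\xi)$ — both of which are immediate from the definitions recalled above.
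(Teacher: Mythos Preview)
Your proof is correct and follows essentially the same approach as the paper's: fix arbitrary parametrizations, use surjectivity to find a parameter value hitting $(u,r)$, bound the point-to-set distance by the coordinatewise sup-norm at that parameter, and take the infimum over parametrizations. The only difference is cosmetic notation and your slightly more explicit remarks about nonemptiness of $\Pi(\xi)$ and surjectivity.
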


\begin{lemma}\label{lemma:constant-jump-m1prime-distance}
\linksinthm{lemma:constant-jump-m1prime-distance}
    Suppose that $\xi,\zeta\in\mb{D}$. If there exists $s,t \in [0,1]$ and $\delta>0$ such that $|\zeta(t) - \zeta(s-)| \geq 2\delta$, and $\xi$ is constant on $[s-\delta, t+\delta]\cap [0,1]$, then
    $$
    d_{M'_1}(\xi,\zeta)\geq \delta.
    $$
\end{lemma}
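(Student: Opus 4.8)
The plan is to reduce the statement to a single estimate on extended completed graphs and then invoke Lemma~\ref{lemma:m1prime-gap-criteria}. Since $d_{M_1'}$ is a metric, it is symmetric, so Lemma~\ref{lemma:m1prime-gap-criteria} applied with the roles of $\xi$ and $\zeta$ interchanged tells us that $d_{M_1'}(\xi,\zeta)\ge d\big(p,\Gamma(\xi)\big)$ for \emph{every} point $p\in\Gamma(\zeta)$, where $d$ is the $\ell_\infty$ metric on $\R\times[0,1]$. Thus it suffices to exhibit one point of $\Gamma(\zeta)$ whose $\ell_\infty$ distance to $\Gamma(\xi)$ is at least $\delta$. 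Assuming $s\le t$ (so that $s,t\in I:=[s-\delta,t+\delta]\cap[0,1]$, and writing $c$ for the constant value of $\xi$ on $I$), the two natural candidates are $p_s:=(\zeta(s-),s)$ and $p_t:=(\zeta(t),t)$, both of which lie on $\Gamma(\zeta)$ because $\zeta(s-)$ and $\zeta(t)$ are endpoints of the (possibly degenerate) vertical segments of $\Gamma(\zeta)$ at times $s$ and $t$, using the convention $\zeta(0-)=0$ when $s=0$. By the triangle inequality,
\[
|\zeta(s-)-c|+|\zeta(t)-c|\ \ge\ |\zeta(t)-\zeta(s-)|\ \ge\ 2\delta,
\]
so at least one of the two summands is at least $\delta$; I would take $p:=p_s$ in the first case and $p:=p_t$ in the second.

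The core of the argument is then to verify $d(p_s,\Gamma(\xi))\ge\delta$ (the case $p=p_t$ being identical). Fix any $(u,r)\in\Gamma(\xi)$. If $r\notin I$, then since $s\le t$ and $I$ is an interval truncated only at the endpoints $0$ and $1$, the time $r$ is separated from $s$ by more than $\delta$, so $d(p_s,(u,r))\ge|s-r|>\delta$. If $r\in I$, then $\xi(r)=c$, and moreover $\xi(r-)=c$ unless $r$ is the left endpoint $\ell:=\max(0,s-\delta)$ of $I$; in the former subcase $(u,r)=(c,r)$ and $d(p_s,(u,r))\ge|\zeta(s-)-c|\ge\delta$, while if $r=\ell$ then either $\ell=s-\delta>0$, giving $|s-r|=\delta$, or $\ell=0$, in which case the convention $\xi(0-)=0$ together with $\xi$ vanishing at the origin forces $c=0$ so that $\Gamma(\xi)$ carries no whisker at time $0$ and $(u,r)=(0,0)=(c,0)$, again at distance $\ge|\zeta(s-)-c|\ge\delta$ from $p_s$. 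Taking the infimum over $(u,r)\in\Gamma(\xi)$ gives $d(p_s,\Gamma(\xi))\ge\delta$, and Lemma~\ref{lemma:m1prime-gap-criteria} then yields $d_{M_1'}(\xi,\zeta)\ge\delta$.

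I expect the main obstacle to be precisely the bookkeeping in this last step, i.e., enumerating exactly which points of $\Gamma(\xi)$ can enter the $\ell_\infty$-ball of radius $\delta$ around $p_s$. This requires keeping track of the short vertical whiskers that $\Gamma(\xi)$ may carry at jump times located at the two ends of $I$, and of the interaction between the truncation of $[s-\delta,t+\delta]$ to $[0,1]$ and the origin convention. Once one recognizes that $\Gamma(\xi)$ over $I$ is just the horizontal segment $\{c\}\times I$ together with at most two whiskers attached at its ends, each at time-distance exactly $\delta$ from $s$ and from $t$, the inequality follows immediately; everything else in the proof is elementary.
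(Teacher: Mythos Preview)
Your approach is the same as the paper's: select whichever of $p_s=(\zeta(s-),s)$ and $p_t=(\zeta(t),t)$ lies at vertical distance $\ge\delta$ from the constant value $c=\xi(t)$, and bound its $\ell_\infty$-distance to $\Gamma(\xi)$ via Lemma~\ref{lemma:m1prime-gap-criteria}. The paper simply asserts the dichotomy ``for any $(u,r)\in\Gamma(\xi)$, either $u=\xi(t)$ or $|t-r|\ge\delta$''; you instead attempt an explicit enumeration of the endpoint whiskers, which is more careful in spirit.

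There is, however, one step that fails: in the subcase $\ell=0$ you claim ``$\xi$ vanishing at the origin forces $c=0$'', but the hypotheses place no constraint on $\xi(0)$. When $c=\xi(0)\neq 0$, the convention $\xi(0-)=0$ gives $\Gamma(\xi)$ a genuine vertical whisker at time $0$ running between $0$ and $c$, and points on it can lie strictly within $\delta$ of $p_s$ (and also of $p_t$ when $t<\delta$). For instance, with $s=t=0$, $\delta=0.1$, and $\xi=\zeta\equiv 0.2$, the hypotheses hold but $d_{M'_1}(\xi,\zeta)=0$. The paper's dichotomy has exactly the same blind spot here; in all of the paper's applications the interval $[s-\delta,t+\delta]$ sits inside $(0,1)$, so the issue is harmless in context, but your explicit assertion about $\xi(0)$ is incorrect as written.
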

Throughout this paper, we consider the following subsets of $\mb{D}$:
\begin{equation*}
\begin{aligned}
&
\newnota{d-uparrow}{\mb{D}^\uparrow}\defnota{d-uparrow}{\delequal \{\xi\in\mb{D}: \xi\text{ is a non-decreasing, pure jump function} \}};
\\
&
\newnota{d-j-jumps-origin-zero}{\mb{D}_{= j}}\defnota{d-j-jumps-origin-zero}{\delequal\{\xi\in\mb{D}^\uparrow: \xi(0) = 0,\ \xi(1) = \xi(1-),\ \xi\text{ has } j \text{ jumps}\}};
\\
&
\newnota{d-lessequalto-j-jumps-origin-zero}{\mb{D}_{\leqslant j}}\defnota{d-lessequalto-j-jumps-origin-zero}{\delequal \cup_{i=0}^j \mb{D}_{=i}},\quad \newnota{d-finite-jumps-origin-zero}{\mb{D}_{< \infty}}\defnota{d-finite-jumps-origin-zero}{\delequal \cup_{j=0}^\infty \mb{D}_{\leqslant j}};
\\
&
\newnota{d-tilde_equal-j}{\tilde{\mb{D}}_{= j}}\defnota{d-tilde_equal-j}{\delequal\{\xi\in\mb{D}^\uparrow: \xi(0) = 0,\ \xi\text{ has } j \text{ jumps}\}};
\\
&
\newnota{d-tilde_leqslant-j}{\tilde{\mb{D}}_{\leqslant j}}\defnota{d-tilde_leqslant-j}{\delequal \cup_{i=0}^j \tilde{\mb{D}}_{=i}} ,\quad \newnota{d-tilde_leqslant-infty}{\tilde{\mb{D}}_{< \infty}}\defnota{d-tilde_leqslant-infty}{\delequal \cup_{j=0}^\infty \tilde{\mb{D}}_{\leqslant j}};
\\
&
\newnota
    {d_hat-j-jumps-origin-positive-endjump}
    {\hat{\mb{D}}_{= j}}
\defnota
    {d_hat-j-jumps-origin-positive-endjump}
    {\delequal\{\xi\in\mb{D}^\uparrow: \xi(0)\geq 0,\ \xi\text{ has } j \text{ jumps}\}};
\\
&
\newnota
    {d-hat_lessequalto-j-jumps-origin-positive-endjump}
    {\hat{\mb{D}}_{\leqslant j}}
\defnota
    {d-hat_lessequalto-j-jumps-origin-positive-endjump}
    {\delequal \cup_{i=0}^j \hat{\mb{D}}_{=i}},
    \quad 
\newnota
    {d-finite-jumps-origin-positive-endjump}
    {\hat{\mb{D}}_{< \infty}}
\defnota
    {d-finite-jumps-origin-positive-endjump}
    {\delequal \cup_{j=0}^\infty \hat{\mb{D}}_{\leqslant j}}.
\end{aligned}
\end{equation*}
It's clear that $\mb{D}_{=j}\subset\tilde{\mb{D}}_{=j}\subset\hat{\mb{D}}_{=j}$ for any $j\in\mb{N}$, and hence, $\mb{D}_{<\infty}\subset\tilde{\mb{D}}_{<\infty}\subset\hat{\mb{D}}_{<\infty}$.
It can be verified that $\mb{D}_{\leqslant j}$, $\tilde{\mb{D}}_{\leqslant j}$ and $\hat{\mb{D}}_{\leqslant j}$ are closed subsets of $(\mb{D}, d_{J_1})$. 
In Lemma \ref{lemma: rate-func-m1prime-levelset-closed}, we show that $\hat{\mb{D}}_{\leqslant j}$ is closed in $(\mb{D}, d_{M'_1})$ as well.

We conclude this section with a brief review of the formal definition of \emph{extended large deviation principle} (extended LDP) introduced in \cite{MR2797595}. 
Extended LDP is often useful in heavy-tailed contexts where the standard LDP fails to hold.
\begin{definition}\label{def:extend_LDP}
    A sequence of measures
    $\{\mu_n\}_{n\geq 1}$ satisfies the extened LDP with speed $a_n$ on $(\ms{X}, d)$ if there is a 
    rate function $I:\ms{X}\rightarrow\mb{R}_+$ and a sequence $\{a_n\}_{n\geq 1}$ with $a_n\rightarrow \infty$ such that, for any open set $G$ and any closed set $F$, the following inequalities hold:
    \begin{equation}\label{inequality:extended_LDP}
        -\inf_{x\in G} I(x) \leq \liminf_{n\rightarrow\infty} \frac{\log\mu_n(G)}{a_n}\text{ and } \limsup_{n\rightarrow\infty} \frac{\log\mu_n(F)}{a_n}\leq -\lim_{\epsilon\downarrow 0}\inf_{x\in F^\epsilon} I(x).
    \end{equation}
\end{definition}
For a sequence of $\mathcal X$-valued random variables $\{X_n\}_{n\geq 1}$, we say that $X_n$ satisfies the extended LDP on $(\mathcal X, d)$ if $\{\pr{X_n\in \cdot}\}_{n\geq 1}$ satisfies the extended LDP on $(\mathcal X, d)$.
The rate function $I$ in Definition \ref{def:extend_LDP} is uniquely determined; see \cite{MR2797595}.

We present a couple of tools that are useful for establishing extended LDPs:

\begin{lemma}
\label
    {E-LDP-on-subspaces-full-measure}
\linksinthm
    {E-LDP-on-subspaces-full-measure}
Let $\{X_n\}_{n\geq 1}$ be a sequence of c\`adl\`ag stochastic processes. 
Let $\mb{E}\subset\mb{D}$ be a closed set such that $\pr{X_n\in\mb{E}} = 1$ for all $n \in\mb{N}$. If $\{X_n\}_{n\geq 1}$ satisfies the extended LDP on $(\mb{E}, d)$ with the rate function $I$ and speed $a_n$. Then $\{X_n\}_{n\geq 1}$ satisfies the extended LDP on $(\mb{D}, d)$ with the same speed sequence and the rate function given by
    \begin{equation*}
        I'(x) = 
        \begin{cases}
            I(x) & x\in \ms{E}\\
            \infty & x\notin \ms{E}
        \end{cases}
    \end{equation*}
\end{lemma}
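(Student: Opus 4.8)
The plan is to transfer the extended LDP from the closed subspace $\mb{E}$ to the ambient space $\mb{D}$ by directly verifying the two inequalities in Definition~\ref{def:extend_LDP} for arbitrary open $G \subseteq \mb{D}$ and closed $F \subseteq \mb{D}$, using the defining inequalities on $\mb{E}$ together with the fact that $\pr{X_n \in \mb{E}} = 1$. The key observation to set up first is the relationship between the subspace topology and relative distances: for any $A \subseteq \mb{D}$, writing $A_{\mb{E}} \triangleq A \cap \mb{E}$, the set $A_{\mb{E}}$ is open (resp.\ closed) in $(\mb{E},d)$ whenever $A$ is open (resp.\ closed) in $(\mb{D},d)$, and the infimum $\inf_{x \in A} I'(x)$ equals $\inf_{x \in A_{\mb{E}}} I(x)$ since $I' \equiv \infty$ off $\mb{E}$.

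\textbf{Lower bound.} Given an open $G \subseteq \mb{D}$, the set $G_{\mb{E}} = G \cap \mb{E}$ is open in $(\mb{E},d)$, and since $\pr{X_n \in G} = \pr{X_n \in G_{\mb{E}}}$ for every $n$, the lower bound from the extended LDP on $\mb{E}$ gives $\liminf_n \frac{\log \pr{X_n \in G}}{a_n} = \liminf_n \frac{\log \pr{X_n \in G_{\mb{E}}}}{a_n} \geq -\inf_{x \in G_{\mb{E}}} I(x) = -\inf_{x \in G} I'(x)$, which is exactly what is needed.

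\textbf{Upper bound.} Given a closed $F \subseteq \mb{D}$, I need to control $\limsup_n \frac{\log\pr{X_n \in F}}{a_n}$ by $-\lim_{\epsilon \downarrow 0}\inf_{x \in F^\epsilon} I'(x)$, where the $\epsilon$-fattening $F^\epsilon$ is taken in $(\mb{D},d)$. The set $F_{\mb{E}} = F \cap \mb{E}$ is closed in $(\mb{E},d)$, and again $\pr{X_n \in F} = \pr{X_n \in F_{\mb{E}}}$, so the extended LDP on $\mb{E}$ yields $\limsup_n \frac{\log\pr{X_n \in F}}{a_n} \leq -\lim_{\epsilon \downarrow 0}\inf_{x \in (F_{\mb{E}})^{\epsilon,\mb{E}}} I(x)$, where $(F_{\mb{E}})^{\epsilon,\mb{E}}$ denotes the $\epsilon$-fattening of $F_{\mb{E}}$ \emph{within} $(\mb{E},d)$. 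It remains to compare the two fattenings. One inclusion is immediate: $(F_{\mb{E}})^{\epsilon,\mb{E}} = (F_{\mb{E}})^\epsilon \cap \mb{E} \subseteq F^\epsilon \cap \mb{E}$, so $\inf_{x \in (F_{\mb{E}})^{\epsilon,\mb{E}}} I(x) \geq \inf_{x \in F^\epsilon \cap \mb{E}} I(x) = \inf_{x \in F^\epsilon} I'(x)$. Taking $\epsilon \downarrow 0$ preserves this inequality (both sides are monotone in $\epsilon$), which gives exactly the required upper bound.

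\textbf{Main obstacle.} The only subtlety is the bookkeeping around the $\epsilon$-fattenings: one must be careful that fattening $F_{\mb{E}}$ inside $\mb{E}$ produces a set no larger than $F^\epsilon$, which is where the closedness of $\mb{E}$ matters — it guarantees $F_{\mb{E}}$ is genuinely closed in $(\mb{E},d)$ so that the extended LDP hypothesis applies to it, and it makes $(F_{\mb{E}})^{\epsilon,\mb{E}} = (F_{\mb{E}})^\epsilon \cap \mb{E}$ a legitimate relation between ambient and relative fattenings. I would also note that the resulting $I'$ is indeed a valid rate function (nonnegative, and its value off $\mb{E}$ being $\infty$ is consistent with the uniqueness statement following Definition~\ref{def:extend_LDP}). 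No delicate estimates are involved; the proof is essentially a matter of matching the subspace and ambient formulations correctly.
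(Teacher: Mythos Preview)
Your proposal is correct and follows essentially the same argument as the paper: intersect $G$ and $F$ with $\mb{E}$, use $\pr{X_n\in\mb{E}}=1$ to equate probabilities, apply the extended LDP on $\mb{E}$, and for the upper bound use the inclusion $(F\cap\mb{E})^{\epsilon,\mb{E}}\subseteq F^\epsilon$. One small correction to your commentary: the closedness of $\mb{E}$ is not what makes $F\cap\mb{E}$ closed in $(\mb{E},d)$ or what makes the fattening relation hold (both are automatic from the subspace topology); rather, it is needed so that the sublevel sets of $I'$---which are sublevel sets of $I$ and hence closed in $\mb{E}$---are also closed in $\mb{D}$, making $I'$ lower-semicontinuous and thus a legitimate rate function.
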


\begin{proposition}[Proposition 2.1 of \cite{MR4493394}]
\label{lemma:approximate}
\linksinthm{lemma:approximate}
Let $\{X_n\}_{n\geq 1}$ and $\{Y^k_n\}_{n\geq 1}$ for each $k\in\mb{N}$ be sequence of random objects in a metric space $(\ms{X},d)$. Let $I$ and $I_k$ for $k\in\mb{N}$ be non-negative lower-semicontinuous functions.  Suppose that the following conditions hold:
\begin{enumerate}[(1)]
    \item 
    For each $k\in\mb{N}$, the sequence $\{Y_n^k\}_{n\geq 1}$ satisfy the extended LDP with the rate function $I_k$ and the speed $\{a_n\}_{n\geq 1}$. 
    \item 
    For each closed set $F$, 
    \begin{equation}\label{require1}
        \lim_{k\rightarrow\infty}\inf_{x\in F} I_k(x)\geq \inf_{x\in F} I(x).
    \end{equation}
    \item
    For each $\delta > 0$ and each open set $G$, there exists $\epsilon > 0$ and $K\geq 0$ such that $k\geq K$ implies
    \begin{equation}\label{require2}
        \inf_{x\in G^{-\epsilon}} I_k(x)\leq\inf_{x\in G} I(x)+\delta.
    \end{equation}

    \item
    For every $\epsilon > 0$, it holds that 
    \begin{equation}\label{require3}
        \lim_{k\rightarrow\infty}\limsup_{n\rightarrow\infty} \frac{1}{a_n}\log\pr{d(X_n, Y_n^k) > \epsilon} = -\infty.
    \end{equation}
\end{enumerate}
Then $\{X_n\}_{n\geq 1}$ satisfy the extended LDP with the rate function $I$ and speed $a_n$.
\end{proposition}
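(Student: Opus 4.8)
The plan is to follow the template of the classical exponential-approximation lemma for the standard LDP, taking care to invoke only the bona fide extended-LDP inequalities \eqref{inequality:extended_LDP} for the sequences $\{Y_n^k\}_{n\geq1}$, since in our applications the rate functions are not good. I would handle the upper and lower halves of the extended LDP separately, as they use conditions (2)/(4) and (3)/(4) in rather different ways.

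For the upper bound I would fix a closed set $F$ and $\epsilon>0$ and begin from the inclusion $\{X_n\in F\}\subseteq\{Y_n^k\in F^\epsilon\}\cup\{d(X_n,Y_n^k)>\epsilon\}$, i.e.\ $\pr{X_n\in F}\leq\pr{Y_n^k\in F^\epsilon}+\pr{d(X_n,Y_n^k)>\epsilon}$. Since $F^\epsilon$ is closed, condition (1) gives $\limsup_n a_n^{-1}\log\pr{Y_n^k\in F^\epsilon}\leq-\lim_{\eta\downarrow0}\inf_{x\in(F^\epsilon)^\eta}I_k(x)$, and because $(F^\epsilon)^\eta\subseteq F^{2\epsilon}$ for $\eta\leq\epsilon$ this is at most $-\inf_{x\in F^{2\epsilon}}I_k(x)$. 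Taking $a_n^{-1}\log(\cdot)$ and $\limsup_n$ of the two-term bound (principle of the largest term) and then sending $k\to\infty$, the term $\pr{d(X_n,Y_n^k)>\epsilon}$ drops out by \eqref{require3}, while \eqref{require1} applied to the closed set $F^{2\epsilon}$ yields $\lim_k\inf_{x\in F^{2\epsilon}}I_k(x)\geq\inf_{x\in F^{2\epsilon}}I(x)$; hence $\limsup_n a_n^{-1}\log\pr{X_n\in F}\leq-\inf_{x\in F^{2\epsilon}}I(x)$, and letting $\epsilon\downarrow0$ produces the extended-LDP upper bound $-\lim_{\epsilon\downarrow0}\inf_{x\in F^\epsilon}I(x)$.

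For the lower bound I would fix an open set $G$ (without loss of generality $\inf_{x\in G}I(x)<\infty$) and $\delta>0$, and use \eqref{require2} to select $\epsilon>0$ and $K$ with $\inf_{x\in G^{-\epsilon}}I_k(x)\leq\inf_{x\in G}I(x)+\delta$ for all $k\geq K$; here $G^{-\epsilon}$ is open. The relevant inclusion is that $Y_n^k\in G^{-\epsilon}$ together with $d(X_n,Y_n^k)\leq\epsilon$ forces $X_n\in G$, because $X_n\in G^c$ would give $d(Y_n^k,G^c)\leq\epsilon$, contradicting $Y_n^k\in G^{-\epsilon}$. Thus $\pr{X_n\in G}\geq\pr{Y_n^k\in G^{-\epsilon}}-\pr{d(X_n,Y_n^k)>\epsilon}$. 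Condition (1) bounds $\liminf_n a_n^{-1}\log\pr{Y_n^k\in G^{-\epsilon}}$ below by $-\inf_{x\in G^{-\epsilon}}I_k(x)\geq-\inf_{x\in G}I(x)-\delta>-\infty$, and by \eqref{require3} I can then fix some $k\geq K$ for which $\limsup_n a_n^{-1}\log\pr{d(X_n,Y_n^k)>\epsilon}$ is strictly below $-\inf_{x\in G}I(x)-\delta$; the elementary fact that $\liminf_n a_n^{-1}\log(p_n-q_n)\geq\liminf_n a_n^{-1}\log p_n$ whenever $\limsup_n a_n^{-1}\log q_n$ is strictly smaller (valid since $a_n^{-1}\log\tfrac12\to0$) then gives $\liminf_n a_n^{-1}\log\pr{X_n\in G}\geq-\inf_{x\in G}I(x)-\delta$, and $\delta\downarrow0$ finishes.

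I expect the only real obstacle to be bookkeeping forced by the non-goodness of the rate functions: one cannot collapse $\lim_{\eta\downarrow0}\inf_{x\in(F^\epsilon)^\eta}I_k(x)$ to $\inf_{x\in F^\epsilon}I_k(x)$, nor pass a limit through an infimum by a compactness argument, so every step must be threaded through monotone families of closed fattenings $F^{2\epsilon}$ and open shrinkings $G^{-\epsilon}$; the asymmetric hypotheses \eqref{require1} and \eqref{require2} are designed precisely to transfer the $I_k$-estimates to $I$-estimates on the upper and lower sides respectively.
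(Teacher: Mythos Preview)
The paper does not give its own proof of this proposition; it is quoted verbatim as Proposition~2.1 of \cite{MR4493394} and used as a black box. Your argument is the standard exponential-approximation proof adapted correctly to the extended LDP setting, and each step checks out (in particular your handling of the fattenings via $(F^\epsilon)^\eta\subseteq F^{\epsilon+\eta}$ and of the difference in the lower bound is sound); this is precisely the argument one expects in the cited reference.
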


When the sequences $\{Y_n^k\}_{n\geq 1}$ and the rate functions $I_k$'s do not change with $k$, the above proposition directly implies the following corollary.

\begin{corollary}[Corollary 2.1 of \cite{MR4493394}]
\label{lemma:approximate2}
\linksinthm{lemma:approximate2}
Assume both the sequence $\{X_n\}_{n\geq 1}$ and $\{Y_n\}_{n\geq 1}$ take values in a  metric space $(\ms{X},d)$ and the following relation holds:
\begin{equation}\label{require4}
    \limsup_{n\rightarrow\infty} \frac{\log\pr{d(X_n, Y_n) > \epsilon}}{a_n} = -\infty.
\end{equation}
If $\{X_n\}_{n\geq 1}$ satisfies the extended LDP with the rate function $I$ and speed $\{a_n\}_{n\geq 1}$, then $\{Y_n\}_{n\geq 1}$ satisfies the extended LDP with the same rate function and speed.
\end{corollary}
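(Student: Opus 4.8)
The plan is to deduce the statement directly from Proposition~\ref{lemma:approximate} by making the \emph{stationary} choice $Y^k_n := Y_n$ and $I_k := I$ for every $k\in\mb{N}$. With this choice, condition~(1) of Proposition~\ref{lemma:approximate} is exactly the assumed extended LDP for $\{X_n\}_{n\geq 1}$ (here we use that the rate function of an extended LDP is lower semicontinuous), and condition~(4) collapses, since nothing depends on $k$, to $\limsup_{n\to\infty} \frac{1}{a_n}\log\pr{d(X_n,Y_n)>\epsilon} = -\infty$, which is precisely the hypothesis~(\ref{require4}). It therefore remains only to verify conditions~(2) and~(3).

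Condition~(2) is immediate: since $I_k\equiv I$, we have $\lim_{k\to\infty}\inf_{x\in F} I_k(x) = \inf_{x\in F} I(x)$ for every closed $F$. For condition~(3), fix an open set $G$ and $\delta>0$. If $\inf_{x\in G} I(x) = \infty$ there is nothing to prove, so assume it is finite and choose $x_0\in G$ with $I(x_0)\leq \inf_{x\in G} I(x) + \delta$. Since $G$ is open there is $r>0$ with $B_r(x_0)\subseteq G$, hence $d(x_0, G^c)\geq r$, so for any $\epsilon < r$ we have $d(x_0,G^c) > \epsilon$, i.e.\ $x_0\in G^{-\epsilon}$. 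Consequently $\inf_{x\in G^{-\epsilon}} I(x)\leq I(x_0)\leq \inf_{x\in G} I(x) + \delta$ for every $k$, which is condition~(3). Proposition~\ref{lemma:approximate} then yields the extended LDP for $\{Y_n\}_{n\geq 1}$ with rate function $I$ and speed $\{a_n\}_{n\geq 1}$. Since every step other than the invocation of Proposition~\ref{lemma:approximate} is routine, the ``main obstacle'' here is essentially organizational.

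If one prefers a self-contained argument that sidesteps the lower-semicontinuity requirement of Proposition~\ref{lemma:approximate}, the two inequalities of Definition~\ref{def:extend_LDP} can be checked by hand. For the upper bound on a closed set $F$, one uses the inclusion $\{Y_n\in F\}\subseteq \{X_n\in F^\epsilon\}\cup\{d(X_n,Y_n)>\epsilon\}$ (valid because $F^\epsilon=\{x:d(x,F)\leq\epsilon\}$), applies the extended LDP upper bound for $X_n$ on the \emph{closed} set $F^\epsilon$, notes that~(\ref{require4}) renders the second event negligible on the $\frac{1}{a_n}\log(\cdot)$ scale, and then lets $\epsilon\downarrow 0$, using $(F^\epsilon)^\eta\subseteq F^{\epsilon+\eta}$ to line up with the $\lim_{\epsilon\downarrow 0}$ in the upper bound. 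For the lower bound on an open set $G$, one uses $\{X_n\in G^{-\epsilon}\}\cap\{d(X_n,Y_n)\leq\epsilon\}\subseteq\{Y_n\in G\}$, applies the extended LDP lower bound for $X_n$ on the \emph{open} set $G^{-\epsilon}$ (open since $(G^c)^\epsilon$ is closed) with a near-minimizer $x_0$ of $I|_G$ that lies in $G^{-\epsilon}$ for $\epsilon$ small as above, and observes that $\pr{d(X_n,Y_n)>\epsilon}$ decays faster than any $e^{-Ca_n}$ while $\pr{X_n\in G^{-\epsilon}}$ is at least $e^{-(I(x_0)+o(1))a_n}$, so the difference $\pr{X_n\in G^{-\epsilon}} - \pr{d(X_n,Y_n)>\epsilon}$ eventually exceeds $\frac{1}{2}\pr{X_n\in G^{-\epsilon}}$; letting $I(x_0)\downarrow \inf_{x\in G} I(x)$ then finishes the argument. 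In either approach, the only mildly delicate point is matching the various $\epsilon$-fattenings and $\epsilon$-shrinkings against the $\lim_{\epsilon\downarrow 0}$ built into the extended LDP upper bound.
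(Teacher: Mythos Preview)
Your approach is exactly the paper's: apply Proposition~\ref{lemma:approximate} with the approximating sequence and rate function independent of $k$. However, the substitution you wrote is backward. In Proposition~\ref{lemma:approximate}, condition~(1) is an assumption on the sequence $\{Y_n^k\}$ and the \emph{conclusion} is for $\{X_n\}$. If you set $Y_n^k:=Y_n$ as written, condition~(1) would demand that $\{Y_n\}$ already satisfies the extended LDP---precisely what you are trying to prove---and the conclusion would deliver the extended LDP for $\{X_n\}$, which is your hypothesis. The correct assignment is to take the Proposition's ``$X_n$'' to be the Corollary's $Y_n$ and the Proposition's ``$Y_n^k$'' to be the Corollary's $X_n$ (constant in $k$), with $I_k:=I$. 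With that swap, your verification of conditions~(1)--(4) is correct and the argument goes through verbatim; your own sentence ``condition~(1)\ldots is exactly the assumed extended LDP for $\{X_n\}$'' shows you had the right picture in mind.

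Your alternative direct verification of the two inequalities in Definition~\ref{def:extend_LDP} is also correct and is a fine self-contained proof.
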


\section{Main Results}\label{section:extended-ldp-levy}

This section presents the main results of the paper. 
Section~\ref{section:extended-LDP-levy-J_1} establishes the extended LDP at the sample-path level for one-dimensional L\'evy processes, while Section~\ref{section:randomwalk-ldp-J1} establishes it for one-dimensional random walks.
Section~\ref{section:extended-ldp-j1-multidimension} establishes the extended LDP for multi-dimensional processes with independent coordinates. 
Section~\ref{section:nonexist-standard-ldp} constructs a counterexample that shows our extended LDPs
cannot be strengthened to a standard LDP.
Section~\ref{subsec:boundary-crossing-example} illustrates the applicability of our results with a boundary crossing problem.

\subsection{Extended LDP for L\'evy processes}\label{section:extended-LDP-levy-J_1}

According to the L\'evy-Ito decomposition (see, for example, Chapter 2 of \cite{MR3185174}), the L\'evy process with triplets $(a,b,\nu)$ has the following distributional representation: for $t\in[0,\infty)$
\begin{equation}\label{equ:levy-ito-decomposition}
    \dnewnota{x-levy-process}{X(\cdot)} X(t) = bt + aB(t) + \int_{|x|<1}x\big(\hat{N}([0,t]\times dx) - t\nu(dx)\big)+\int_{|x|\geq 1}x\hat{N}([0,t]\times dx)
    ,
\end{equation}
\mdefnota{x-levy-process}{general notation used for L\'evy process}%
where $a$ and $b$ are scalars, $\newnota{b-standard-brownian-motion}{B}$ is \defnota{b-standard-brownian-motion}{the standard Brownian motion}, 
and
\newnota{n-levy-measure}{$\nu$} is a \defnota{n-levy-measure}{L\'evy measure, i.e., a $\sigma$-finite measure supported on $\mb{R}\backslash\{0\}$ that satisfies $\int \min(1,|x|^2)\nu(dx) < \infty$}. \newnota{n-poisson-random-measure}{$\hat{N}$} is a \defnota{n-poisson-random-measure}{Poisson random measure on $[0,\infty)\times (0,\infty)$ with mean measure $\newnota{leb}{\textbf{LEB}}\times\nu$}, where LEB is \defnota{leb}{the Lebesque measure on $\mb{R}$}. 
Throughout the rest of this paper, we make the following lognormal-type tail assumption on the L\'evy measure $\nu$:

\begin{assumption}\label{assumption-lognormaltail}
    $\nu$ and $\P(Z_1 \in \cdot)$ are supported on \newnota{r-non-negative-real-number}{$\mb{R}_+$}\defnota{r-non-negative-real-number}{$ = \{x\in\mb{R}: x\geq 0\}$}, and 
    \begin{equation}\label{lognormal-lambda-gamma}
        \nu[x,\infty) = \exp(-r(\log x)) 
    \end{equation}
$\text{for a regularly varying function } r(\cdot) \text{ with index $\gamma > 1$.}$
\end{assumption}
\begin{remark}
Note that such $\nu$ is always heavier than Weibull tails, and the case $\gamma=2$ corresponds with the standard lognormal tail, whereas $\gamma \leq 1$ corresponds with the power law tails or even heavier tails. 
\end{remark}
Consider a scaled process
\begin{equation}\label{def:xnbar}
    \newnota{xnbar-scaled-centered-1d-levy-process}{\bar{X}_n(t)} \defnota{xnbar-scaled-centered-1d-levy-process}{=\frac{1}{n}
    \bigg(X(nt) - bnt - nt\int_{|x|\geq 1} x\nu(dx)\bigg)}.
\end{equation}
The main result of this section is the following extended LDP for $\bar X_n$.

\begin{theorem}
\label{theorem:ex-ldp-xnbar}
\linksinthm{theorem:ex-ldp-xnbar}
The sequence $\{\bar X_n\}_{n\geq 1}$ of scaled processes  defined in (\ref{def:xnbar}) satisfies the extended LDP in $(\mb{D}, d_{J_1})$ with the rate function 
\begin{equation}\label{xnbarrate2}
\newnota{i-xnbar-rate-function}{I^{J_1}(\xi)}\defnota{i-xnbar-rate-function}{ = 
\begin{cases}
\sum_{t\in[0,1]}\I\{\xi(t)\neq \xi(t-)\} & \text{ if }\xi\in\mb{D}_{<\infty}\\
\infty & \text{ otherwise}
\end{cases}}
\end{equation}
and speed $r(\log n)$. 
\end{theorem}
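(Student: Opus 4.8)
The plan is to reduce the extended LDP for $\bar X_n$ to that of its "$k$-largest-jumps" truncation $\hat J_n^{\leqslant k}$, then apply the approximation lemma (Proposition~\ref{lemma:approximate}). First I would introduce, for the Poisson random measure $\hat N$, the process $\hat J^{\leqslant k}$ obtained by keeping only the $k$ largest positive jumps of $X$ on $[0,n]$, scaled by $1/n$; this is a random element of $\hat{\mb{D}}_{\leqslant k}$ whose law is governed by the order statistics of roughly $n\cdot\nu[\,\cdot\,,\infty)$ many jumps. The core probabilistic input is that $\nu[x,\infty)=\exp(-r(\log x))$, so that a single jump of $\bar X_n$ of size $\approx u$ costs $\exp(-r(\log(nu)))=\exp\big(-r(\log n + \log u)\big)$, and since $r$ is regularly varying with index $\gamma>1$, $r(\log n + \log u)/r(\log n)\to 1$ for fixed $u>0$. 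Hence each fixed-size jump contributes cost $(1+o(1))\,r(\log n)$ on the log scale, which is why the rate function counts the \emph{number} of jumps rather than weighting by jump size. I would establish the extended LDP for $\{\hat J_n^{\leqslant k}\}_n$ on $(\hat{\mb{D}}_{\leqslant k}, d_{J_1})$ (hence on $(\mb{D},d_{J_1})$ via Lemma~\ref{E-LDP-on-subspaces-full-measure}, since $\hat{\mb{D}}_{\leqslant k}$ is $J_1$-closed) with rate function $I_k(\xi)=$ (number of jumps of $\xi$) if $\xi\in\mb{D}_{\leqslant k}$ and $\infty$ otherwise, and speed $r(\log n)$. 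The lower bound comes from a change-of-measure / direct estimate forcing exactly $j$ jumps of prescribed (rational, interior) sizes and locations; the upper bound from a union bound over the number and approximate location/size of the retained jumps, together with the estimate that having $j+1$ or more jumps, or a jump landing near the boundary so as to violate $\xi(1)=\xi(1-)$, is a factor $e^{-r(\log n)(1+o(1))}$ less likely — this is where the distinction $\mb{D}_{\leqslant k}$ vs.\ $\hat{\mb{D}}_{\leqslant k}$, and the $\epsilon$-fattening in the upper bound, enters.

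Next I would verify the three analytic hypotheses of Proposition~\ref{lemma:approximate} relating $I_k$ to the target $I=I^{J_1}$. Condition (2), $\liminf_k \inf_F I_k \ge \inf_F I$, holds because $I_k = I$ on $\mb{D}_{\leqslant k}$ and $I_k=\infty$ elsewhere, so $\inf_F I_k = \inf_{F\cap\mb{D}_{\leqslant k}} I \ge \inf_F I$ for every $k$; in fact these infima increase to $\inf_F I$. Condition (3) requires that for open $G$ and $\delta>0$ there is $\epsilon>0$, $K$ with $\inf_{G^{-\epsilon}} I_k \le \inf_G I + \delta$ for $k\ge K$: if $\inf_G I = m < \infty$, pick $\xi\in G$ with finitely many jumps and $I^{J_1}(\xi)\le m+\delta$; then $\xi\in G^{-\epsilon}$ for small $\epsilon$, and $\xi\in\mb{D}_{\leqslant k}$ once $k$ exceeds its jump count, giving $\inf_{G^{-\epsilon}}I_k \le I_k(\xi)=I^{J_1}(\xi)\le m+\delta$ (the case $m=\infty$ is trivial). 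Condition (1) is the extended LDP for $\hat J_n^{\leqslant k}$ just established.

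The remaining and most delicate ingredient is the approximation estimate, Proposition~\ref{lemma:approximate} condition (4): for every $\epsilon>0$,
\[
\lim_{k\to\infty}\limsup_{n\to\infty}\frac{1}{r(\log n)}\log\pr{d_{J_1}\big(\bar X_n,\hat J_n^{\leqslant k}\big)>\epsilon} = -\infty.
\]
Here $\bar X_n - \hat J_n^{\leqslant k}$ is (after recentering) a compensated small-jumps integral plus the Brownian part plus the sum of all but the $k$ largest positive jumps. One controls the continuous/compensated-small-jump part by exponential (Bernstein-type) martingale inequalities — this contributes $e^{-cn}$, negligible against $e^{-r(\log n)}$ since $r$ grows sub-polynomially (indeed poly-logarithmically) — and one controls the "overshoot" of the discarded jumps: the event that the $(k{+}1)$-st largest jump already exceeds $n\epsilon'$, or that the sum of the many small-but-not-tiny jumps exceeds $n\epsilon$, must be shown to have log-probability $\to -\infty$ (faster than any multiple of $r(\log n)$) as $k\to\infty$. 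I expect this last estimate — quantifying that beyond $k$ big jumps the residual path is uniformly (hence $J_1$-) close to its truncation, with superexponential-in-$r(\log n)$ probability — to be the main obstacle, since it requires a careful large-deviations bound on the truncated Lévy measure's contribution that is uniform in $n$ and genuinely uses $\gamma>1$; this is presumably where the approximation lemma of \cite{MR4187125} is invoked. Once (1)–(4) are in hand, Proposition~\ref{lemma:approximate} yields the extended LDP for $\{\bar X_n\}$ on $(\mb{D},d_{J_1})$ with rate function $I^{J_1}$ and speed $r(\log n)$, which is Theorem~\ref{theorem:ex-ldp-xnbar}. Finally I would double-check lower semicontinuity of $I^{J_1}$ and of each $I_k$ on $(\mb{D},d_{J_1})$ (needed to invoke the proposition), which follows from $J_1$-closedness of $\mb{D}_{\leqslant j}$ together with the fact that jump count is $J_1$-lower-semicontinuous on these strata.
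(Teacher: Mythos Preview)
Your proposal is correct and follows essentially the same route as the paper: prove the extended LDP for the $k$-largest-jump process $\hat J_n^{\leqslant k}$ with rate function $\hat I_k$ (Proposition~\ref{theorem:ex-ldp-kjumpprocess}), verify conditions (1)--(4) of Proposition~\ref{lemma:approximate} exactly as you describe, and handle condition (4) by splitting the remainder into a light-tailed martingale part ($\bar R_n$, controlled by classical LDP at linear speed), a correction term $\check J_n^{\leqslant k}$ (negligible by a Poisson concentration bound), and the discarded-jumps process $\bar H_n^{\leqslant k}$ (controlled by truncation plus Etemadi/Bernstein-type bounds, Lemma~\ref{lemma:checkjnk-asymp-neg}). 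One cosmetic slip: $\hat J_n^{\leqslant k}$ lives almost surely in $\mb{D}_{\leqslant k}$, not $\hat{\mb{D}}_{\leqslant k}$, since the jump times $U_i$ are a.s.\ in $(0,1)$; the paper accordingly works on $(\mb{D}_{\leqslant k},d_{J_1})$ before lifting via Lemma~\ref{E-LDP-on-subspaces-full-measure}.
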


As we will see in Section~\ref{section:nonexist-standard-ldp}, this result cannot be strengthened to the standard LDP w.r.t.\ the $J_1$ topology. 
However, one may naturally wonder if the standard LDP holds w.r.t.\ weaker topologies. 
For example, \cite{MR4187125} establishes the standard LDP w.r.t.\ the $M_1'$ topology, while showing that the standard LDP w.r.t.\ the $J_1$ topology is impossible in the heavy-tailed Weibull case.
It turns out that the counterexample in Section~\ref{section:nonexist-standard-ldp} proves that even w.r.t.\ the $M_1'$ topology, the standard LDP cannot be satisfied. 
Here we also mention that the rate function $I^{J_1}$ fails to be lower-semicontinuous under the $M'_1$ topology. To see this, consider $\xi = \I([0,1])$ and $\xi_n = \I([1/n,1])$ for $n\in\mb{N}$. 
It's straightforward to verify that $\xi_n\rightarrow\xi$ under the $M'_1$ topology, but $\lim_{n\rightarrow\infty} I^{J_1}(\xi_n) < I^{J_1}(\xi)$. 

Although the standard LDP cannot be satisfied, the extended LDP can be established w.r.t.\ the $M_1'$ topology with a slightly different rate function, however. 
The next theorem establishes the extended LDP for $\bar X_n$ w.r.t.\ the $M_1'$ topology.
It should be clear from the proof of Theorem~\ref{theorem:ex-ldp-xnbar-m1prime} that the $M_1'$ extended LDP in the theorem doesn't provide any new useful bounds other than those that are already implied by the $J_1$ counterpart in Theorem~\ref{theorem:ex-ldp-xnbar}. 
However, we state the next theorem here and prove it in Section~\ref{section: extended-ldp-J1-proof} for the purpose of completeness.


\begin{theorem}\label{theorem:ex-ldp-xnbar-m1prime}
\linksinthm{theorem:ex-ldp-xnbar-m1prime}
The sequence $\{\bar X_n\}_{n\geq 1}$ defined in (\ref{def:xnbar}) satisfy the extended LDP in $(\mb{D}, d_{M'_1})$ with the rate function given by
\begin{equation}\label{notation:m1prime-rate-func}
    \newnota{i-xnbar-rate-function-m1prime}{I^{M'_1}(\xi)}\defnota{i-xnbar-rate-function-m1prime}{ = 
    \begin{cases}
        \sum_{t\in[0,1]} \I\{\xi(t) \neq \xi(t-)\} & \xi\in\hat{\mb{D}}_{< \infty} \\
        \infty & \text{otherwise}
    \end{cases}}
\end{equation}
and speed $r(\log n)$.
\end{theorem}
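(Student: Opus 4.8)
The plan is to derive the $M_1'$ extended LDP from the $J_1$ extended LDP in Theorem~\ref{theorem:ex-ldp-xnbar}, since $d_{J_1} \geq d_{M_1'}$ and so open/closed sets in the $M_1'$ topology are open/closed in the $J_1$ topology. Concretely, for a $d_{M_1'}$-open set $G$, we have that $G$ is also $d_{J_1}$-open, so the lower bound $\liminf_n \frac{\log \pr{\bar X_n \in G}}{r(\log n)} \geq -\inf_{\xi \in G} I^{J_1}(\xi)$ holds; since $\hat{\mb D}_{<\infty} \supset \mb D_{<\infty}$ and $I^{M_1'} \leq I^{J_1}$ pointwise, we get $-\inf_{\xi\in G} I^{J_1}(\xi) \leq -\inf_{\xi\in G} I^{M_1'}(\xi)$... wait, that inequality goes the wrong way. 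So the lower bound needs an independent argument: I would show directly that for any $\xi \in \hat{\mb D}_{<\infty}$ and any $M_1'$-neighborhood of $\xi$, the probability that $\bar X_n$ lands in that neighborhood is at least $\exp(-(I^{M_1'}(\xi)+o(1)) r(\log n))$. This reduces to the one-big-jump heuristic: to approximate a step function with $j$ jumps of sizes $w_1,\dots,w_j > 0$ (possibly with $\xi(0) = w_0 \geq 0$, i.e., a jump at the origin), force the L\'evy process to have $j$ (or $j+1$) large jumps of the appropriate sizes near the appropriate times, each of which costs $\exp(-r(\log n)(1+o(1)))$ in probability by Assumption~\ref{assumption-lognormaltail}; the remaining small-jump/Brownian part stays uniformly (hence in $M_1'$) small with overwhelming probability. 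The key point that makes the $M_1'$ rate function finite on $\hat{\mb D}_{<\infty}$ rather than just $\mb D_{<\infty}$ is that a jump of the scaled L\'evy process at a time $\approx 0$ is visible in $M_1'$ but collapses to the vertical segment at the origin, so paths with $\xi(0) > 0$ become attainable at finite cost.

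For the upper bound, let $F$ be $d_{M_1'}$-closed; then $F$ is $d_{J_1}$-closed, so Theorem~\ref{theorem:ex-ldp-xnbar} gives $\limsup_n \frac{\log\pr{\bar X_n\in F}}{r(\log n)} \leq -\lim_{\epsilon\downarrow 0}\inf_{\xi\in F^{\epsilon,J_1}} I^{J_1}(\xi)$, where $F^{\epsilon,J_1}$ denotes the $J_1$-$\epsilon$-fattening. I would then argue that this quantity is bounded above by $-\lim_{\epsilon\downarrow 0}\inf_{\xi\in F^{\epsilon,M_1'}} I^{M_1'}(\xi)$, the target right-hand side. Since $d_{J_1}\geq d_{M_1'}$, the $J_1$-fattening is contained in the $M_1'$-fattening, so $\inf_{\xi \in F^{\epsilon,J_1}} I^{J_1}(\xi) \geq \inf_{\xi\in F^{\epsilon,M_1'}} I^{J_1}(\xi) \geq \inf_{\xi\in F^{\epsilon,M_1'}} I^{M_1'}(\xi)$, using $I^{M_1'}\leq I^{J_1}$. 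Taking $\epsilon \downarrow 0$ on both sides gives exactly the desired $M_1'$ upper bound. Thus the upper bound is essentially free once the $J_1$ version is in hand.

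Alternatively — and this may be cleaner to write — I would mimic the $J_1$ proof structure directly: establish the $M_1'$ extended LDP for the truncated processes $\hat J_n^{\leqslant k}$ (the $k$ largest jumps), for which the rate function is $I^{M_1'}$ restricted to $\hat{\mb D}_{\leqslant k}$, and then apply the approximation result Proposition~\ref{lemma:approximate}. For this I would need: (i) the $M_1'$ extended LDP for $\hat J_n^{\leqslant k}$, which follows from a contraction-type argument using that the map from jump sizes/times to step functions is continuous into $(\mb D, d_{M_1'})$ and that $\hat{\mb D}_{\leqslant k}$ is $M_1'$-closed (promised in Lemma~\ref{lemma: rate-func-m1prime-levelset-closed}); (ii) conditions (2) and (3) of Proposition~\ref{lemma:approximate} relating $\inf I_k$ and $\inf I$ over closed and open sets, which hold because $I_k = I^{M_1'}$ on $\hat{\mb D}_{\leqslant k}$ and $= \infty$ otherwise, so $\inf_{x\in F}I_k(x) \uparrow \inf_{x\in F\cap\hat{\mb D}_{<\infty}}I^{M_1'}(x)$; and (iii) the crucial approximation bound (\ref{require3}), namely $\lim_k \limsup_n \frac{1}{r(\log n)}\log\pr{d_{M_1'}(\bar X_n, \hat J_n^{\leqslant k}) > \epsilon} = -\infty$, which follows from the $J_1$ version already established in Section~\ref{section: extended-ldp-J1-proof} together with $d_{M_1'}\leq d_{J_1}$.

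The main obstacle is the lower bound — specifically, verifying that a step function with a jump at (or arbitrarily near) time $0$, i.e., an element of $\hat{\mb D}_{<\infty}\setminus\mb D_{<\infty}$, is genuinely approximable in $M_1'$ by $\bar X_n$ at the claimed cost. One must be careful that the $M_1'$ metric correctly "forgives" the location of the boundary jump: a large jump of $X$ occurring in the time window $[0, n\delta]$ produces a path $\bar X_n$ whose extended completed graph is within $M_1'$-distance $O(\delta)$ of the graph of a path with a jump exactly at $0$, and Lemma~\ref{lemma:constant-jump-m1prime-distance} together with Lemma~\ref{lemma:m1prime-gap-criteria} are the right tools to make this rigorous. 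Everything else — the small-jump remainder being negligible, the probabilistic cost of forcing $j$ big jumps — is standard and parallels the $J_1$ argument.
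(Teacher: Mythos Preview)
Your upper-bound argument is exactly the paper's: $F$ closed in $M_1'$ is closed in $J_1$, apply Theorem~\ref{theorem:ex-ldp-xnbar}, then use $F^{\epsilon,J_1}\subset F^{\epsilon,M_1'}$ and $I^{M_1'}\le I^{J_1}$.

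For the lower bound, both of your proposed routes would work, but the paper takes a shorter path that avoids any new probabilistic estimates or re-running Proposition~\ref{lemma:approximate}. The paper simply shows that for every $M_1'$-open set $G$,
\[
\inf_{\xi\in G} I^{M_1'}(\xi) \;=\; \inf_{\xi\in G} I^{J_1}(\xi),
\]
which, combined with the $J_1$ lower bound (applicable since $G$ is also $J_1$-open), immediately gives the $M_1'$ lower bound. The equality is proved by the same geometric observation you make about boundary jumps, but phrased deterministically: if $\inf_G I^{M_1'}=m$ is attained at some $\xi\in\hat{\mb D}_{=m}\cap G$ with a jump at $0$ or $1$, move that jump time to $\delta$ (resp.\ $1-\delta$) to get $\xi'\in\mb D_{=m}$ with $d_{M_1'}(\xi,\xi')\le\delta$; since $G$ is $M_1'$-open, $\xi'\in G$ for small $\delta$, and $I^{J_1}(\xi')=m$. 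So the ``wrong-way'' inequality you flagged is in fact an equality on $M_1'$-open sets, and nothing further is needed.

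What your approach buys is a self-contained argument that does not lean on the precise form of the $J_1$ rate function; what the paper's approach buys is brevity---no probability is touched, and the $M_1'$ result is a two-line corollary of the $J_1$ one. Note also that $\hat{\mb D}_{<\infty}$ allows jumps at \emph{both} endpoints $0$ and $1$, not just at $0$ as your write-up suggests; the same perturbation handles both.
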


\subsection{Extended LDP for random walks }\label{section:randomwalk-ldp-J1}

Let $Z_i$'s be i.i.d.\ random variables, and consider the random walk \newnota{w-random_walk}{$W(t)$}\defnota{w-random_walk}{ $= \sum_{i=1}^{\floor{t}} Z_i$} embedded in $\D$.
Throughout the rest of the paper, we make the following assumption on the tail distributions of $Z_i$'s:
\begin{assumption}\label{assumption-lognormaltail2}
    $Z_i$'s takes non-negative values with probability 1 and satisfies  
    \begin{equation}
        \pr{Z_i>x} = \exp(-r(\log x)) 
\end{equation} 
for a regularly varying function $r(\cdot)$ with index $\gamma > 1$.
\end{assumption}
\noindent
Here we emphasize again that $Z_i$'s with standard lognormal distribution satisfies the above assumption with $\gamma = 2$.
This section establishes the extended LDP for the scaled processes:
\begin{equation}\label{definition:scaled-centered-randomwalk}
    \newnota{w-scaled_centered_RW}{\bar{W}_n(t)}\defnota{w-scaled_centered_RW}{ = \frac{1}{n} \sum_{i=1}^{\floor{nt}} (Z_i - \E{Z_1})}.
\end{equation}

Unlike the L\'evy process $\bar{X}_n$ in section \ref{section:extended-LDP-levy-J_1}, the random walks $\bar{W}_n$'s have jump at $t=1$. 
This makes the rate function finite on $\tilde{\mb{D}}_{<\infty}$, not just $\mb{D}_{<\infty}$. 
To recall the definitions of 
$\mb{D}_{<\infty}$ 
and 
$\tilde{\mb{D}}_{<\infty}$, 
see section \ref{section:notation-definitions}. 
The following is the main result of this subsection.

\begin{theorem}\label{theorem: extended-ldp-randomwalk}
\linksinthm{theorem: extended-ldp-randomwalk}
The sequence of random walk $\{\bar{W}_n\}_{n\geq 1}$ satisfies the extended large deviation principle on $(\mb{D}, J_1)$ with the rate function
\begin{equation}\label{def:rate-function-randomwalk-J1}
    \newnota{i-rate-random-walk}{\tilde{I}(\xi)}=
    \defnota{i-rate-random-walk}{\begin{cases}
    \sum_{t\in[0,1]} \I\{\xi(t)\neq\xi(t-)\}&\text{ if } \xi\in\tilde{\mb{D}}_{<\infty}\\
    \infty & \text{otherwise}
    \end{cases}}
\end{equation}
and speed $r(\log n)$.
\end{theorem}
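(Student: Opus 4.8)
The plan is to reduce the random-walk statement to the already-established Lévy-process result (Theorem \ref{theorem:ex-ldp-xnbar}) together with the approximation machinery of Proposition \ref{lemma:approximate} and Corollary \ref{lemma:approximate2}. First I would set up the natural truncated approximants: for $k\in\mb{N}$, let $Y_n^k$ be the scaled random walk in which only the increments $Z_i$ exceeding the threshold $n/k$ (say) are retained, with the remaining small increments replaced by their centered sum, which by the law of large numbers is uniformly close to zero on the scale $1/n$. More precisely one writes $\bar W_n = \hat J_n^{\leqslant k} + (\text{small-jump remainder}) + (\text{centering error})$ where $\hat J_n^{\leqslant k}$ keeps the $k$ largest jumps. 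Because $Z_i$ has a lognormal-type tail with the same $r(\cdot)$ as the Lévy measure, the combinatorics of how many ``big jumps'' are needed and the cost $r(\log n)$ per big jump are identical to the Lévy case; the key quantitative input is the one-big-jump-type estimate $\pr{Z_1 > nx} = \exp(-r(\log n + \log x)) = \exp(-r(\log n)(1+o(1)))$ for fixed $x>0$, and the subexponential-type bound controlling the probability that two or more of the retained increments are simultaneously large.

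Next I would verify the hypotheses of Proposition \ref{lemma:approximate} with $X_n = \bar W_n$, the $Y_n^k$ as above, $I_k$ the ``number of jumps'' rate function restricted to step paths with $\le k$ jumps (in $\tilde{\mb D}_{\leqslant k}$, since the random walk's terminal jump at $t=1$ is not excluded), and $I = \tilde I$. Hypothesis (1) — the extended LDP for each $Y_n^k$ — is obtained exactly as in the Lévy case: the discrete point process of exceedances converges after scaling, and one transfers the extended LDP for the exceedance process to $Y_n^k$ via the continuous-mapping/approximation arguments already developed; the only new feature is bookkeeping the possible jump at $t=1$, which enlarges the level sets from $\mb D_{\leqslant k}$ to $\tilde{\mb D}_{\leqslant k}$ but changes nothing essential since $\tilde{\mb D}_{\leqslant k}$ is still $J_1$-closed. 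Hypotheses (2) and (3) are purely deterministic statements about the rate functions: since $\tilde I$ is the pointwise increasing limit of $I_k$ and $I_k = \tilde I$ on $\tilde{\mb D}_{\leqslant k}$, one has $\inf_F I_k \uparrow \inf_F \tilde I$ for closed $F$ (giving (2)) and, for open $G$, $\inf_{G^{-\epsilon}} I_k \le \inf_G \tilde I + \delta$ once $k$ exceeds the finite number of jumps of a near-optimal path in $G$ and $\epsilon$ is small enough that that path lies in $G^{-\epsilon}$ (giving (3)). Hypothesis (4), the super-exponential negligibility $\lim_k \limsup_n \tfrac1{r(\log n)}\log\pr{d_{J_1}(\bar W_n, Y_n^k) > \epsilon} = -\infty$, is the analytic heart of the argument: I would bound $d_{J_1}(\bar W_n, Y_n^k)$ by the uniform deviation of the discarded (sub-threshold) part of the walk from its mean, plus the contribution of the $(k{+}1)$-st largest jump, and control each term — the former by a maximal inequality / Fuk–Nagaev-type estimate for sums of truncated lognormal increments, the latter by the tail bound $\pr{\text{at least }k{+}1\text{ jumps exceed }n/k} = \exp(-(k+1)r(\log n)(1+o(1)))$, whose exponent $\to -\infty$ after dividing by $r(\log n)$ and letting $k\to\infty$.

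Finally, having applied Proposition \ref{lemma:approximate} on the closed subspace $\mb{E}$ consisting of nondecreasing càdlàg paths vanishing at $0$ (which carries all the mass of $\bar W_n$), I would invoke Lemma \ref{E-LDP-on-subspaces-full-measure} to lift the extended LDP from $(\mb E, d_{J_1})$ to all of $(\mb D, d_{J_1})$, extending $\tilde I$ by $+\infty$ off $\mb E$ — consistent with the stated form of $\tilde I$. I expect the main obstacle to be hypothesis (4): unlike the Weibull or regularly-varying settings, the lognormal tail is ``just barely'' heavy, so the truncation level and the maximal-inequality constants must be chosen carefully to ensure the discarded small-jump sum is negligible at the exponential scale $r(\log n)$ rather than merely $o(n)$; establishing that the sub-threshold truncated sum deviates from its mean by more than $\epsilon n$ with probability $\exp(-\omega(r(\log n)))$ requires a quantitative large-deviation bound for truncated lognormal sums that does not follow from the one-big-jump heuristic alone, and getting the truncation threshold right (e.g.\ $n/k$ versus $\delta n$ versus something slowly varying) is where the real care is needed. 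A secondary, more routine nuisance is handling the terminal jump at $t=1$ consistently through all the $J_1$ approximation steps so that the rate function lands on $\tilde{\mb D}_{<\infty}$ and not on $\mb D_{<\infty}$.
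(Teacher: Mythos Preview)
Your overall strategy---approximate by the $k$-largest-jump process, verify the four hypotheses of Proposition~\ref{lemma:approximate}, and handle the terminal jump at $t=1$ so that the rate function lands on $\tilde{\mb D}_{<\infty}$---is correct and matches the paper's in spirit. However, the paper executes it through a different intermediate object that you miss, and your final step contains an error.

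\textbf{The paper's route.} Rather than applying Proposition~\ref{lemma:approximate} directly to $\bar W_n$, the paper first introduces an auxiliary process $\bar S_n$ having the \emph{same} increment sizes $Z_1,\dots,Z_n$ but with the first $n-1$ jump times replaced by i.i.d.\ uniforms $U_1,\dots,U_{n-1}$ on $[0,1]$ (the last jump stays at $t=1$). Because the jump times are now uniform, the entire Lévy-process analysis (extended LDP for the ordered $k$ largest jump sizes, then for the $k$-jump path $\tilde J_n^k$, then for $\bar S_n$) carries over essentially verbatim---these are Lemmas~\ref{lemma:k-big-jump-rw-ldp}, \ref{lemma:ex-ldp-tilde-jnk}, \ref{lemma: extended-ldp-tildesn}, parallel to Propositions~\ref{theorem:k-big-jump-ldp}, \ref{theorem:ex-ldp-kjumpprocess} and Theorem~\ref{theorem:ex-ldp-xnbar}. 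The passage from $\bar S_n$ back to $\bar W_n$ is then a single clean estimate: under the natural coupling (match the $i$-th order statistic $U_{(i)}$ with the deterministic time $i/n$), one has $d_{J_1}(\bar W_n,\tilde S_n)\le \sup_i |i/n - U_{(i)}|$, which is the Kolmogorov distance between the empirical and true uniform CDFs and is controlled by the DKW inequality as $\le 2e^{-2\epsilon^2 n}$. Corollary~\ref{lemma:approximate2} then transfers the extended LDP. Your direct approach would work too, but you would have to redo the lower bound of the $k$-jump extended LDP with jump times at random lattice points $R_i/n$ instead of uniforms; the paper's uniformization trick lets it reuse the Lévy machinery wholesale and isolate all the ``random walk vs.\ Lévy'' content into one DKW bound.

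\textbf{An error in your final step.} You propose to apply Proposition~\ref{lemma:approximate} on the closed subspace $\mb E$ of nondecreasing paths vanishing at $0$, claiming it carries all the mass of $\bar W_n$. This is false: $\bar W_n$ is \emph{centered}, its increments $Z_i-\E{Z_1}$ can be negative, so $\bar W_n\notin\mb D^\uparrow$ in general. The subspace lift via Lemma~\ref{E-LDP-on-subspaces-full-measure} is legitimate only for the approximants $Y_n^k$ (which \emph{are} nonnegative step functions); the approximation lemma itself must be applied on all of $(\mb D,d_{J_1})$, as the paper does.
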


\subsection{Extended LDP for multi-dimensional processes}
\label{section:extended-ldp-j1-multidimension}
This section establishes the extended LDP for multi-dimensional L\'evy processes and random walks, each of whose coordinates are independent, and their increment distributions are lognormal-type.
We accomplish this by building on the one-dimensional results in Section~\ref{section:extended-LDP-levy-J_1} and \ref{section:randomwalk-ldp-J1}. 
Note that since we are working with extended LDPs---as opposed to standard LDPs---and the rate functions that controls our extended LDPs are not good, standard results such as Theorem 4.14 of \cite{MR2045489} cannot be directly applied in our context.
However, the discrete nature of our rate function enables us to adapt some key proof ideas in Theorem 4.14 of \cite{MR2045489} to establish the same result in our context in Proposition \ref{theorem:product-extended-ldp} without resorting to the goodness of the rate function.
Theorem~\ref{theorem:product-extended-ldp} is formulated in general metric spaces. 
Specifically, we consider the following sequence of random variables: 
$$
\bm X_n = (X^{(1)}_n,\cdots, X^{(k)}_n)
$$
where for each $i=1,\ldots, k$,

\begin{enumerate}[(i)]
\item 
$X^{(i)}_n$ is a random object taking value in the metric space $(\ms{X}^{(i)}, d^{(i)})$.
\item 
$\{X^{(i)}_n\}_{n\geq 1}$ satisfies the extended LDP with the rate function $I^{(i)}$ and speed $a_n$
\item 
$I^{(i)}$ takes at most countable distinct values in $\mb{R}_+$ with no limit point.

\end{enumerate}


\begin{proposition}\label{theorem:product-extended-ldp}
\linksinthm{theorem:product-extended-ldp}
Suppose that $\{\bm{X}_n\}_{n\geq 1}$ satisfies conditions (i), (ii), and (iii) above. 
Then $\{\bm{X}_n\}_{n\geq 1}$ satisfies the extended LDP on the product space $\big(\prod_{i=1}^k \ms{X}^{(i)}, \max_{i=1}^k d^{(i)}\big)$ with the rate function $\bar I = \sum_{i=1}^k I^{(i)}$  and speed $a_n$.
\end{proposition}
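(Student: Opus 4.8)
Here is how I would proceed. The plan is to reduce to two factors, dispatch the lower bound by the usual product‑of‑balls argument, and attack the upper bound with a ``collar stratification'' that plays the role the exponential tightness of a good rate function plays in the classical proof.

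\emph{Reduction to $k=2$ and lower bound.} Write $R^{(i)}$ for the value set of $I^{(i)}$. The sumset $R^{(2)}+\cdots+R^{(k)}$ is again countable with no limit point: a limit point would force infinitely many tuples in $\prod_{i\ge2}R^{(i)}$ with bounded sum, hence each coordinate in a bounded interval, but $R^{(i)}$ meets each bounded interval in a finite set. Granting the case $k=2$, one then inducts via $\bm X_n=\big(X_n^{(1)},(X_n^{(2)},\dots,X_n^{(k)})\big)$, whose second block satisfies the extended LDP with rate $\sum_{i=2}^kI^{(i)}$ (a function with limit‑point‑free value set) on $\prod_{i\ge2}\ms{X}^{(i)}$ under $\max_{i\ge2}d^{(i)}$, using $\max_{i\le k}d^{(i)}=\max\big(d^{(1)},\max_{i\ge2}d^{(i)}\big)$. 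So I assume $k=2$ and, as in this section, use independence, $\mu_n:=\pr{\bm X_n\in\cdot}=\mu_n^{(1)}\times\mu_n^{(2)}$ on $\ms{X}:=\ms{X}^{(1)}\times\ms{X}^{(2)}$ with $d=\max(d^{(1)},d^{(2)})$. (Applying the extended‑LDP upper bound to $F=\ms{X}^{(i)}$ forces $\inf I^{(i)}=0$, which is then attained, else $0$ is a limit point of $R^{(i)}$.) For an open $G$ with $\inf_G\bar I<\infty$, choosing $\bm z=(z^{(1)},z^{(2)})\in G$ nearly minimizing $\bar I$ and $r>0$ with $B_r(z^{(1)})\times B_r(z^{(2)})\subseteq G$, independence and the one‑dimensional lower bounds give $\liminf_n\frac{1}{a_n}\log\mu_n(G)\ge\sum_i\liminf_n\frac{1}{a_n}\log\mu_n^{(i)}\big(B_r(z^{(i)})\big)\ge-\sum_iI^{(i)}(z^{(i)})=-\bar I(\bm z)$, and letting $\bm z$ tend to a minimizer finishes the lower bound.

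\emph{Upper bound: reduction and stratification.} It suffices to prove: if $F\subseteq\ms{X}$ is closed, $\ell>0$, and $F^{\epsilon_0}\subseteq\{\bar I>\ell\}$ for some $\epsilon_0>0$, then $\limsup_n\frac{1}{a_n}\log\mu_n(F)\le-\ell$. Indeed $\inf_{F^\epsilon}\bar I$ increases to $c:=\lim_{\epsilon\downarrow0}\inf_{F^\epsilon}\bar I$ as $\epsilon\downarrow0$; the case $c=0$ is trivial, and for every $\ell\in(0,c)$ some $\epsilon_0>0$ has $F^{\epsilon_0}\subseteq\{\bar I>\ell\}$, so the displayed statement gives $\limsup_n\frac{1}{a_n}\log\mu_n(F)\le-\ell$ for all such $\ell$, hence $\le-c$, which is the extended‑LDP upper bound. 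For the reduced statement, list $R^{(i)}\cap[0,\ell]=\{v_0^{(i)}<\cdots<v_{m_i}^{(i)}\}$ (finite, $v_0^{(i)}=0$), let $v_\ast^{(i)}\in(\ell,\infty]$ be the smallest value of $I^{(i)}$ exceeding $\ell$, and stratify $\ms{X}^{(i)}$ into the finitely many sets
\[
A_{v_0^{(i)}}^{(i)}=\big(\{I^{(i)}\le v_0^{(i)}\}\big)^{\epsilon_0/2},\quad A_{v_j^{(i)}}^{(i)}=\big(\{I^{(i)}\le v_j^{(i)}\}\big)^{\epsilon_0/2}\setminus\big(\{I^{(i)}\le v_{j-1}^{(i)}\}\big)^{\epsilon_0/2},\quad A_\ast^{(i)}=\ms{X}^{(i)}\setminus\big(\{I^{(i)}\le\ell\}\big)^{\epsilon_0/2}.
\]

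\emph{Upper bound: the estimate.} Two facts drive the argument. (a) Since consecutive elements of $R^{(i)}\cap[0,\ell]$ are consecutive values of $I^{(i)}$, each piece $A^{(i)}_w$ lies at distance $\ge\epsilon_0/2$ from $\{I^{(i)}<w'\}$, where $w'=w$ for a value $w=v_j^{(i)}$ and $w'=v^{(i)}_\ast$ for $w=\ast$; hence $\big(\overline{A^{(i)}_w}\big)^{\delta'}\subseteq\{I^{(i)}\ge w'\}$ for $\delta'<\epsilon_0/2$, and the extended‑LDP upper bound for $\mu_n^{(i)}$ applied to the closed set $\overline{A^{(i)}_w}$ yields $\limsup_n\frac{1}{a_n}\log\mu_n^{(i)}(A^{(i)}_w)\le-w'$, with $w'\ge w$ and $v^{(i)}_\ast>\ell$. (b) For a value $w=v_j^{(i)}$, every $x\in A^{(i)}_w$ is within $\epsilon_0$ of some $x'$ with $I^{(i)}(x')\le w$. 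Now $\mu_n(F)=\sum_{(w,v)}\mu_n\big(F\cap(A^{(1)}_w\times A^{(2)}_v)\big)$, a sum over finitely many pairs. If $w=\ast$ or $v=\ast$, that factor alone gives, by (a), decay rate $>\ell$. If $w,v$ are both values and $F\cap(A^{(1)}_w\times A^{(2)}_v)\neq\emptyset$, pick $(x,y)$ in it and $x',y'$ as in (b): then $d\big((x',y'),(x,y)\big)<\epsilon_0$, so $(x',y')\in F^{\epsilon_0}\subseteq\{\bar I>\ell\}$ and $w+v\ge I^{(1)}(x')+I^{(2)}(y')>\ell$, so by independence and (a) the term satisfies $\limsup_n\frac{1}{a_n}\log\big[\mu_n^{(1)}(A^{(1)}_w)\mu_n^{(2)}(A^{(2)}_v)\big]\le-(w+v)<-\ell$. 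Empty pieces contribute $0$, so the finite sum gives $\limsup_n\frac{1}{a_n}\log\mu_n(F)\le-\ell$, as required.

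\emph{Main obstacle.} Because the $I^{(i)}$ are not good, there is no exponential tightness; $\mu_n^{(i)}$ need not concentrate near sublevel sets of $I^{(i)}$, and $\mu_n^{(i)}\big(\{I^{(i)}=v\}\big)$ can fail to decay at rate $v$, so decomposing $\ms{X}^{(i)}$ along exact level sets is too lossy. The hard part is finding the substitute: the $(\epsilon_0/2)$‑collars $A^{(i)}_w$ must simultaneously be \emph{far} enough from the next‑lower sublevel set that $\mu_n^{(i)}(A^{(i)}_w)$ decays at rate $\ge w$, and \emph{close} enough to $\{I^{(i)}\le w\}$ that closedness of $F$ together with the slack $\epsilon_0$ upgrades any point of $F$ in $A^{(1)}_w\times A^{(2)}_v$ to a point of $F^{\epsilon_0}$ with $\bar I\le w+v$, forcing $w+v>\ell$. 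Reconciling these two demands is exactly where the discreteness of the value sets is used (finiteness of $R^{(i)}\cap[0,\ell]$, with only its gaps mattering), and it is what replaces goodness in the classical product‑LDP proof, e.g.\ Theorem 4.14 of \cite{MR2045489}.
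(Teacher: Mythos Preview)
Your proof is correct and shares the paper's core idea: exploit the discreteness of the value sets so that, below any fixed level, only finitely many sublevel sets of $I^{(1)}$ and $I^{(2)}$ are in play, which reduces the upper bound to a finite maximum of product estimates. The reduction to two factors and the lower bound are identical to the paper's.

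The upper bounds are organized differently, however. The paper isolates an auxiliary lemma (Lemma~\ref{lemma:eLDP-upper-bound-for-sets-with-special-cylindrical-form}) for sets of the form $\bigcap_m\big((C_m\times\ms Y)\cup(\ms X\times D_m)\big)$, then contains $F$ in the complement of $(\Psi_K(k))^\delta$ for the largest value $k<k'$ and applies the lemma once. Your collar stratification instead partitions each factor into the finitely many annuli $A^{(i)}_w$ between consecutive $(\epsilon_0/2)$-fattened sublevel sets, and bounds $\mu_n(F)$ term-by-term over the resulting grid of products, repeating for each $\ell<c$ and passing to the limit. Your decomposition is finer and entirely self-contained (no separate lemma), and the two-sided property you highlight---each collar is far enough from the previous sublevel set to control $\mu^{(i)}_n(A^{(i)}_w)$, yet close enough to $\{I^{(i)}\le w\}$ to push points of $F$ into $F^{\epsilon_0}$---makes very explicit how discreteness replaces goodness. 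The paper's packaging is a bit more modular (Lemma~\ref{lemma:eLDP-upper-bound-for-sets-with-special-cylindrical-form} could be reused elsewhere) and avoids the final $\ell\uparrow c$ limit by working directly with the predecessor $k$ of $k'$ in $\mathcal K$. One small omission: you do not state that $\bar I$ is lower semicontinuous, but this is immediate since a sum of nonnegative lower-semicontinuous functions is lower semicontinuous.
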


Moving back to L\'evy processes, for $i= 1,\cdots, d$, let $X^{(i)}$ be an independent one-dimensional L\'evy process with generating triplet $(a_i, b_i, \nu_i)$. 
Each $\nu_i$ satisfies Assumption~\ref{assumption-lognormaltail} with regularly varying $r_i(x) = \log cx^\beta -\lambda_i \log^\gamma x$ with a common $\gamma > 1$.
\here%
Consider the scaled and centered processes 
\begin{equation}
    \bar{X}^{(i)}_n(t) =\frac{1}{n}\big[X^{(i)}(nt) - b_int - nt\int_{[1,\infty)}x\nu_i(dx)\big]\text{ for } t\in [0,1]
    .
\end{equation}
Theorem \ref{theorem:ex-ldp-xnbar} ensures that each $\{\bar{X}^{(i)}_n\}_{n\geq 1}$ on $(\mb{D}, d_{J_1})$ satisfies the extended LDP with the rate function $I_{\lambda_i}^{J_1}$, which only takes values in $\{k\lambda_i: k\in\mb{N}\}$. 
Hence those $d$ L\'evy processes meet requirements (i), (ii) and (iii), and this directly leads to the following implication of Proposition~\ref{theorem:product-extended-ldp}:

\begin{theorem}\label{corollary: extended-LDP-multidimensional}
\linksinthm{corollary: extended-LDP-multidimensional}
On $\big(\prod_{i=1}^d \mb{D},\sum_{i=1}^d d_{J_1}\big)$, the multidimensional L\'evy process with independent coordinates $\bar{\bm X}_n= (\bar{X}^{(1)}_n, \cdots \bar{X}^{(d)}_n)$  satisfies the extended LDP with the rate function $I^d:\prod_{i=1}^d \mb{D}\rightarrow \mb{R}$ and speed $r(\log n)$.
\begin{equation}\label{defnition:rate-function-multidimensional}
    \newnota{i-rate-function-multidimensional}{I^d(\xi_1,\cdots, \xi_d)} \defnota{i-rate-function-multidimensional}{\delequal \sum_{i=1}^d I_{\lambda_i}^{J_1}(\xi_i)}
\end{equation}
Here $I_{\lambda_i}^{J_1}(\cdot)$ is defined in (\ref{xnbarrate2}).
\end{theorem}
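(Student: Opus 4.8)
The plan is to read Theorem~\ref{corollary: extended-LDP-multidimensional} off Proposition~\ref{theorem:product-extended-ldp}: the whole argument reduces to (a) checking that the tuple $\bar{\bm X}_n=(\bar X^{(1)}_n,\dots,\bar X^{(d)}_n)$ satisfies hypotheses (i)--(iii) of that proposition, and (b) matching the metric $\max_i d_{J_1}$ that the proposition produces with the metric $\sum_i d_{J_1}$ in the statement. All the substantive work sits upstream --- in Proposition~\ref{theorem:product-extended-ldp} itself (whose proof adapts the scheme of Theorem~4.14 of~\cite{MR2045489} to discrete-valued, not necessarily good rate functions) and in the one-dimensional Theorem~\ref{theorem:ex-ldp-xnbar} --- both of which I take as given. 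Consequently I do not expect the main difficulty of the paper to lie in this theorem; only the two routine bookkeeping steps below require a little care.

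First I would put the $d$ coordinates on a common speed. Applying Theorem~\ref{theorem:ex-ldp-xnbar} to $X^{(i)}$ with L\'evy measure $\nu_i$ gives an extended LDP for $\bar X^{(i)}_n$ on $(\mb{D},d_{J_1})$ with speed $r_i(\log n)$ and rate function equal to the number of jumps on $\mb{D}_{<\infty}$ (and $+\infty$ off it). Since the $r_i$ share the index $\gamma$, we have $r_i(\log n)=\lambda_i\, r(\log n)\,(1+o(1))$ for the common reference function $r$, so I would divide the logarithmic asymptotics through by $r(\log n)$: the log-probabilities involved are $\le 0$ while the prefactor $r_i(\log n)/r(\log n)$ tends to $\lambda_i>0$, hence the $\liminf$ bound over open sets and the $\limsup$ bound over the $\epsilon$-fattenings of closed sets both persist after rescaling. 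This produces an extended LDP for $\bar X^{(i)}_n$ with the common speed $r(\log n)$ and rate function $I^{J_1}_{\lambda_i}=\lambda_i\cdot(\text{number of jumps})$ on $\mb{D}_{<\infty}$, which is exactly hypothesis (ii); hypothesis (i) is immediate since each $\bar X^{(i)}_n$ is $\mb{D}$-valued; and for (iii) observe that within $\mb{R}_+$ the function $I^{J_1}_{\lambda_i}$ takes only the values $\{k\lambda_i:k=0,1,2,\dots\}$, a countable set with no limit point.

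With (i)--(iii) verified, Proposition~\ref{theorem:product-extended-ldp} delivers the extended LDP for $\bar{\bm X}_n$ on $\big(\prod_{i=1}^d\mb{D},\ \max_i d_{J_1}\big)$ with speed $r(\log n)$ and rate function $\sum_{i=1}^d I^{J_1}_{\lambda_i}=I^d$. To conclude I would transfer this to $\big(\prod_{i=1}^d\mb{D},\ \sum_i d_{J_1}\big)$: on the $d$-fold product, $\max_i d_{J_1}\le\sum_i d_{J_1}\le d\cdot\max_i d_{J_1}$, so the two metrics induce the same topology (hence the open-set lower bound is unaffected), and for every set $A$ the fattenings satisfy $A^\epsilon_{\Sigma}\subseteq A^\epsilon_{\max}\subseteq A^{d\epsilon}_{\Sigma}$, which forces $\lim_{\epsilon\downarrow 0}\inf_{A^\epsilon_{\max}}I^d=\lim_{\epsilon\downarrow 0}\inf_{A^\epsilon_{\Sigma}}I^d$; therefore the closed-set upper bound carries over verbatim and $\bar{\bm X}_n$ satisfies the extended LDP on $\big(\prod_{i=1}^d\mb{D},\sum_i d_{J_1}\big)$ with rate function $I^d$ and speed $r(\log n)$, as claimed. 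The only genuinely delicate point is to make sure that neither the rescaling of the speed nor the change between the equivalent product metrics disturbs the $\epsilon$-fattening in the extended upper bound; but both are harmless, the first because the rescaling factor converges to a positive constant and the second because $\max_i d_{J_1}$ and $\sum_i d_{J_1}$ are bi-Lipschitz, so the argument stays entirely routine once Proposition~\ref{theorem:product-extended-ldp} is in hand.
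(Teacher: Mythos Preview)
Your proposal is correct and follows exactly the approach the paper takes: verify hypotheses (i)--(iii) for each coordinate via Theorem~\ref{theorem:ex-ldp-xnbar} and then invoke Proposition~\ref{theorem:product-extended-ldp}. You are in fact more careful than the paper on two points it leaves implicit --- the rescaling from the coordinate speeds $r_i(\log n)$ to the common speed $r(\log n)$ (which multiplies each rate function by $\lambda_i$), and the passage from the $\max_i d_{J_1}$ metric produced by Proposition~\ref{theorem:product-extended-ldp} to the $\sum_i d_{J_1}$ metric in the statement via bi-Lipschitz equivalence; both of your arguments for these are sound.
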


Likewise, the extended LDP for the multidimensional random walks follows directly from Proposition~\ref{theorem:product-extended-ldp}.
\begin{theorem}\label{corollary: extended-LDP-multidimensional-rw}
\linksinthm{corollary: extended-LDP-multidimensional-rw}
    On $\big(\prod_{i=1}^d \mb{D},\sum_{i=1}^d d_{J_1})$, the vector-valued sequence $\{\notationdef{scaled-and-centered-rw-multi}{\bar{\bm S}_n}= (\bar{S}^{(1)}_n, \cdots \bar{S}^{(d)}_n)\}_{n\geq 1}$ with independent coordinates satisfies the extended LDP with speed $\log^\gamma n$ and the rate function $\tilde{I}^d:\prod_{i=1}^d \mb{D}\rightarrow \mb{R}_+$:
\begin{equation}\label{defnition:rate-function-multidimensional-rw}
    \newnota{i-rate-function-multidimensional-rw}{\tilde{I}^d(\bm\xi)}\defnota{i-rate-function-multidimensional-rw}{\delequal  
    \begin{cases}
    \sum_{i=1}^d\lambda_i\cdot\sum_{t\in[0,1]} \I\{\xi_i(t)\neq\xi_i(t-)\}&\text{ if } \bm\xi\in\prod_{i=1}^d\tilde{\mb{D}}_{<\infty}\\
    \infty & \text{otherwise}
    \end{cases}}
\end{equation}
\end{theorem}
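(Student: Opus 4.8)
The plan is to obtain this statement as an immediate consequence of Proposition~\ref{theorem:product-extended-ldp}, exactly as Theorem~\ref{corollary: extended-LDP-multidimensional} is obtained in the L\'evy-process case. Write $\bar S^{(i)}_n$ for the scaled and centered random walk built from i.i.d.\ increments $Z^{(i)}_1, Z^{(i)}_2, \ldots$ as in (\ref{definition:scaled-centered-randomwalk}), where $\pr{Z^{(i)}_1 > x} = \exp(-r_i(\log x))$, each $r_i$ is regularly varying with the common index $\gamma > 1$, and $r_i(\log n)/\log^\gamma n \to \lambda_i \in (0,\infty)$. I would first verify the three hypotheses (i)--(iii) preceding Proposition~\ref{theorem:product-extended-ldp} for the family $(\bar S^{(1)}_n, \ldots, \bar S^{(d)}_n)$ of \emph{independent} coordinates, then invoke the proposition, and finally transfer the conclusion from the product metric $\max_i d_{J_1}$ to $\sum_i d_{J_1}$.

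For (i), each $\bar S^{(i)}_n$ is a $\mb{D}$-valued random element and we equip $\mb{D}$ with $d_{J_1}$. For (ii), Theorem~\ref{theorem: extended-ldp-randomwalk} gives, for each fixed $i$, the extended LDP for $\{\bar S^{(i)}_n\}_{n\geq 1}$ in $(\mb{D}, d_{J_1})$ with rate function $\tilde I$ and speed $r_i(\log n)$; since these speeds differ across coordinates, I would renormalize to the common speed $a_n = \log^\gamma n$. The relevant elementary fact is: if a sequence of measures satisfies the extended LDP with speed $b_n$ and rate $J$, and $b_n/a_n \to c \in (0,\infty)$, then it satisfies the extended LDP with speed $a_n$ and rate $cJ$. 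Indeed, dividing the two inequalities in (\ref{inequality:extended_LDP}) through by $a_n/b_n$ and using that $\log \mu_n(F)\le 0$ together with $b_n/a_n\to c>0$, the $\liminf$ and $\limsup$ each scale by the positive constant $c$, and the right-hand sides scale by $c$ as well because the $\epsilon$-inflation $F \mapsto F^\epsilon$ does not depend on the speed, so the constant $c$ may be pulled out of $\inf_{x \in F^\epsilon}$ and $\inf_{x\in G}$. Applying this with $c = \lambda_i$ yields the extended LDP for $\{\bar S^{(i)}_n\}_{n\geq 1}$ with speed $\log^\gamma n$ and rate function $\lambda_i \tilde I$, which is precisely the $i$-th summand in (\ref{defnition:rate-function-multidimensional-rw}). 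For (iii), $\lambda_i \tilde I$ takes values in $\{\lambda_i k : k \in \mb{N}\cup\{0\}\} \cup \{\infty\}$, a countable set with no finite limit point.

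Proposition~\ref{theorem:product-extended-ldp} then yields the extended LDP for $\bar{\bm S}_n$ on $\big(\prod_{i=1}^d \mb{D}, \max_{i=1}^d d_{J_1}\big)$ with speed $\log^\gamma n$ and rate function $\sum_{i=1}^d \lambda_i \tilde I(\xi_i)$, which is exactly $\tilde I^d$. It remains to replace $\max_i d_{J_1}$ by $\sum_i d_{J_1}$: since $\max_i d_{J_1} \leq \sum_i d_{J_1} \leq d \cdot \max_i d_{J_1}$, these metrics are strongly equivalent, hence induce the same open and closed sets, and (by the same inclusion-of-inflations argument used for a pair of equivalent metrics) produce the same value of $\lim_{\epsilon \downarrow 0} \inf_{x \in F^\epsilon} I(x)$ for every closed $F$; therefore the extended LDP passes verbatim to $\big(\prod_{i=1}^d \mb{D}, \sum_i d_{J_1}\big)$, completing the proof. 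There is essentially no obstacle of substance here: everything is driven by Proposition~\ref{theorem:product-extended-ldp}, and the only points that require a line of justification are (a) that renormalizing to a common speed is legitimate at the level of \emph{extended} (not merely standard) LDPs, which hinges on the speed-independence of the $\epsilon$-inflation, and (b) the harmless switch between the strongly equivalent product metrics $\max_i d_{J_1}$ and $\sum_i d_{J_1}$ — both being the same bookkeeping steps already used to deduce Theorem~\ref{corollary: extended-LDP-multidimensional}.
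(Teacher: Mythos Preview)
Your proposal is correct and follows essentially the same route as the paper, which simply states that the result ``follows directly from Proposition~\ref{theorem:product-extended-ldp}'' in parallel with Theorem~\ref{corollary: extended-LDP-multidimensional}. You have merely made explicit two bookkeeping steps the paper leaves to the reader: the renormalization from the coordinate speeds $r_i(\log n)$ to the common speed $\log^\gamma n$ (picking up the factors $\lambda_i$ in the rate function), and the passage from the $\max_i d_{J_1}$ product metric of Proposition~\ref{theorem:product-extended-ldp} to the strongly equivalent $\sum_i d_{J_1}$ metric in the statement.
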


\subsection{Nonexistence of standard LDPs}\label{section:nonexist-standard-ldp}
In subsection \ref{section:extended-LDP-levy-J_1}, we established the extend LDP for L\'evy processes under the $J_1$ topology and $M'_1$ topology.
A natural question is whether it is possible to strengthen those results and establish the standard LDPs. 
Theorem~\ref{proposition-nonexistence-of-standard-LDP} below constructs a counterexample to confirm that the extended LDPs can not be strengthened to the standard LDPs.

\begin{theorem}
\label{proposition-nonexistence-of-standard-LDP}
\linksinthm{proposition-nonexistence-of-standard-LDP}
Recall $\bar X_n$ given in \eqref{def:xnbar} and 
assume that its L\'evy measure $\nu$ is supported on $\mb{R}_+$ and satisfies $\nu[x,\infty) = cx^\beta e^{-\lambda\log^\gamma x}$. 
That is, $\nu[x,\infty) = \exp(-r(\log n))$ with $r(x) = \log cx^\beta -\lambda \log^\gamma x$ and $\gamma > 1$.  
Then $\bar X_n$ cannot satisfy an LDP in the $M_1'$ topology. 
In particular, there exists a set $F \subseteq \D$ such that 
\begin{equation}\label{eq:theorem-nonexistence}
        \limsup_{n\rightarrow\infty}\frac{\log \pr{\bar{X}_n\in F}}{r(\log n)}
        > 
        -\inf_{\xi\in \bar{F}} I^{M'_1}(\xi) 
        ,
\end{equation}
where the closure $\bar F$ of $F$ is w.r.t.\ the $M_1'$ metric.
Since the $M_1'$ metric is  bounded by the $J_1$ metric, this also implies that the $\bar X_n$ cannot satisfy an LDP in the $J_1$ topology. 
\end{theorem}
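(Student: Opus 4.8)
The extended $M'_1$ large deviation principle of Theorem~\ref{theorem:ex-ldp-xnbar-m1prime} already gives, for every closed $F$, the bound $\limsup_n r(\log n)^{-1}\log\pr{\bar X_n\in F}\le -\lim_{\epsilon\downarrow 0}\inf_{\xi\in F^\epsilon}I^{M'_1}(\xi)$, so what has to be violated is the \emph{standard} upper bound, i.e.\ the same statement with $F^\epsilon$ replaced by $\bar F$. For a good rate function these two quantities coincide, but $I^{M'_1}$, although lower semicontinuous on $(\mb{D},d_{M'_1})$ (its sublevel sets $\hat{\mb{D}}_{\leqslant j}$ are closed by Lemma~\ref{lemma: rate-func-m1prime-levelset-closed}), is \emph{not} good: $\hat{\mb{D}}_{\leqslant j}$ contains jumps of arbitrarily large size. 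The plan is to exploit precisely this non-goodness, producing a closed set $F$ with $\inf_{\bar F}I^{M'_1}=2$ for which nevertheless $\limsup_n r(\log n)^{-1}\log\pr{\bar X_n\in F}>-2$.

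Two scaling facts drive the construction. A single jump of $X$ of size $\asymp n^{1-b}$ (hence of size $\asymp n^{-b}$ for $\bar X_n$), $b\in(0,1)$, occurs with probability $\asymp n\,\nu[n^{1-b},\infty)=\exp(\log n-r((1-b)\log n))=\exp(-(1-b)^\gamma r(\log n)(1+o(1)))$, by regular variation of $r$ with index $\gamma$ and $\log n=o(r(\log n))$; likewise a jump of size $\asymp n^{1+a}$ costs $(1+a)^\gamma r(\log n)(1+o(1))$. Since $\gamma>1$, as $a\downarrow 0$ and $b\uparrow\tfrac12$ one has $(1+a)^\gamma+(1-b)^\gamma\to 1+2^{-\gamma}<2$, so fix $a\in(0,\tfrac1{10})$ and $b\in(0,\tfrac12)$ with $(1+a)^\gamma+(1-b)^\gamma<2$. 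For $\delta\in(0,\tfrac14)$ and $Y\ge\max\{1,\log(1/\delta)\}$ put $\psi_{\delta,Y}=Y\,\I[1/2,1]+\delta\,\I[1/4,1]\in\hat{\mb{D}}_{=2}$, and let $F$ be the $d_{M'_1}$-closure of $\bigcup_{\delta,Y}B_{\delta/8}(\psi_{\delta,Y})$. Using Lemmas~\ref{lemma:m1prime-gap-criteria} and~\ref{lemma:constant-jump-m1prime-distance} one checks $d_{M'_1}(\psi_{\delta,Y},\hat{\mb{D}}_{\leqslant 1})\ge \delta/2$ (a path in $\hat{\mb{D}}_{\leqslant 1}$ is constant on a neighbourhood of $1/4$ of radius $\delta/2$, or, if its single jump lies near $1/4$, constant near $1/2$), so each ball $B_{\delta/8}(\psi_{\delta,Y})$ misses $\hat{\mb{D}}_{\leqslant 1}$; and any convergent sequence $\xi_k\in B_{\delta_k/8}(\psi_{\delta_k,Y_k})$ is bounded, whence $\delta_k,Y_k$ stay bounded, while $\delta_k\to0$ is impossible because then $Y_k\ge\log(1/\delta_k)\to\infty$. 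Hence every limit point lies in some $\overline{B_{\delta^*/8}(\psi_{\delta^*,Y^*})}$ with $\delta^*>0$, so outside $\hat{\mb{D}}_{\leqslant 1}$. Therefore $\bar F\cap\hat{\mb{D}}_{\leqslant 1}=\emptyset$, so $\inf_{\bar F}I^{M'_1}\ge 2$, and since $\psi_{\delta,Y}\in F$ has $I^{M'_1}=2$, equality holds.

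For the probability lower bound set $\delta_n=n^{-b}$, $Y_n=n^{a}$ (admissible for large $n$), so $\pr{\bar X_n\in F}\ge\pr{\bar X_n\in B_{\delta_n/8}(\psi_{\delta_n,Y_n})}$. Let $E_n$ be the event that the Poisson random measure $\hat N$ has exactly one atom of size in a narrow window around $n^{1+a}$ with time in a narrow window around $n/2$, exactly one atom of size near $n^{1-b}$ with time near $n/4$, no other atom of size exceeding $\sqrt n\log n$, and the Gaussian-plus-compensated-small-jump part of $\bar X_n$ has sup-norm below $\delta_n/16$. On $E_n$ the one-big-jump decomposition yields $d_{M'_1}(\bar X_n,\psi_{\delta_n,Y_n})<\delta_n/8$ — in $d_{M'_1}$ the large jump at a time within $\delta_n/16$ of $1/2$ is close to $Y_n\,\I[1/2,1]$ regardless of its (macroscopic) height, and the remaining part is $O_p(n^{-1/2})=o(\delta_n)$ since $b<\tfrac12$ and $\int x^2\,\nu(dx)<\infty$, exceeding $\delta_n/16$ only with probability $\exp(-\omega(r(\log n)))$ by a Bernstein estimate. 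The three disjoint regions of $[0,n]\times(0,\infty)$ and the Gaussian part being independent, and the narrow size/time windows and log-derivative factors contributing only polynomial-in-$n$ and polylogarithmic corrections, i.e.\ $\exp(o(r(\log n)))$, we obtain
\[
\pr{E_n}\ \ge\ \exp\!\Big(-\big[(1+a)^\gamma+(1-b)^\gamma\big]\,r(\log n)\,(1+o(1))\Big),
\]
hence $\limsup_n r(\log n)^{-1}\log\pr{\bar X_n\in F}\ge -[(1+a)^\gamma+(1-b)^\gamma]>-2=-\inf_{\bar F}I^{M'_1}$, which is \eqref{eq:theorem-nonexistence}. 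Finally $d_{M'_1}\le d_{J_1}$ makes $F$ also $J_1$-closed with the same closure, while $I^{J_1}\ge I^{M'_1}$ pointwise, so $\inf_{\bar F}I^{J_1}\ge 2$ and the same $F$ refutes the standard $J_1$ upper bound.

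The step I expect to be the real obstacle is the closure computation — simultaneously keeping $\bar F$ away from $\hat{\mb{D}}_{\leqslant 1}$ (so that $\inf_{\bar F}I^{M'_1}=2$) and leaving $F$ reachable by $\bar X_n$ through the cheap ``one moderately large jump plus one mesoscopic jump'' mechanism. The two devices that reconcile these are the coupling $Y\ge\log(1/\delta)$ between the jump sizes (which drives the centres to infinity whenever $\delta\to0$, so no $1$-jump path can sneak into $\bar F$) and the radius choice $\delta/8<\delta/2\le d_{M'_1}(\psi_{\delta,Y},\hat{\mb{D}}_{\leqslant 1})$; the constraint $b<\tfrac12$ is in turn dictated by needing the $O(n^{-1/2})$ Gaussian/small-jump fluctuation to fit inside a ball of radius $\asymp\delta_n=n^{-b}$. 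The remaining items — the Bernstein control of the small-jump part and the bookkeeping of the polynomial corrections in $\pr{E_n}$ — are routine.
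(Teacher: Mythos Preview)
The proposal is essentially correct and follows the same architecture as the paper's proof: both construct a set $F$ of ``near two-jump'' paths where one jump is macroscopic and growing while the other is mesoscopic and shrinking, show that $\bar F\cap\hat{\mb D}_{\leqslant 1}=\emptyset$ via $M'_1$ geometry (so $\inf_{\bar F}I^{M'_1}=2$), and then lower-bound $\pr{\bar X_n\in F}$ by the probability of a specific two-jump event whose normalized log-cost lies strictly above $-2$.

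The differences are in the scalings and the packaging of $F$. You use unscaled jump sizes $n^{1+a}$ and $n^{1-b}$ (scaled: $n^a$ and $n^{-b}$) with costs $(1+a)^\gamma$ and $(1-b)^\gamma$; the paper uses $n\log n$ and $n^{2/3}$ (scaled: $\log n$ and $n^{-1/3}$) with costs $1$ and $(2/3)^\gamma$. The paper's choice is tighter --- a jump barely above linear scale costs exactly $1$ --- but your $(1+a)^\gamma+(1-b)^\gamma<2$ works just as well. Your construction of $F$ as a union of $M'_1$-balls with the coupling $Y\ge\log(1/\delta)$ replaces the paper's explicit sumset $F_n=B_n+C_n$; both closure arguments ultimately hinge on the same mechanism, namely that any candidate one-jump limit would force the small-jump parameter $\delta\to 0$, which in turn forces the large jump height to infinity, contradicting $M'_1$ convergence. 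Your final paragraph correctly identifies this as the crux.

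Two places where the wording is imprecise but the intent is right: (i) ``regardless of its (macroscopic) height'' is misleading --- the $M'_1$ distance between two single-jump paths \emph{does} see the difference of heights, and you do need the big jump's scaled size to lie within $O(n^{-b})$ of $Y_n=n^a$; what is true is that the $M'_1$ closeness is then governed by the small perturbations in time and size, not by the absolute magnitude $Y_n$ itself. (ii) Your ``Gaussian-plus-compensated-small-jump part'' must also absorb the medium jumps in $[1,\sqrt n\log n)$ together with the compensator $-t\nu_1\mu_1$ coming from the centering of $\bar X_n$; the paper handles this by centering its set $C_n$ at $-\mu_1\nu_1 e$ rather than at $0$. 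Once you write $\bar X_n$ as the two designated jumps plus a centered martingale remainder with variance $O(1/n)$, the constraint $b<1/2$ and a Doob (or Bernstein) bound give what you need.
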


Before proving Theorem~\ref{proposition-nonexistence-of-standard-LDP}, we set some notations and state two lemmata, Lemma~\ref{lemma:F-disjoint-property} and \ref{lemma:F-disjoint-lowerbound}. 
The proofs these lemmata are deferred to Section~\ref{section: nonexist-standard-ldp-proof}.
Consider the following mapping:
\linkdest{notation:map-pi}
\begin{align}\label{pimap}
    \notationdef{path-to-two-largest-jump-process}{\pi}: \hat{\mb{D}}_{<\infty}
    &
    \longrightarrow
    \hat{\mb{D}}_{\leqslant 2}
    \nonumber
    \\
    \xi
    &
    \longmapsto\pi(\xi) = 
    \begin{cases}
        \xi 
        &
        \xi\in\hat{\mb{D}}_{\leqslant 2}
        \\
        a_1\I_{[t_1,1]} + a_2\I_{[t_2,1]} 
        &
        \text{otherwise}
    \end{cases},
\end{align}
where $a_1$ and $a_2$ are the first and second largest jump sizes of $\xi$, and $t_1$ and $t_2$ are the earliest times for those jumps. 
Consider the following sets:
\linkdest{notation:set-An-Bn-Cn-Fn-F}
\begin{align}
    &
    A_n
    \delequal
    \bigg\{
        \xi 
        = 
        \sum_{i=1}^2 
            z_i\I_{[v_i,1]}: 
                z_1\in[\log n,\infty), 
                z_2\in\Big[\frac{1}{n^{1/3}},\infty\Big), 
                z_1 \geq z_2, 
                v_1\in\Big(\frac{1}{4}, \frac{1}{2}\Big], 
                v_2\in\Big(\frac{3}{4}, 1\Big]
    \bigg\}
    \label{setan}
    \\
    &
    B_n 
    \delequal
    \pi^{-1}(A_n)
    \label{setbn}
    \\
    &
    C_n 
    \delequal
    \bigg\{
        \xi\in \mb{D}: 
            d_{\|\cdot\|}(\xi ,\, -\mu_1\nu_1 e)
            \leq 
            \frac{1}{3} \frac{1}{n^{1/3}}
    \bigg\}
    \label{setcn}
    \\
    &
    F_n
    \delequal
    \big\{
        \eta\in \mb{D}: \eta = \xi_1 + \xi_2, \xi_1\in B_n, \xi_2\in C_n
    \big\}
    \label{setfn}
\end{align}
For some $N$ such that $\log N - \frac{1}{2}\nu_1\mu_1 - \frac{1}{3}N^{-\frac{1}{3}} > 1$, we further define
\begin{equation}\label{definition:counterexample-F}
    F \delequal \cup_{n=N}^\infty F_n
\end{equation}
The following two lemmata are key to Theorem~\ref{proposition-nonexistence-of-standard-LDP}. 
\begin{lemma}\label{lemma:F-disjoint-property}
\linksinthm{lemma:F-disjoint-property}
On $(\mb{D}, d_{M'_1})$, the set $F$ defined in (\ref{definition:counterexample-F}) satisfies $\bar{F}\cap\hat{\mb{D}}_{\leqslant 1} = \emptyset$.
\end{lemma}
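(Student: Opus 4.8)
The plan is to establish that every $\zeta\in\hat{\mb{D}}_{\leqslant 1}$ lies at strictly positive $d_{M'_1}$-distance from $F$; since $d_{M'_1}$ is a metric on $\mb D$, a point at positive distance from $F$ cannot belong to $\bar F$, and as $\zeta$ is arbitrary this yields $\bar F\cap\hat{\mb{D}}_{\leqslant 1}=\emptyset$. First I would record the structure of a generic $\eta\in F_n$. Writing $\eta=\xi_1+\xi_2$ with $\xi_1\in B_n=\pi^{-1}(A_n)$ and $\xi_2\in C_n$, the membership $\pi(\xi_1)\in A_n$ tells us that $\xi_1$ is non-decreasing with $\xi_1(0)\geq 0$, that its largest jump has size $z_1\geq\log n$ at some $t_1\in(1/4,1/2]$, and that its second-largest jump has size $z_2\geq n^{-1/3}$ at some $t_2\in(3/4,1]$; in particular $\xi_1(t)\geq z_1\geq\log n$ for all $t\geq t_1$ and $\xi_1(t-)\geq\log n$ for all $t>t_1$. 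Since $\xi_2\in C_n$ means $\sup_{t\in[0,1]}|\xi_2(t)+\mu_1\nu_1 t|\leq\tfrac13 n^{-1/3}$ and $t\mapsto-\mu_1\nu_1 t$ is continuous, every jump of $\xi_2$ has size at most $\tfrac23 n^{-1/3}$, and $\xi_2(t)\geq-\mu_1\nu_1-\tfrac13 n^{-1/3}$ for every $t$. Combining these I obtain two facts, valid for all $n\geq N$ and all $\eta\in F_n$: \emph{(a)} $\eta$ has a jump of size at least $\tfrac13 n^{-1/3}$ at $t_1\in(1/4,1/2]$ and a jump of size at least $\tfrac13 n^{-1/3}$ at $t_2\in(3/4,1]$; and \emph{(b)} $\eta(t)\geq\rho_n$ and $\eta(t-)\geq\rho_n$ for every $t\in(1/2,1]$, where $\rho_n:=\log n-\mu_1\nu_1-\tfrac13 n^{-1/3}\to\infty$.

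Next I would fix $\zeta\in\hat{\mb{D}}_{\leqslant 1}$ and split $F$ into a tail $\bigcup_{n\geq M}F_n$ and a finite remainder $\bigcup_{N\leq n<M}F_n$, where $M=M(\zeta)$ is chosen large enough that $\rho_n\geq\zeta(1)+1$ for all $n\geq M$. For the tail, I would invoke Lemma~\ref{lemma:m1prime-gap-criteria} at the point $(\zeta(1),1)\in\Gamma(\zeta)$: for $\eta\in F_n$ with $n\geq M$, any $(y,t)\in\Gamma(\eta)$ satisfies $d\big((y,t),(\zeta(1),1)\big)\geq 1-t\geq 1/2$ when $t\leq 1/2$, whereas when $t>1/2$ property \emph{(b)} gives $y\geq\rho_n\geq\zeta(1)+1$ and hence $d\big((y,t),(\zeta(1),1)\big)\geq 1$; therefore $d_{M'_1}(\zeta,\eta)\geq d\big((\zeta(1),1),\Gamma(\eta)\big)\geq 1/2$. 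For each fixed $n$ in the finite remainder I would set $\delta_n:=\tfrac1{12}n^{-1/3}$, note that $\delta_n<1/8$ so that the closed $\delta_n$-windows around $t_1$ and around $t_2$ are disjoint subintervals of $[0,1]$, observe that $\zeta$ has at most one jump and is therefore constant on whichever of the two windows avoids that jump --- say the one centered at $t_*\in\{t_1,t_2\}$ --- and then apply Lemma~\ref{lemma:constant-jump-m1prime-distance}: by \emph{(a)}, $\eta$ has a jump of size $\geq\tfrac13 n^{-1/3}\geq 2\delta_n$ at $t_*$ while $\zeta$ is constant on $[t_*-\delta_n,t_*+\delta_n]\cap[0,1]$, so $d_{M'_1}(\zeta,\eta)\geq\delta_n$. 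Taking the infimum over the finitely many $n<M$ and combining with the tail bound gives $d_{M'_1}(\zeta,F)\geq\min\{1/2,\ \tfrac1{12}(M-1)^{-1/3}\}>0$, as needed.

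The routine parts are the two elementary estimates on $\xi_2$ in the first step (controlling its jumps and its values via closeness to a continuous line in the uniform norm) and the interval bookkeeping in the last step, where borderline placements of the jump of $\zeta$ relative to a window endpoint are absorbed by shrinking $\delta_n$ slightly. The point that actually has to be gotten right --- and the main obstacle in the sense of organizing the argument --- is that no single estimate covers all of $F$: for large $n$ the second jump of $\eta$ shrinks like $\tfrac13 n^{-1/3}\to 0$, so the ``two well-separated jumps'' bound coming from Lemma~\ref{lemma:constant-jump-m1prime-distance} degenerates and must be replaced, for $n\geq M$, by the ``$\eta$ is uniformly large on $(1/2,1]$'' bound coming from Lemma~\ref{lemma:m1prime-gap-criteria}; conversely the latter bound is useless when $\zeta(1)$ is large relative to $\log n$, which is precisely the regime the former handles. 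Splitting $F$ at the $\zeta$-dependent threshold $M(\zeta)$ is what reconciles the two.
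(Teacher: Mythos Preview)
Your proof is correct and relies on the same two building blocks as the paper (Lemma~\ref{lemma:m1prime-gap-criteria} and Lemma~\ref{lemma:constant-jump-m1prime-distance}), but the decomposition is genuinely different. The paper fixes $\zeta=z\I_{[v,1]}\in\hat{\mb D}_{\leqslant 1}$ and splits according to the jump location $v$: when $v\in(1/2,1]$ it uses the point $(\eta(1/2),1/2)\in\Gamma(\eta)$ to get $d_{M_1'}(\zeta,\eta)\geq v-1/2$ uniformly over all $\eta\in F$, while for $v\in[0,1/2]$ it argues by contradiction---a convergent sequence in $F$ would have to live in $F_{m_k}$ with $m_k\to\infty$ (by the jump in $(3/4,1]$ and Lemma~\ref{lemma:constant-jump-m1prime-distance}), but then $\eta_k(1/2)\to\infty$ forces $d_{M_1'}(\eta_k,\zeta)\to\infty$. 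You instead split $F$ at a $\zeta$-dependent threshold $M(\zeta)$ in the index $n$: the tail is handled by showing $(\zeta(1),1)$ is at $\ell_\infty$-distance $\geq 1/2$ from every $\Gamma(\eta)$, and the finite remainder by the two disjoint jump windows. Your organization is more direct (no contradiction step) and yields an explicit lower bound on $d_{M_1'}(\zeta,F)$; the paper's organization has the advantage that its first case produces a bound depending only on $\zeta$ and covering all of $F$ simultaneously, with no truncation needed. Both executions are clean; the essential insight---that for large $n$ the ``two-jump'' estimate degenerates and one must switch to the ``large on $(1/2,1]$'' estimate---is the same in each.
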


\begin{lemma}\label{lemma:F-disjoint-lowerbound}
\linksinthm{lemma:F-disjoint-lowerbound}
    The set $F$ defined in (\ref{definition:counterexample-F}) satisfies
    \begin{equation}\label{counterexample-lowerbound}
        \limsup_{n\rightarrow\infty}\frac{\log \pr{\bar{X}_n\in \bar{F}}}{r(\log n)} > -2, 
    \end{equation}
\end{lemma}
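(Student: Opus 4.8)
The goal is to show $\limsup_n \frac{\log \pr{\bar X_n \in \bar F}}{r(\log n)} > -2$. Since $F \subseteq \bar F$ and $F = \cup_{n\ge N} F_n$, it suffices to show $\liminf_n \frac{\log \pr{\bar X_n \in F_n}}{r(\log n)} > -2$; then along that subsequence the bound for $\bar F$ follows. So the plan is to produce a good lower bound on $\pr{\bar X_n \in F_n}$ by exhibiting an explicit, likely event forcing $\bar X_n$ into $F_n$.

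First I would unpack what membership in $F_n$ requires. By \eqref{setfn}, $\eta \in F_n$ iff $\eta = \xi_1 + \xi_2$ with $\xi_1 \in B_n = \pi^{-1}(A_n)$ and $\xi_2 \in C_n$. The set $C_n$ is a uniform ball of radius $\tfrac13 n^{-1/3}$ around the linear drift path $-\mu_1\nu_1 e$ (the mean-correction drift of the centered L\'evy process), so $\xi_2$ should be taken to be the ``small-jump plus drift'' part of $\bar X_n$, which concentrates on $C_n$ by a fluctuation estimate. The set $B_n$ consists of paths whose two largest jumps, recorded via $\pi$, land in $A_n$: precisely, the largest jump has size $\ge \log n$ and occurs in $(\tfrac14,\tfrac12]$, and the second-largest has size $\ge n^{-1/3}$ and occurs in $(\tfrac34,1]$. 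So the natural event to use is: the (scaled) L\'evy process has exactly two jumps exceeding some threshold, one of scaled size $\ge \log n$ in the time window $(\tfrac14,\tfrac12]$ and one of scaled size in roughly $[n^{-1/3}, \log n]$ in the window $(\tfrac34,1]$, and nothing else large, with the small-jump/drift remainder staying within $\tfrac13 n^{-1/3}$ of $-\mu_1\nu_1 e$ in sup norm.

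Concretely, I would define the event $E_n$ as the intersection of: (a) $\hat N$ has exactly one atom in $(n/4, n/2] \times [n\log n, \infty)$ — i.e., one jump of (unscaled) size $\ge n\log n$ in that time interval; (b) $\hat N$ has exactly one atom in $(3n/4, n] \times [n^{1/3}, \text{something})$ — a jump of unscaled size comparable to $n^{1/3}$; (c) no other atoms of $\hat N$ above a moderate threshold; (d) the compensated small-jump martingale plus the Brownian part, after scaling by $1/n$, stays uniformly within $\tfrac16 n^{-1/3}$ of $0$. On $E_n$, the two recorded largest jumps of $\bar X_n$ have the right sizes and locations (so $\pi(\bar X_n)$ lies in a path of the form in $A_n$, giving $\xi_1 = \bar X_n - (\text{remainder}) \in B_n$), and the remainder lies in $C_n$; hence $\bar X_n \in F_n$. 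The probability of (a) is, by the Poisson structure, $\asymp (n/4)\,\nu[n\log n,\infty)\,e^{-(\cdot)} = \tfrac14 n \exp(-r(\log n + \log\log n))(1+o(1))$; since $r$ is regularly varying with index $\gamma>1$, $r(\log n + \log\log n) = r(\log n)(1+o(1))$, so $\log \pr{(a)} = -r(\log n)(1+o(1))$. The probability of (b) is $\asymp n \nu[n^{1/3},\infty) = n\exp(-r(\tfrac13\log n))$, and since $r(\tfrac13\log n) \sim 3^{-\gamma} r(\log n)$ with $3^{-\gamma} < 1$, this factor is $\exp(-o(r(\log n)))$ — negligible on the $r(\log n)$ scale. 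Events (c) and (d) have probability bounded away from $0$ (or with $\log$-probability $o(r(\log n))$). Multiplying, $\log\pr{E_n} = -r(\log n)(1+o(1))$, so $\liminf_n \frac{\log\pr{\bar X_n \in F_n}}{r(\log n)} \ge -1 > -2$, which is more than enough.

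The main obstacle is the bookkeeping in step (a): one must choose the exact unscaled thresholds so that the scaled jump sizes land in the prescribed ranges of $A_n$ (size $\ge \log n$ for the first, $\ge n^{-1/3}$ for the second), and simultaneously control the remainder well enough that adding it does not push the ``two largest jumps'' identification off — e.g., one needs the second jump's scaled size ($\gtrsim n^{-1/3}$) to genuinely dominate the remainder's fluctuations ($\lesssim n^{-1/3}$), so the constants ($\tfrac13$, $\tfrac16$) must be tracked carefully, exactly as they are arranged in the definitions of $C_n$ and $A_n$. The regular-variation estimate $r(\log n + \log\log n) \sim r(\log n)$ and $r(c\log n) \sim c^\gamma r(\log n)$ are routine (Potter bounds / uniform convergence theorem for regularly varying functions), and the fluctuation bound (d) is a standard maximal inequality for the compensated small-jump martingale plus Brownian motion — both can be cited or dispatched quickly. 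So the real work is the combinatorial/geometric check that $E_n \subseteq \{\bar X_n \in F_n\}$, which hinges precisely on the separation of scales $n^{-1/3} \ll \log n$ and on the placement of jumps in disjoint, well-separated time windows.
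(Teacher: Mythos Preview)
Your plan is essentially the paper's: decompose $\bar X_n=\bar J_n+\bar H_n$ into the big-jump part and the remainder, use independence to get $\pr{\bar X_n\in F_n}\ge\pr{\bar J_n\in B_n}\pr{\bar H_n\in C_n}$, show $\pr{\bar H_n\in C_n}\to 1$ via a Doob/maximal inequality (your step (d)), and lower-bound $\pr{\bar J_n\in B_n}$ by an explicit Poisson event on the two largest jumps (your steps (a)--(c)).

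There are, however, two arithmetic slips in your treatment of (b). First, the second jump in $A_n$ must have \emph{scaled} size $\ge n^{-1/3}$, hence \emph{unscaled} size $\ge n\cdot n^{-1/3}=n^{2/3}$, not $n^{1/3}$; the relevant probability is $\asymp n\,\nu[n^{2/3},\infty)=n\exp(-r(\tfrac23\log n))$. Second, and more consequential, your claim that this factor is $\exp(-o(r(\log n)))$ is wrong: $r(c\log n)\sim c^{\gamma}r(\log n)$ is a \emph{constant multiple} of $r(\log n)$, not $o(r(\log n))$. With the correct threshold, $\log\pr{(b)}\sim -(2/3)^{\gamma}r(\log n)$, and the combined bound becomes
\[
\liminf_{n\to\infty}\frac{\log\pr{\bar X_n\in F_n}}{r(\log n)}\ \ge\ -1-(2/3)^{\gamma},
\]
which is exactly what the paper obtains. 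Since $(2/3)^{\gamma}<1$ for $\gamma>1$, this is still $>-2$, so the conclusion stands once the errors are fixed.
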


\begin{proof}[Proof of Theorem~\ref{proposition-nonexistence-of-standard-LDP}]
Lemma \ref{lemma:F-disjoint-property} implies that 
\begin{equation}\label{eq:interpretation-of-lemma-3.1}
    - 2 \geq -\inf_{\xi\in \bar{F}} I^{M'_1}(\xi).
\end{equation}
The conclusion follows from \eqref{eq:interpretation-of-lemma-3.1} and Lemma~\ref{lemma:F-disjoint-lowerbound}.

\end{proof}


\subsection{Example: boundary crossing with regulated jumps}
\label{subsec:boundary-crossing-example}
This section illustrates an application of Theorem \ref{theorem:ex-ldp-xnbar} to analyze the large deviation probabilities associated with level crossing events. 
Specifically, we consider the probability
\begin{equation}\label{level-crossing-event}
    \P
    \bigg(
        \sup_{t\in[0,1]}\bar{X}_n\geq b,\ 
        \sup_{t\in[0,1]}|\bar{X}_n(t) - \bar{X}_n(t-)|\leq c
    \bigg)
    .
\end{equation}
This probability is closely related to the insolvency risk of reinsured insurance lines in actuarial science. 
To facilitate the use of extended LDP, we introduce the mapping
\begin{align}
    \phi: \mb{D}\longrightarrow
    &
    \mb{R}^2\nonumber
    \\
    \xi\longmapsto
    &
    \bigg(
        \sup_{t\in[0,1]} \xi(t), \ 
        \sup_{t\in[0,1]} \big|\xi(t)- \xi(t-)\big|
    \bigg),
\end{align}
which captures the maximum value of a function $\xi$ and its largest jump.
The probability in (\ref{level-crossing-event}) can be rephrased as follows:
\begin{equation}\label{level-crossing-event2}
    \pr{\phi(\bar{X}_n)\in [b,\infty)\times [0,c]}.
\end{equation}
The Lipschitz continuity of $\phi$ (as discussed in Section 4 of \cite{MR4187125}) enables us to apply the contraction principle (Lemma B.3 in \cite{MR4493394}) to the extended LDP of $\{\bar{X}_n\}_{n\geq 1}$ established in Theorem \ref{theorem:ex-ldp-xnbar}. Consequently, we deduce that $\{\phi(\bar{X}_n)\}_{n\geq 1}$ satisfies the extended LDP with the rate function given by
$$
I_\phi(x,y)
= 
\inf
    \bigg\{
        I^{J_1}(\xi): \sup_{t\in[0,1]}\xi(t)=x, \ 
        \sup_{t\in[0,1]}|\xi(t)-\xi(t-)| = y
    \bigg\}
=
\bigg\lceil\frac{x}{y}\bigg\rceil
.
$$
That is, 
\begin{align*}
    -\inf_{(x,y)\in([b,\infty)\times[0,c])^\circ} \bigg\lceil\frac{x}{y}\bigg\rceil
    &
    \leq
    \liminf_{n\rightarrow\infty} \frac{\log\pr{\phi(\bar{X}_n)\in [b,\infty)\times[0,c]}}{r(\log n)} 
    \\
    &
    \leq
    \limsup_{n\rightarrow\infty} \frac{\log\pr{\phi(\bar{X}_n)\in [b,\infty)\times[0,c]}}{r(\log n)} 
    \leq 
    -\lim_{\epsilon\downarrow 0}\inf_{(x,y)\in([b,\infty)\times[0,c])^\epsilon} \left\lceil\frac{x}{y}\right\rceil.
\end{align*}
If $b/c$ is not an integer, this yields a tight asymptotic limit
$$
\lim_{n\rightarrow\infty} \frac{\log\pr{\phi(\bar{X}_n)\in [b,\infty)\times[0,c]}}{r(\log n)} = -\left\lceil \frac{b}{c}\right\rceil,
$$
whereas if $b/c$ is an integer, we get a lose asymptotic limit 
$$
    -\frac{b}{c}-1\leq\liminf_{n\rightarrow\infty} \frac{\log\pr{\phi(\bar{X}_n)\in [b,\infty)\times[0,c]}}{r(\log n)}\leq \limsup_{n\rightarrow\infty} \frac{\log\pr{\phi(\bar{X}_n)\in [b,\infty)\times[0,c]}}{r(\log n)} \leq -\frac{b}{c}.
$$

\section{Proofs}\label{section:proofs}

\subsection{Proofs for Section~\ref{section:notation-definitions}}
\label{section:preliminary-proof}

\begin{proof}[Proof of Lemma \ref{lemma:m1prime-gap-criteria}]
\linksinpf{lemma:m1prime-gap-criteria}
Fix a $(u,r) \in \Gamma(\xi)$. 
Let $(v_1, s_1) \in \Pi(\xi)$ and $(v_2, s_2) \in \Pi(\zeta)$ be given arbitrarily. 
There exists a $t_0$ such that $v_1(t_0) = u$ and $s_1(t_0) = r$. 
\begin{align*}
    d\big((u,r),\, \Gamma(\zeta) \big)
    &
    \leq
    d\Big(\big(v_1(t_0),s_1(t_0)\big),\, \big(v_2(t_0), s_2(t_0)\big)\Big)
    \\
    &
    =
    \big|v_1(t_0)-v_2(t_0)\big| \vee \big| s_1(t_0)-s_2(t_0)\big|
    \leq
    \|v_1-v_2\| \vee \|s_1 - s_2\|.
\end{align*}
Since the choice of $(v_1, s_1)$ and $(v_2, s_2)$ was arbitrary, we arrive at the conclusion of the lemma by taking the infimum over $(v_1, s_1)\in\Pi(\xi)$ and $(v_2, s_2) \in \Pi(\zeta)$. 
\end{proof}

\begin{proof}[Proof of Lemma \ref{lemma:constant-jump-m1prime-distance}]
\linksinpf{lemma:constant-jump-m1prime-distance}
    Since $(\zeta(t), t)$ and $(\zeta(s-),s)$ belong to $\Gamma(\zeta)$, it is enough, in view of Lemma \ref{lemma:m1prime-gap-criteria}, to show that 
    \begin{equation}\label{eq:lower-bound-M1'-vee}
        d\big((\zeta(t), t), \Gamma(\xi)\big) \vee d\big((\zeta(s-), s), \Gamma(\xi)\big) \geq \delta                 
        .
    \end{equation}
    To see that this is the case, note first that either 
    \begin{equation*}
        |\zeta(t)-\xi(t)| \geq \delta
        \text{\quad or \quad}
        |\zeta(s-) - \xi(s) | \geq \delta.
    \end{equation*}
    Suppose that $|\zeta(t)-\xi(t)| \geq \delta$.
    Note also that for any $(u,r) \in \Gamma(\xi)$, either $u = \xi(t)$ or $|t-r| \geq \delta$.
    Then,
    $$
        d\big((\zeta(t),t), (u,r)\big)  
        =
        |\zeta(t) - u| \vee |t - r|
        \geq
        \delta.
    $$    
    Taking infimum over $(u,v)\in \Gamma(\xi)$, we get
    $
        d\big((\zeta(t), t), \Gamma(\xi)\big)> \delta.
    $
    Similarly,
    $
        d\big((\zeta(s-), s), \Gamma(\xi)\big)> \delta
    $
    if $|\zeta(s-) - \xi(s) | \geq \delta$.
    Therefore, we have \eqref{eq:lower-bound-M1'-vee}.

\end{proof}

\begin{proof}[Proof of Lemma~\ref{E-LDP-on-subspaces-full-measure}]
\linksinpf{E-LDP-on-subspaces-full-measure}
Since $\mb{E}$ is a closed set, it is clear that newly defined function $I'$ is lower-semicontinuous on $\mb{D}$. 
For any open set $G$ in $\mb{D}$, $G\cap\mb{E}$ is open in $\mb{E}$ and $I$ take $\infty$ value on $G\cap \mb{E}^c$. Therefore,
\begin{equation*}
    \liminf_{n\rightarrow\infty} \frac{\log\pr{X_n\in G}}{a_n} = \liminf_{n\rightarrow\infty} \frac{\log\pr{X_n\in G\cap\mb{E}}}{a_n}\geq -\inf_{x\in G\cap\mb{E}}I(x) = -\inf_{x\in G}I'(x)
\end{equation*}
Also, for any closed set $F$ in $\mb{D}$, $(F\cap\mb{E})^\epsilon\subset F^\epsilon$, hence 
\begin{equation*}
    \limsup_{n\rightarrow\infty} \frac{\log\pr{X_n\in F}}{a_n} = \limsup_{n\rightarrow\infty} \frac{\log\pr{X_n\in F\cap\mb{E}}}{a_n}\leq -\lim_{\epsilon\downarrow 0}\inf_{x\in (F\cap\mb{E})^\epsilon}I(x) \leq -\lim_{\epsilon\downarrow 0}\inf_{x\in F^\epsilon}I'(x)
\end{equation*}
Since the upper and lower bounds for the extended LDP are all satisfied with the lower-semicontinuous function $I'$, the lemma is proved.
\end{proof}

\subsection{Proofs for Section \ref{section:extended-LDP-levy-J_1}}\label{section: extended-ldp-J1-proof}

This section proves Theorem~\ref{theorem:ex-ldp-xnbar} and Theorem~\ref{theorem:ex-ldp-xnbar-m1prime}. 
Before describing the structures of the proofs, we introduce some notations. 
Rearranging the terms in 
(\ref{equ:levy-ito-decomposition}),
we can decompose $\bar X_n$ as follows:
\begin{equation}
    \bar{X}_n = \newnota{ynbar-big-jumps-process}{\bar{Y}_n} + \newnota{rnbar-martingale-process}{\bar{R}_n}
\end{equation}
where
\linkdest{notation:ynbar-and-rnbar}
\begin{align}
    &
    \bar{Y}_n(t) 
        \delequal 
        \frac{1}{n}
        \int_{[1,\infty)}
        (x-\mu_1)
        \hat{N}
        ([0,nt]\times dx),
    \label{ynbar}
    \\
    &
    \bar{R}_n(t) 
    \delequal 
    \frac{aB(nt)}{n} 
        + \frac{1}{n}
        \int_{(0,1)}    
        x
        \big(
            \hat{N}
            ([0,nt]\times dx)
            - nt\nu(dx)
        \big)
        + \frac{\mu_1\hat{N}\big([0,nt]\times [1,\infty)\big)}{n}
        - t\nu_1 \mu_1
    \label{rnbar}
\end{align}
\mdefnota
    {ynbar-big-jumps-process}
    {$=\frac{1}{n}\int_{[1,\infty)}(x-\mu_1)\hat{N}([0,nt]\times dx)$}%
\mdefnota
    {rnbar-martingale-process}
    {$=\frac{aB(nt)}{n} + \frac{1}{n}\int_{(0,1)}x\big(\hat{N}([0,nt]\times dx) - nt\nu(dx)\big)+ \frac{\mu_1\hat{N}\big([0,nt]\times [1,\infty)\big)}{n} - t\nu_1\mu_1$}%
for $t\in [0,1]$,
where
\linkdest{notation:nu1-and-mu1}%
\newnota
    {n-nu-1}
    {$\nu_1$}
\defnota
    {n-nu-1}
    {$ = \nu[1,\infty)$} 
and 
\newnota
    {m-mu-1}
    {$\mu_1$}
\defnota
    {m-mu-1}
    {$ = \int_{[1,\infty)} x\frac{\nu(dx)}{\nu_1}$} 
.

Since $\hat N$ has a finite mean measure on $[1,\infty)\times [0, n]$, the process $\bar{Y}_n$ is a compound Poisson process. 
Before being centered and normalized by $\mu_1$ and $1/n$, respectively, the jump sizes of $\bar Y_n$ are independent and identically distributed (i.i.d.) with the distirbution $\nu|_{[1,\infty)}/\nu_1$. 
Let 
\notationdef
    {big-jump-generic-rv}
    {$Z_1, Z_2, \cdots$}
be i.i.d.\ random variables with distribution $\nu|_{[1,\infty)}/\nu_1$ and let 
\notationdef
    {poisson_process}
    {$N(t)$}%
    $\delequal \hat{N}([0,t]\times[1,\infty))$.
Then, $\bar{Y}_n$ has the following distributional representation:
\begin{equation}\label{eq: first distributional representation of Y_n bar}
    \bar{Y}_n(t) \stackrel{\ms{D}}{=} \frac{1}{n}\sum_{i=1}^{N(nt)} (Z_i - \mu_1).
\end{equation}
We further decompose \eqref{eq: first distributional representation of Y_n bar} into two parts: the $k$ largest jumps and the rest. 
Let 
\notationdef
    {permutation}
    {$P_n(\cdot)$} 
be the random permutation of the indices $\{1,2,\cdots, N(n)\}$ of $Z_i$'s such that $P_n(i)$ is the rank of $Z_i$ in the decreasing order among $Z_1,Z_2,\cdots, Z_{N(n)}$. Then

\begin{equation}\label{checkjnk}
    \bar{Y}_n(t) \stackrel{\ms{D}}{=} 
    \underbrace{\frac{1}{n} \sum_{i=1}^{N(nt)}Z_i\I\{P_n(i)\leq k\}}_{\bar{J}_n^{\leqslant k}(t)} +   \underbrace{\frac{1}{n} \sum_{i=1}^{N(nt)}\big(Z_i\I\{P_n(i)> k\} - \mu_1\big)}_{{\bar{H}_n^{\leqslant k}(t)}}
\end{equation}
\mdefnota{jncheck-over-k-largest-jump}{$=\frac{1}{n} \sum_{i=1}^{N(nt)}\big(Z_i\I\{P_n(i)> k\} - \mu_1\big)$}%
\mdefnota{jnbar-k-largest-jump}{$\frac{1}{n} \sum_{i=1}^{N(nt)}Z_i\I\{P_n(i)\leq k\}$}%
Note that
\newnota
    {jnbar-k-largest-jump}
    {$\bar{J}_n^{\leqslant k}$} 
is the process that consists of the $k$ largest jumps in $\bar{Y}_n$, whereas \newnota{jncheck-over-k-largest-jump}{$\bar{H}_n^{\leqslant k}$} is the centered process that consists of the remaining jumps.
Let $Y_i$'s be i.i.d.\ exponential random variables with mean $1$, 
and \newnota{gamma-i}{$\Gamma_i$}\defnota{gamma-i}{$\triangleq Y_1 + \cdots +Y_i$}.
Let \newnota{uniform-rv}{$U_i$}'s be \defnota{uniform-rv}{i.i.d.\ uniform random variables on $[0,1]$}, independent from the $Y_i$'s. 
Let 
\begin{equation}\label{definition:Qn_and_inverse}
    \newnota
        {q-n}
        {Q_n(\cdot)}
    \defnota
        {q-n}
        {\triangleq n\nu[\cdot,\infty)}
        \qquad\text{and}\qquad
    \newnota
        {q-n-inverse}
        {Q^\leftarrow_n(y)}
    \defnota
        {q-n-inverse}
        {\triangleq \inf\{s > 0:n\nu[s,\infty) < y\}}.
\end{equation}
According to the argument presented in \cite{MR3271332} (p.305) and \cite{MR2271424} (p.163), a coupling with respect to $Y_i$, $U_i$ and $\hat{N}$ can be constructed by
\begin{equation}\label{poisson-measure-pointmass}
    \hat{N}([0,n]\times (0,\infty)) = \sum_{i=1}^\infty \epsilon_{(n\cdot U_i,\ Q^\leftarrow_n(\Gamma_i))}
\end{equation}
Here 
\newnota{epsilon_dirac-measure}{$\epsilon_{(x,y)}$} denotes 
\defnota{epsilon_dirac-measure}{the Dirac measure at $(x, y)$}. 
The sequence $\{Q^\leftarrow_n(\Gamma_i), i\geq 1\}$ record the second coordinate value of point masses of $\hat{N}$ with decreasing order.
The coupling in (\ref{poisson-measure-pointmass}) facilitates a further decomposition of $\bar{J}_n^{\leqslant k}$ into \newnota{jnhat-k-largest-jump-process}{$\hat{J}_n^{\leqslant k}$} + \newnota{j_tilde-compensate-over-1-process}{$\check{J}^{\leqslant k}_n$},where
\mdefnota{jnhat-k-largest-jump-process}{$= \frac{1}{n}\sum_{i=1}^k Q_n^\leftarrow (\Gamma_i)\I_{[U_i,1]}(t)$}
\mdefnota{j_tilde-compensate-over-1-process}{$= - \frac{1}{n}\I\{\tilde{N}_n < k\}\sum_{i=\tilde{N}_n+1}^k Q_n^\leftarrow (\Gamma_i)\I_{[U_i,1]}(t)$}
\begin{align}
    &
    \hat{J}_n^{\leqslant k}(t)
    \delequal 
    \frac{1}{n}\sum_{i=1}^k Q_n^\leftarrow (\Gamma_i)\I_{[U_i,1]}(t),
    \label{hatjnk}
    \\
    &
    \check{J}^{\leqslant k}_n(t)
    \delequal
    - \frac{1}{n}
    \I\{
        \tilde{N}_n < k
      \}
    \sum_{i=\tilde{N}_n+1}^k 
    Q_n^\leftarrow
    (\Gamma_i)
    \I_{[U_i,1]}(t).
    \label{tildejnk}
\end{align}
In (\ref{tildejnk}), \newnota{n_tilde-count-var}{$\tilde N_n$}\defnota{n_tilde-count-var}{$ = \sum_{i=1}^\infty \mathbb{I}_{[0,n]\times[1,\infty)}\big((n\cdot U_i, Q^\leftarrow_n(\Gamma_i))\big)$} counts the number of point masses of $\hat N$ on the set $[0,n]\times[1,\infty)$. Intuitively, $\hat{J}_n^{\leqslant k}$ is composed by the k largest jumps in $\bar{X}_n$, but if some of those jumps are of size smaller than 1 before the $1/n$ normalization, they are offset in $\check{J}^{\leqslant k}_n$.

We arrive at the distributional representation 
\begin{equation}\label{decompose-xnbar}
    \bar{X}_n\stackrel{\ms{D}}{=}\hat{J}_n^{\leqslant k} + \check{J}^{\leqslant k}_n + \bar{H}_n^{\leqslant k}+ \bar{R}_n
\end{equation}
where $\hat{J}_n^{\leqslant k} $, $ \check{J}^{\leqslant k}_n $, $ \bar{H}_n^{\leqslant k}$ and $ \bar{R}_n$ are respectively defined in (\ref{hatjnk}), (\ref{tildejnk}), (\ref{checkjnk}) and (\ref{rnbar}). 
This representation provides a clear road map for demonstrating the main result of this subsection:
Theorem \ref{theorem:k-big-jump-ldp} proves the extended LDP satisfied by the jump sizes of $\hat{J}_n^{\leqslant k}$.
That principle helps to derive the extended LDP of $\hat{J}^{\leqslant k}_n$, which is summarized in  Proposition \ref{theorem:ex-ldp-kjumpprocess}.
Note that (\ref{decompose-xnbar}) is valid for $k \in \mb{N}$.
It turns out that, for large $k$, $\bar{X}_n$'s
is predominantly determined 
by $\hat{J}_n^{\leqslant k}$'s, hence the extended LDP of the later, through applying Proposition \ref{lemma:approximate}, implies the extended LDP of the former.
To meet the conditions of Proposition \ref{lemma:approximate},
several intermediate results are shown with respect to $\check{J}^{\leqslant k}_n$, $\bar{H}_n^{\leqslant k}$ and $\bar{R}_n$.

The proof of Theorem~\ref{theorem:ex-ldp-xnbar} hinges on Proposition~\ref{theorem:ex-ldp-kjumpprocess} and Lemma~\ref{lemma:checkjnk-asymp-neg} below, whose proofs are provided in Section~\ref{subsubsection:proof-of-theorem:ex-ldp-kjumpprocess} and Section~\ref{subsubsection:proof-of-lemma:checkjnk-asymp-neg}, respectively.

\begin{proposition}\label{theorem:ex-ldp-kjumpprocess}
\linksinthm{theorem:ex-ldp-kjumpprocess}
$\{\hat{J}_n^{\leqslant k}\}_{n\geq 1}$ satisfies the extended LDP on $(\mb{D},d_{J_1})$ with the rate function 
\begin{equation}\label{rateik}
\newnota{i_hat-k-largest-jump-process-rate-function}{\hat{I}_k(\xi)}\defnota{i_hat-k-largest-jump-process-rate-function}{ \delequal 
\begin{cases}
\sum_{t\in(0,1]}\I\{\xi(t)\neq \xi(t-)\} & \text{ if }\xi\in\mb{D}_{\leqslant k}\\
\infty & \text{ otherwise}
\end{cases}}
\end{equation}
and speed $r(\log n)$.
\end{proposition}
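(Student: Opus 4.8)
The plan is to transfer the extended LDP from the jump-size vector of $\hat J_n^{\leqslant k}$ — which is exactly the statement of Theorem~\ref{theorem:k-big-jump-ldp} — up to the path level via a continuous-mapping argument. Concretely, $\hat J_n^{\leqslant k}(t) = \frac1n\sum_{i=1}^k Q_n^\leftarrow(\Gamma_i)\I_{[U_i,1]}(t)$ is the image of the data $\big(Q_n^\leftarrow(\Gamma_1)/n,\dots,Q_n^\leftarrow(\Gamma_k)/n; U_1,\dots,U_k\big)\in\mathbb{R}_+^k\times[0,1]^k$ under the summation map $T:(z_1,\dots,z_k;u_1,\dots,u_k)\mapsto \sum_{i=1}^k z_i\I_{[u_i,1]}$. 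First I would record the extended LDP (with speed $r(\log n)$) for this $\mathbb{R}_+^k\times[0,1]^k$-valued vector: the $U_i$ contribute nothing to the rate (they are uniform, hence at LDP scale their contribution is $0$ on $[0,1]^k$ and $+\infty$ off it), and the first-coordinate block $(Q_n^\leftarrow(\Gamma_i)/n)_{i\le k}$ carries the rate function counting how many of the $z_i$ are strictly positive — this is precisely what Theorem~\ref{theorem:k-big-jump-ldp} should supply. Then I would push this forward through $T$. Because $T$ is continuous (small perturbations of jump sizes and jump locations move the path by a small amount in $d_{J_1}$), the extended contraction principle — e.g.\ the version in Lemma B.3 of \cite{MR4493394}, as used in Section~\ref{subsec:boundary-crossing-example} — yields that $\hat J_n^{\leqslant k}$ satisfies the extended LDP on $(\mathbb{D},d_{J_1})$ with rate function $\xi\mapsto \inf\{\,\#\{i: z_i>0\} : T(z;u)=\xi\,\}$.

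The remaining work is to identify this pushforward rate function with $\hat I_k$ in \eqref{rateik}. If $\xi\notin\mathbb{D}_{\leqslant k}$ then $\xi$ is not of the form $\sum_{i=1}^k z_i\I_{[u_i,1]}$ with $z_i\ge 0$ (it is either not a non-decreasing pure-jump path starting at $0$, or has more than $k$ jumps, or does not vanish at $0$ / is not continuous at $1$), so the infimum is over the empty set and equals $+\infty$, matching \eqref{rateik}. If $\xi\in\mathbb{D}_{\leqslant k}$ with exactly $j\le k$ jumps of sizes $a_1,\dots,a_j>0$ at times $t_1,\dots,t_j$, then one representation uses $z_i=a_i$, $u_i=t_i$ for $i\le j$ and $z_i=0$ for $j<i\le k$, giving value $j$; and no representation can do better, since any representation of $\xi$ must account for each of its $j$ jumps with a strictly positive $z_i$ (two coincident jump times can be merged, but that only lowers the count toward $j$, never below it). Hence the infimum equals $j=\sum_{t\in(0,1]}\I\{\xi(t)\neq\xi(t-)\}$, which is $\hat I_k(\xi)$. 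One should also note $\hat I_k$ is lower-semicontinuous on $(\mathbb{D},d_{J_1})$: its sub-level sets are the sets $\mathbb{D}_{\leqslant m}$ for $m\le k$, each of which is closed in $(\mathbb{D},d_{J_1})$ as noted in Section~\ref{section:notation-definitions}.

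I would also handle the boundary bookkeeping carefully: the jump at $t=0$ is excluded by the half-open indicator $\I_{[U_i,1]}$ together with $U_i\in[0,1]$ — if $U_i=0$ the jump occurs at $0$, but since $\hat J_n^{\leqslant k}(0)=0$ a.s. by right-continuity conventions only if we are careful, and in any case the target space is $\mathbb{D}_{\leqslant k}$ which forces $\xi(0)=0$, so representations with $u_i=0$ are automatically compatible with $z_i=0$-type adjustments; this is why the sum in $\hat I_k$ ranges over $t\in(0,1]$ rather than $[0,1]$. Continuity at $1$ is automatic since $\I_{[u_i,1]}$ has no jump at $1$.

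The main obstacle I expect is making the continuous-mapping/extended-contraction step fully rigorous in the \emph{extended} LDP setting — in particular verifying that $T$ is continuous from $\mathbb{R}_+^k\times[0,1]^k$ (with the product topology) into $(\mathbb{D},d_{J_1})$ uniformly enough that the $\epsilon$-inflation in the extended upper bound behaves well, and checking the hypotheses of the extended contraction principle given that the rate function here is not good (its level sets $\mathbb{D}_{\leqslant m}$ are closed but not compact). Verifying that the pushforward of a non-good rate function under a continuous map still produces the correct extended-LDP upper bound with the claimed rate function — rather than merely its lower-semicontinuous regularization — is the delicate point, and I would lean on the precise formulation of Lemma B.3 in \cite{MR4493394} to discharge it.
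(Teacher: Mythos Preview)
Your strategy of transferring the extended LDP from the jump-size vector (Proposition~\ref{theorem:k-big-jump-ldp}) to the path via the map $T(z;u)=\sum_{i=1}^k z_i\I_{[u_i,1]}$ has the right starting point, but the contraction-principle mechanism breaks down. The map $T$ is \emph{not} continuous from $\mathbb{R}_+^{k\downarrow}\times[0,1]^k$ into $(\mathbb{D},d_{J_1})$: with $z=(1,1,0,\ldots)$ and $u=(\tfrac12,\tfrac12,\ldots)$ one has $T(z;u)=2\I_{[1/2,1]}$, a single jump of size $2$, while perturbing to $u_\epsilon=(\tfrac12,\tfrac12+\epsilon,\ldots)$ produces two jumps of size $1$; since jumps do not merge under $J_1$ convergence, $d_{J_1}(T(z;u),T(z;u_\epsilon))\not\to0$. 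So the Lipschitz hypothesis behind the cited Lemma~B.3 of \cite{MR4493394} (and even plain continuity) is unavailable --- the real obstacle is not the non-goodness you flag. Your rate-function identification is also off at the boundary: $\xi=z\I_{\{1\}}=T\big((z,0,\ldots);(1,\cdot,\ldots)\big)$ has a preimage with one positive coordinate, so the naive pushforward assigns it value $1$, yet $\hat I_k(\xi)=+\infty$ because $\mathbb{D}_{\leqslant k}$ requires continuity at $1$. And the value $1$ is genuinely wrong: for $\epsilon<z/2$ the $J_1$-ball $B_\epsilon(z\I_{\{1\}})$ coincides with the uniform ball, and since $\hat J_n^{\leqslant k}$ is a.s.\ constant on $[\max_i U_i,1]$ with $\max_i U_i<1$, one has $\P(\hat J_n^{\leqslant k}\in B_\epsilon(z\I_{\{1\}}))=0$, so a lower bound with rate $1$ there would fail. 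The analogous issue arises at $u_i=0$.

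The paper avoids both problems by arguing the two bounds by hand rather than through a general contraction. It first passes to the closed subspace $\mathbb{D}_{\leqslant k}$ via Lemma~\ref{E-LDP-on-subspaces-full-measure}, using $\P(\hat J_n^{\leqslant k}\in\mathbb{D}_{\leqslant k})=1$. For the lower bound, given $\xi\in G$ with $j$ jumps it constructs an explicit product-type neighborhood $C_\delta\subset G$ on which the event $\{\hat J_n^{\leqslant k}\in C_\delta\}$ factorizes as $\{\text{ordered sizes in an open subset of }\mathbb{R}_+^{k\downarrow}\}\cap\{\text{times in an open box}\}$, and then invokes Proposition~\ref{theorem:k-big-jump-ldp} on the size factor. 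For the upper bound, if $\lim_{\epsilon\downarrow0}\inf_{F^\epsilon}\hat I_k=i_F$ then $d_{J_1}(F,\mathbb{D}_{\leqslant i_F-1})>0$, so every path in $F$ has its $i_F$-th largest jump exceeding some fixed $r>0$; this gives $\P(\hat J_n^{\leqslant k}\in F)\le\P(\Gamma_{i_F}\le Q_n(nr))$, and the limit \eqref{limit5} finishes.
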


\begin{lemma}\label{lemma:checkjnk-asymp-neg}
\linksinthm{lemma:checkjnk-asymp-neg}
Recall $\bar{H}_n^{\leqslant k}$ defined in (\ref{checkjnk}).
We conclude
\begin{equation*}
    \lim_{k\rightarrow\infty}\limsup_{n\rightarrow\infty} \frac{\log\pr{\| \bar{H}_n^{\leqslant k}\|> \epsilon}}{r(\log n)} = -\infty
\end{equation*}
\end{lemma}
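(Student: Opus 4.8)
To prove Lemma~\ref{lemma:checkjnk-asymp-neg}, the plan is to show that, on an event of overwhelming probability, $\bar H_n^{\leqslant k}$ agrees with a compensated compound Poisson martingale whose jumps have been truncated at level $\sqrt n$, and then to bound the sup-norm of that martingale by an exponential (Freedman/Bennett-type) inequality. The bound we aim for is $\limsup_{n\to\infty}\frac{\log\P(\|\bar H_n^{\leqslant k}\|>\epsilon)}{r(\log n)}\le-(k+1)2^{-\gamma}$ for every fixed $\epsilon>0$ and $k$, which tends to $-\infty$ as $k\to\infty$.

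Fix $\epsilon>0$ and $k\in\mathbb N$, and recall that $\bar H_n^{\leqslant k}(t)=\frac1n\sum_{i\le N(nt)}\big(Z_i\I\{P_n(i)>k\}-\mu_1\big)$, where $Z_1,Z_2,\dots$ are i.i.d.\ with law $\nu|_{[1,\infty)}/\nu_1$ and $N(n)=\hat N([0,n]\times[1,\infty))$ is Poisson with mean $n\nu_1$. First I would introduce the event $E_n\triangleq\big\{\#\{i\le N(n):Z_i>\sqrt n\}\le k\big\}$. On $E_n$, every index $i$ with $P_n(i)>k$ carries $Z_i\le\sqrt n$, so $Z_i\I\{P_n(i)>k\}=(Z_i\wedge\sqrt n)\I\{P_n(i)>k\}$ there, and rearranging terms gives, on $E_n$,
\[
\bar H_n^{\leqslant k}=\tilde M_n+\mathrm{(I)}_n+\mathrm{(II)}_n ,
\]
where $\tilde M_n(t)\triangleq\frac1n\sum_{i\le N(nt)}(Z_i\wedge\sqrt n)-t\nu_1\,\E{Z_1\wedge\sqrt n}$ is a L\'evy martingale with jumps bounded by $1/\sqrt n$ and deterministic predictable quadratic variation $\langle\tilde M_n\rangle(1)=n^{-1}\nu_1\,\E{(Z_1\wedge\sqrt n)^2}\le\sigma^2/n$ with $\sigma^2\triangleq\nu_1\,\E{Z_1^2}<\infty$; $\mathrm{(I)}_n(t)=t\nu_1\big(\E{Z_1\wedge\sqrt n}-\mu_1\big)+\mu_1\big(t\nu_1-\tfrac1n N(nt)\big)$ is the compensator mismatch; and $\mathrm{(II)}_n(t)=-\frac1n\sum_{i\le N(nt):\,P_n(i)\le k}(Z_i\wedge\sqrt n)$ is the truncated top-$k$ term. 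Finiteness of $\E{Z_1^2}$ (indeed of all moments of $Z_1$) follows from Potter's bounds, since $r$ is regularly varying with index $\gamma>1$.

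The two correction terms are negligible uniformly in $t$: $\|\mathrm{(II)}_n\|\le k\sqrt n/n=k/\sqrt n\to0$, while $\|\mathrm{(I)}_n\|\le\nu_1\,\E{(Z_1-\sqrt n)^+}+\mu_1\,n^{-1}\|N(n\cdot)-n\nu_1 e\|$, whose first summand tends to $0$ (as $\E{Z_1}<\infty$) and whose second summand is controlled on $D_n\triangleq\{n^{-1}\|N(n\cdot)-n\nu_1 e\|\le\epsilon/(8\mu_1)\}$. Hence there is $n_0=n_0(k,\epsilon)$ such that for $n\ge n_0$ one has $\{\|\bar H_n^{\leqslant k}\|>\epsilon\}\subseteq E_n^c\cup D_n^c\cup\{\|\tilde M_n\|>\epsilon/2\}$, and it remains to estimate these three probabilities on the scale $r(\log n)$. (a) The number of points of $\hat N$ in $[0,n]\times(\sqrt n,\infty)$ is Poisson with mean $\Lambda_n=n\nu(\sqrt n,\infty)=n\exp(-r(\tfrac12\log n))$, which tends to $0$ since $r(x)/x\to\infty$; thus $\P(E_n^c)\le C_k\Lambda_n^{k+1}$ for large $n$, and since $r(\tfrac12\log n)\sim2^{-\gamma}r(\log n)$ and $\log n=o(r(\log n))$,
\[
\limsup_{n\to\infty}\frac{\log\P(E_n^c)}{r(\log n)}\le(k+1)\lim_{n\to\infty}\frac{\log n-r(\tfrac12\log n)}{r(\log n)}=-(k+1)2^{-\gamma}.
\]
(b) $N(n\cdot)-n\nu_1 e$ is a L\'evy martingale with unit jumps, so a Freedman/Bennett-type exponential inequality gives $\P(D_n^c)\le e^{-c_1 n}$ for some $c_1>0$. (c) Applying the same inequality to $\pm\tilde M_n$ with jump bound $n^{-1/2}$ and variance bound $\sigma^2/n$ gives $\P(\|\tilde M_n\|>\epsilon/2)\le 2e^{-c_2(\epsilon)\sqrt n}$ for $n$ large. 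Since $n$ and $\sqrt n$ both grow faster than the polylogarithmic quantity $r(\log n)$, the contributions of (b) and (c) to the $\limsup$ of log-probabilities over $r(\log n)$ are $-\infty$. Combining the three estimates yields $\limsup_{n\to\infty}\frac{\log\P(\|\bar H_n^{\leqslant k}\|>\epsilon)}{r(\log n)}\le-(k+1)2^{-\gamma}$, and letting $k\to\infty$ proves the lemma.

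The delicate point, and what I expect to be the main obstacle, is the choice of truncation level: it must grow with $n$ (otherwise discarding only the $k$ largest jumps would not leave all remaining jumps truncated, except on the rare event $E_n^c$, whose probability must nonetheless beat $r(\log n)$ for each fixed $k$ and diverge as $k\to\infty$), yet it must be $o(n)$ so that $\tilde M_n$ has vanishing jump sizes and the Bernstein exponent is of polynomial order in $n$, comfortably dominating the polylogarithmic speed $r(\log n)$. The value $\sqrt n$ (or any $n^{1-\rho}$ with $\rho\in(0,1)$) threads this needle; the remaining estimates are routine moment and concentration bookkeeping.
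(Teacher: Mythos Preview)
Your argument is correct, and it takes a genuinely different route from the paper's. The paper truncates the jumps at level $n\delta$ with a tunable constant $\delta=\delta(k)$ chosen so that $k\delta<\epsilon/2$ and $-\epsilon/(14\delta)<-k$; it then conditions on $\{Z_{P_n^{-1}(k)}\le n\delta,\,N(n)\ge k\}$, splits the deviation into positive and negative parts, applies Etemadi's inequality to pull the supremum outside the probability, and invokes two separately proved lemmata (a Cram\'er--Chernoff bound for the upper tail and a Bernstein bound for the lower tail of the truncated random walk) to obtain $\limsup_n r(\log n)^{-1}\log\P(\|\bar H_n^{\leqslant k}\|>\epsilon)\le -k$. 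You instead truncate at the fixed level $\sqrt n$, recognize $\tilde M_n$ as a compensated compound Poisson martingale with jump bound $n^{-1/2}$ and deterministic predictable quadratic variation $O(n^{-1})$, and apply a single Freedman/Bennett martingale inequality to get the $e^{-c\sqrt n}$ bound directly; your ``bad event'' $E_n^c$ is controlled by a Poisson tail with mean $n\nu(\sqrt n,\infty)$, yielding the weaker but still divergent rate $-(k+1)2^{-\gamma}$. Your approach is more self-contained (no auxiliary lemmata, no $\delta$-tuning), while the paper's has the advantage that its two random-walk lemmata are reused verbatim in the random-walk extended LDP of Section~\ref{section:randomwalk-ldp-J1}. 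One small point you leave implicit: on $\{N(n)<k\}$ one has $\|\bar H_n^{\leqslant k}\|\le \mu_1 k/n<\epsilon$ for large $n$, so this case is vacuous and need not be folded into $E_n$.
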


With these in hand, we are ready to prove Theorem~\ref{theorem:ex-ldp-xnbar}.

\begin{proof}[Proof of Theorem \ref{theorem:ex-ldp-xnbar}]
\linksinpf{theorem:ex-ldp-xnbar}
We apply Proposition \ref{lemma:approximate}, with $\bar{X}_n$ and $\hat J_n^{\leqslant k}$ being $X_n$ and $Y^k_n$ in the Proposition. To apply Proposition \ref{lemma:approximate}, we need to verify the following four conditions:
\begin{enumerate}[(1)]
    \item 
    For each $k\in\mb{N}$, the sequence $\{\hat J_n^{\leqslant k}\}_{n\geq 1}$ satisfies an extended LDP with rate $\hat I_k$ and speed $\{r(\log n)\}_{n\geq 1}$.
    \item 
    For any closed set $F$, 
    \begin{equation}\label{require1-xnbar-theorem}
        \lim_{k\rightarrow\infty}\inf_{\xi\in F} \hat I_k(\xi)\geq \inf_{\xi\in F} I^{J_1}(\xi).
    \end{equation}
    \item
    For $\forall\delta > 0$ and any open set $G$, there exists $\epsilon > 0$ and $K\geq 0$ such that when $k\geq K$
    \begin{equation}\label{require2-xnbar-theorem}
        \inf_{\xi\in G^{-\epsilon}} \hat I_k(\xi)\leq\inf_{\xi\in G} I^{J_1}(\xi)+\delta.
    \end{equation}

    \item
    For $\forall\epsilon > 0$,
    \begin{equation}\label{require3-xnbar-theorem}
        \lim_{k\rightarrow\infty}\limsup_{n\rightarrow\infty} \frac{\log\pr{d_{J_1}(\bar X_n, \hat J_n^{\leqslant k}) > \epsilon}}{\log^\gamma  n} = -\infty.
    \end{equation}
\end{enumerate}

Condition (1) holds due to Proposition \ref{theorem:ex-ldp-kjumpprocess}. Condition (2) is direct as $\hat I_k\geq \bar I$ for any $k\in\mb{N}$.
For the condition (3), we verify a stronger equality that replace (\ref{require2-xnbar-theorem}):
\begin{equation}\label{require2-stronger-xnbar-theorem}
        \inf_{\xi\in G^{-\epsilon}} \hat I_k(\xi) =\inf_{\xi\in G} I^{J_1}(\xi).
\end{equation}
To verify (\ref{require2-stronger-xnbar-theorem}), we consider two cases: 
\begin{itemize}
    \item 
    If $\inf_{\xi\in G} I^{J_1}(\xi) = \infty$, it implies that $G\cap \mb{D}_{<\infty} = \emptyset$. Therefore, $\inf_{x\in G^{-\epsilon}} \hat{I}_k(x) = \infty$ for any $k\in\mb{N}$;

    \item
    If $0\leq \inf_{\xi\in G} I^{J_1}(\xi) = m<\infty$, the following inequalities hold for any $\epsilon >0$
\begin{equation*}
    \inf_{\xi\in G^{-\epsilon}} \hat{I}_k(\xi) \geq \inf_{\xi\in G} \hat{I}_k(\xi) \geq  \inf_{\xi\in G} I^{J_1}(\xi) = m
\end{equation*}
Additionally, $\inf_{\xi\in G} I^{J_1}(\xi) = m$ suggests the existence of some $\eta\in G\cap\mb{D}_{=m}$. Given that $G$ is open, there exists $\epsilon >0 $ such that $B_\epsilon(\eta) \subset G$, implying $\eta\in G^{-\epsilon}$. Therefore, for $k > m$, we have
\begin{equation*}
    \inf_{\xi\in G^{-\epsilon}} \hat{I}_k(x)\leq \hat{I}_k(\eta)= m
\end{equation*}
These two inequalities together confirm that  $\inf_{\xi\in G^{-\epsilon}} \hat{I}_k(\xi) =\inf_{\xi\in G} I^{J_1}(\xi)$ for $k > m$.
\end{itemize}  
For the condition (4), by $\bar{X}_n$'s representation in (\ref{decompose-xnbar}), we have 
\begin{align*}
    &
    \pr{
        d_{J_1}(\tilde{X}_n, \hat{J}_n^{\leqslant k})
        > 
        \epsilon
    }
    \leq 
    \pr{
        \|\tilde{X}_n- \hat{J}_n^{\leqslant k}\| 
        >
        \epsilon
    }
    \\
    &
    \leq
    \pr{    
        \|\bar{H}_n^{\leqslant k}\| 
        >
        \frac{\epsilon}{3}
    } 
    + 
    \pr{
        \|\check{J}^{\leqslant k}_n\| 
        >
        \frac{\epsilon}{3}
    }
    +
    \pr{
        \|\bar{R}_n\| 
        >
        \frac{\epsilon}{3}
    }.
\end{align*}
Therefore, 
\begin{align}
    &
    \lim_{k\rightarrow\infty}
    \limsup_{n\rightarrow\infty}
    \frac
        {\log\pr{d_{J_1}(\bar{Y}_n, \hat{J}_n^{\leqslant k}) > \epsilon}}{r(\log n)}
    \leq
    \lim_{k\rightarrow\infty}
    \limsup_{n\rightarrow\infty}
    \frac
        {\log\pr{\|\bar{Y}_n- \hat{J}_n^{\leqslant k}\| > \epsilon}}{r(\log n)}
    \nonumber
    \\
    &
    \leq
    \lim_{k\rightarrow\infty}
    \limsup_{n\rightarrow\infty}
    \frac{
        \log\Big(
        \pr{
            \|\bar{H}_n^{\leqslant k}\| 
            >
            \frac{\epsilon}{3}
        }
        +
        \pr{
            \|\check{J}^{\leqslant k}_n\| > \frac{\epsilon}{3}
        }
        +\pr{
        \|\bar{R}_n\| 
        >
        \frac{\epsilon}{3}}\Big)}{r(\log n)}
    \nonumber
    \\
    &
    \leq
    \max
    \big\{
        \underbrace
        {
        \lim_{k\rightarrow\infty}
        \limsup_{n\rightarrow\infty}
        \frac{
            \log
            \pr{
                \|\bar{H}_n^{\leqslant k}\|
                >
                \frac{\epsilon}{3}}}{r(\log n)}
        }_{(\text{I})}, \underbrace
        {
            \lim_{k\rightarrow\infty}
            \limsup_{n\rightarrow\infty} \frac{\log\pr{\|\check{J}^{\leqslant k}_n\|
            > 
            \frac{\epsilon}{3}}
        }
        {
            r(\log n)
        }
        }_{(\text{II})},
        \underbrace
        {
            \limsup_{n\rightarrow\infty}
            \frac
                {\log\pr{\|\bar{R}_n\| 
                > 
                \frac{\epsilon}{3}}}
            {r(\log n)}
        }_{(\text{III})}
    \big\}.
    \label{tempequation}
\end{align}
Among the three terms in (\ref{tempequation}), (I) $=-\infty$ by Lemma \ref{lemma:checkjnk-asymp-neg}.
 For term (II), by the definition of $\check{J}^{\leqslant k}_n$ (\ref{tildejnk}), we have
\begin{equation*}
    (\text{II})= \pr{\sup_{t\in[0,1]}\Big|- \frac{1}{n}\I\{\tilde{N}_n < k\}\sum_{i=\tilde{N}_n+1}^k Q_n^\leftarrow
    (\Gamma_i)\I_{[U_i,1]}(t)\Big| > \frac{\epsilon}{3}}\leq\pr{\tilde N_n < k}
\end{equation*}
Since $\tilde N_n$ can be treated as $n$ independent summation of Poisson random variables with mean $\nu[1,\infty)$, Sanov's theorem concludes that $\pr{\tilde N_n < k}\sim e^{-n\cdot C}$ for some constant $C$. This implies (II)$=-\infty$. 
For term (III), $\bar{R}_n$ is a L\'evy process such that $\bar{R}_n(1)$ has a finite moment generating function.
Theorem 2.5 of \cite{MR1207223} confirms that term (III) increases at a linear rate, thus (III)$=-\infty$.
Hence condition (4) is satisfied and the proof is finished.
\end{proof}

Now we move on to the proof of Theorem~\ref{theorem:ex-ldp-xnbar-m1prime}. 
Before providing the proof, we establish the following lemma.

\begin{lemma}\label{lemma: rate-func-m1prime-levelset-closed}
\linksinthm{lemma: rate-func-m1prime-levelset-closed}
For any $j\in\mb{N}$,  $\hat{\mb{D}}_{\leqslant j}$ is closed  w.r.t the $M'_1$ topology.
\end{lemma}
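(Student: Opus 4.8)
The plan is to show that a convergent sequence in $(\mb{D}, d_{M'_1})$ whose elements lie in $\hat{\mb{D}}_{\leqslant j}$ has a limit that again lies in $\hat{\mb{D}}_{\leqslant j}$. So let $\xi_n \to \xi$ in the $M'_1$ topology with each $\xi_n \in \hat{\mb{D}}_{\leqslant j}$; recall this means $\xi_n$ is non-decreasing, pure jump, vanishing-or-positive at $0$, with at most $j$ jumps. I will first argue that $\xi$ is non-decreasing: $M'_1$ convergence of monotone functions forces the limit to be monotone (e.g.\ via convergence at continuity points together with the ordering built into the parametrizations, or by noting that the extended completed graphs $\Gamma(\xi_n)$ converge in Hausdorff distance and each is a monotone staircase). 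Next I want to show $\xi$ has at most $j$ jumps. The key device is Lemma~\ref{lemma:constant-jump-m1prime-distance}: if $\xi$ had $j+1$ distinct jump times $t_0 < t_1 < \dots < t_j$ with jump sizes $\geq 2\delta$ for some $\delta>0$, then on each jump the only way for $d_{M'_1}(\xi_n,\xi)$ to be small is for $\xi_n$ to also make a comparable increment near $t_i$ — more precisely, if $\xi_n$ were constant on a $\delta$-neighborhood of some $[t_i{-},t_i]$ then $d_{M'_1}(\xi_n,\xi)\geq\delta$. Choosing the neighborhoods disjoint (possible since the $t_i$ are distinct and finitely many), once $d_{M'_1}(\xi_n,\xi) < \delta$ we conclude $\xi_n$ has a nonconstant stretch in each of the $j+1$ disjoint neighborhoods, hence at least $j+1$ jumps (as $\xi_n$ is pure jump), contradicting $\xi_n\in\hat{\mb{D}}_{\leqslant j}$.

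Carrying this out requires knowing a priori that $\xi$ itself is a pure jump (piecewise constant) path, not merely non-decreasing — otherwise "the jump times of $\xi$" is not the right object. So I would insert an intermediate step: a non-decreasing $M'_1$-limit of paths each having at most $j$ jumps is itself piecewise constant with at most $j$ jumps. One clean way: suppose $\xi$ has total increment $\xi(1)-0$ concentrated on more than $j$ points or on a continuous part. If $\xi$ has $j+1$ points of increase that are genuine jumps we are already done by the paragraph above; the remaining possibility is that $\xi$ is continuous and strictly increasing on some interval (or has between $1$ and $j$ jumps plus a continuous increasing part). Pick $j+1$ points $0 \le s_0 < s_1 < \cdots < s_j \le 1$ at which $\xi$ increases by at least $2\delta$ across a short interval $[s_i, s_i']$ — possible because a non-decreasing function with a non-trivial continuous increasing part (or simply with more than $j$ ``units'' of increase) admits arbitrarily many disjoint short intervals of increase $\ge 2\delta$ for $\delta$ small. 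Here I should use the $M'_1$ analogue directly: I would prove (or cite from the $M'_1$ machinery in \cite{MR4187125,MR1876437}) that if $\xi_n \to \xi$ in $M'_1$ and $\xi$ increases by $\ge 2\delta$ on $[s,s']$, then for $n$ large $\xi_n$ increases by $\ge 2\delta - o(1)$ somewhere in a slightly enlarged interval $[s-\delta, s'+\delta]$ — essentially the contrapositive of Lemma~\ref{lemma:constant-jump-m1prime-distance}. Applying this across $j+1$ disjoint enlarged intervals again contradicts $\xi_n \in \hat{\mb{D}}_{\leqslant j}$.

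So the logical skeleton is: (1) $\xi$ is non-decreasing; (2) $\xi$ cannot have more than $j$ "units of jump" — formalized via $j+1$ disjoint short intervals of increase $\ge 2\delta$ and Lemma~\ref{lemma:constant-jump-m1prime-distance} — hence $\xi$ has finitely many jump times and no continuous increasing part, i.e.\ $\xi$ is pure jump with at most $j$ jumps; (3) $\xi(0) \ge 0$ is immediate from $\xi_n(0) \ge 0$ and pointwise convergence at $0$ (or from the graph containing the segment up to $\xi(0)$). Combining, $\xi \in \hat{\mb{D}}_{\leqslant j}$, so the set is closed.

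The main obstacle I anticipate is step (2)/the piecewise-constancy claim: translating "$\xi$ has a lot of increase" into a clean lower bound on $d_{M'_1}(\xi_n,\xi)$ when $\xi$ is not yet known to be a staircase. Lemma~\ref{lemma:constant-jump-m1prime-distance} as stated requires $\xi$ (the ``nice'' path in its statement) to be constant on a neighborhood, i.e.\ it is tailored to detecting that one path has a jump the other misses; I need the version detecting that $\xi_n$ misses increase that $\xi$ has, which I would obtain by a symmetric argument using Lemma~\ref{lemma:m1prime-gap-criteria} (take $(u,r)$ on $\Gamma(\xi)$ at the top of a $2\delta$-increase and note $\Gamma(\xi_n)$ stays $\delta$-far if $\xi_n$ is flat there). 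Getting the quantifiers right — first fix $\delta$ and the $j+1$ disjoint windows from $\xi$, then push $n$ large — is the delicate bookkeeping, but no deep new idea is needed beyond the two lemmata already proved.
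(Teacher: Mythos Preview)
Your approach is correct and uses the same two key lemmata (Lemma~\ref{lemma:m1prime-gap-criteria} and Lemma~\ref{lemma:constant-jump-m1prime-distance}) as the paper, but frames the argument dually: you prove sequential closedness of $\hat{\mb{D}}_{\leqslant j}$, whereas the paper shows its complement is open by partitioning $\mb{D}\setminus\hat{\mb{D}}_{\leqslant j}$ into four pieces (paths with $\eta(0)<0$; non-monotone paths; monotone paths with a continuous increasing part; monotone pure-jump paths with more than $j$ jumps) and showing each $\eta$ in the complement has strictly positive $M'_1$-distance to $\hat{\mb{D}}_{\leqslant j}$. The paper's framing sidesteps exactly the awkwardness you flag in step~(2): by fixing a bad $\eta$ and bounding it away from every $\xi\in\hat{\mb{D}}_{\leqslant j}$, one never needs to first establish that the limit is piecewise constant before counting its jumps --- the case analysis is on the already-known structure of $\eta$. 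One small correction: your justification for $\xi(0)\ge 0$ via ``pointwise convergence at $0$'' is not valid under $M'_1$ convergence in general; your parenthetical alternative via the extended graph and Lemma~\ref{lemma:m1prime-gap-criteria} is the correct route, and is exactly what the paper does for its case $A$.
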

\begin{proof}
\linksinpf{lemma: rate-func-m1prime-levelset-closed}
Note that $\mb{D}\backslash\hat{\mb{D}}_{\leqslant j}=A\cup B\cup C \cup D$ where
\begin{align*}
    A
    &
    \triangleq
    \{\eta\in\mb{D}: \eta(0) < 0\}
    \\
    B
    &
    \triangleq
    \{\eta\in\mb{D}: \eta(0) \geq 0,\, \eta\text{ is not a non-decreasing function}\}
    \\
    C 
    &
    \triangleq 
    \{\eta\in\mb{D}: \eta(0) \geq 0,\, \eta\text{ is non-decreasing, but not a pure jump function}\}
    \\
    D
    &
    \triangleq
    \{\eta\in\mb{D}: \eta(0) \geq 0,\, \eta\text{ is  a non-decreasing pure jump function with more than $j$ jumps}\}
\end{align*}
We argue that any given $\eta \notin \hat \D_{\sleq j}$ 
is bounded away from $\hat{\mb{D}}_{\leqslant j}$ by considering $\eta$'s in $A$, $B$, $C$, and $D$ separately. 

Suppose that $\eta \in A$. 
For any $\xi \in \hat \D_{\sleq j}$, $(u,v)\in \Gamma(\xi)$ implies that $u\geq 0$, and hence, $d\big((\eta(0), 0),\, \Gamma(\xi)\big) \geq |\eta(0)|$.  
This along with Lemma~\ref{lemma:m1prime-gap-criteria} implies that $d_{M_1'}(\eta, \xi) > |\eta(0)|>0$. Since $\xi$ was arbitrarily chosen in $\hat \D_{\sleq j}$, we conclude that $d_{M_1'}(\eta, \hat \D_{\sleq j}) > |\eta(0)|>0$.

Suppose that $\eta \in B$. 
Then, there exists $\delta>0$ and $t_1,t_2\in [0,1]$ such that $t_2 - t_1 > 4\delta$ and $\eta(t_1) - \eta(t_2) > 4 \delta$. 
We claim that for any $\xi \in \hat \D_{\sleq j}$, 
$d_{M_1'}(\eta, \xi) > \delta$, and hence, $d_{M_1'}(\eta, \hat \D_{\sleq j}) > \delta$. 
To see why, suppose not.
That is, suppose that there exists $\xi \in \hat \D_{\sleq j}$ such that $d_{M_1'}(\eta, \xi) \leq \delta$.
Then, due to Lemma~\ref{lemma:m1prime-gap-criteria}, there are $(u_1,s_1),(u_2,s_2)\in \Gamma(\xi)$ such that 
$d\big((\eta(t_1), t_1), (u_1, s_1)\big) \leq 2\delta$ where 
$u_1 \in [\xi(s_1-),\, \xi(s_1)]$, and
$d\big((\eta(t_2), t_2), (u_2, s_2)\big) \leq 2\delta$ where
$u_2 \in [\xi(s_2-),\, \xi(s_2)]$.
Note that these imply 
$$
s_1 \leq t_1 - 2\delta < t_2 + 2\delta \leq s_2
$$
and
$$
\xi(s_1) \geq u_1 \geq \eta(t_1)-2\delta \geq \eta(t_2)+ 2\delta \geq u_2 \geq \xi(s_2-),
$$
which is contradictory to $\xi$ being non-decreasing. 
This proves the claim.

Suppose that $\eta \in C$.
Then there exists an interval within $[0,1]$ on which $\eta$ is continuous and strictly increasing. 
By subdividing the increment over this interval into the ones with small enough increments, one can find a sufficiently small $\delta>0$ and more than $j$ non-overlapping subintervals of the form $[s-\delta, t+\delta]$ such that $t-s > 2\delta$ and $\eta(t) - \eta(s) > 2\delta$. 
For any $\xi \in \hat \D_{\sleq j}$, it needs to be constant on at least one of these intervals. 
From Lemma~\ref{lemma:constant-jump-m1prime-distance}, we see that $d_{M_1'}(\eta, \xi) \geq \delta$. 
Again, since this is for an arbitrary $\xi\in \hat \D_{\sleq j}$, we conclude that $d_{M_1'}(\eta, \hat \D_{\sleq j}) \geq \delta$. 

Suppose that $\eta \in D$. 
Then $\eta = \sum_{i=1}^k z_i\I_{[t_1,1]}$ some $k > j$ and $0\leq t_1 < t_2 < \cdots < t_k \leq 1$.
Set $T_i \teq [t_i-\delta, t_i + \delta] \cap [0,1]$ and 
pick $\delta > 0$ small enough so that
$T_i$'s are disjoint
and
$    
2\delta \leq \min_{i= 1,\cdots, k} \big(\eta(t_i) - \eta(t_i-)\big)
$.
Then, any path $\xi\in \hat{\mb{D}}_{\leqslant j}$ is constant on at least one of $T_i$'s, and $\eta$ jumps at $t_i$ with jump size no smaller than $2\delta$. 
From Lemma \ref{lemma:constant-jump-m1prime-distance}, we have that $d_{M'_1}(\xi, \eta)\geq \delta/2$. Since $\xi$ is arbitrary in path in $\hat \D_{\sleq j}$, we conclude that  $d_{M_1'}\big(\eta, \hat{\mb{D}}_{\leqslant j} \big) > \delta$.
\end{proof}

\begin{proof}[Proof of Theorem \ref{theorem:ex-ldp-xnbar-m1prime}]
\linksinpf{theorem:ex-ldp-xnbar-m1prime}
Since $I^{M'_1}$ is obviously non-negative, and the sublevel set $\Psi_{I^{M'_1}}(\alpha) = \hat \D_{\sleq \lfloor \alpha \rfloor}$ of $I^{M'_1}$ is closed for any $\alpha\geq 0$ due to Lemma \ref{lemma: rate-func-m1prime-levelset-closed}, $I^{M_1'}$ is a rate function.

Consider closed set $F$ and open set $G$ under the $M'_1$ topology. They are also closed and open under the $J_1$ topology, respectively. 
Due to Theorem \ref{theorem:ex-ldp-xnbar}, we have:
\begin{equation}\label{prem1primeldp}
        -\inf_{x\in G} I^{J_1}(x) \leq \liminf_{n\rightarrow\infty} \frac{\log\pr{\bar{X}_n\in G}}{r(\log n)}\text{ and }  \limsup_{n\rightarrow\infty} \frac{\log\pr{\bar{X}_n\in F}}{r(\log n)}\leq-\inf_{x\in F^{\epsilon,J_1}} I^{J_1}(x)
\end{equation}
In the above inequalities, we use extra the super-script in $F^{\epsilon, J_1}$ to clarify that it is the $\epsilon$-fattening w.r.t.\ $J_1$ topology. 
Likewise, we will denote the $\epsilon$-fattening w.r.t.\ $M_1'$ topology with $F^{\epsilon, M'_1}$.

Note that the $M'_1$ metric is bounded by the $J_1$ metric, hence $F^{\epsilon, J_1}\subset F^{\epsilon, M'_1}$. 
Moreover, $I^{M'_1}\leq I^{J_1}$ obviously from their definitions. 
Hence 
\begin{equation}\label{m1primeldplower}
    \limsup_{n\rightarrow\infty} \frac{\log\pr{\bar{X}_n\in F}}{r(\log n)}\leq-\inf_{x\in F^{\epsilon,J_1}} I^{J_1}(x)\leq-\inf_{x\in F^{\epsilon,M'_1}} I^{M'_1}(x)
\end{equation}
which proves the upper bound for the extended LDP on $(\mb{D}, M'_1)$.

We claim that for any open set $G$ with respect to the $M_1'$ topology, the following holds:
\begin{equation}\label{equ:equal-of-rate-m1prime-j1}
    -\inf_{x\in G} I^{M'_1}(x)  = -\inf_{x\in G} I^{J_1}(x) 
\end{equation}
The '$\geqslant$' direction is trivial as $I^{M'_1}\leq I^{J_1}$. 
To see the '$\leqslant$' direction, we assume $-\inf_{x\in G} I^{M'_1}(x) = -m$ for some integer $m$. Otherwise, $-\inf_{x\in G} I^{M'_1}(x) = -\infty$ makes the lower bound inequality of extended LDP holds automatically. Under this assumption, $G$ contains some $\xi\in\hat{\mb{D}}_{=m}$, which can be categorized into one of the following two scenarios:
\begin{itemize}
    \item 
    If none of the jump of $\xi$ occur happen at $0$ or $1$. This implies $\xi\in\mb{D}_{=m}$ and 
    \begin{equation*}
        -\inf_{x\in G} I^{J_1}(x) \geq -I^{J_1}(\xi) = -m = -\inf_{x\in G} I^{M'_1}(x)
    \end{equation*}
    
    \item
    If jumps of $\xi$ occur at $0$ or $1$, or both. We can construct a new path $\xi'\in\mb{D}_{=m}$ by copying $\xi$'s jump sizes but only perturbing $\xi$'s jump location at $0$($1$) to $\delta$($1-\delta$).  By choosing $\delta$ arbitrarily small, $d_{M'_1}(\xi, \xi')\leq \delta$, hence $\xi'\in G$ due to $G$ is an open set. This implies 
    \begin{equation*}
        -\inf_{x\in G} I^{J_1}(x) \geq -I^{J_1}(\xi') = -m = -\inf_{x\in G} I^{M'_1}(x)
    \end{equation*}
\end{itemize}
Combining the conclusions from both scenarios confirm the equality (\ref{equ:equal-of-rate-m1prime-j1}), which further imlies 
\begin{equation}\label{m1primeldplower}
    -\inf_{x\in G} I^{M'_1}(x) = -\inf_{x\in G} I^{J_1}(x) \leq \liminf_{n\rightarrow\infty} \frac{\log\pr{\bar{X}_n\in F}}{r(\log n)}
\end{equation}
This proves the lower bound for the extended LDP on $(\mb{D}, M'_1)$ and finishes the proof.
\end{proof}

\subsubsection{Proof of Proposition~\ref{theorem:ex-ldp-kjumpprocess}}
\label{subsubsection:proof-of-theorem:ex-ldp-kjumpprocess}
Proposition~\ref{theorem:ex-ldp-kjumpprocess} 
hinges on the extended LDP for $\big({Q_n^\leftarrow (\Gamma_1)}/{n},\cdots, {Q_n^\leftarrow(\Gamma_k)}/{n}\big)$,
which is established
in Proposition~\ref{theorem:k-big-jump-ldp}. 
We divide the proof of Proposition~\ref{theorem:k-big-jump-ldp} into the extended LDP lower bound (Proposition~\ref{proposition:k-big-jump-ldp-lowerbound}) and upper bound (Proposition~\ref{proposition:k-big-jump-ldp-upperbound}). 
The following lemma is useful in the proof of Proposition~\ref{proposition:k-big-jump-ldp-lowerbound}.

\begin{lemma}
\label{lemma:neighborhood-of-exp-dist-for-lower-bound}
\linksinthm{lemma:neighborhood-of-exp-dist-for-lower-bound}
Suppose that $x = (x_1,\ldots, x_k) \in \R^{k\downarrow}_+$ and and $\iota = \check I_k(x)$. 
Let
$$
E(x;\delta) \teq \{ y \in \R_+^k: y_i \in E_j(x,y;\delta)\}
$$
where
\begin{align*}
E_j(x,y;\delta) = 
\left\{\begin{array}{ll}
    \big(Q_n(n(x_1+\delta),\, Q_n(n(x_1-\delta)\big) 
        &\text{for } j=1
        ;
    \\[3pt]
    \textstyle
    \big(Q_n(n(x_j+\delta)- \sum_{i=1}^{j-1} y_i,\, Q_n(n(x_j-\delta)-\sum_{i=1}^{j-1} y_i\big)
        &\text{for } j = 2,\ldots, \iota
        ;
    \\[3pt]
    \big(Q_n(n\delta)- Q_n(n(x_\iota + \delta)),\, \infty \big)
         &\text{for } j = \iota + 1
         ;
    \\[3pt]
    \big[0,\infty\big)
        &\text{for } j>\iota+1
        .
\end{array}
\right.
\end{align*}
Then $y \in E(x;\delta)$ implies that
$$
\bigg(\frac{Q^\leftarrow_n(y_1)}{n}, \cdots, \frac{Q^\leftarrow_n(y_1+\cdots +y_k)}{n}\bigg)
\in 
\prod_{i=1}^{\iota}[x_i-\delta, x_i+\delta]\times [0,\delta]^{(k-\iota)}
\teq D(x;\delta).
$$
\end{lemma}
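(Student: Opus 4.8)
The plan is to read off what ``$y \in E(x;\delta)$'' means one coordinate at a time, rewrite each membership $y_j \in E_j(x,y;\delta)$ as a bound on the partial sum $s_j \teq y_1 + \cdots + y_j$ (with $s_0 = 0$), and then convert those bounds into bounds on $Q_n^\leftarrow(s_j)$ via the generalized-inverse relation. Two elementary facts do all the work: (a) $Q_n(s) = n\nu[s,\infty)$ is non-increasing on $(0,\infty)$ and finite there, since $\nu$ is a L\'evy measure and hence $\nu[a,\infty)<\infty$ for every $a>0$; and (b) for every $s>0$ and $y \geq 0$, $Q_n(s) < y$ forces $Q_n^\leftarrow(y) \leq s$, while $Q_n(s) \geq y$ forces $Q_n^\leftarrow(y) \geq s$. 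Both halves of (b) are immediate from the definition $Q_n^\leftarrow(y) = \inf\{s>0 : Q_n(s) < y\}$ together with the monotonicity in (a); I will also use $Q_n^\leftarrow(y) \geq 0$ for every $y$.

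For $1 \leq j \leq \iota$, the definition of $E_j$ says exactly that $Q_n(n(x_j+\delta)) < s_{j-1} + y_j = s_j < Q_n(n(x_j-\delta))$. Applying (b) with $s = n(x_j+\delta)>0$ gives $Q_n^\leftarrow(s_j) \leq n(x_j+\delta)$, and applying (b) with $s = n(x_j-\delta)$ gives $Q_n^\leftarrow(s_j) \geq n(x_j-\delta)$ — trivially true if $x_j - \delta \leq 0$ since $Q_n^\leftarrow \geq 0$, and a genuine application of (b) otherwise, using that $Q_n(n(x_j-\delta)) \geq s_j$ is part of the displayed inequality. Dividing by $n$ yields $Q_n^\leftarrow(s_j)/n \in [x_j-\delta,\, x_j+\delta]$, the required bound on the $j$-th coordinate.

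For $j = \iota+1$, the case $j = \iota$ just treated gives in particular $s_\iota > Q_n(n(x_\iota+\delta))$, and $y_{\iota+1} \in E_{\iota+1}$ gives $y_{\iota+1} > Q_n(n\delta) - Q_n(n(x_\iota+\delta))$; adding the two inequalities, $s_{\iota+1} = s_\iota + y_{\iota+1} > Q_n(n\delta)$. Then (b) with $s = n\delta > 0$ gives $Q_n^\leftarrow(s_{\iota+1}) \leq n\delta$, so $Q_n^\leftarrow(s_{\iota+1})/n \in [0,\delta]$. Finally, for $\iota+1 < j \leq k$ we have $y_i \geq 0$ for every $i$ (membership in $E(x;\delta)$ already requires $y \in \R_+^k$), hence $s_j \geq s_{\iota+1} > Q_n(n\delta)$, and the same step gives $Q_n^\leftarrow(s_j)/n \in [0,\delta]$. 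Assembling the per-coordinate conclusions over $j = 1,\ldots,k$ shows $\big(Q_n^\leftarrow(s_1)/n, \ldots, Q_n^\leftarrow(s_k)/n\big) \in D(x;\delta)$.

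This is bookkeeping, so there is no substantial obstacle; the points needing a little care are getting the strict/non-strict sides of the inverse inequalities in (b) correct — the intervals $E_j$ are open, which is precisely what makes (b) apply cleanly — and checking that the degenerate values $Q_n(n(x_j-\delta)) = +\infty$ (possible when $x_j \leq \delta$ or when $\nu$ has infinite mass near $0$) and $Q_n^\leftarrow(\cdot)=0$ cause no trouble, which they do not, since all the inequalities used remain valid. One should also parse $E(x;\delta)$ as defined recursively, since for $2 \leq j \leq \iota$ the interval $E_j(x,y;\delta)$ depends on $y_1,\ldots,y_{j-1}$ through $s_{j-1}$; the degenerate case $\iota = 0$ is covered by reading the $j = \iota+1$ clause as $y_1 > Q_n(n\delta)$ and then applying the argument of the previous paragraph.
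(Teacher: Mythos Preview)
Your proof is correct and follows essentially the same approach as the paper: both arguments first establish that the partial sums $s_j = y_1+\cdots+y_j$ lie in $(Q_n(n(x_j+\delta)),\,Q_n(n(x_j-\delta)))$ for $j\leq\iota$ and in $(Q_n(n\delta),\infty)$ for $j>\iota$, then invoke the generalized-inverse relation to deduce the bounds on $Q_n^\leftarrow(s_j)/n$. Your version is more carefully written---you spell out the strict/non-strict inequality bookkeeping and the edge cases $x_j-\delta\leq 0$ and $\iota=0$ that the paper glosses over---but the underlying argument is the same.
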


\begin{proof}[Proof of Lemma~\ref{lemma:neighborhood-of-exp-dist-for-lower-bound}]
\linksinpf{lemma:neighborhood-of-exp-dist-for-lower-bound}
Note that by the construction of $E_j(x,y;\delta)$, it is straightforward to check by induction that if $y_i \in E_i(x,y;\delta)$ for $i=1,\ldots,j$, then 
\begin{equation}
\label{eq:1-in-lemma:neighborhood-of-exp-dist-for-lower-bound}
    \sum_{i=1}^j y_j \in  \big(Q_n(n(x_j+\delta)), Q_n(n(x_j-\delta))\big)
\end{equation}
for $j\leq \iota$, and 
\begin{equation}
\label{eq:2-in-lemma:neighborhood-of-exp-dist-for-lower-bound}
\sum_{i=1}^j y_j \in [Q_n(n\delta), \infty)
\end{equation}
for $j > \iota$. 
The conclusion of the lemma follows from the fact that $c\in \big(Q_n(b),  Q_n(a)\big]$ if and only if $Q_n^\leftarrow(c) \in [a, b)$  for  any $c$ and $b > a \geq 0$.
\end{proof}

Recall that
\begin{equation*}
    \check{\mu}_n(\cdot) \delequal \pr{\Big(\frac{Q_n^\leftarrow (\Gamma_1)}{n},\cdots, \frac{Q_n^\leftarrow(\Gamma_k)}{n}\Big)\in \cdot}\text{ for }n\in\mb{N},
\end{equation*}
where $\Gamma_i=Y_1+\cdots+Y_i$, and $Y_i$'s are iid $\mathrm{Exp}(1)$. 
Therefore, Lemma~\ref{lemma:neighborhood-of-exp-dist-for-lower-bound} implies
\begin{equation}\label{eq:implication-of-lemma:neighborhood-of-exp-dist-for-lower-bound}
\int_{D(x;\delta)} \check{\mu}_n(dx)
>
\int_{E(x;\delta)}  e^{-\sum_{i=1}^k y_i} dy_1\cdots dy_k.    
\end{equation}
We are ready to prove the lower bound of Proposition~\ref{theorem:k-big-jump-ldp}.

\begin{proposition}
\label{proposition:k-big-jump-ldp-lowerbound}
\linksinthm{proposition:k-big-jump-ldp-lowerbound}
For any open set $G\subset \mb{R}_+^{k\downarrow}$,
\begin{equation}\label{equ:checkmun-ldp-lowerbound-appendix}
    -\inf_{x\in G} \check{I}_k(x) \leq \liminf_{n\rightarrow\infty} \frac{\log\check{\mu}_n( G)}{r(\log n)}
\end{equation}
\end{proposition}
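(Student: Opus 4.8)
The plan is to read off the lower bound directly from the pointwise estimate \eqref{eq:implication-of-lemma:neighborhood-of-exp-dist-for-lower-bound} provided by Lemma~\ref{lemma:neighborhood-of-exp-dist-for-lower-bound}, reducing the statement to an elementary asymptotic evaluation of an exponential integral over the explicit region $E(x;\delta)$. Since $\check I_k$ takes values in $\{0,1,\dots,k\}$, it suffices to fix an arbitrary $x^\star\in G$, set $\iota\teq\check I_k(x^\star)$, and prove
$$
\liminf_{n\to\infty}\frac{\log\check\mu_n(G)}{r(\log n)}\ \ge\ -\iota;
$$
taking the supremum over $x^\star\in G$ then yields $\liminf_n\frac{\log\check\mu_n(G)}{r(\log n)}\ge-\inf_{x\in G}\check I_k(x)$, which is \eqref{equ:checkmun-ldp-lowerbound-appendix}. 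First I would use that $G$ is open in $\mb{R}_+^{k\downarrow}$ to pick $\delta>0$ small enough that $D(x^\star;\delta)\cap\mb{R}_+^{k\downarrow}\subseteq G$ (note $\check\mu_n$ is supported on $\mb{R}_+^{k\downarrow}$ since $Q_n^\leftarrow$ is non-increasing and the $\Gamma_i$ are increasing); I would also shrink $\delta$ below half the minimal gap among the distinct positive coordinates of $x^\star$. Then $\check\mu_n(G)\ge\check\mu_n(D(x^\star;\delta))$, and \eqref{eq:implication-of-lemma:neighborhood-of-exp-dist-for-lower-bound} gives $\check\mu_n(G)\ge\int_{E(x^\star;\delta)}e^{-(y_1+\cdots+y_k)}\,dy_1\cdots dy_k$.

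Next I would evaluate the right-hand side. Writing $Q_n(y)=n\exp(-r(\log y))$ as in Assumption~\ref{assumption-lognormaltail}, the interval $E_j$ controlling $y_j$ has length $L_j\teq Q_n(n(x^\star_j-\delta))-Q_n(n(x^\star_j+\delta))$ for $1\le j\le\iota$. Using that $r$ is regularly varying of index $\gamma>1$ — so that $r(\log n+c)/r(\log n)\to1$ for fixed $c$, $\log n/r(\log n)\to0$, and $r(\log n+a)-r(\log n+b)\to\infty$ for $a>b$ (uniform convergence theorem for regularly varying functions) — one obtains $\log L_j=\log n-r(\log n+\log(x^\star_j-\delta))+o(1)$, hence $\log L_j/r(\log n)\to-1$. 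Moreover every coordinate $y_1,\dots,y_\iota$ on $E(x^\star;\delta)$ as well as the lower endpoint for $y_{\iota+1}$ is of order $ne^{-r(\log n)}=o(1)$, so $e^{-(y_1+\cdots+y_\iota)}$ is bounded below on $E(x^\star;\delta)$ by a constant tending to $1$, while integrating out $y_{\iota+1},\dots,y_k$ produces a factor tending to $1$ as well. With the chosen $\delta$ the $(y_1,\dots,y_\iota)$-section of $E(x^\star;\delta)$ is a genuine product of intervals of volume $\prod_{j=1}^\iota L_j$, so for all large $n$,
$$
\int_{E(x^\star;\delta)}e^{-\sum_i y_i}\,dy\ \ge\ \tfrac12\prod_{j=1}^\iota L_j,
$$
whence $\frac{\log\check\mu_n(G)}{r(\log n)}\ge\frac{\log\frac12+\sum_{j=1}^\iota\log L_j}{r(\log n)}\to-\iota$. (When $\iota=0$, i.e.\ $x^\star$ is the origin, $E(x^\star;\delta)$ has the form $(Q_n(n\delta),\infty)\times[0,\infty)^{k-1}$ up to a vanishing shift, the integral is $e^{-o(1)}\to1$, and $\liminf\ge0=-\check I_k(0)$ is immediate.)

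The only genuinely delicate bookkeeping is when some positive coordinates of $x^\star$ coincide, so that the left endpoints of the corresponding $E_j$'s may become negative; there I would substitute $s_j=y_1+\cdots+y_j$ (Jacobian $1$), turning $E(x^\star;\delta)$ into $\{a_j<s_j<b_j,\ s_1\le\cdots\le s_\iota\}$ with $a_j=Q_n(n(x^\star_j+\delta))$ and $b_j=Q_n(n(x^\star_j-\delta))$, whose volume is still at least a purely dimensional constant times $\prod_{j=1}^\iota L_j$, which does not affect the limit $-\iota$. The main obstacle, then, is not conceptual but is the regular-variation estimate itself — ensuring $\sum_{j=1}^\iota\log L_j=-\iota\,r(\log n)(1+o(1))$ uniformly over the finitely many fixed shifts $\log(x^\star_j\pm\delta)$ — and everything else (openness of $G$, positivity/measurability of the regions, the elementary exponential integrals) is routine. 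Combining the display above with the arbitrariness of $x^\star\in G$ completes the proof.
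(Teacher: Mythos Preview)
Your argument is correct and hits the same core computation as the paper --- both proofs feed Lemma~\ref{lemma:neighborhood-of-exp-dist-for-lower-bound} into the bound $\check\mu_n(D(x;\delta))\ge\int_{E(x;\delta)}e^{-\sum y_i}\,dy$ and then evaluate the asymptotics of $\prod_j L_j$, which is exactly the content of \eqref{limit4}. The differences are in packaging. First, the paper routes through a Laplace-type argument: it picks a continuous $f$ with $f(x)=1$ and $f|_{G^c}=0$, sets $f_m=m(f-1)$, and compares $\int e^{r(\log n)f_m}\,d\check\mu_n$ with $e^{-mr(\log n)}+\check\mu_n(G)$ before eventually landing on the same box $D(x;\delta)$. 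You cut this detour and use openness of $G$ directly via $D(x^\star;\delta)\cap\mb{R}_+^{k\downarrow}\subseteq G$, which is cleaner and loses nothing. Second, the paper disposes of tied positive coordinates by perturbing $x$ within $G$ to a nearby point with strictly decreasing positive entries and the same $\check I_k$; you instead keep the ties and pass to the partial sums $s_j=y_1+\cdots+y_j$, paying a harmless combinatorial factor in the volume. Both are valid; the paper's perturbation is a one-line reduction, while your substitution is a bit more work but self-contained. Your regular-variation estimate $\log L_j/r(\log n)\to-1$ is precisely \eqref{limit4}, and the fact $r(\log n+a)-r(\log n+b)\to\infty$ for $a>b$ that you invoke is the same unproved-but-used step the paper relies on in the proof of \eqref{limit4}.
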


\begin{proof}
\linksinpf{proposition:k-big-jump-ldp-lowerbound}
Fix an arbitrary $x\in G$ and let $\iota = \check I_k (x)$.
We first observe that for any given $x\in G$, we can find $x'\in G$ such that $\check I_k(x') = \iota = \check I_k (x)$ and $x'_1 > x'_2 > \cdots > x'_{\iota}>0$. 
Therefore, for the purpose of establishing  \eqref{equ:checkmun-ldp-lowerbound-appendix}, we can assume w.l.o.g.\ that 
\begin{equation}\label{eq:strict-monotonicity-condition}
x_1 > \ldots > x_\iota > x_{\iota+1} = 0.
\end{equation}
Note that we can find a continuous function $f:\mb{R}_+^{k\downarrow}\rightarrow [0,1]$ such that $f(x) = 1$ and $f(y) = 0$ for $y\in G^c$ since $\mb{R}_+^{k\downarrow}$ is a completely regular topological space. 
Further, define $f_m(\cdot) = m(f(\cdot)-1)$. 
Since $f_m$ is $-m$ on $G^c$ and at most $0$ elsewhere, 
\begin{equation*}
    \int_{\mb{R}_+^{k\downarrow}} e^{r(\log n)f_m(s)} \check{\mu}_n(ds)\leq e^{-mr(\log n)}\check{\mu}_n(G^c) + \check{\mu}_n(G),
\end{equation*}
and hence,
\begin{align}
    \nonumber
    \liminf_{n\rightarrow\infty}\frac{\log\int_{\mb{R}_+^{k\downarrow}} e^{r(\log n)f_m(s)} \check{\mu}_n(ds)}{r(\log n)}
    &\leq 
    \liminf_{n\rightarrow\infty}\frac{\log\big(e^{-mr(\log n)}+ \check{\mu}_n(G)\big)}{r(\log n)} 
    \\
    &= 
    \max\bigg\{-m,\, \liminf_{n\rightarrow\infty}\frac{\log\check{\mu}_n(G)}{r(\log n)} \bigg\}.
    \label{midstep1}
\end{align}
On the other hand, we claim that the following holds:
\begin{equation}\label{inequality:midstep2}
    -\check{I}_k(x)
    \leq 
    \liminf_{n\rightarrow\infty}\frac{\log\int_{\mb{R}_+^{k\downarrow}} e^{r(\log n)f_m(s)} \check{\mu}_n(ds)}{r(\log n)}.
\end{equation}
Combining \eqref{midstep1} and \eqref{inequality:midstep2}, then taking the limit $m\to\infty$ and the infimum over $x\in G$, we arrive at the conclusion of the proposition. 

Now we are left with the proof of the claim \eqref{inequality:midstep2}.
Recall the neighborhood $D(x;\delta) = \prod_{i=1}^{\iota}[x_i-\delta, x_i+\delta]\times [0,\delta]^{(k-\iota)}$ of $x$ defined in Lemma~\ref{lemma:neighborhood-of-exp-dist-for-lower-bound}.
Pick an arbitrary $\epsilon > 0$.
Since $G$ is open and $f_m$ is continuous, assumption \eqref{eq:strict-monotonicity-condition} allows us to choose a small enough $\delta = \delta(x,\epsilon)$ so that
\begin{itemize}
    \item[(i)]
    $[x_i-\delta, x_i+\delta]\cap [x_j-\delta, x_j+\delta] = \emptyset$ for any $i$ and $j$ such that $i < j \leq \iota$;
        
    \item[(ii)]
    $f_m(y) > -\epsilon$  for $y\in D(x;\delta)$.
\end{itemize}
From (ii) and \eqref{eq:implication-of-lemma:neighborhood-of-exp-dist-for-lower-bound}
\begin{align}
    \liminf_{n\rightarrow\infty}\frac{\log\int_{\mb{R}_+^{k\downarrow}} e^{r(\log n)f_m(x)} \check{\mu}_n(dx)}{r(\log n)}
    &
    \geq
    \liminf_{n\rightarrow\infty}\frac{\log \left(e^{r(\log n)(-\epsilon)} \int_{D(x;\delta)} \check{\mu}_n(dx)\right)}{r(\log n)}
    \nonumber
    \\&
    \geq
    -\epsilon + \liminf_{n\rightarrow\infty}\frac{\log\int_{E(x;\delta)}  e^{-\sum_{i=1}^k y_i} dy_1\cdots dy_k}{r(\log n)}
    \label{eq:lower-bound-in-terms-of-exponential-rvs-in-proposition:k-big-jump-ldp-lowerbound}
\end{align}
From the forms of $E(x;\delta)$ and $E_i(x,y;\delta)$'s in Lemma~\ref{lemma:neighborhood-of-exp-dist-for-lower-bound}, we see that the integral in \eqref{eq:lower-bound-in-terms-of-exponential-rvs-in-proposition:k-big-jump-ldp-lowerbound} can be decomposed as follows:
\begin{align}
    &\log\int_{E(x;\delta)}  e^{-\sum_{i=1}^k y_i} dy_1\cdots dy_k\nonumber
    \\
    &
    =
    \log\int_{E'(x;\delta)}e^{-\sum_{i=1}^{\iota} y_i} dy_{1}\cdots dy_\iota
    + 
    \log \int_{Q_n(n\delta) - Q_n(n(x_\iota + \delta))}^\infty e^{-x}dx
    + 
    \sum_{i=\iota+2}^k\cdot \log\int_0^\infty e^{-x}dx
    \nonumber
    \\
    &
    =
    \log
    \underbrace{
    \int_{E'(x;\delta)}e^{-\sum_{i=1}^{\iota} y_i} dy_{1}\cdots dy_\iota
    }_{(\text{I})} 
    -\big(Q_n(n\delta) - Q_n(n(x_{\iota}+\delta)) \big)
    \label{eq:further-lower-bound-in-terms-of-exponential-rvs-in-proposition:k-big-jump-ldp-lowerbound}
\end{align}
where $E'(x;\delta) = \{y \in \R_+^\iota: y_i \in E_j (x, y; \delta) \text{ for }j=1,\ldots, \iota\}$.
To bound (I), note first that the integrand is bounded from below due to \eqref{eq:1-in-lemma:neighborhood-of-exp-dist-for-lower-bound}: 
$$
e^{-\sum_{i=1}^\iota y_i} \geq e^{-Q_n(n(x_\iota - \delta))}
\quad\text{on $E'(x;\delta)$}
$$
and the length of the domain of integral in $i$\nth coordinate is $Q_n(n(x_j-\delta))-Q_n(n(x_j+\delta))$ for each $i \leq \iota$, and hence,
$$
\int_{E'(x;\delta)} dy_\iota\cdots dy_1 
= 
\prod_{i=1}^\iota \big(Q_n(n(x_j-\delta))-Q_n(n(x_j+\delta))\big).
$$
Therefore, 
\begin{align}
    (I) = \int_{E'(x;\delta)}e^{-\sum_{i=1}^{\iota} y_i} dy_\iota\cdots dy_1
    &
    \geq 
    e^{-Q_n(n(x_\iota - \delta))}\int_{E'(x;\delta)} dy_\iota\cdots dy_1
    \nonumber
    \\
    &
    =e^{-Q_n(n(x_{\iota}-\delta))}\cdot \prod_{i=1}^{\iota}\big(Q_n(n(x_i-\delta)) - Q_n(n(x_i+\delta))\big).
    \label{eq:bound-on-(I)-in-proposition:k-big-jump-ldp-lowerbound}
\end{align}
From \eqref{eq:lower-bound-in-terms-of-exponential-rvs-in-proposition:k-big-jump-ldp-lowerbound}, \eqref{eq:further-lower-bound-in-terms-of-exponential-rvs-in-proposition:k-big-jump-ldp-lowerbound}, and \eqref{eq:bound-on-(I)-in-proposition:k-big-jump-ldp-lowerbound},

\begin{align*}
    &\liminf_{n\rightarrow\infty}\frac{\log\int_{\mb{R}_+^{k\downarrow}} e^{r(\log n)f_m(x)} \check{\mu}_n(dx)}{r(\log n)}\\
    &
    \geq 
    -\epsilon + \lim_{n\rightarrow\infty}\frac{-\big(Q_n(n\delta) - Q_n(n(x_{\iota}+\delta)) \big)}{r(\log n)}
    \\
    &+ \lim_{n\rightarrow\infty}\frac{-Q_n(n(x_{\iota}-\delta))}{r(\log n)} + \sum_{i=1}^{\iota}\lim_{n\rightarrow\infty}\frac{\log\big(Q_n(n(x_i-\delta)) - Q_n(n(x_i+\delta))\big)}{r(\log n)}
    \\
    &
    = -\epsilon - \iota,
\end{align*}
where the last equality is from (\ref{limit2}) and (\ref{limit4}). 
Taking $\epsilon\rightarrow 0$, we arrive at (\ref{inequality:midstep2}), proving the desired claim. 
This concludes the proof of Proposition~\ref{proposition:k-big-jump-ldp-lowerbound}.
\end{proof}

We move on to the extended LDP upper bound for $\big({Q_n^\leftarrow (\Gamma_1)}/{n},\cdots, {Q_n^\leftarrow(\Gamma_k)}/{n}\big)$.

\begin{proposition}
\label{proposition:k-big-jump-ldp-upperbound}
\linksinthm{proposition:k-big-jump-ldp-upperbound}
For any closed set $F\subset\mb{R}_+^{k\downarrow}$, 
\begin{equation}\label{equ:checkmun-ldp-upperbound-appendix}
    \limsup_{n\rightarrow\infty} \frac{\log\check{\mu}_n(F)}{r(\log n)}\leq  -\lim_{\epsilon\downarrow 0}\inf_{x\in F^\epsilon} \check{I}_k(x) 
\end{equation}
\end{proposition}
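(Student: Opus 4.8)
The plan is to prove, for each fixed $\epsilon>0$, the bound
$$
\limsup_{n\rightarrow\infty}\frac{\log\check{\mu}_n(F)}{r(\log n)}\leq-\inf_{x\in F^\epsilon}\check{I}_k(x),
$$
and then to send $\epsilon\downarrow0$: since the left-hand side is independent of $\epsilon$ and $\epsilon\mapsto\inf_{x\in F^\epsilon}\check{I}_k(x)$ is nonincreasing in $\epsilon$, this yields $\limsup_n\log\check{\mu}_n(F)/r(\log n)\leq-\sup_{\epsilon>0}\inf_{x\in F^\epsilon}\check{I}_k(x)=-\lim_{\epsilon\downarrow0}\inf_{x\in F^\epsilon}\check{I}_k(x)$, which is the assertion of the proposition. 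If $\inf_{x\in F^\epsilon}\check{I}_k(x)=0$ there is nothing to prove, so set $m\teq\inf_{x\in F^\epsilon}\check{I}_k(x)$ and assume $1\leq m\leq k$ (recall $\check{I}_k$ only takes values in $\{0,1,\dots,k\}$).

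The heart of the argument is a deterministic reduction of $\{\,\cdot\in F\}$ to a one-coordinate tail event. Recall that $\check{I}_k(x)$ equals the number of strictly positive coordinates of $x\in\mb{R}_+^{k\downarrow}$. Suppose $x\in F$. The point $x'$ obtained from $x$ by setting to $0$ every coordinate that is $\leq\epsilon$ still lies in $\mb{R}_+^{k\downarrow}$ (the coordinates to be zeroed form a suffix since $x$ is nonincreasing), and $d(x,x')\leq\epsilon$, so $x'\in F^\epsilon$; therefore $\#\{i:x_i>\epsilon\}=\check{I}_k(x')\geq m$, and since the coordinates of $x$ are nonincreasing this forces $x_m>\epsilon$. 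Now the random vector $V_n\teq\big(Q_n^\leftarrow(\Gamma_1)/n,\dots,Q_n^\leftarrow(\Gamma_k)/n\big)$ takes values in $\mb{R}_+^{k\downarrow}$, because $\Gamma_1<\dots<\Gamma_k$ almost surely and $Q_n^\leftarrow$ is nonincreasing; hence $\{V_n\in F\}\subseteq\{Q_n^\leftarrow(\Gamma_m)/n\geq\epsilon\}$ and
$$
\check{\mu}_n(F)\leq\pr{Q_n^\leftarrow(\Gamma_m)/n\geq\epsilon}=\pr{\Gamma_m\leq Q_n(n\epsilon)},
$$
where the last equality is the equivalence ``$c\in(Q_n(b),Q_n(a)]\iff Q_n^\leftarrow(c)\in[a,b)$'' (already used in the proof of Lemma~\ref{lemma:neighborhood-of-exp-dist-for-lower-bound}) applied with $a=n\epsilon$ and $b=\infty$, using $Q_n(\infty)=0$ and $\Gamma_m>0$ a.s.

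It remains to evaluate $\lim_n\log\pr{\Gamma_m\leq Q_n(n\epsilon)}/r(\log n)$. Since $\Gamma_m$ has a $\mathrm{Gamma}(m,1)$ law, $\pr{\Gamma_m\leq t}\sim t^m/m!$ as $t\downarrow0$, and $Q_n(n\epsilon)=n\nu[n\epsilon,\infty)=\exp\!\big(\log n-r(\log n+\log\epsilon)\big)\to0$ because $r$ is regularly varying with index $\gamma>1$, so $r(x)/x\to\infty$. Consequently
$$
\frac{\log\pr{\Gamma_m\leq Q_n(n\epsilon)}}{r(\log n)}=m\Big(\frac{\log n}{r(\log n)}-\frac{r(\log n+\log\epsilon)}{r(\log n)}\Big)+o(1)\longrightarrow m\,(0-1)=-m,
$$
where $\log n/r(\log n)\to0$ and $r(\log n+\log\epsilon)/r(\log n)\to1$ are precisely the regular-variation limits already invoked in the proof of Proposition~\ref{proposition:k-big-jump-ldp-lowerbound}. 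This establishes the displayed bound for each $\epsilon>0$, and letting $\epsilon\downarrow0$ completes the proof.

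The downstream estimates are routine; the one step requiring care is the geometric reduction in the second paragraph --- coupling the $\epsilon$-fattening $F^\epsilon$, the discrete rate function $\check{I}_k$, and the ordered coordinates so that the whole event $\{V_n\in F\}$ collapses onto the tail event $\{Q_n^\leftarrow(\Gamma_m)/n\geq\epsilon\}$ for the $m$-th largest rescaled jump. Everything after that is an elementary small-ball estimate for a Gamma law glued to the regular variation of $r$.
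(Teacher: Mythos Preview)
Your proof is correct and follows essentially the same approach as the paper: reduce $\{V_n\in F\}$ to the one-coordinate tail event $\{Q_n^\leftarrow(\Gamma_m)/n>\epsilon\}$ via the observation that every $x\in F$ must have $x_m>\epsilon$ (where $m=\inf_{F^\epsilon}\check I_k$), and then apply the Gamma small-ball/regular-variation limit. The paper compresses the geometric reduction into ``it is straightforward to see that there exists $r>0$ such that $x_\iota>r$ for all $x\in F$'' and cites the asymptotic as Lemma~\ref{lemma:limit-result-4}~(\ref{limit5}), whereas you spell both out explicitly---but the argument is the same.
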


\begin{proof}
\linksinpf{proposition:k-big-jump-ldp-upperbound}
Let $\iota \teq \inf_{x\in F^\epsilon} \check I_k(x)$.
Then, it is straightforward to see that there exists $r>0$ such that $x_\iota > r$ for all $x \in F$. 
Therefore, 
\begin{align*}
    \limsup_{n\rightarrow\infty}
    \frac   
        {\log\check{\mu}_n(F)}
        {r(\log n)}
    =
    &
    \limsup_{n\rightarrow\infty}
    \frac
        {
        \log\pr{\Big(\frac{Q^\leftarrow_n(\Gamma_1)}{n}, \frac{Q^\leftarrow_n(\Gamma_2)}{n}
        , \cdots 
        \frac{Q^\leftarrow_n(\Gamma_k)}{n}\Big)\in F}
        }
        {r(\log n)}
    \\
    &
    \leq
    \limsup_{n\rightarrow\infty}
    \frac
        {\log\pr{\{ Q^\leftarrow_n(\Gamma_{\iota})
        \geq
        nr\}}}
        {r(\log n)}
    \\
    &
    =
    \lim_{n\rightarrow\infty}
    \frac
        {\log\pr{\{ \Gamma_{\iota} \leq Q_n(nr)\}}}
        {r(\log n)}
    =
    -\iota,
\end{align*}
where the last equality is from (\ref{limit5}). 
\end{proof}

With Proposition~\ref{proposition:k-big-jump-ldp-lowerbound} and Proposition~\ref{proposition:k-big-jump-ldp-upperbound} in hand, 
the proof of Proposition~\ref{theorem:k-big-jump-ldp} is straightforward.

\begin{proposition}
\label{theorem:k-big-jump-ldp}
\linksinthm{theorem:k-big-jump-ldp}
$\{\check\mu^k_n\}_{n\geq 1}$ satisfies the extended LDP on $\mb{R}_+^{k\downarrow}$ with the rate function 
\begin{equation}\label{ikrate}
    \newnota{i_check-k-largest-jump-rate-function}{\check{I}_k(\bm x)}\defnota{i_check-k-largest-jump-rate-function}{\delequal \sum_{i=1}^k \I\{x_i\neq 0\}\text{ for } \bm x = (x_1,\cdots, x_k)\in \mb{R}_+^{k\downarrow}}
\end{equation}
and the speed $r(\log n)$.
\end{proposition}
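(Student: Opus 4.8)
The plan is to deduce the claim directly by combining the lower bound of Proposition~\ref{proposition:k-big-jump-ldp-lowerbound} with the upper bound of Proposition~\ref{proposition:k-big-jump-ldp-upperbound}, after a short check that $\check{I}_k$ is a legitimate rate function and that $\{r(\log n)\}_{n\geq 1}$ is a legitimate speed sequence. Since $r$ is regularly varying with index $\gamma > 1 > 0$, we have $r(x)\to\infty$ as $x\to\infty$, and therefore $r(\log n)\to\infty$, so $\{r(\log n)\}_{n\geq 1}$ qualifies as a speed in the sense of Definition~\ref{def:extend_LDP}.

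Next I would verify that $\check{I}_k$ is a rate function, i.e.\ non-negative and lower semicontinuous. Non-negativity is immediate from $\check{I}_k(\bm{x}) = \sum_{i=1}^k \I\{x_i\neq 0\}$. For lower semicontinuity it suffices to check that each sublevel set $\{\bm{x}\in\mb{R}_+^{k\downarrow}:\check{I}_k(\bm{x})\leq j\}$ is closed. Because the coordinates of $\bm{x}\in\mb{R}_+^{k\downarrow}$ are non-negative and non-increasing, the inequality $\check{I}_k(\bm{x})\leq j$ is equivalent to $x_{j+1}=\cdots=x_k=0$ (no constraint when $j\geq k$); hence the sublevel set is the intersection of the closed set $\mb{R}_+^{k\downarrow}$ with the closed set $\{\bm{x}\in\mb{R}^k: x_{j+1}=\cdots=x_k=0\}$, which is closed. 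In particular $\check{I}_k$ takes only the finitely many values $0,1,\dots,k$ with no accumulation point, which is also exactly the discreteness property invoked for condition (iii) in Section~\ref{section:extended-ldp-j1-multidimension}.

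It then remains only to match the two defining inequalities of the extended LDP to the cited propositions. For any open $G\subset\mb{R}_+^{k\downarrow}$, Proposition~\ref{proposition:k-big-jump-ldp-lowerbound} gives precisely $-\inf_{\bm{x}\in G}\check{I}_k(\bm{x})\leq\liminf_{n\to\infty}\frac{\log\check{\mu}_n(G)}{r(\log n)}$, and for any closed $F\subset\mb{R}_+^{k\downarrow}$, Proposition~\ref{proposition:k-big-jump-ldp-upperbound} gives precisely $\limsup_{n\to\infty}\frac{\log\check{\mu}_n(F)}{r(\log n)}\leq-\lim_{\epsilon\downarrow 0}\inf_{\bm{x}\in F^\epsilon}\check{I}_k(\bm{x})$. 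These are the two inequalities required by Definition~\ref{def:extend_LDP}, so $\{\check{\mu}^k_n\}_{n\geq 1}$ satisfies the extended LDP on $\mb{R}_+^{k\downarrow}$ with rate function $\check{I}_k$ and speed $r(\log n)$. There is essentially no obstacle remaining at this stage: the genuine analytic content --- the neighborhood construction of Lemma~\ref{lemma:neighborhood-of-exp-dist-for-lower-bound}, the exponential-random-variable estimates, and the large-$n$ tail asymptotics of $Q_n$ and $Q_n^\leftarrow$ --- was already spent in establishing the two one-sided bounds; the only mild subtlety is confirming closedness of the rate function's sublevel sets, handled above.
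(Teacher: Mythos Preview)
Your proposal is correct and mirrors the paper's own proof: both invoke Propositions~\ref{proposition:k-big-jump-ldp-lowerbound} and~\ref{proposition:k-big-jump-ldp-upperbound} for the two bounds and then verify lower semicontinuity of $\check I_k$ by checking that its sublevel sets are closed. Your additional remarks on the speed sequence and on the discreteness of the range of $\check I_k$ are harmless extras not present in the paper's version.
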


\begin{proof}[Proof of Proposition \ref{theorem:k-big-jump-ldp}]
\linksinpf{theorem:k-big-jump-ldp}
    Proposition~\ref{proposition:k-big-jump-ldp-lowerbound} 
    and \ref{proposition:k-big-jump-ldp-upperbound} establish the upper and lower bounds of the extended LDP. 
    We are left with showing that the rate function $\check{I}_k(\cdot)$ is lower semi-continuous. 
    To see the lower semi-continuity, consider an $\alpha < k$ (since the case $\alpha\geq k$ is trivial) and
    the corresponding sublevel set $\Psi_{\check I_k} (\alpha) \teq \{x \in \R_+^k: \check I_k(x) \leq \alpha\}$. 
    Note that  $\Psi_{\check I_k} (\alpha) = \{x\in \R_+^k:\ x_{\lfloor \alpha\rfloor+1} = 0\}$. 
    This is a closed set, because for any $y$ in $\Psi_{\check I_k} (\alpha)^\complement$, we have $y_{\lfloor \alpha\rfloor+1} > 0$, and hence, $B(y; \delta)$ with $\delta = y_{\lfloor \alpha\rfloor+1}/2$ is a neighborhood of $y$ within $\Psi_{\check I_k} (\alpha)^\complement$.  
    This proves the lower semicontinuity of $\check{I}_k(\cdot)$ and concludes the proof of the proposition.
\end{proof}

We are now ready to prove Proposition~\ref{theorem:ex-ldp-kjumpprocess}.
\begin{proof}[Proof of Proposition~\ref{theorem:ex-ldp-kjumpprocess}]
\linksinpf{theorem:ex-ldp-kjumpprocess}
We first prove the extended LDP on $(\mb{D}_{\leqslant k}, d_{J_1})$, then lift it to the larger space $(\mb{D}, d_{J_1}) $ via Lemma \ref{E-LDP-on-subspaces-full-measure}. 
Note that the conditions in Lemma \ref{E-LDP-on-subspaces-full-measure} are satisfied as $\P\big(\hat{J}^{\leqslant k}_n\in \mb{D}_{\leqslant k}\big) = 1$, and $\mb{D}_{\leqslant k}$ is  closed.  

To see that $\hat I_k$ is a legitimate rate function, note that $\Psi_{\hat{I}_k}(c) = \mb{D}_{\sleq\lfloor c\rfloor \wedge k}$ for any $c\in\mb{R}_+$, which is closed, and hence, $\hat{I}_k$ is lower-semicontinuous. 
To verify the lower bound, we show that 
\begin{equation}\label{equ:barjkn-ldp-lowerbound-mid-step}
    \liminf_{n\rightarrow\infty} \frac{\log\pr{\hat{J}_n^{\leqslant k}\in G}}{r(\log n)}\geq - \hat{I}_k(\xi)
\end{equation}
for any open set $G\subset \mb{D}_{\leqslant k}$ and any $\xi\in G$.
Fix a $\xi\in G$ and set $j\teq \hat I_k(\xi)\leq k$.
Then $\xi = \sum_{i=1}^j x_i\I_{[u_i,1]}$, where $x_1\geq \cdots \geq x_j > 0$ and $u_i\in (0,1)$ for $i=1,\ldots,j$. 
We construct a neighborhood of $\xi$ in $\mb{D}_{\leqslant k}$ by perturbing $\xi$'s jump sizes and times. 
Specifically, for some $w^*\in(0,1)$ that does not align with $\xi$'s jump time, consider
\begin{equation*}
    C_\delta:=\{\sum_{i=1}^k y_i\I_{[w_i, 1]}\in\mb{D}_{\leqslant k}: y_i\in Y_i\text{ and } w_i\in W_i\text{ for }i=1,\cdots, k\}
\end{equation*}
where $Y_i$'s and $W_i$'s are defined as:
\begin{equation*}
    Y_i = 
    \begin{cases}
        (x_i-\delta, x_i+\delta) & i=1,\cdots j\\
        [0,\delta) & i =j+1,\cdots, k
    \end{cases}
    ,\ \ \ \ 
    W_i = 
    \begin{cases}
        (u_i-\delta, u_i+\delta) & i=1,\cdots j\\
        (w^*-\delta, w^*+\delta) & i =j+1,\cdots, k
    \end{cases}
\end{equation*}
It is evident that $d_{J_1}(\xi, \eta) < k\delta$ for any $\eta\in C_\delta$. Indeed, we could find a suitable time homeomorphism $\rho$ such that $\eta\circ \rho$'s first $j$ jumps and $\xi$'s jumps have aligned locations. Then the uniform metric between $\xi$ and $\eta\circ\rho$ is bounded by $k$ accumulation of size $\delta$.  We can choose $\delta$ small enough so that (1) $C_\delta\subset G$; (2)for $p\leq j$, the p-th largest jump of $\eta$ happens on $(u_p-\delta, u_p+\delta)$ with size in range $(x_p-\delta, x_p+\delta)$. Such a choice of $\delta$ allows for the following derivation::
\begin{align*}
    &   
    \pr{\hat{J}_n^{\leqslant k}\in G}
    \stackrel{\text{due to }(1)}{\geq}
    \pr{\hat{J}_n^{\leqslant k}\in C_\delta}
    = 
    \pr{\sum_{i=1}^k \frac{Q_n^\leftarrow(\Gamma_i)}{n}\I_{[U_i,1]}\in C_\delta}
    \\ 
    &
    \stackrel{\text{due to }(2)}{=}
    \pr{
        \Big(
            \frac{Q_n^\leftarrow (\Gamma_1)}{n},\cdots, \frac{Q_n^\leftarrow(\Gamma_k)}{n}
        \Big)
        \in 
        \big(
            \prod_{i=1}^k Y_i
        \big)
        \cap\mb{R}_+^{k\downarrow}, (U_1, \cdots, U_k)
        \in
        \big(
            \prod_{i=1}^k W_i
        \big)
    }
    \\ 
    &
    = 
    \pr{
        \Big(
            \frac{Q_n^\leftarrow (\Gamma_1)}{n},\cdots, \frac{Q_n^\leftarrow(\Gamma_k)}{n}
        \Big)
        \in 
        \big(
            \prod_{i=1}^k Y_i
        \big)
        \cap\mb{R}_+^{k\downarrow}}\cdot \text{Const}
    \\
    &
    =
    \check\mu^k_n(
    \big(
        \prod_{i=1}^k Y_i
    \big)
    \cap\mb{R}_+^{k\downarrow})\cdot \text{Const}.
\end{align*}
Note that $\prod_{i=1}^k Y_i$ and $\prod_{i=1}^k W_i$ represent the product space of $Y_i$'s and $W_i$'s, respectively. Given that $\big(\Pi_{i=1}^k A_i\big)\cap\mb{R}_+^{k\downarrow}$ is an open set, the extended LDP of  $\{\check\mu^k_n\}_{n\geq 1}$ as unraveled by Theorem \ref{theorem:k-big-jump-ldp} implies 
\begin{align*}
    &
    \liminf_{n\rightarrow\infty}
    \frac
        {\log\pr{\hat{J}_n^{\leqslant k}\in G}}
        {r(\log n)}
    \geq
    \liminf_{n\rightarrow\infty}
    \frac
        {\log\check\mu^k_n(\big(\prod_{i=1}^k Y_i\big)\cap\mb{R}_+^{k\downarrow})}
        {r(\log n)}
    +
    \liminf_{n\rightarrow\infty}
    \frac{\log\text{Const}}{r(\log n)}
    \\
    &
    \geq
    - \inf_{
        x\in \big(\Pi_{i=1}^k Y_i\big)\cap\mb{R}_+^{k\downarrow}
        }
        \check{I}_k\big(x\big) +0 
    \\
    &
    = 
    -j 
    =
    -\hat{I}_k(\xi).
\end{align*}
This establishes inequality (\ref{equ:barjkn-ldp-lowerbound-mid-step}), and, consequently, the lower bound of extended LDP is proved.

Then, we verify the upper bound of extended LDP: for a closed set $F\subset\mb{D}_{\leqslant k}$:
\begin{equation}\label{equ:barjkn-ldp-upperbound}
    \limsup_{n\rightarrow\infty} \frac{\log\pr{\hat{J}_n^{\leqslant k}\in F}}{r(\log n)}\leq -\lim_{\epsilon\downarrow 0}\inf_{\xi\in F^\epsilon} \hat{I}_k(\xi)
\end{equation}
Note that $F^\epsilon$ in the above inequality is the fattening set of $F$ restricted to $\mb{D}_{\leqslant k}$. If $F$ contains the zero function, then $\lim_{\epsilon\downarrow 0}\inf_{\xi\in F^\epsilon} \hat{I}_k(\xi) = 0$, and the upper bound (\ref{equ:barjkn-ldp-upperbound}) holds trivially. For the rest of the proof, we assume $F$ does not contain the zero function. Define the index $i_F$ as follows:
\begin{equation}\label{jfunction}
        i_F =\max\{j\in \{0, 1, 2, \cdots k-1\}: d_{J_1}(F, \mb{D}_{\leqslant j}) > 0\}+1
\end{equation}
The definition of $i_F$ has two implications:(i) $F^\epsilon\cap \mb{D}_{\leqslant {i_F-1}} = \emptyset$ for sufficiently small $\epsilon$; (ii) $F^\epsilon\cap\mb{D}_{\leqslant i_F}\neq \emptyset$ for any $\epsilon >0 $. Therefore,
\begin{equation}\label{midstep4}
    \lim_{\epsilon\downarrow 0} \inf_{\xi\in F^\epsilon} \hat{I}_k(\xi)= i_F
\end{equation}

Given that $ d_{J_1}(F, \mb{D}_{\leqslant i_F-1}) > 0$, there exist an $r > 0$ such that any path $\xi$ in $F$ has its $i_F$-th largest jump greater than $r$. Therefore, $F$ is a subset of  $C'$, with $C'$ defined as: 
\begin{align*}
     C'
     := 
     \{
        \sum_{i=1}^k y_i\I_{[w_i,1]}:
    & 
    y_1\geq y_2\geq\cdots \geq y_k\geq 0, y_{i_F}> r,
    \\
     &
     w_i\in (0,1)\text{ for } i= 1,\cdots, k
     \}
\end{align*}
The set inclusion $F\subset C'$ implies
\begin{align*}
     &
     \pr{
        \hat{J}_n^{\leqslant k}\in F
        }
    \leq
    \pr{
        \hat{J}_n^{\leqslant k}\in C'
        }
    = 
    \pr{
        \sum_{i=1}^k \frac
            {Q_n^\leftarrow(\Gamma_i)}
            {n}
        \I_{[U_i,1]}\in C'
        }
    \\
     &
     =\pr{
        \frac
            {Q_n^\leftarrow(\Gamma_{i_F})}
            {n}
        > r
        }
     =
    \pr{
        \Gamma_{i_F}
        \leq
        Q_n(nr)
        }.
\end{align*}
Taking the limit of both sides and employing limit result (\ref{limit5}), we deduce
\begin{equation}\label{midstep3}
     \limsup_{n\rightarrow\infty} \frac{\log\pr{\hat{J}_n^{\leqslant k}\in F}}{r(\log n)}\leq\limsup_{n\rightarrow\infty} \frac{\log\pr{\Gamma_{i_F}\leq Q_n(nr)}}{r(\log n)} =  - i_F
\end{equation}
In view of the inequalities (\ref{midstep3}) and (\ref{midstep4}), the upper bound (\ref{equ:barjkn-ldp-upperbound}) is established, thus concluding the proof.

\end{proof}

\subsubsection{Proof of Lemma~\ref{lemma:checkjnk-asymp-neg}}
\label{subsubsection:proof-of-lemma:checkjnk-asymp-neg}

Lemma \ref{lemma:max-sum-zi-greaterndelta-upperbound} and \ref{lemma:max-sum-zi-smallerndelta-upperbound} below are useful in Lemma \ref{lemma:checkjnk-asymp-neg}.
\ZS{the theorem in the random walk section to be added.}%

\begin{lemma}\label{lemma:max-sum-zi-greaterndelta-upperbound}
\linksinthm{lemma:max-sum-zi-greaterndelta-upperbound}
Consider a random variable $Z$ supported on $[0,\infty)$ belonging to the class $LN(\lambda,\gamma)$. Let $Z_i$'s be i.i.d.\ random variables following $Z$'s distribution and construct $Y_i^n$ as $Y_i^n:=Z_i\I\{Z_i\leq n\delta\}$ for some given $\delta>0$. Then given any $\epsilon >0$ and $M\in\mb{N}$, the following inequality holds 
\begin{equation}\label{inequality:truncate-deviation-1}
    \limsup_{n\rightarrow\infty} \frac{\log\underset{j = 1,\cdots, M\cdot n}{\max}\pr{\sum_{i=1}^j \big(Y_i^n-\E{Z}\big) > n\epsilon}}{r(\log n)}< - \frac{\epsilon}{2\delta}
\end{equation}
\end{lemma}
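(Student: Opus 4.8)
The plan is to introduce a \emph{second} truncation at a level $a_n$ that is much smaller than $n\delta$, split each increment accordingly, and control the two resulting sums by different tools. Set $\beta\teq\epsilon/\delta$ and $a_n\teq n\delta/r(\log n)$; since $r(\log n)\to\infty$, we have $a_n\to\infty$ and $a_n<n\delta$ for all large $n$. Writing $Y_i^n=Z_i\I\{Z_i\le a_n\}+Z_i\I\{a_n<Z_i\le n\delta\}$ and using $\E Z\ge\E[Z\I\{Z\le a_n\}]$ together with $Z_i\I\{a_n<Z_i\le n\delta\}\le n\delta\,\I\{a_n<Z_i\le n\delta\}$, one gets, for every $j$,
$$
\sum_{i=1}^j(Y_i^n-\E Z)\ \le\ \underbrace{\sum_{i=1}^j\big(Z_i\I\{Z_i\le a_n\}-\E[Z\I\{Z\le a_n\}]\big)}_{=:S_j}\ +\ n\delta\,\underbrace{\sum_{i=1}^j\I\{a_n<Z_i\le n\delta\}}_{=:N_j},
$$
so that $\big\{\sum_{i=1}^j(Y_i^n-\E Z)>n\epsilon\big\}\subseteq\{S_j>n\epsilon/2\}\cup\{N_j>\epsilon/(2\delta)\}$, and it suffices to bound the two right-hand probabilities uniformly over $j\le Mn$.

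For $S_j$: since $r$ is regularly varying with index $\gamma>1$, the tail $\P(Z>x)=e^{-r(\log x)}$ decays faster than any power, so $\sigma^2\teq\E Z^2<\infty$; the summands $Z_i\I\{Z_i\le a_n\}-\E[Z\I\{Z\le a_n\}]$ are i.i.d., mean zero, bounded above by $a_n$, with variance at most $\sigma^2$. Bernstein's inequality then gives, for all $j\le Mn$,
$$
\P(S_j>n\epsilon/2)\ \le\ \exp\!\left(-\frac{n^2\epsilon^2/8}{Mn\sigma^2+a_nn\epsilon/6}\right)=\exp\!\left(-\frac{n^2\epsilon^2/8}{Mn\sigma^2+n^2\delta\epsilon/(6r(\log n))}\right).
$$
Because $n/r(\log n)\to\infty$, the denominator is $(1+o(1))\,n^2\delta\epsilon/(6r(\log n))$, so the exponent is $(1-o(1))\tfrac{3\epsilon}{4\delta}r(\log n)$, whence $\limsup_n r(\log n)^{-1}\log\max_{j\le Mn}\P(S_j>n\epsilon/2)\le-\tfrac{3\epsilon}{4\delta}$.

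For $N_j$: it is $\mathrm{Binomial}(j,q_n)$ with $q_n=\P(a_n<Z\le n\delta)\le e^{-r(\log a_n)}$, and $\{N_j>\epsilon/(2\delta)\}=\{N_j\ge m^*\}$ with $m^*\teq\lfloor\epsilon/(2\delta)\rfloor+1>\epsilon/(2\delta)$. A union bound over the $m^*$-subsets of indices gives $\P(N_j\ge m^*)\le\binom{j}{m^*}q_n^{m^*}\le(Mn)^{m^*}e^{-m^*r(\log a_n)}$ for all $j\le Mn$. Since $\log a_n=\log n-\log r(\log n)+\log\delta=(1-o(1))\log n$, the uniform convergence theorem for regularly varying functions yields $r(\log a_n)\sim r(\log n)$; combined with $\log(Mn)=o(r(\log n))$, this gives $\limsup_n r(\log n)^{-1}\log\max_{j\le Mn}\P(N_j\ge m^*)\le-m^*$. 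Combining the two estimates through $\log(x+y)\le\log2+\log\max(x,y)$,
$$
\limsup_{n\to\infty}\frac{\log\,\max_{j=1,\dots,Mn}\P\!\big(\sum_{i=1}^j(Y_i^n-\E Z)>n\epsilon\big)}{r(\log n)}\ \le\ -\min\!\Big(\tfrac{3\epsilon}{4\delta},\,m^*\Big)\ <\ -\tfrac{\epsilon}{2\delta},
$$
which is the claim.

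The step I expect to be the main obstacle is recognizing that a single exponential--Chebyshev (Chernoff) bound applied directly to $\sum_{i=1}^j(Y_i^n-\E Z)$, even with the natural tilt $\theta\asymp r(\log n)/n$, fails here: the moment generating function of one increment truncated at $n\delta$ is already of order $r(\log n)$, and raising it to the power $j\asymp n$ overwhelms the $r(\log n)$ scale. The two-level truncation above is the fix, and the genuinely delicate points are calibrating $a_n$ so that Bernstein's exponent strictly exceeds $\epsilon/(2\delta)$ while still keeping $\log a_n/\log n\to1$ (so that each ``medium'' increment costs $(1-o(1))r(\log n)$ on the log scale), together with the floor bookkeeping $m^*>\epsilon/(2\delta)$ needed for the strict inequality.
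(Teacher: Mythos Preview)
Your proof is correct and takes a genuinely different route from the paper. The paper does \emph{exactly} what you claim cannot work: it applies a single Chernoff bound directly to $\sum_{i=1}^j(Y_i^n-\E Z)$ with the tilt $s=\lambda'\log^\gamma(n\delta)/(n\delta)\asymp r(\log n)/n$. The key is a careful estimate of the MGF $\E[e^{sY_i^n}]$, obtained by splitting the integration domain into $[0,1/s]$, $(1/s,n\delta]$, and $(n\delta,\infty)$, using $e^x\le 1+x+x^2$ on the first piece and integration by parts on the second. This produces $\E[e^{sY_i^n}]\le 1+s\E Z+O(s^2)+o(1)$, where the $o(1)$ term $sn\delta\,e^{(1+\Delta)sn\delta}\P(Z>n\delta)$ decays super-polynomially once $(1+\Delta)\lambda'<\lambda$. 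After the $\log(1+x)\le x$ step, the $js\E Z$ contribution cancels against the centering term $-js\E Z$, the $O(js^2)$ and $j\cdot o(1)$ contributions vanish, and only $-ns\epsilon\sim-(\lambda'/\lambda)(\epsilon/\delta)r(\log n)$ survives. So your diagnosis of the obstacle is off: the MGF of a single increment at this tilt is $1+O(s)$, not of order $r(\log n)$.

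That said, your two-level truncation at $a_n=n\delta/r(\log n)$ is a clean alternative. It replaces the MGF estimate by two off-the-shelf bounds (Bernstein for the light piece, a union bound for the medium piece), and the calibration of $a_n$ is exactly right: large enough that Bernstein yields an exponent strictly exceeding $\epsilon/(2\delta)$, yet with $\log a_n/\log n\to1$ so that each medium exceedance costs $(1+o(1))r(\log n)$. Your argument also works verbatim under the general regularly-varying assumption on $r$, whereas the paper's MGF computation is written for the specific parametric form $r(x)=\lambda x^\gamma$. The paper's approach, on the other hand, gives a sharper constant (arbitrarily close to $-\epsilon/\delta$ rather than $-\min(3\epsilon/(4\delta),m^*)$), though this is immaterial for the lemma as stated.
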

\begin{proof}
\linksinpf{lemma:max-sum-zi-greaterndelta-upperbound}
The truncated random variable $Y_i^n$ differs from $Z_i$ in that it has a bounded moment generating function. This allows for the application of the Cram\'er-Chernoff method to derive an upper bound for the probability term in (\ref{inequality:truncate-deviation-1}). For $s > 0$, we have
\begin{align}
    \pr{
        \sum_{i=1}^j \big(Y_i^n-\E{Z}\big) 
        >
        n\epsilon
    }
    &
    =
    \pr{
        \exp\Big\{s\sum_{i=1}^j Y_i^n\Big\} 
        >
        \exp\big\{ns\epsilon+js\E{Z}\big\}
    }
    \nonumber
    \\
    &
    \leq
    \exp\big\{-ns\epsilon-js\E{Z}\big\}
    \E{\exp\Big\{s\sum_{i=1}^j Y_i^n \Big\}}
    \nonumber
    \\
    &
    =
    \exp\Big\{
        -ns\epsilon-js\E{Z}+ \sum_{i=1}^j \log\E{\exp(sY_i^n)}
    \Big\}
    \label{cramer-chernoff-bound}
\end{align}

To further analyze $\E{\exp(sY_i^n)}$ for $s > \frac{1}{n\delta}$, we decompose its integration domain into three parts:
\begin{equation}\label{tempfomula1}
    \E{e^{sY_i^n}} = \int_{[0,1/s]} e^{sy}d\pr{Z\leq y} + \int_{(1/s, n\delta]} e^{sy}d\pr{Z_1\leq y} + \int_{(n\delta,\infty)} 1 d\pr{Z_1\leq y}\\
\end{equation}
Given the inequality $e^x\leq 1 + x + x^2$ on $x\in[0,1]$, the first integration term in (\ref{tempfomula1}) can be bounded as:
\begin{equation*}
   \int_{[0,1/s]} e^{sy}d\pr{Z\leq y} \leq \int_{[0,1/s]} (1+ sy + s^2y^2)d\pr{Z\leq y}\leq 1+ s\E{Z} + s^2\E{Z^2}
\end{equation*}
This bound is valid since the expected value $\E{Z}$ and the second moment $\E{Z^2}$ are well-defined for a random variable $Z$ satisfying Assumption~\ref{assumption-lognormaltail2}.

We apply integration by parts to the second term in (\ref{tempfomula1}),
\begin{align*}
    &
    \int_{(1/s, n\delta]} e^{sy}d\pr{Z\leq y}
    =
    e^{sy}\pr{Z\leq y}\big|_{1/s}^{n\delta} - s\int_{1/s}^{n\delta} e^{sy}\pr{Z\leq y}dy
    \nonumber
    \\
    &
    =
    e^{sn\delta}\pr{Z\leq n\delta} - e\pr{Z\leq 1/s} - s\int_{1/s}^{n\delta} e^{sy}dy +  s\int_{[1/s, n\delta]} e^{sy}\pr{Z> y}dy
    \\
    &
    \leq
    e\pr{Z> 1/s} + s\int_{[1/s, n\delta]} e^{sy}\pr{Z> y}dy
\end{align*}
In this context,  $\Delta >0$ is chosen so that $e^{\Delta sn\delta}\pr{Z>n\delta} > \pr{Z>1/s}$ for large $n$. Therefore, for $y\in[1/s, n\delta]$,
\begin{equation*}
    e^{sy}\pr{Z > y} \leq e^{sn\delta}\pr{Z>y}\cdot\frac{e^{\Delta sn\delta}\pr{Z>n\delta}}{\pr{Z>y}} = e^{(1+\Delta)sn\delta}\pr{Z >n\delta}
\end{equation*}
This leads to an upper bound for the second term in (\ref{tempfomula1}),
\begin{equation*}
    \int_{(1/s, n\delta]} e^{sy}d\pr{Z\leq y}\leq e\pr{Z> 1/s}  +  sn\delta\cdot e^{(1+\Delta)sn\delta}\pr{Z >n\delta}
\end{equation*}

The final term in (\ref{tempfomula1}) is $\pr{Z> n\delta}$. Since $\pr{Z> n\delta} \leq \pr{Z> 1/s}\leq s^2\E{Z^2}$, combining all three integration terms in (\ref{tempfomula1}) yield
\begin{equation*}
    \E{e^{sY_i^n}}  \leq 1 + sn\delta\cdot e^{(1+\Delta)sn\delta}\pr{Z >n\delta} +  s\E{Z} + s^2(e+2)\E{Z^2}
\end{equation*}

By inserting the above bounds into (\ref{cramer-chernoff-bound}), and utilizing the inequality $\log(1+x)\leq x$ for $x >0$, we obtain 
\begin{align*}
    &
    \pr{
        \sum_{i=1}^j \big(Y_i^n-\E{Z}\big)
        >
        n\epsilon
    }
    \\
    &
    \leq 
    \exp\{
        -ns\epsilon - js\E{Z} + 
        j\log\big(
        1 + sn\delta\cdot e^{(1+\Delta)sn\delta}\pr{Z >n\delta}  + s\E{Z} + s^2(e+2)\E{Z^2}
        \big)
    \}
    \\
    &
    \leq 
    \exp\{
        -ns\epsilon - js\E{Z} + 
        j\big(
            sn\delta\cdot e^{(1+\Delta)sn\delta}\pr{Z >n\delta}  + s\E{Z} + s^2(e+2)\E{Z^2}
        \big)
    \}
    \\
    &
    \leq
    \exp\{
        -ns\epsilon  + 
        j\big(
            sn\delta\cdot e^{(1+\Delta)sn\delta}\pr{Z >n\delta} + s^2(e+2)\E{Z^2}
        \big)
    \}.
\end{align*}
Maximizing over $j= 1,\cdots, M\cdot n$ and applying the logarithm, and choosing some $\lambda'<\lambda$ and $s = \frac{\lambda'\log^\gamma(n\delta)}{n\delta}>\frac{1}{n\delta}$, we derive:
\begin{align*}
    &
    \log
    \underset
        {j = 1,\cdots, M\cdot n}
        {\max}
    \pr{
        \sum_{i=1}^j \big(Y_i^n-\E{Z}\big) 
        > 
        n\epsilon
    }
    \\
    &
    \leq
    -ns\epsilon  + M\cdot 
    n\big(
        sn\delta\cdot e^{(1+\Delta)sn\delta}
        \pr{Z >n\delta} + s^2(e+2)\E{Z^2}
    \big)
    \\
    &
    =  -\lambda'\log^\gamma(n\delta)\cdot
    \frac{\epsilon}{\delta}
    + Mn\lambda'\log^\gamma(n\delta)e^{(1+\Delta)\lambda'\log^\gamma (n\delta) }\pr{Z>n\delta}
    +
    \frac
        {M\lambda'^2
         \log^{2\gamma} (n\delta)
         }
        {n\delta^2}
    (e+2)\E{Z^2}.
\end{align*}
The third term in the last line above approach to zero as $n\rightarrow\infty$. Select $\Delta<1$ small enough such that $\lambda'(1+\Delta) < \lambda$. With this choice, and because $\pr{Z>n\delta}\sim e^{-\lambda\log^\gamma (n\delta)}$, the second term diminishes to zero as well. Hence
\begin{align*}
      &
      \limsup_{n\rightarrow\infty} 
      \frac
        {\log
         \underset
            {j = 1,\cdots, M\cdot n}
            {\max}
         \pr{
            \sum_{i=1}^j \big(Y_i^n-\E{Z}\big)
            >
            n\epsilon
            }
        }
        {r(\log n)}
      \\
      &
      \leq
      \lim_{n\rightarrow\infty}
      \frac
        { -\lambda'\log^\gamma(n\delta)\cdot
         \frac{\epsilon}{\delta}
        }
        {r(\log n)} 
      =
      -
      \frac{\epsilon}{\delta}
      \cdot
      \frac{\lambda'}{\lambda}
      <
      - \frac{\epsilon}{(1+\Delta)\delta}
      <
      - \frac{\epsilon}{2\delta}.
\end{align*}
This finishes the proof of the lemma.
\end{proof}

\linkdest{location,lemma:max-sum-zi-smallerndelta-upperbound}
\begin{lemma}\label{lemma:max-sum-zi-smallerndelta-upperbound}
\linksinthm{lemma:max-sum-zi-smallerndelta-upperbound}
Consider a random variable $Z$ supported on $[0,\infty)$ satisfying Assumption~\ref{assumption-lognormaltail2}.
Let $Z_i$'s be i.i.d.\ random variables following $Z$'s distribution and construct $Y_i^n$ as $Y_i^n:=Z_i\I\{Z_i\leq n\delta\}$ for some given $\delta>0$. Then given any $\epsilon >0$ and $M\in\mb{N}$, 
\begin{equation}\label{inequality:truncate-deviation-2}
    \limsup_{n\rightarrow\infty} \frac{\log\underset{j = 1,\cdots, M\cdot n}{\max}\pr{\sum_{i=1}^j \big(\E{Z} - Y_i^n\big) > n\epsilon}}{r(\log n)} = -\infty
\end{equation}
\end{lemma}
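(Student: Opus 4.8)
The plan is to produce an upper bound for $\pr{\sum_{i=1}^j(\E{Z}-Y_i^n)>n\epsilon}$ that decays exponentially in $n$, uniformly over $j=1,\dots,Mn$, and then to observe that the speed $r(\log n)$ grows only polylogarithmically: since $r$ is regularly varying of index $\gamma>1$ at infinity, $r(\log n)$ is of order $(\log n)^\gamma$ up to a slowly varying factor, hence $r(\log n)=o(n)$. Any bound of the form $e^{-cn}$ with $c>0$ then forces the ratio in \eqref{inequality:truncate-deviation-2} to $-\infty$. The reason the argument is much shorter than that of Lemma~\ref{lemma:max-sum-zi-greaterndelta-upperbound} is that here the truncation works \emph{in our favor}: the one-sided summands $\E{Z}-Y_i^n$ are bounded above by the constant $\E{Z}$, so the Cram\'er--Chernoff transform needs no delicate decomposition of the moment generating function; indeed $e^{-sY_i^n}\le 1$ for $s>0$.

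Concretely, for $s>0$, Markov's inequality and independence give
$$\pr{\sum_{i=1}^j(\E{Z}-Y_i^n)>n\epsilon}\ \le\ e^{-sn\epsilon}\big(e^{s\E{Z}}\,\E{e^{-sY_1^n}}\big)^{j}.$$
Since $Y_1^n=Z_1\I\{Z_1\le n\delta\}$ increases pointwise to $Z_1$ and $e^{-sY_1^n}\le 1$, dominated convergence gives $\E{e^{-sY_1^n}}\to\E{e^{-sZ_1}}$ as $n\to\infty$; and because $Z_1$ has finite first and second moments under Assumption~\ref{assumption-lognormaltail2}, a second-order Taylor expansion yields $e^{s\E{Z}}\E{e^{-sZ_1}}=1+O(s^2)$ as $s\downarrow0$. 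Hence I would fix $s_0>0$, depending only on $\epsilon$ and $M$, together with $n_0$ large, so that $e^{s_0\E{Z}}\,\E{e^{-s_0Y_1^n}}\le e^{\,s_0^2\E{Z^2}}$ for all $n\ge n_0$, while also requiring $s_0\le \epsilon/\big(2M\E{Z^2}\big)$. (Alternatively, to bypass the limit in $n$ one can write $\E{e^{-sY_1^n}}=\E{e^{-sZ_1}\I\{Z_1\le n\delta\}}+\pr{Z_1>n\delta}$ and bound the two pieces by $\E{e^{-sZ_1}}$ and $\pr{Z_1>n\delta}\to0$.)

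Substituting this into the Chernoff bound, for every $j\le Mn$ and $n\ge n_0$,
$$\pr{\sum_{i=1}^j(\E{Z}-Y_i^n)>n\epsilon}\ \le\ \exp\big(-s_0n\epsilon+j\,s_0^2\E{Z^2}\big)\ \le\ \exp\big(-s_0n\epsilon+Mn\,s_0^2\E{Z^2}\big)\ \le\ \exp\big(-\tfrac12 s_0n\epsilon\big),$$
so $\max_{1\le j\le Mn}\pr{\sum_{i=1}^j(\E{Z}-Y_i^n)>n\epsilon}\le e^{-s_0n\epsilon/2}$ for $n\ge n_0$. Dividing by $r(\log n)$ and using $r(\log n)=o(n)$ gives $\limsup_{n\to\infty} r(\log n)^{-1}\log\max_{j\le Mn}\pr{\cdots}\le\limsup_{n\to\infty}\big(-s_0n\epsilon/2\big)/r(\log n)=-\infty$, which is the claim.

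I do not expect a genuine obstacle. The only points needing care are (i) that the moment generating function estimate can be made uniform in $n$ — handled by the dominated-convergence limit together with the fact that the $O(s^2)$ error is genuinely negligible once $s_0$ is fixed small — and (ii) recalling that the lognormal-type tail of Assumption~\ref{assumption-lognormaltail2} has all polynomial moments, so that both the centering constant $\E{Z}$ and the quadratic term $\E{Z^2}$ are finite and the computation above is legitimate.
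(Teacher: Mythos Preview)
Your argument is correct. Both your proof and the paper's reach the same endpoint---an exponential-in-$n$ tail bound, which dominates the polylogarithmic speed $r(\log n)$---but they use different concentration inequalities to get there. The paper first recenters, rewriting $\sum_{i=1}^j(\E{Z}-Y_i^n)$ as $\sum_{i=1}^j(\E{Y_i^n}-Y_i^n)$ plus a correction $j\,\E{Z\I\{Z>n\delta\}}$ that is absorbed into the threshold for large $n$, and then applies Bernstein's inequality (Lemma~\ref{berstein}) to the centered, bounded-above summands $\E{Y_i^n}-Y_i^n$, using the uniform bounds $\E{Y_i^n}-Y_i^n\le\E{Z}$ and $\var{Y_i^n}\le\var{Z}+2\E{Z}$. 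You bypass Bernstein entirely and run a bare Chernoff argument, exploiting that $e^{-sY_i^n}\le1$ so the Laplace transform $\E{e^{-sY_i^n}}$ is trivially finite and can be controlled by the elementary bound $e^{-x}\le1-x+x^2/2$ (equivalently, the second-order expansion of the cumulant generating function). Your route is marginally more self-contained since it avoids invoking a named inequality; the paper's route makes the variance dependence more explicit. Either way the bound is $\exp(-cn)$ uniformly over $j\le Mn$, and the conclusion follows from $r(\log n)=o(n)$.
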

\begin{proof}
\linksinpf{lemma:max-sum-zi-smallerndelta-upperbound}
Define $Y := Z\I\{Z\leq n\delta\}$. The term $\pr{\sum_{i=1}^j \big(\E{Z} - Y_i^n\big) > n\epsilon}$ satisfies
\begin{align*}
    &
    \pr{
        \sum_{i=1}^j \big(\E{Z} - Y_i^n\big)
        >
        n\epsilon
    }
    \\
    &
    =
    \pr{
        \sum_{i=1}^j \big(\E{Y_i^n}- Y_i^n\big)
        >
        n\epsilon - j\cdot(\E{Z}-\E{Z_i\I(Z_i\leq n\delta)}
    }
    \\
    &
    =
    \pr{
        \sum_{i=1}^j \big(\E{Y_i^n}- Y_i^n\big)
        > 
        n\epsilon - j\cdot\E{Z\I(Z>  n\delta)}
    }.
\end{align*}
Since $Z$ satisfies Assumption~\ref{assumption-lognormaltail2}, it has the first moment, thus large $n$, $M\cdot\E{Z_i\I(Z_i >  n\delta)} < \epsilon/2$. For such $n$ and $j =1,\cdots, M\cdot n$, we have 
\begin{equation}\label{change_variable_Y_Z_bound}
    \pr{\sum_{i=1}^j \big(\E{Z} - Y_i^n\big) > n\epsilon}\leq \pr{\sum_{i=1}^j \big(\E{Y_i^n}- Y_i^n\big)> \frac{n\epsilon}{2}}
\end{equation}
We claim the following two properties about the random variable $\E{Y_i^n} - Y_i^n$:
\begin{itemize}
    \item 
    $\E{Y_i^n} - Y_i^n \leq \E{Y_i^n}\leq \E{Z}$. This is true due to $Y_i^n$ is a non-negative random variable.

    \item 
    Given $\delta$, we can choose large $n$ such that $\E{Z\I(Z > n\delta)} < 1$, hence
    \begin{align*}
        &
        \var{\E{Y_i^n} - Y_i^n }
        \\
        &
        =
        \var{Y_i^n}
        =
        \E{(Y_i^n)^2} - (\E{Y_i^n})^2
        \\
        &
        \leq
        \E{Z^2} - (\E{Z})^2 + (\E{Z})^2 - (\E{Y_i^n})^2
        \\
        &
        =
        \var{Z} + (\E{Z}+\E{Y_i^n})(\E{Z} - \E{Y_i^n})
        \\
        &
        \leq
        \var{Z} + 2\E{Z}\cdot \E{Z\I(Z > n\delta)}
        \\
        &
        \leq
        \var{Z} + 2\E{Z}
    \end{align*}
\end{itemize}
The two properties of $\E{Y_i^n} - Y_i^n $ shown above make us ready to apply the Bernstein's inequality (see Lemma \ref{berstein} in appendix Section \ref{appendix}), and this yields
\begin{align*}
    &
    \pr{\sum_{i=1}^j \big(\E{Y_i^n}- Y_i^n\big)> \frac{n\epsilon}{2}}
    \\
    &
    \leq 
    \exp
    \Big\{
        -\frac{n^2\epsilon^2/4}{2(\sum_i^j\var{\E{Y_i^n}-Y_i^n} + \frac{n\epsilon\E{Z}}{6}}
    \Big\}
    \\
    &
    \leq
    \exp
    \Big\{
        -\frac{n^2\epsilon^2/4}{2(j(\var{Z} + 2\E{Z}) + \frac{n\epsilon\E{Z}}{6}}
    \Big\}.
\end{align*}
Since the exponential term in the above last line increases as $j$ increases, and combining with (\ref{change_variable_Y_Z_bound}), for large $n$,
\begin{equation}\label{numerator-upper}
    \log\underset{j = 1,\cdots, M\cdot n}{\max}\pr{\sum_{i=1}^j \big(\E{Z} - Y_i^n\big) > n\epsilon}\leq -\frac{n^2\epsilon^2/4}{2(M\cdot n(\var{Z} + 2\E{Z}) + \frac{n\epsilon\E{Z}}{6}}
\end{equation}
In view of the ratio in (\ref{inequality:truncate-deviation-2}), the numerator has upper bound (\ref{numerator-upper}), which decrease with speed $-n$. Thus the lemma is proved by running $n\rightarrow\infty$.
\end{proof}

With Lemma \ref{lemma:max-sum-zi-greaterndelta-upperbound} and \ref{lemma:max-sum-zi-smallerndelta-upperbound} in hand, we are ready to prove Lemma~\ref{lemma:checkjnk-asymp-neg}.

\begin{proof}[Proof of Lemma~\ref{lemma:checkjnk-asymp-neg}]
\linksinpf{lemma:checkjnk-asymp-neg}
For any fixed $k\in\mb{N}$, chose $\delta>0$ such that $k\delta < \epsilon/2$. We begin by analyzing $\|\{\bar{H}_n^{\leqslant k}\|> \epsilon\}$ conditioning on the event $\{N(n)\geq k, Z_{P_n^{-1}(k)} \leq n\delta\}$ and its complement. This leads to the following probability upper bound:
\begin{align}
    &
    \pr{
        \| \bar{H}_n^{\leqslant k}\|
        >
        \epsilon
        }
    =  
    \pr{
        \sup_{t\in[0,1]
        }
    \big|
        \sum_{i=1}^{N(nt)}
        \big(
            Z_i\I(P_n(i) > k) - \mu_1
        \big)
    \big|
    >
    n\epsilon
    }
    \nonumber
    \\
    &\leq 
    \pr{
        \underset   
            {j = 1,\cdots, N(n)}    
            {\max}
        \big|
            \sum_{i=1}^{j}
            \big(
                Z_i\I(P_n(i) > k) - \mu_1
            \big)
        \big|
        > 
        n\epsilon, Z_{P_n^{-1}(k)}
        \leq
        n\delta,  N(n)
        \geq 
        k
    }\nonumber
    \\
    &
    +
    \pr{
        \{Z_{R_n^{-1}(k)}
        \leq
        n\delta, N(n)
        \geq
        k\}^c
    }
    \nonumber
    \\
    &
    \leq 
    \pr{
        \underset
            {j = 1,\cdots, N(n)}
            {\max}
        \sum_{i=1}^{j}
        \big(
            Z_i\I(P_n(i) 
            >
            k) - \mu_1
        \big)
        > 
        n\epsilon, Z_{P_n^{-1}(k)}
        \leq
        n\delta,  N(n)
        \geq 
        k
    }
    \label{constraint_prob1}
    \\
    &
    +\pr{
        \underset
            {j = 1,\cdots, N(n)}
            {\max}
        \sum_{i=1}^{j}
        \big(
            \mu_1-Z_i\I(P_n(i) 
            >
            k)
        \big)
        >
        n\epsilon, Z_{P_n^{-1}(k)}
        \leq
        n\delta,  N(n) 
        \geq
        k
    }
    \label{constraint_prob2}
    \\
    &
    +\pr{   
        Z_{P_n^{-1}(k)}
        >
        n\delta
    }
    +\pr{
        N(n)< k
    }.
    \label{removed_events}
\end{align}
We proceed with investigating the terms (\ref{constraint_prob1}) and (\ref{constraint_prob2}) separately. Given that $\{P_n(i) > k\}$ is a subset of $\{Z_i\leq n\delta\}$ under the condition $Z_{P_n^{-1}(k)} \leq n\delta$ and $N(n) \geq k$, the sum $\sum_{i=1}^{j}\big(Z_i\I(Z_i\leq n\delta) - \mu_1\big)$ includes more positive $Z_i$'s than $\sum_{i=1}^{j}\big(Z_i\I(P_n(i) > k) - \mu_1\big)$. Therefore, we can upper bound the probability (\ref{constraint_prob1}) as
\begin{align}
    &
    (\ref{constraint_prob1})
    \leq
    \pr{
        \underset
            {j = 1,\cdots, N(n)}
            {\max}
        \sum_{i=1}^{j}
        \big(
            Z_i\I(Z_i\leq n\delta) - \mu_1
        \big)
        >
        n\epsilon, Z_{P_n^{-1}(k)}
        \leq
        n\delta,  N(n)
        \geq 
        k
    }
    \nonumber
    \\
    &
    \leq
    \pr{
        \underset
            {j = 1,\cdots, N(n)}
            {\max}
        \sum_{i=1}^{j}
        \big(
            Z_i\I(Z_i
            \leq
            n\delta) - \mu_1
        \big)
        > 
        n\epsilon
    }
    \nonumber
    \\
    &
    \leq
    \pr{
        \underset\
            {j = 1,\cdots,n(\lfloor \nu_1\rfloor+1)}
            {\max}
        \sum_{i=1}^{j}
        \big(
            Z_i\I(Z_i
            \leq
            n\delta) - \mu_1
        \big)
        > 
        n\epsilon
    }
    +\pr{
            N(n) 
            > 
            n(\lfloor \nu_1\rfloor+1)
    }.
    \label{upperbound-positive}
\end{align}
To evaluate (\ref{constraint_prob2}), note that $\I(Z_j\leq n\delta)-\I(P_n(j) > k)= 1$ if and only if $Z_j$ is among the $k$-largest value of $Z_i$'s and $Z_j\leq n\delta$, hence $\sum_{i=1}^{j}Z_i\big(\I(Z_i\leq n\delta)-\I(P_n(i) > k)\big)\leq kn\delta$ for any $j$. This leads to the following bound for (\ref{constraint_prob2}):
\begin{align}
    &
    (\ref{constraint_prob2})
    = \mathbb{P}\Big(
        \underset
            {j = 1,\cdots, N(n)}
            {\max}
        \sum_{i=1}^{j}
        \big(
            \mu_1-Z_i\I(Z_i\leq n\delta)
        \big)
        + 
        \sum_{i=1}^{j}Z_i
        \big(
            \I(Z_i
            \leq
            n\delta)-\I(P_n(i) 
            > 
            k)
        \big)
        >
        n\epsilon,
        \nonumber
    \\
    &
    Z_{P_n^{-1}(k)} 
    \leq
    n\delta,  N(n) 
    \geq
    k
    \Big)
    \nonumber
    \\
    &
    \leq
    \mathbb{P}
    \Big(
        \underset
            {j = 1,\cdots, N(n)}
            {\max}
        \sum_{i=1}^{j}
        \big(
            \mu_1-Z_i\I(Z_i
            \leq
            n\delta)
        \big)
        > 
        n(\epsilon - k\delta), Z_{P_n^{-1}(k)}
        \leq
        n\delta,  N(n) 
        \geq 
        k
    \Big)
    \nonumber
    \\
    &
    \leq
    \mathbb{P}
    \Big(
        \underset  
            {j = 1,\cdots, N(n)}
            {\max}
        \sum_{i=1}^{j}
        \big(
            \mu_1-Z_i\I(Z_i\leq n\delta)\big) > \frac{n\epsilon}{2}
        \Big)
        \nonumber
    \\
    &
    \leq
    \pr{
        \underset
            {j = 1,\cdots,n(\lfloor \nu_1\rfloor+1)}
            {\max}
        \sum_{i=1}^{j}\
        \big(   
            \mu_1-Z_i\I(Z_i\leq n\delta)
        \big)
        > 
        \frac{n\epsilon}{2}
    }
    +\pr{
        N(n) 
        > 
        n(\lfloor \nu_1\rfloor+1)
    }
    \label{upperbound-negative}
\end{align}

Having established the upper bounds (\ref{upperbound-positive}) and (\ref{upperbound-negative}) for (\ref{constraint_prob1}) and (\ref{constraint_prob2}), and considering the additional terms in (\ref{removed_events}), we can conclude that 
\begin{align}
    &
    \pr{
        \| \bar{H}_n^{\leqslant k}\|
        >
        \epsilon
    }
    \leq
    2\pr{
        \underset
            {j = 1,\cdots,n(\lfloor \nu_1\rfloor+1)}
            {\max}
        \Big|
            \sum_{i=1}^{j}
            \big(
                Z_i\I(Z_i
                \leq
                n\delta) - \mu_1
            \big)
        \Big|
        >
        \frac{n\epsilon}{2}
    }
    \nonumber
    \\
    &
    +\pr{
        Z_{P_n^{-1}(k)}
        >
        n\delta
    }
    + \pr{
        N(n)< k
    }
    +2\pr{
        N(n) > n(\lfloor \nu_1\rfloor+1)
    }.
    \label{final-upperbound}
\end{align}

For the third and fourth terms in (\ref{final-upperbound}), since $N(n)$ is distributed as the sum of $n$ i.i.d.\ Poisson($\nu_1$) random variables and by Sanov's theorem (see, for example, theorem 6.1.3 in \cite{MR2571413}), those two terms decay as $e^{-n\cdot\text{const}}$. Consequently,
\begin{equation*}
    \limsup_{n\rightarrow\infty}\frac{\log\pr{N(n)< k}}{r(\log n)} = -\infty\text{ and }\limsup_{n\rightarrow\infty}\frac{\log2\pr{N(n) > n(\lfloor \nu_1\rfloor+1)}}{r(\log n)} = -\infty
\end{equation*}
For the second term in (\ref{final-upperbound}), the $k$-th largest $Z_i$'s is distributed as $Q^{-1}_n(\Gamma_k)$. By leveraging the limit result (\ref{limit5}) in appendix, this term satisfies
\begin{equation*}
    \limsup_{n\rightarrow\infty}
    \frac
    {
        \log
        \pr{
            Z_{P_n^{-1}(k)}>n\delta
        }
    }
    {r(\log n)}
    =
    \limsup_{n\rightarrow\infty}
    \frac
        {\log\pr{Q^{\leftarrow}_n(\Gamma_k)>n\delta}}
        {r(\log n)}
    \leq
    \limsup_{n\rightarrow\infty}
    \frac
        {\log\pr{\Gamma_k\leq Q_n(n\delta)}}
        {r(\log n)} 
    = -k
\end{equation*}
For the first term in (\ref{final-upperbound}), we employ Etemadi's inequality (see Lemma \ref{etemadi} in appendix Section \ref{appendix} ) to externalize the maximum from the probability. Then, invoking Lemma \ref{lemma:max-sum-zi-greaterndelta-upperbound} and Lemma \ref{lemma:max-sum-zi-smallerndelta-upperbound} from the appendix, we analyze the term as follows:
\begin{align*}
    &
    \limsup_{n\rightarrow\infty}
    \frac
        {\log2
        \pr{
            \underset
                {j = 1,\cdots,n(\lfloor \nu_1\rfloor+1)}
                {\max}
            \Big|
                \sum_{i=1}^{j}\big(Z_i\I(Z_i\leq n\delta) - \mu_1\big)
            \Big|
            > 
            \frac
                {n\epsilon}{2}}}
                {r(\log n)}
    \\
    &
    \leq
    \limsup_{n\rightarrow\infty}
    \frac
        {\log6
        \underset
            {j = 1,\cdots,n(\lfloor \nu_1\rfloor+1)}
            {\max}
        \pr{
            \Big|\sum_{i=1}^{j}\big(Z_i\I(Z_i\leq n\delta) - \mu_1\big)\Big| 
            \geq 
            \frac{n\epsilon}{6}
        }
        }
        {r(\log n)}
    \\
    &
    \leq
    0+\max\Big\{
        \limsup_{n\rightarrow\infty}
        \frac
            { \log\underset{j = 1,\cdots,n(\lfloor \nu_1\rfloor+1)}
            {\max}
            \pr{\sum_{i=1}^{j}
            \big(
                Z_i\I(Z_i\leq n\delta) - \mu_1
            \big)
            > 
            \frac{n\epsilon}{7}}}{r(\log n)},
    \\
    &
    \limsup_{n\rightarrow\infty}
    \frac
        {\log\underset{j = 1,\cdots,n(\lfloor \nu_1\rfloor+1)}
        {\max}
    \pr{
        \sum_{i=1}^{j}
        \big(
            \mu_1 - Z_i\I(Z_i\leq n\delta)
        \big)
        > 
        \frac{n\epsilon}{7}}}{r(\log n)}
        \Big\}
    \leq
    -\frac{\epsilon}{14\delta}.
\end{align*}
Returning to (\ref{final-upperbound}), for each fixed $k$, we can choose $\delta$ small enough such that $-\epsilon/14\delta < -k$. Therefore, we obtain
\begin{equation*}
    \limsup_{n\rightarrow\infty}\frac{\log \pr{\| \bar{H}_n^{\leqslant k}\|> \epsilon}}{r(\log n)} \leq \max\{-\frac{\epsilon}{14\delta}, -k, -\infty,-\infty\} = -k
\end{equation*}
The conclusion of the lemma follows by considering $k\rightarrow \infty$.
\end{proof}

Let \newnota{r-euclidean-kdim-decrease}{$\mb{R}_+^{k\downarrow}$}\defnota{r-euclidean-kdim-decrease}{$\delequal\{(x_1,\cdots, x_k)\in\mb{R}^k: x_1\geq x_2\geq\cdots\geq x_k\geq 0\}$}.
Consider the sequence of measures on $\mb{R}_+^{k\downarrow}$ defined below: for $n\in\mb{N}$,
\begin{equation}\label{def:measure-largestkjump}
    \newnota{mu-k-largest-jump-measure}{\check\mu^k_n(\cdot)}\defnota{mu-k-largest-jump-measure}{ \delequal \pr{\Big(\frac{Q_n^\leftarrow (\Gamma_1)}{n},\cdots, \frac{Q_n^\leftarrow(\Gamma_k)}{n}\Big)\in \cdot}}
\end{equation}
In view of (\ref{hatjnk}), note that $\check\mu^k_n(\cdot)$ is the distribution of the jump sizes of $\hat{J}^{\leqslant k}_n$.

Proposition \ref{proposition:k-big-jump-ldp-lowerbound} and \ref{proposition:k-big-jump-ldp-upperbound} below will be used in Theorem \ref{theorem:k-big-jump-ldp}, and they prove the sequence of measure $\{\check\mu_n\}_{n\geq 1}$ (defined in (\ref{def:measure-largestkjump}) and reviewed below) satisfies the upper and lower bound inequality for extended large deviation, respectively.
We start with the following observation.

Define \newnota{h-kdim-leqi}{$H^k_{\leqslant i}$}\defnota{h-kdim-leqi}{$\delequal \{x\in \mb{R}_+^{k\downarrow}, x_{i+1} = 0\}$} for $i\in\{0, 1,2,\cdots, k-1\}$ and $H_{\sleq k}^k = \R_+^{k\downarrow}$.
Note that each $H^k_{\leqslant i}$ is closed.

\subsection{Proofs for Section \ref{section:randomwalk-ldp-J1}}\label{section:randomwalk-proof}
Instead proving the above theorem directly, we consider the process $\bar{S_n}$ defined below, whose jumps, except for the last one, are uniformly distributed on $[0,1]$:
\begin{equation}\label{notation-snbar}
    \newnota{sn-bar}{\bar{S}_n}\defnota{sn-bar}{\delequal\frac{1}{n}\sum_{i=1}^{n-1} \big(Z_i - \E{Z}\big)\I_{[U_i, 1]} + \frac{1}{n} (Z_n-\E{Z})\I_{\{1\}}}
\end{equation}
Here $Z$ is a generic random variable of the same distribution as $Z_1$. $U_1,\cdots, U_{n-1}$ are i.i.d.\ random variable uniform on $[0,1]$.

Similar to (\ref{hatjnk}) and (\ref{tildejnk}), where jumps in a L\'evy process is presented in the order of their size,  we can list the jumps in $\bar{S_n}$  on $[0,1)$ as $\{\tilde{Q}^\leftarrow(V_{(i)}), i = 1,\cdots, n-1\}$. Here \newnota{q-one-minus-cdf}{$\tilde{Q}(x)$}\defnota{q-one-minus-cdf}{$\delequal \pr{Z \geq x}$} and \newnota{q-tilde-inverse}{$\tilde{Q}^\leftarrow(x)$}\defnota{q-tilde-inverse}{$= \inf\{s\geq 0: \tilde{Q}(s) < x\}$}.
\newnota{v-uniform-0-1-v}{$V_1,V_2,\cdots, V_{n-1}$} are \defnota{v-uniform-0-1-v}{i.i.d.\ random variables uniform on $[0,1]$}, with
\newnota{ordered-v}{$V_{(1)}, V_{(2)}, \cdots, V_{(n-1)}$} being their order statistics in the descending order\mdefnota{ordered-v}{(descending) order statistics of $V_1,\cdots, V_{n-1}$}. 
Taking this ordered jump representation into consideration,  $\bar{S_n}$ has the same distribution with $\newnota{tilde-sn}{\tilde{S}_n}\defnota{tilde-sn}{\delequal \tilde{J}^k_n + \tilde{H}^k_n}$ where for $t\in[0,1]$:
\begin{equation}\label{def:tilde-jnk}
    \newnota{j-tilde-jnk}{{\tilde{J}^k_n(t)}}\defnota{j-tilde-jnk}{\delequal \frac{1}{n}\sum_{i=1}^k \tilde{Q}^\leftarrow(V_{(i)}) \I_{[U_i,1]}(t) + \frac{1}{n} Z_n\I_{\{1\}}(t)}
\end{equation}

and 
\begin{equation}\label{def:tilde-hnk}
    \newnota{hn-tilde}{{\tilde{H}^k_n(t)}}\defnota{hn-tilde}{\delequal \frac{1}{n}\sum_{i=k+1}^{n-1} \tilde{Q}^\leftarrow(V_{(i)})\I_{[U_i,1]}(t) -\frac{1}{n}\E{Z}\sum_{i=1}^{n-1}\I_{[U_i,1]}(t) -\frac{1}{n} \E{Z}\I_{\{1\}}(t)}
\end{equation}

We have the following three lemmas as intermediate results:

\begin{lemma}\label{lemma:k-big-jump-rw-ldp}
\linksinthm{lemma:k-big-jump-rw-ldp}
For any $k\in\mb{N}$, recall that $\mb{R}_+^{k\downarrow}=\{(x_1,\cdots, x_k)\in\mb{R}^k: x_1\geq x_2\geq\cdots\geq x_k\geq 0\}$.
The sequence of measures $\{\tilde{\mu}_n\}_{n\geq 1}$ on $\mb{R}_+^{k\downarrow}$ defined by
\begin{equation}\label{jump-random-walk-measure}
    \newnota{m-jump-random-walk-measure}{\tilde{\mu}_n(\cdot)}\defnota{m-jump-random-walk-measure}{\delequal \pr{\big(\frac{\tilde{Q}^\leftarrow(V_{(1)})}{n}, \cdots,\frac{\tilde{Q}^\leftarrow(V_{(k)})}{n} \big)\in \cdot}}
\end{equation}
satisfies the extended LDP with the rate  $\tilde{I}^k:\mb{R}_+^{k\downarrow}\rightarrow\mb{R}$ given by:
\begin{equation}
    \newnota{i-rate-func-jump-random-walk}{\tilde{I}^k (\bm x)}\defnota{i-rate-func-jump-random-walk}{\delequal \sum_{i=1}^k \I\{x_i\neq 0\}\text{ with } \bm = (x_1,\cdots, x_k)\in\mb{R}_+^{k\downarrow}}
\end{equation}
and speed $r(\log n)$.
\end{lemma}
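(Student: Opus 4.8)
The plan is to follow the proof of Proposition~\ref{theorem:k-big-jump-ldp} essentially verbatim, replacing the Poisson/$\Gamma_i$ structure underlying $\check\mu^k_n$ with the binomial structure of the order statistics of the $n-1$ i.i.d.\ uniforms. The key observation is that, since $\tilde Q^\leftarrow$ is non-increasing, $\tilde Q^\leftarrow(V_{(1)}),\ldots,\tilde Q^\leftarrow(V_{(n-1)})$ are exactly the i.i.d.\ copies $Z_1,\ldots,Z_{n-1}$ listed in decreasing order, so $\tilde\mu_n$ is the law of $n^{-1}$ times the $k$ largest among $Z_1,\ldots,Z_{n-1}$; in particular it is supported on $\mb{R}_+^{k\downarrow}$, and lower semicontinuity of $\tilde I^k$ is proved exactly as in Proposition~\ref{theorem:k-big-jump-ldp} (its sublevel set at a level $\alpha<k$ is $H^k_{\leqslant\lfloor\alpha\rfloor}$, which is closed). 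The analytic inputs I would isolate at the outset are: (i) the elementary inversion identity $\tilde Q^\leftarrow(y)\in(a,b)\iff y\in(\tilde Q(b),\tilde Q(a))$, up to boundary cases that are irrelevant to logarithmic asymptotics; and (ii) the regular-variation estimates $n\tilde Q(cn)\to0$, $\log\tilde Q(cn)/r(\log n)\to-1$ for fixed $c>0$, and $\log\big(\tilde Q(an)-\tilde Q(bn)\big)/r(\log n)\to-1$ for fixed $0<a<b$, which are the random-walk counterparts of the limits (\ref{limit2}), (\ref{limit4}) and (\ref{limit5}) and follow from the same computations, the essential point being $\log n=o(r(\log n))$ since $\gamma>1$.

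For the lower bound, given an open $G$ and $x\in G$ with $\iota\teq\tilde I^k(x)$, I would first reduce, exactly as in the proof of Proposition~\ref{proposition:k-big-jump-ldp-lowerbound}, to the case $x_1>\cdots>x_\iota>0=x_{\iota+1}=\cdots=x_k$, and then pick $\delta>0$ small enough that the box $D(x;\delta)=\big(\prod_{i\leqslant\iota}[x_i-\delta,x_i+\delta]\times[0,\delta]^{k-\iota}\big)\cap\mb{R}_+^{k\downarrow}$ lies in $G$, has pairwise disjoint first-$\iota$ intervals, and satisfies $x_\iota>2\delta$. Using (i), the event $\{n^{-1}(\tilde Q^\leftarrow(V_{(1)}),\ldots,\tilde Q^\leftarrow(V_{(k)}))\in D(x;\delta)\}$ contains the event that exactly one of the $n-1$ uniforms lies in each interval $I_j=(\tilde Q(n(x_j+\delta)),\tilde Q(n(x_j-\delta)))$, $j=1,\ldots,\iota$ (these are disjoint and contained in $(0,\tilde Q(n\delta))$), and none of the remaining $n-1-\iota$ uniforms lies below $\tilde Q(n\delta)$; the probability of the latter equals $\frac{(n-1)!}{(n-1-\iota)!}\prod_{j=1}^\iota\big(\tilde Q(n(x_j-\delta))-\tilde Q(n(x_j+\delta))\big)\big(1-\tilde Q(n\delta)\big)^{n-1-\iota}$. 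Dividing its logarithm by $r(\log n)$ and letting $n\to\infty$, (ii) gives that the factorial ratio ($\Theta(n^\iota)$) contributes $0$, each of the $\iota$ product factors contributes $-1$, and $\big(1-\tilde Q(n\delta)\big)^{n-1-\iota}\to1$ because $n\tilde Q(n\delta)\to0$; hence $\liminf_n\log\tilde\mu_n(G)/r(\log n)\geq-\iota$, and taking the infimum over $x\in G$ yields the lower bound.

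For the upper bound, given a closed $F$, I would set $\iota\teq\lim_{\epsilon\downarrow0}\inf_{x\in F^\epsilon}\tilde I^k(x)$ (the case $\iota=0$ being trivial). Since $\tilde I^k$ is $\{0,\ldots,k\}$-valued and $\epsilon\mapsto\inf_{x\in F^\epsilon}\tilde I^k(x)$ is non-increasing, this infimum equals $\iota$ for all small $\epsilon$, which forces $F$ to be bounded away from $H^k_{\leqslant\iota-1}$; hence there is $r>0$ with $x_\iota>r$ for all $x\in F$. Writing $V_{(\iota)}$ for the $\iota$-th smallest uniform and using (i), $\tilde\mu_n(F)\leq\pr{\tilde Q^\leftarrow(V_{(\iota)})>nr}=\pr{V_{(\iota)}\leq\tilde Q(nr)}=\sum_{j=\iota}^{n-1}\binom{n-1}{j}\tilde Q(nr)^j\big(1-\tilde Q(nr)\big)^{n-1-j}$, and since $n\tilde Q(nr)\to0$ this sum is at most $2\binom{n-1}{\iota}\tilde Q(nr)^\iota$ for all large $n$; dividing the logarithm by $r(\log n)$ and using (ii) gives $\limsup_n\log\tilde\mu_n(F)/r(\log n)\leq-\iota$. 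This is the binomial analogue of the estimate $\pr{\Gamma_{i_F}\leq Q_n(nr)}$ from the L\'evy case.

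The step I expect to require the most care is establishing the estimates in (ii) and, relatedly, checking that the polynomial-in-$n$ prefactors ($n^\iota$ from the binomial coefficients and from $V_{(\iota)}$ being an order statistic) are genuinely absorbed by the speed $r(\log n)$ — this is precisely where the hypothesis $\gamma>1$ enters, and it is the same delicate bookkeeping that underlies Proposition~\ref{theorem:k-big-jump-ldp}. As an alternative route one could instead deduce the lemma directly from Proposition~\ref{theorem:k-big-jump-ldp}, using the classical representation $V_{(i)}=\Gamma_i/\Gamma_n$ together with the strong law $\Gamma_n/n\to1$ and an exponential-equivalence estimate at speed $r(\log n)$ (via Corollary~\ref{lemma:approximate2}); but the direct binomial computation above is shorter and avoids having to control the ratio $\Gamma_i/\Gamma_n$.
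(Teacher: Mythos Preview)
Your proposal is correct and follows essentially the same route as the paper's proof: both the lower bound via a multinomial/box probability for the order statistics of the $n-1$ uniforms and the upper bound via $\pr{V_{(\iota)}\leq\tilde Q(nr)}$ match the paper's argument. Your version is in fact slightly more streamlined---you bypass the auxiliary $f_m$ function the paper carries over from Proposition~\ref{proposition:k-big-jump-ldp-lowerbound} and use a direct binomial tail bound in place of the Beta density integral in (\ref{limit9})---but these are stylistic differences only.
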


\begin{lemma}\label{lemma:ex-ldp-tilde-jnk}
\linksinthm{lemma:ex-ldp-tilde-jnk}
The sequence $\{\tilde{J}_n^k\}_{n\geq 1}$ defined in (\ref{def:tilde-jnk}) satisfies the extended LDP  on $(\mb{D}, d_{J_1}$) with rate function $\tilde{I}_k:\mb{D}\rightarrow\mb{R}$ given by:
\begin{equation}\label{rate-tilde-ik}
\newnota{i-rate-func-k-jump-rw}{\tilde{I}_k(\xi)}\defnota{i-rate-func-k-jump-rw}{\delequal 
\begin{cases}
\sum_{t\in(0,1]}\I\{\xi(t)\neq \xi(t-)\} & \text{ if }\xi\in\tilde{\mb{D}}_{\leqslant k}\\
\infty & \text{ otherwise}
\end{cases}}
\end{equation} 
and speed $r(\log n)$.
\end{lemma}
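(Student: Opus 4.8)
The plan is to follow the blueprint of the proof of Proposition~\ref{theorem:ex-ldp-kjumpprocess}, the only genuinely new ingredient being the boundary jump $\frac1n Z_n\I_{\{1\}}$ that $\tilde J^k_n$ carries at time $1$. Write $\tilde J^k_n = A_n + B_n$, where $A_n(t)\teq\frac1n\sum_{i=1}^k\tilde Q^\leftarrow(V_{(i)})\I_{[U_i,1]}(t)$ collects the $k$ largest interior jumps placed at their uniform locations, and $B_n(t)\teq\frac1n Z_n\I_{\{1\}}(t)$ is the endpoint jump. Since $(V_1,\dots,V_{n-1},U_1,\dots,U_k)$ is independent of $Z_n$, the processes $A_n$ and $B_n$ are independent, so I would establish an extended LDP for each and then recombine.

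First I would show that $\{A_n\}_{n\ge1}$ satisfies the extended LDP on $(\mb{D},d_{J_1})$ with the rate function $\hat I_k$ of (\ref{rateik}), which is finite on $\mb{D}_{\leqslant k}$ and equal there to the number of jumps. This is the random-walk counterpart of Proposition~\ref{theorem:ex-ldp-kjumpprocess}: Lemma~\ref{lemma:k-big-jump-rw-ldp} already provides the extended LDP for the vector of ordered jump sizes $\big(\tilde Q^\leftarrow(V_{(1)})/n,\dots,\tilde Q^\leftarrow(V_{(k)})/n\big)$ on $\mb{R}_+^{k\downarrow}$ with rate $\tilde I^k(\bm x)=\sum_i\I\{x_i\neq0\}$, and, attaching the independent i.i.d.\ uniform jump times, one runs the same neighborhood-perturbation argument as in the proof of Proposition~\ref{theorem:ex-ldp-kjumpprocess}: for the lower bound one wraps a small $d_{J_1}$-ball of a target step function $\sum_{i=1}^j a_i\I_{[u_i,1]}$ into a product of a box around $(a_1,\dots,a_j,0,\dots,0)$ in $\mb{R}_+^{k\downarrow}$ and small time-windows around $(u_1,\dots,u_j,w^\ast,\dots,w^\ast)$, while for the upper bound one uses the estimate on the size of the $i_F$-th largest jump. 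One first proves this on the closed subspace $\mb{D}_{\leqslant k}$, on which $A_n$ lives with probability one, and then lifts it to $(\mb{D},d_{J_1})$ by Lemma~\ref{E-LDP-on-subspaces-full-measure}.

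Next I would treat $B_n$. By Assumption~\ref{assumption-lognormaltail2}, $\pr{Z_n/n\geq c}=\exp\!\big(-r(\log(nc))\big)$ for $c>0$, and since $r$ is regularly varying of index $\gamma>1$ one has $r(\log(nc))/r(\log n)\to1$, so that
\[
\lim_{n\to\infty}\frac{\log\pr{Z_n/n\geq c}}{r(\log n)}=-1\quad(c>0),\qquad \pr{Z_n/n\geq c}\to1\quad(c\leq0).
\]
An elementary Cram\'er-type estimate then gives the (standard) LDP for $\{Z_n/n\}$ on $\mb{R}_+$ with speed $r(\log n)$ and rate $J(x)=\I\{x>0\}$; as $x\mapsto x\I_{\{1\}}$ is $1$-Lipschitz from $\mb{R}_+$ into $(\mb{D},d_{J_1})$, the contraction principle (Lemma~B.3 of \cite{MR4493394}, which does not require a good rate function) yields the extended LDP for $B_n$ on $(\mb{D},d_{J_1})$ with rate $0$ at the zero path, $1$ at $a\I_{\{1\}}$ for $a>0$, and $\infty$ otherwise. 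Because $\hat I_k$ and this rate take only the values $\{0,\dots,k\}$ and $\{0,1\}$ --- countable with no limit point --- Proposition~\ref{theorem:product-extended-ldp} applies to the independent pair $(A_n,B_n)$ and gives the extended LDP on $(\mb{D}\times\mb{D},\,d_{J_1}\vee d_{J_1})$ with the summed rate. The addition map $(\eta,\zeta)\mapsto\eta+\zeta$ is $d_{J_1}$-continuous on $\mb{D}_{\leqslant k}\times\big(\{0\}\cup\{a\I_{\{1\}}:a>0\}\big)$ --- the jumps of $\eta$ lie in $(0,1)$ and that of $\zeta$ at $1$, so no collision of jump times occurs --- so one more application of the contraction principle, pushing the summed rate forward through addition, produces the extended LDP for $\tilde J^k_n$ with the rate function $\tilde I_k$ in (\ref{rate-tilde-ik}). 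Lower semicontinuity of $\tilde I_k$ is immediate, its sublevel sets being the $d_{J_1}$-closed sets $\tilde{\mb{D}}_{\leqslant\lfloor\alpha\rfloor}$.

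I expect the recombination step to be the main obstacle: since neither rate function is good, the usual product LDP and the usual contraction principle are unavailable, and one must rely on Proposition~\ref{theorem:product-extended-ldp} (tailored to countably-valued rate functions) together with a careful verification that addition is genuinely $J_1$-continuous on the product of the relevant sublevel sets, which hinges on the separation of the interior jump times from the endpoint. The rest is a transcription of the corresponding steps in the proof of Proposition~\ref{theorem:ex-ldp-kjumpprocess}, with Lemma~\ref{lemma:k-big-jump-rw-ldp} replacing Proposition~\ref{theorem:k-big-jump-ldp} and the elementary tail estimate above accounting for the boundary jump.
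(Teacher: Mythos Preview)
Your route is genuinely different from the paper's. The paper does \emph{not} decompose $\tilde J^k_n$ into an interior part and an endpoint part; instead it argues the extended LDP bounds directly. For the lower bound it fixes $\xi\in G$, distinguishes two cases (all jumps of $\xi$ lie in $(0,1)$, or one jump sits at $t=1$), and in each case builds a product-form neighborhood $C_\delta$ that simultaneously constrains the ordered interior jump sizes (handled by the extended LDP of $\tilde\mu_n$ from Lemma~\ref{lemma:k-big-jump-rw-ldp}), the uniform jump times, and the endpoint value $Z_n/n$ (handled by the elementary tail limits (\ref{limit7})--(\ref{limit8})). For the upper bound it defines $i^*$ as in (\ref{jfunction2}) and splits the closed set as $F=F_1\cup F_2$ according to whether the endpoint jump is among the $i^*$ largest, then bounds $\P(\tilde J^k_n\in F_1)$ via $\P(V_{(i^*)}\le\tilde Q(nr))$ and $\P(\tilde J^k_n\in F_2)$ via $\P(V_{(i^*-1)}\le\tilde Q(nr))\cdot\P(Z\ge nr)$. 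This case-by-case bookkeeping is heavier to write out but sidesteps entirely the question of $J_1$-continuity of addition.

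Your modular approach is cleaner and more reusable, and your diagnosis of the obstacle is accurate. Two points deserve care. First, addition is not Lipschitz on $\mb{D}\times\mb{D}$, so before contracting you must restrict the product extended LDP (via Lemma~\ref{E-LDP-on-subspaces-full-measure}) to the closed set $E=\mb{D}_{\leqslant k}\times\{a\I_{\{1\}}:a\ge0\}$; on $E$ addition \emph{is} Lipschitz with constant $2$, since any time homeomorphism fixes $t=1$ and every $\eta\in\mb{D}_{\leqslant k}$ is continuous there, so $\|(\eta_1+\zeta_1)\circ\lambda-(\eta_2+\zeta_2)\|\le\|\eta_1\circ\lambda-\eta_2\|+|a_1-a_2|$. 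Second, the pushforward rate your construction produces is $k+1$ on step functions with $k$ interior jumps plus one endpoint jump, whereas the stated $\tilde I_k$ is $\infty$ there (such paths lie in $\tilde{\mb{D}}_{=k+1}\setminus\tilde{\mb{D}}_{\leqslant k}$). This mismatch is harmless for the downstream use in Lemma~\ref{lemma: extended-ldp-tildesn}, where only the restriction of $\tilde I_k$ to $\tilde{\mb{D}}_{\leqslant k}$ enters, but you should flag it explicitly: your argument delivers the lemma with $\tilde{\mb{D}}_{\leqslant k}$ replaced by the image of $E$ under addition, not literally as stated.
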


\begin{lemma}\label{lemma: extended-ldp-tildesn}
\linksinthm{lemma: extended-ldp-tildesn}
The sequence of processes $\{\bar{S}_n\}_{n\geq 1}$ satisfies the extended large deviation on $(\mb{D}, d_{J_1})$ with the rate function given by (\ref{def:rate-function-randomwalk-J1}) and speed  $r(\log n)$.
\end{lemma}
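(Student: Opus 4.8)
The plan is to deduce the extended LDP for $\bar{S}_n$ from the extended LDP for $\tilde{J}^k_n$ (Lemma~\ref{lemma:ex-ldp-tilde-jnk}) via the approximation principle Proposition~\ref{lemma:approximate}, in exact parallel with the derivation of Theorem~\ref{theorem:ex-ldp-xnbar} from Proposition~\ref{theorem:ex-ldp-kjumpprocess}. Since $\bar{S}_n \stackrel{\ms{D}}{=} \tilde{S}_n = \tilde{J}^k_n + \tilde{H}^k_n$ for every $k$ (with $\tilde{S}_n$ itself not depending on $k$) and the extended LDP depends only on the law of the process, it suffices to prove the extended LDP for $\tilde{S}_n$. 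First, $\tilde{I}$ is a genuine rate function: it is nonnegative, and its sublevel sets are the sets $\tilde{\mb{D}}_{\leqslant j}$, which are closed in $(\mb{D}, d_{J_1})$ as recorded in Section~\ref{section:notation-definitions}, so $\tilde{I}$ is lower semicontinuous. We then invoke Proposition~\ref{lemma:approximate} with $X_n = \tilde{S}_n$, $Y^k_n = \tilde{J}^k_n$, $I = \tilde{I}$, $I_k = \tilde{I}_k$, and speed $a_n = r(\log n)$, and verify its four hypotheses.

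Hypothesis (1) is exactly Lemma~\ref{lemma:ex-ldp-tilde-jnk}. Hypothesis (2) holds because $\tilde{I}_k \geq \tilde{I}$ pointwise --- the two functions coincide (both count jumps) on $\tilde{\mb{D}}_{\leqslant k}$ and $\tilde{I}_k \equiv \infty$ elsewhere --- so $\inf_F \tilde{I}_k \geq \inf_F \tilde{I}$ for every set $F$. For hypothesis (3), I would repeat, essentially verbatim, the argument used for condition (3) in the proof of Theorem~\ref{theorem:ex-ldp-xnbar}, which in fact gives the stronger identity $\inf_{\xi \in G^{-\epsilon}} \tilde{I}_k(\xi) = \inf_{\xi \in G} \tilde{I}(\xi)$ for all large $k$: if $\inf_G \tilde{I} = \infty$, then $G$ misses $\tilde{\mb{D}}_{<\infty}$ and both sides are $\infty$; if $\inf_G \tilde{I} = m < \infty$, then $G$ contains some $\eta \in \tilde{\mb{D}}_{=m}$, and since $G$ is open there is $\epsilon > 0$ with $B_\epsilon(\eta) \subseteq G$, whence $\eta \in G^{-\epsilon}$ and $\inf_{G^{-\epsilon}} \tilde{I}_k \leq \tilde{I}_k(\eta) = m$ for $k > m$, while the reverse inequality follows from $\tilde{I}_k \geq \tilde{I}$ and $G^{-\epsilon} \subseteq G$.

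Hypothesis (4) is the substantive point and the main obstacle: one must show $\lim_{k\to\infty}\limsup_{n\to\infty} r(\log n)^{-1}\log\pr{d_{J_1}(\tilde{S}_n, \tilde{J}^k_n) > \epsilon} = -\infty$, which, since $d_{J_1}(\tilde{S}_n, \tilde{J}^k_n) \leq \|\tilde{S}_n - \tilde{J}^k_n\| = \|\tilde{H}^k_n\|$, reduces to the random-walk counterpart of Lemma~\ref{lemma:checkjnk-asymp-neg}, namely $\lim_{k\to\infty}\limsup_{n\to\infty} r(\log n)^{-1}\log\pr{\|\tilde{H}^k_n\| > \epsilon} = -\infty$. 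I would follow the proof of Lemma~\ref{lemma:checkjnk-asymp-neg}: fix $k$, choose $\delta > 0$ with $k\delta < \epsilon/4$, and split on the event $E_{n,\delta}$ that the $k$-th largest among $Z_1,\ldots,Z_{n-1}$ is at most $n\delta$. On $E_{n,\delta}$ every jump of $\tilde{S}_n$ on $[0,1)$ is $\leq n\delta$, and writing $\sup_{t\in[0,1]}|\tilde{H}^k_n(t)|$ as the maximum over the partial sums of the increments taken in time order --- which, because the jump times are i.i.d.\ uniform and independent of the sizes, form a uniformly random permutation of i.i.d.\ copies of $Z - \E{Z}$ with the $k$ largest among them replaced by $-\E{Z}$ --- one checks that each such partial sum lies within $kn\delta$ of the corresponding partial sum $\sum_{i\leq j}(W_i\I\{W_i \leq n\delta\} - \E{Z})$ for i.i.d.\ copies $W_1,\ldots,W_{n-1}$ of $Z$; together with the $O(1/n)$ boundary correction $\E{Z}\I_{\{1\}}(\cdot)/n$ this yields, for large $n$, $\{\|\tilde{H}^k_n\| > \epsilon\} \cap E_{n,\delta} \subseteq \{\max_{j\leq n-1}|\sum_{i\leq j}(Z_i\I\{Z_i\leq n\delta\} - \E{Z})| > n\epsilon/2\}$ after recalling that a random permutation of i.i.d.\ variables is again i.i.d. Etemadi's inequality (Lemma~\ref{etemadi}) externalizes the maximum, and Lemmas~\ref{lemma:max-sum-zi-greaterndelta-upperbound} and~\ref{lemma:max-sum-zi-smallerndelta-upperbound} (with $M = 1$) bound the resulting probability by $\exp(-c\,r(\log n))$ with $c = c(\epsilon,\delta) > 0$ that can be made arbitrarily large as $\delta \downarrow 0$; finally $\pr{E_{n,\delta}^c} \leq \binom{n-1}{k}\pr{Z_1 > n\delta}^k$, and since $\pr{Z_1 > n\delta} = \exp(-r(\log(n\delta)))$ with $r$ regularly varying of index $\gamma > 1$ (so $r(\log(n\delta)) \sim r(\log n)$ and $\log n = o(r(\log n))$), one gets $\limsup_n r(\log n)^{-1}\log\pr{E_{n,\delta}^c} \leq -k$. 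Combining, $\limsup_n r(\log n)^{-1}\log\pr{\|\tilde{H}^k_n\| > \epsilon} \leq \max\{-c, -k\}$; taking $\delta$ small so that $c > k$ and then letting $k \to \infty$ gives $-\infty$. Note that, unlike in Lemma~\ref{lemma:checkjnk-asymp-neg}, there is no random Poisson number of jumps to control here, so this step is in fact slightly simpler than its L\'evy analogue. With all four hypotheses verified, Proposition~\ref{lemma:approximate} yields the extended LDP for $\tilde{S}_n$, hence for $\bar{S}_n$, with rate function $\tilde{I}$ and speed $r(\log n)$, which is the assertion of the lemma.
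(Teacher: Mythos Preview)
Your proposal is correct and follows essentially the same route as the paper: reduce to $\tilde S_n$, apply Proposition~\ref{lemma:approximate} with $Y_n^k=\tilde J_n^k$, verify (1)--(3) exactly as in Theorem~\ref{theorem:ex-ldp-xnbar}, and for (4) bound $d_{J_1}(\tilde S_n,\tilde J_n^k)$ by $\|\tilde H_n^k\|$, split on whether the $k$-th largest increment exceeds $n\delta$, and control the two pieces via Lemmas~\ref{lemma:max-sum-zi-greaterndelta-upperbound}--\ref{lemma:max-sum-zi-smallerndelta-upperbound} (with Etemadi) and a tail bound on the order statistic, respectively. The only cosmetic difference is that you bound $\pr{E_{n,\delta}^c}$ by the crude binomial estimate $\binom{n-1}{k}\pr{Z_1>n\delta}^k$, whereas the paper invokes the Beta-distribution limit~(\ref{limit9}) for $\pr{V_{(k)}\leq\tilde Q(n\delta)}$; both give the same $-k$ on the log scale.
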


Lemma \ref{lemma:k-big-jump-rw-ldp}, \ref{lemma:ex-ldp-tilde-jnk} and \ref{lemma: extended-ldp-tildesn} above are in parallel of Proposition \ref{theorem:k-big-jump-ldp}, Proposition~\ref{theorem:ex-ldp-kjumpprocess} and Theorem~\ref{theorem:ex-ldp-xnbar} in section \ref{section:extended-LDP-levy-J_1}, with comparable proof framework but nuances in computation. We will make the proof of Lemma \ref{lemma:k-big-jump-rw-ldp}, \ref{lemma:ex-ldp-tilde-jnk} and \ref{lemma: extended-ldp-tildesn} succinct and put them in appendix section \ref{section:randomwalk-proof}.

With the conclusion in Lemma \ref{lemma: extended-ldp-tildesn}, we are ready to prove Theorem \ref{theorem: extended-ldp-randomwalk}:

\begin{proof}[Proof of Theorem \ref{theorem: extended-ldp-randomwalk}]
\linksinpf{theorem: extended-ldp-randomwalk}
Recall the definition of $\bar{W}_n$ in (\ref{definition:scaled-centered-randomwalk}).
Incorporating $\{\tilde{Q}^\leftarrow(V_{(i)}), i = 1,\cdots, n-1\}$ enables a distributional representation of $\bar{W}_n$:
\begin{equation*}
    \bar{W}_n \stackrel{\ms{D}}{=} \frac{1}{n}\sum_{i=1}^{n-1} \big(\tilde{Q}^\leftarrow(V_{(i)}) - \E{Z}\big)\I_{[\frac{R_i}{n}, 1]} + \frac{1}{n} (Z_n-\E{Z})\I_{\{1\}}
\end{equation*}
and this representation creates a coupling between $\bar{W}_n$ and $\tilde{S}_n$. In the above representation, $(R_1,\cdots,R_{n-1})$ is the random permutation of $\{1,2,\cdots, n-1\}$ indicating the increasing rank of $(U_1,\cdots, U_{n-1})$ appeared in (\ref{def:tilde-jnk}) and (\ref{def:tilde-hnk}), i.e $U_i$ is the $R_i$-th smallest among $\{U_1,\cdots, U_{n-1}\}$.
This makes the $i$-th largest jump size $\tilde{Q}^\leftarrow(V_{(i)})$ being the $R_i$-th increment of both $\tilde{S}_n$ an d $\bar{W}_n$ w.r.t time. 

According to the definition of $J_1$ metric, We have 
\begin{align*}
    &
    \pr{
        d_{J_1}(\bar{W}_n,\tilde{S}_n)
        >
        \epsilon
    } 
    \leq
    \pr{
        \sup_{i\in[n-1]}\big|\frac{i}{n}- U_{(i)}\big|
        >
        \epsilon
    }
    \\
    &
    \leq
    \pr{
        \sqrt{n}
        \sup_{x\in[0,1]}\big|\frac{1}{n}\sum_{i=1}^n\I(U_i\leq x) - x\big|
        >
        \sqrt{n}\epsilon
    }
    \leq 
    2e^{-2\epsilon^2n}
\end{align*}
where the last inequality above is due to  Corollary 1 of \cite{MR1062069}. This further implies 
\begin{equation*}
    \limsup_{n\rightarrow\infty} \frac{\log\pr{d_{J_1}(\bar{W}_n,\tilde{S}_n)}}{r(\log n)}\leq  -\lim_{n\rightarrow\infty} \frac{2\epsilon^2n}{r(\log n)} = -\infty
\end{equation*}
Hence by the corollary \ref{lemma:approximate2}, $\{\tilde{S}_n\}_{n\geq 1}$ and $\{\bar{W}_n\}_{n\geq 1}$ satisfies the an extended LDP with the same rate function and speed sequence. Note that the extended LDP of $\{\tilde{S}_n\}_{n\geq 1}$ is confirmed by Lemma \ref{lemma: extended-ldp-tildesn}. This finishes the proof. 
\end{proof}

\begin{proof}[Proof of Lemma \ref{lemma:k-big-jump-rw-ldp}]
\linksinpf{lemma:k-big-jump-rw-ldp}
It is clear that $\tilde{I}^k$ defined in (\ref{rate-tilde-ik}) is lower semicontinuous on $\mb{R}_+^{k\downarrow}$. 
What's left is to verify the lower and upper bound inequalities required in an extended LDP, i.e for any open  set $G$
\begin{equation}\label{equ:tildemun-ldp-lowerbound}
     \liminf_{n\rightarrow\infty} \frac{\log\tilde{\mu}_n( G)}{r(\log n)}\geq -\inf_{x\in G} \tilde{I}^k(x)
\end{equation}
and for any closed set $F$
\begin{equation}\label{equ:tildemun-ldp-upperbound}
     \limsup_{n\rightarrow\infty} \frac{\log\tilde{\mu}_n( F)}{r(\log n)}\leq -\lim_{\epsilon\downarrow 0}\inf_{x\in F^\epsilon} \tilde{I}^k(x)
\end{equation}

Let's first show (\ref{equ:tildemun-ldp-lowerbound}). Fix a $\hat{x}\in G$, then $\hat{x}$ has  $\tilde{I}^k(\hat{x})$ nonzero entries. 
Using a similar argument for (\ref{midstep1}) in Proposition \ref{proposition:k-big-jump-ldp-lowerbound}, for $m\in\mb{N}$, there is $f_m:\mb{R}_+^{k\downarrow}\rightarrow [-m,0]$ such that $f_m(\hat{x}) = 0$ and $f_m(y) = -m$ for $y\in G^c$, and we obtain 
\begin{equation}\label{midstep6}
    \liminf_{n\rightarrow\infty}\frac{\log\int_{\mb{R}_+^{k\downarrow}} e^{-\log\tilde{Q}(n)f_m(x)} \tilde{\mu}_n(dx)}{-\log\tilde{Q}(n)}\leq \max\{\liminf_{n\rightarrow\infty}\frac{\log\tilde{\mu}_n(G)}{-\log\tilde{Q}(n)} , -m\}
\end{equation}

We try to get a lower bound for the LHS of (\ref{midstep6}). For this purpose, define $\tilde{D}_\delta$ as a neighbour of $\hat{x}$ taking the following form:
\begin{equation*}
        \newnota{d-delta-set-rw}{\tilde{D}_\delta}\defnota{d-delta-set-rw}{\delequal \prod_{i=1}^{\tilde{I}^k(\hat{\bm x})}[\hat{x}_i-\delta, \hat{x}_i+\delta]\times [0,\delta]^{(k-\tilde{I}^k(\hat{\bm x}))}}
\end{equation*}
Fix $\forall\epsilon > 0$, we could choose a $\delta$ small enough (only dependent on $m$ and $\epsilon$) such that
\begin{enumerate}
    \item 
    If two nonzero entries $\hat{x}_i\neq \hat{x}_j$, then $[\hat{x}_i-\delta, \hat{x}_i+\delta]\cap [\hat{x}_j-\delta, \hat{x}_j+\delta] = \emptyset$.
    
    \item
    If $\hat{x}_i\neq 0$, then $[\hat{x}_i-\delta, \hat{x}_i+\delta]\cap [0,\delta]=\emptyset$.
    
    \item
    $\tilde{D}_\delta\subset G$ (This is possible since $G$ is open)
    
    \item
    $f_m(y) > f_m(\hat{x}) - \epsilon = 0 -\epsilon = -\epsilon$ for $y\in \tilde{D}_\delta$. (This is possible due to the continuity of $f_m$).
\end{enumerate}
By the properties 3 and 4 of $\tilde{D}_\delta$ above, we can use a similar derivation of (\ref{bigequation1}) in Proposition \ref{proposition:k-big-jump-ldp-lowerbound} to get :
\begin{equation}\label{midstep5}
    \liminf_{n\rightarrow\infty}\frac{\log\int_{\mb{R}_+^{k\downarrow}} e^{-\log\tilde{Q}(n)f_m(x)} \tilde{\mu}_n(dx)}{-\log\tilde{Q}(n)}\geq \liminf_{n\rightarrow\infty}\frac{\log\int_{\tilde{D}_\delta} e^{\tilde{Q}(n)\epsilon} \tilde{\mu}_n(dx)}{-\log\tilde{Q}(n)}
    \geq -\epsilon + \liminf_{n\rightarrow\infty}\frac{\log\tilde{\mu}_n(\tilde{D}_\delta)}{-\log\tilde{Q}(n)}
\end{equation}
According to $\hat{D}_\delta$'s form along with its properties 1 and 2, $\tilde{\mu}_n(\tilde{D}_\delta)$ in (\ref{midstep5}) satisfies:
\begin{align*}
    \tilde{\mu}_n(\tilde{D}_\delta)
    \geq
    &
    \mb{P}\Big(
        \{
        \frac
            {\tilde{Q}^\leftarrow(V_{(1)})}{n}
            \in
            [\hat{x}_1-\delta, \hat{x}_1+\delta),\cdots,
        \frac
            {\tilde{Q}^\leftarrow(V_{(\tilde{I}^k(\hat{\bm x}))})}
            {n}
            \in [\hat{x}_{\tilde{I}^k(\hat{\bm x})}-\delta, \hat{x}_{\tilde{I}^k(\hat{\bm x})}+\delta),
        \\
        &
         \frac
            {\tilde{Q}^\leftarrow(V_{(j)})}{n}
        \in
        [0, \delta)
        \text{ for } j= \tilde{I}^k(\hat{\bm x}) + 1, \cdots, n-1
        \}
    \Big)
    \\
    =& 
    {{n-1}\choose{k}}
    \cdot
    \prod_{i=1}^{\tilde{I}(\hat{\bm x})}
    \big(\tilde{Q}(n(\hat{x}_i - \delta)- \tilde{Q}(n(\hat{x}_i + \delta)\big)
    \cdot
    \big(1- \tilde{Q}(n\delta)\big)^{n-1-\tilde{I}^k(\hat{\bm x})},
\end{align*}
and this implies
\begin{align}
    \liminf_{n\rightarrow\infty}
    \frac
        {\log\tilde{\mu}_n(\tilde{D}_\delta)}
        {-\log\tilde{Q}(n)}
    \geq
    &
    \underbrace
        {\lim_{n\rightarrow\infty}
         \frac
            {\log{{n-1}\choose{k}}}
            {-\log\tilde{Q}(n)}
        }_{(\text{I})} 
    + \underbrace
        {\sum_{i=1}^{\tilde{I}^k(\hat{\bm x})}
        \lim_{n\rightarrow\infty}
        \frac
            {\log\big(\tilde{Q}(n(\hat{x}_i - \delta)- \tilde{Q}(n(\hat{x}_i + \delta)\big)}
            {-\log\tilde{Q}(n)}
        }_{(\text{II})}
    +
    \\
    &
    \underbrace{                
        \lim_{n\rightarrow\infty}
        \frac
            {(n-1-\tilde{I}^k(\hat{x}))\cdot\log\big(1- \tilde{Q}(n\delta)\big)}
            {-\log\tilde{Q}(n)}
    }_{(\text{III})}
    \label{midstep11}
    \\
    \geq
    &
    0-\tilde{I}^k(\hat{\bm x})+0
    =
    -\tilde{I}^k(\hat{\bm x}).
    \nonumber
\end{align}
Note that in (\ref{midstep11}), (I)$=0$ due to $-\log\tilde{Q}(n) = r(\log n) (1+o(1))$ and $\log{{n-1}\choose{k}}\sim \log(n-1)^k \sim k\log n$;
(II)$=-\tilde{I}^k(\hat{x})$ and (III)$=0$ are the limit results (\ref{limit7}) and (\ref{limit8}), respectively.

Now, we can combine the inequalities (\ref{midstep6}), (\ref{midstep5}) and (\ref{midstep11}), and since $\epsilon$ is arbitrarily small, we obtain
\begin{equation*}
    \max\{\liminf_{n\rightarrow\infty}\frac{\log\tilde{\mu}_n(G)}{-\log\tilde{Q}(n)} , -m\}\geq\liminf_{n\rightarrow\infty}\frac{\log\int_{\mb{R}_+^{k\downarrow}} e^{-\log\tilde{Q}(n)f_m(x)} \tilde{\mu}_n(dx)}{-\log\tilde{Q}(n)}\geq -\tilde{I}^k(\hat{x}),
\end{equation*}
The extended LDP lower bound (\ref{equ:tildemun-ldp-lowerbound}) is achieved by letting $m\rightarrow\infty$ and taking infimum over $\hat{x}\in G$.

Now we turn to the upper bound (\ref{equ:tildemun-ldp-upperbound}).
Let $H^k_{\leqslant i} =\{x\in \mb{R}_+^{k\downarrow}, x_{i+1} = 0\}$ and define
\begin{equation}\label{def:index-set2}
    \ms{I}\delequal\{i\in\{0,1,2,\cdots,k-1\}: d(F,  H^k_{\leqslant i}) >0\}  
\end{equation}
We borrow the same argument in Proposition \ref{proposition:k-big-jump-ldp-upperbound}: if the set (\ref{def:index-set2}) is empty, the upper bound (\ref{equ:tildemun-ldp-upperbound}) becomes trivial. Otherwise, $\lim_{\epsilon\downarrow 0}\inf_{x\in F^\epsilon} \check{I}_k(x) = i+1$ with $i = \max\ \ms{I}$. 
Since $d(F,H^k_{\leqslant i})>0$,  there is $r>0$ such that for any $x\in F$, $x_{i+1} > r$, this implies:
\begin{align*}
    \limsup_{n\rightarrow\infty}
    \frac
        {\log\check{\mu}_n(F)}
        {-\log\tilde{Q}(n)}
    &
    =
    \limsup_{n\rightarrow\infty}
    \frac
        {
            \log
            \pr{
            \Big(
            \frac
                {\tilde{Q}^\leftarrow(V_{(1)})}{n},
                \frac{\tilde{Q}^\leftarrow(V_{(2)})}{n},
                \cdots \frac{\tilde{Q}^\leftarrow(V_{(k)})}{n}
            \Big)
            \in F
            }
        }
        {-\log\tilde{Q}(n)}
    \\
    &
    \leq
    \limsup_{n\rightarrow\infty}
    \frac
        {\log\pr{\{ \tilde{Q}^\leftarrow(V_{(i+1)}) \geq nr\}}}
        {-\log\tilde{Q}(n)}
    =
    \limsup_{n\rightarrow\infty}
    \frac
        {\log\pr{\{ V_{(i+1)}\leq\tilde{Q}(nr)\}}}
        {-\log\tilde{Q}(n)}
    \\    
    &
    \leq
    -(i+1)
    =
    -\lim_{\epsilon\downarrow 0}\inf_{x\in F^\epsilon} \tilde{I}^k(x).
\end{align*}
The first equality in the last line is due to the limit result (\ref{limit9}). This establishes the upper bound in (\ref{equ:tildemun-ldp-upperbound}).
\end{proof}

\begin{proof}[Proof of Lemma \ref{lemma:ex-ldp-tilde-jnk}]
\linksinpf{lemma:ex-ldp-tilde-jnk}

We only prove the extended LDP on $(\tilde{\mb{D}}_{\leqslant k}, d_{J_1})$, as that can be turned to an extended LDP on $(\mb{D}, d_{J_1}) $ with the help of Lemma \ref{E-LDP-on-subspaces-full-measure}. The conditions to apply Lemma \ref{E-LDP-on-subspaces-full-measure} are met, as $\pr{\tilde{J}^k_n\in \tilde{\mb{D}}_{\leqslant k}} = 1$ and $\tilde{\mb{D}}_{\leqslant k}$ is  closed.  

The rate function $\tilde{I}_k$ is lower-semicontinuous, as its level set $\Psi_{\tilde{I}_k}(c)=\tilde{\mb{D}}_{\leqslant\min\{\lfloor c\rfloor, k\}}$ is closed. 
It remains to show the extened LDP's lower bound: for any $G$ as a subset $\tilde{\mb{D}}_{\leqslant k}$ and open 
\begin{equation}\label{equ:tilde-jkn-ldp-lowerbound}
    \liminf_{n\rightarrow\infty} \frac{\log\pr{\tilde{J}_n^k\in G}}{r(\log n)}\geq -\inf_{\xi\in G} \tilde{I}_k(\xi)
\end{equation}
and extended LDP's upper bound: for any  $F$ a subset $\tilde{\mb{D}}_{\leqslant k}$ and closed 
\begin{equation}\label{equ:tilde-jkn-ldp-upperbound}
    \limsup_{n\rightarrow\infty} \frac{\log\pr{\tilde{J}_n^k\in F}}{r(\log n)}\leq -\lim_{\epsilon\downarrow 0}\inf_{\xi\in F^\epsilon} \tilde{I}_k(\xi)
\end{equation}
(Here $F^\epsilon$ is the set $\{\eta\in\tilde{\mb{D}}_{\leqslant k}: d_{J_1}(\eta, F)\leq \epsilon\}$).

For the lower bound (\ref{equ:tilde-jkn-ldp-lowerbound}), we discuss the following two cases:

\textbf{Case 1}: If $\xi$ has $j$ jumps for some $j\geq k$ and all those jumps fall on $(0,1)$.
In such case, $\xi = \sum_{i=1}^j x_i\I_{[u_i,1]}$ with $x_i$'s being the jump sizes of non-increasing order and $u_i\in(0,1)$ being the corresponding jump times. 
We construct a neighbourhood of $\xi$ of the following form
\begin{align}
    C_\delta
    \delequal
    \Big\{
        \eta = \sum_{i=1}^k y_i\I_{[w_i, 1]} + y_{k+1}\I_{\{1\}}\in\mb{D}:
        &
        \bm y
        =
        (y_1,\cdots,y_{k+1})\in  \big((\prod_{i=1}^{k}Y_i)\cap\mb{R}_+^{k\downarrow}\big)\times Y_{k+1},
        \nonumber
        \\
        &
        \bm w 
        =
        (w_1,\cdots,w_k)\in \prod_{i=1}^k W_i, 
    \Big\}
    \label{def:the-set-C}
\end{align}
with the sets $Y_i$'s and $W_i$'s defined as below
\begin{equation*}
    Y_i = 
    \begin{cases}
        (x_i-\delta, x_i+\delta) & i=1,\cdots, j\\
        [0,\delta) & i =j+1, \cdots, k+1
    \end{cases}
    ,\ \ \ \ 
    W_i = 
    \begin{cases}
        (u_i-\delta, u_i+\delta) & i=1,\cdots, j\\
        (w^*-\delta, w^*+\delta) & i =j+1, \cdots, k
    \end{cases}
\end{equation*}
In the above, $w^*$ is a fixed time that does not belong to $\{u_1,\cdots, u_j\}$. 
It is not hard to see for any $\eta\in C_\delta$  $d_{J_1}(\xi, \eta) < (k+1)\delta$ is satisfied, hence we can make $C_\delta\in G$ by choosing $\delta$ small.

Based on $C_\delta\in G$, we have
\begin{align*}
    &
    \pr{\tilde{J}_n^k \in G}
    \geq
    \pr{\tilde{J}_n^k\in C_\delta}
    \\
    &
    =
    \pr{
        \frac{1}{n}
        \sum_{i=1}^k \tilde{Q}^\leftarrow(V_{(i)}) \I_{[U_i,1]} + \frac{1}{n} Z_n\I_{\{1\}}\in C_\delta
    }
    \\ 
    &
    =
    \pr{
        \Big(
        \frac
            {\tilde{Q}^\leftarrow(V_{(1)})} {n}
        ,\cdots,
        \frac
            {\tilde{Q}^\leftarrow(V_{(k)})} {n}
        \Big)
        \in (\prod_{i=1}^{k}Y_i)\cap\mb{R}_+^{k\downarrow}}
        \pr{\frac{Z_n}{n}\in Y_{k+1}}\pr{ (U_1, \cdots, U_k)
        \in B
    }
    \\ 
    &
    =
    \tilde{\mu}_n((\prod_{i=1}^{k}Y_i)\cap\mb{R}_+^{k\downarrow})
    \cdot
    \pr{Z_n<n\delta}
    \cdot
    \text{Const}.
\end{align*}
Therefore,
\begin{align}
    &
    \liminf_{n\rightarrow\infty} 
    \frac
        {\log\pr{\tilde{J}_n^k\in G}}
        {r(\log n)}
    \nonumber
    \\
    &
    \geq
    \liminf_{n\rightarrow\infty} 
    \frac
        {\log\tilde{\mu}_n((\prod_{i=1}^{k}Y_i)\cap\mb{R}_+^{k\downarrow}) + \log\pr{Z_n<n\delta}}
        {r(\log n)}
    \nonumber
    \\
    &
    =
    \liminf_{n\rightarrow\infty} 
    \frac
        {\log\tilde{\mu}_n\big((\prod_{i=1}^{k}Y_i)\cap\mb{R}_+^{k\downarrow}\big)}
        {r(\log n)}
    +
    \liminf_{n\rightarrow\infty} 
    \frac
        {\log\big(1- \tilde{Q}(n\delta)\big)}
        {r(\log n)}
    \label{midstep13}
    \\
    &
    \geq
    - \inf_{
        \bm x\in (\prod_{i=1}^{k}Y_i)\cap\mb{R}_+^{k\downarrow}
        }
    \tilde{I}^k\big(\bm x\big) + 0
    \geq
    -\tilde{I}^k\big((x_1,\cdots, x_j,0,\cdots, 0)\big)
    =
    -j
    =
    -\tilde{I}_k(\xi).
    \label{midstep14}
\end{align}
Note that to lower bound two limit terms in from (\ref{midstep13}), we use the conclusion from Lemma \ref{lemma:k-big-jump-rw-ldp} such that $\{\tilde{\mu}_n\}_{n\geq 1}$ satisfying the extended LDP, and the limit results (\ref{limit8}). The second inequality in to (\ref{midstep14}) holds due to $(x_1,\cdots, x_j,0,\cdots, 0)\in (\prod_{i=1}^{k}Y_i)\cap\mb{R}_+^{k\downarrow}$.

\textbf{Case 2}:If $\xi$ has $j$ jumps for some $j\geq k$ and one of those jump occur at $t=1$.
In such case, $\xi = \sum_{i=1}^{j-1} x_i\I_{[u_i,1]} + x_j\I_{\{1\}}$ with $x_1, \cdots, x_{j-1}$ being the jump sizes of non-increasing order. 
We construct the set $C_\delta$ as (\ref{def:the-set-C}), but modify $Y_i$'s and $W_i$'s definitions as below
\begin{equation*}
    Y_i = 
    \begin{cases}
        (x_i-\delta, x_i+\delta) & i=1,\cdots, j-1\\
        [0,\delta) & i=j,\cdots, k\\
        (x_j-\delta, x_j+\delta) & i = k+1
    \end{cases}
    ,\ \ \ \ 
    B_i = 
    \begin{cases}
        (u_i-\delta, u_i+\delta) & i=1,\cdots, j-1\\
        (\frac{1}{2}-\delta, \frac{1}{2}+\delta) & i=j,\cdots, k
    \end{cases}
\end{equation*}
Similar to the case 1 above, $C_\delta$ in this case is a neighborhood of $\xi$ and we can choose $\delta$ small enough to make it as a subset of $G$.
By the extended LDP of $\{\tilde{\mu}_n\}_{n\geq 1}$ and $(x_1,\cdots, x_{j-1},0,\cdots, 0)\in (\prod_{i=1}^{k}Y_i)\cap\mb{R}_+^{k\downarrow}$, we can conclude
\begin{align}
    &
    \liminf_{n\rightarrow\infty} 
    \frac
        {\log\pr{\tilde{J}_n^k\in G}}{r(\log n)}
    \geq
    \liminf_{n\rightarrow\infty} 
    \frac
        {\log\pr{\tilde{J}_n^k\in C_\delta}}
        {r(\log n)}
    \nonumber
    \\
    &
    =
    \liminf_{n\rightarrow\infty} 
    \frac
        {\log\tilde{\mu}_n
         \big(
            (\prod_{i=1}^{k}Y_i)\cap\mb{R}_+^{k\downarrow}
         \big)
         +
         \log
         \pr{\frac{Z_n}{n}\in Y_{k+1}}
         +
         \log\pr{(U_1, \cdots, U_k)\in B}
        }
        {r(\log n)}
    \nonumber
    \\
    \geq
    &
    \liminf_{n\rightarrow\infty}
    \frac
        {\log\tilde{\mu}_n(A\cap\mb{R}_+^{k\downarrow})
        +
        \log\pr{n(x_j - \delta) <Z_n < n(x_j + \delta)}}
        {r(\log n)}
    \nonumber
    \\
    &
    \geq
    \liminf_{n\rightarrow\infty}
    \frac
        {\log\tilde{\mu}_n(A\cap\mb{R}_+^{k\downarrow})}
        {r(\log n)}
        +
    \liminf_{n\rightarrow\infty} 
    \frac
        {\log\big(\tilde{Q}(n(x_j - \frac{\delta}{2})) - \tilde{Q}(n(x_j + \frac{\delta}{2}))\big)}
        {r(\log n)}
    \label{midstep12}
    \\
    &
    \geq
    - \inf_{
        \bm x\in (\prod_{i=1}^{k}Y_i)\cap\mb{R}_+^{k\downarrow}
    }
    \tilde{I}^k\big(\bm x\big) - 1
    \geq
    -j
    \geq
    - \tilde{I}^k\big((x_1,\cdots,x_{j-1},0,\cdots, 0)\big) - 1 
    =
    -\tilde{I}_k(\xi).
    \nonumber
\end{align}
Note that the second limit term in (\ref{midstep12}) equals -1 is due to the limit result (\ref{limit7}). Combining the two above cases yield
\begin{equation*}
    \liminf_{n\rightarrow\infty} \frac{\log\pr{\tilde{J}_n^k\in G}}{r(\log n)}\geq -\tilde{I}_k(\xi)
\end{equation*}
and the lower bound (\ref{equ:tilde-jkn-ldp-lowerbound}) is established by taking supremum over all $\xi\in G$.

Now we turn to prove the upper bound (\ref{equ:tilde-jkn-ldp-upperbound}). If the closed set $F$ contains the zero function, then $\lim_{\epsilon\downarrow 0}\inf_{\xi\in F^\epsilon} \tilde{I}_k(\xi) = 0$, and the upper bound (\ref{equ:tilde-jkn-ldp-upperbound}) is trivial.
Hence we consider $F$ that does not contain the zero function and define
\begin{equation}\label{jfunction2}
        i^* =\max\{j\in \{0, 1, 2, \cdots k-1\},: d_{J_1}(F, \tilde{D}_{\leqslant j}) > 0\}+1
\end{equation}
Based on this definition, $F$ is bounded away from $\tilde{\mb{D}}_{\leqslant i^*-1}$, which implies
\begin{equation}\label{rw-tilde-jnk mid-equation}
     \lim_{\epsilon\downarrow 0} \inf_{\xi\in F^\epsilon} \tilde{I}_k(\xi) = i^*
\end{equation}
Also, paths in $F$ has at least $i^*$ jumps, and due to $d_{J_1}(F, \tilde{D}_{\leqslant i^*-1}) > 0$, we can find some $r > 0$ such that paths in $F$ should have their $i^*$-th largest jump size no smaller than $r$. 

Clearly, $F$ can be represented by $F_1\cup F_2 $, where 
\begin{align*}
    &
    F_1
    =
    \{\xi\in F, \xi(1)-\xi(1-)\text{ does not belong to }\xi's\ i^*\text{ largest jumps sizes}\}
    \\
    &
    F_2
    =
    \{\xi\in F, \xi(1)-\xi(1-)\text{ belongs to }\xi's\ i^*\text{ largest jumps sizes}\}.
\end{align*}
Therefore, 
\begin{equation}\label{midstep7}
\limsup_{n\rightarrow\infty}\frac{\log\pr{\tilde{J}^k_n\in F}}{ r(\log n)} =
    \max\Big\{\underbrace{\limsup_{n\rightarrow\infty}\frac{\log\pr{\tilde{J}^k_n\in F_1}}{r(\log n)}}_{\text{(I)}}, \underbrace{\limsup_{n\rightarrow\infty}\frac{\log\pr{\tilde{J}^k_n\in F_2}}{r(\log n)}}_{\text{(II)}}\Big\}
\end{equation}
For the term (I), we have
\begin{align}
     \limsup_{n\rightarrow\infty}
     \frac
        {\log\pr{\tilde{J}^k_n\in F_1}}
        {r(\log n)}
    =
    &
    \limsup_{n\rightarrow\infty}
    \frac
        {
            \log\pr{
            \frac{1}{n}
            \sum_{i=1}^k \tilde{Q}^\leftarrow(V_{(i)}) \I_{[U_i,1]} + \frac{1}{n} Z\I_{\{1\}}\in F_1}
        }
        {r(\log n)}
    \nonumber
    \\
    \leq
    &
    \limsup_{n\rightarrow\infty}
    \frac
        {\log\pr{
            \tilde{Q}^\leftarrow(V_{(i^*)}) \geq nr, Z\in\mb{R}_+, U_i\in (0,1)\text{ for } i 
            =
            1,\cdots,k}
        }
        {r(\log n)}
    \nonumber
    \\
    =
    &
    \lim_{n\rightarrow\infty}
    \frac
        {\log\pr{\tilde{Q}^\leftarrow(V_{(i^*)}) \geq nr}}
        {r(\log n)} 
    =
    \lim_{n\rightarrow\infty}
    \frac
        {\log\pr{V_{(i^*)} \geq \tilde{Q}(nr)}}
        {r(\log n)}
    =
    -i^*.
    \label{midstep15}
\end{align}
For the term (II), we have
\begin{align}
     \limsup_{n\rightarrow\infty}
     \frac
        {\log\pr{\tilde{J}^k_n\in F_2}}
        {r(\log n)}
    =
    &
    \limsup_{n\rightarrow\infty}
    \frac
        {\log\pr{
            \frac{1}{n}
            \sum_{i=1}^k \tilde{Q}^\leftarrow(V_{(i)}) \I_{[U_i,1]} + \frac{1}{n} Z\I_{\{1\}}\in F_2}
        }
        {r(\log n)}
    \nonumber
    \\
    \leq
    &
    \limsup_{n\rightarrow\infty}
    \frac
        {\log\pr{
            \tilde{Q}^\leftarrow(V_{(i^*-1)}) 
            \geq
            nr, 
            Z \geq nr,
            U_i\in (0,1)
            \text{ for } i = 1,\cdots,k
            }
        }
        {r(\log n)}
    \nonumber
    \\
    =
    &
    \lim_{n\rightarrow\infty}
    \frac
        {\log\pr{V_{(i^*-1)}\geq \tilde{Q}(nr)}}
        {r(\log n)}
    +
    \lim_{n\rightarrow\infty}
    \frac
        {\log\tilde{Q}(nr)}
        {r(\log n)}
    =
    -(i^*-1) -1
    =
    - i^*.
    \label{midstep16}
\end{align}
Note that in line (\ref{midstep15}) and (\ref{midstep16}), we use the limit result (\ref{limit6}) and (\ref{limit9}) with the fact $-\log\tilde{Q}(n)= r(\log n) (1+o(1))$. 

Combining the calculation for (I) and (II), (\ref{midstep7}) becomes
\begin{equation*}
    \limsup_{n\rightarrow\infty}\frac{\log\pr{\tilde{J}^k_n\in F_2}}{r(\log n)}\leq - i^* 
\end{equation*}
With (\ref{rw-tilde-jnk mid-equation}), the upper bound (\ref{equ:tilde-jkn-ldp-upperbound})is established. This finishes the proof.
\end{proof}

\begin{proof}[Proof of Lemma \ref{lemma: extended-ldp-tildesn}]
\linksinpf{lemma: extended-ldp-tildesn}
Since $\bar{S}_n$ and $\tilde{S}_n$ has the same distribution, we will infer the extended LDP of $\{\tilde{S}_n\}_{n\geq 1}$. For this purpose, we apply the approximation lemma \ref{lemma:approximate}. 
As the extended LDP of $\{\tilde{J}^k_n\}_{n\geq 1}$ is confirm by Lemma \ref{lemma:ex-ldp-tilde-jnk}, it remains to verify Lemma \ref{lemma:approximate}'s conditions (\ref{require1}), (\ref{require2}) and (\ref{require3}), with $\tilde{I}_k$, $\tilde{I}$, $\tilde{J}_n^k$, $\tilde{S}_n$, $r(\log n)$ being  $I_k$, $I$, $Y_n^k$,$X_n$, $a_n$ according to Lemma \ref{lemma:approximate}'s notations.

Since $\tilde{I}_k(\cdot) \geq \tilde{I}(\cdot)$ on $\mb{D}$, the condition (\ref{require1}) is clearly met.
In Theorem \ref{theorem:ex-ldp-xnbar}, we have shown the condition (\ref{require2}) is satisfied w.r.t $\hat{I}_k$ and $I^{J_1}$.
Here, we use the same argument the condition (\ref{require2}) is satisfied w.r.t $\tilde{I}_k$ and $\tilde{I}$ holds. 
To see the condition (\ref{require3}) is satisfied, note that
\begin{align*}
    &
    \pr{d_{J_1}(\tilde{S}_n, \tilde{J}^k_n) > \epsilon} 
    \leq 
    \pr{\|\tilde{H}_n^k\| > \epsilon}
    \\
    &
    =
    \pr{\|\frac{1}{n}\sum_{i=k+1}^{n-1} \tilde{Q}^\leftarrow(V_{(i)})\I_{[U_i,1]} -\frac{1}{n}\E{Z}\sum_{i=1}^{n-1}I_{[U_i,1]} -\frac{1}{n} \E{Z}\I_{\{1\}}\|>\epsilon}
    \\
    &
    \leq 
    \pr{\|\frac{1}{n}\sum_{i=k+1}^{n-1} \tilde{Q}^\leftarrow(V_{(i)})\I_{[U_i,1]} -\frac{1}{n}\E{Z}\sum_{i=1}^{n-1}I_{[U_i,1]}\| > \frac{\epsilon}{2}}
    \\
    &
    +\pr{\|\frac{1}{n} \E{Z}\I_{\{1\}}\|>\frac{\epsilon}{2}}.
\end{align*}
It's clear the term $\pr{\|\frac{1}{n} \E{Z}\I_{\{1\}}\|_\infty>\frac{\epsilon}{2}}$ becomes zero for large $n$. For such large $n$ and any fixed $0<\delta< \frac{\epsilon}{2k}$, we have  
\begin{align}
    &
    \pr{d_{J_1}(\tilde{S}_n, \tilde{J}^k_n) > \epsilon}
    \nonumber
    \\
    &\leq
    \pr{\|\frac{1}{n}\sum_{i=k+1}^{n-1} \tilde{Q}^\leftarrow(V_{(i)})\I_{[U_i,1]} -\frac{1}{n}\E{Z}\sum_{i=1}^{n-1}I_{[U_i,1]}\|
    >
    \frac{\epsilon}{2}, \tilde{Q}^\leftarrow(V_{(k)})\geq n\delta}
    \nonumber
    \\
    &
    +
    \pr{\|\frac{1}{n}\sum_{i=k+1}^{n-1} \tilde{Q}^\leftarrow(V_{(i)})\I_{[U_i,1]} -\frac{1}{n}\E{Z}\sum_{i=1}^{n-1}I_{[U_i,1]}\|
    >
    \frac{\epsilon}{2}, \tilde{Q}^\leftarrow(V_{(k)})\leq n\delta}
    \nonumber
    \\
    &
    \leq 
    \pr{\tilde{Q}^\leftarrow(V_{(k)})\geq n\delta}
    \nonumber
    \\
    &
    +
    \pr{\|\frac{1}{n}\sum_{i=k+1}^{n-1} \tilde{Q}^\leftarrow(V_{(i)})\I_{[U_i,1]} -\frac{1}{n}\E{Z}\sum_{i=1}^{n-1}I_{[U_i,1]}\|
    >
    \frac{\epsilon}{2}, \tilde{Q}^\leftarrow(V_{(k)})\leq n\delta}.
    \label{midstep9}
\end{align}
Let $Z'_1,\cdots, Z'_{n-1}$ be $n-1$ independent copies of the generic random variable $Z$ and $P'_{n-1}(\cdot)$ be a random permutation on $\{1,\cdots, n-1\}$ such that $Z'_{P'_{n-1}(i)}$ is the i-th largest item among $Z'_1,\cdots, Z'_{n-1}$. Since $\tilde{Q}^\leftarrow (V_{(1)}), \cdots, \tilde{Q}^\leftarrow (V_{(n-1)})$ has the same distribution of the ascending order of $Z'_1,\cdots, Z'_{n-1}$, the second term in (\ref{midstep9}) can be further upper bounded by:
\begin{align*}
    &
    \pr{
        \max_{j = 1,\cdots, n-1}
        \big|\sum_{i=1}^j \big(Z'_i\I\{P'_{n-1}(i) > k\} - \E{Z}\big)\big|
        >
        \frac{n\epsilon}{2},
        Z'_{P'_{n-1}(k)}
        \leq 
        n\delta
    }
    \\
    &
    \leq
    \pr{
        \max_{j = 1,\cdots, n-1}\sum_{i=1}^j \big(Z'_i\I\{P'_{n-1}(i) > k\} - \E{Z}\big)
        >
        \frac{n\epsilon}{2},
        Z'_{P'_{n-1}(k)}
        \leq
        n\delta
    } +
    \\
    &
    \pr{
        \max_{j = 1,\cdots, n-1}\sum_{i=1}^j
        \big(
            \E{Z}-Z'_i
            \I\{P'_{n-1}(i)> k\}
        \big)
        >
        \frac{n\epsilon}{2},
        Z'_{P'_{n-1}(k)}
        \leq
        n\delta
    }
    \\
    &\leq
    \pr{
        \max_{j=1,\cdots, n-1} \sum_{i=1}^j \big(
            Z'_i\I\{Z'_i \leq n\delta\} - \E{Z}
        \big) 
        > 
        \frac{n\epsilon}{2}
    } +
    \\
    &
    \pr{
        \max_{j=1,\cdots, n-1} \sum_{i=1}^j \big(
            \E{Z} - Z'_i\I\{Z'_i \leq n\delta\}
        \big)
        + n\delta k
        > 
        \frac{n\epsilon}{2}
    }.
\end{align*}
With the above upper bound for the second term in (\ref{midstep9}), we have
\begin{align}
    &
    \limsup_{n\rightarrow\infty}
    \frac
        {\log\pr{
            d_{J_1}(\tilde{S}_n, \tilde{J}^k_n) 
            >
            \epsilon}
            }
        {r(\log n)}
    \nonumber
    \\
    \leq
    &
    \max
    \Big\{
        \limsup_{n\rightarrow\infty}
        \frac
            {\log\pr{V_{(k)}
             \leq
             \tilde{Q}(n\delta)}}
            {r(\log n)}
            ,
            \nonumber
            \\
    &
    \limsup_{n\rightarrow\infty}
    \frac
        {\log\pr{
            \underset
                {j=1,\cdots, n-1}{\max}
            \sum_{i=1}^j
            \big(
                Z'_i\I\{Z'_i \leq n\delta\} - E{Z}
            \big)
            >
            \frac{n\epsilon}{2}
            }
        }
        {r(\log n)}
        ,\nonumber
        \\
    &
    \limsup_{n\rightarrow\infty}
    \frac
        {\log\pr{
            \underset
                {j=1,\cdots, n-1}
                {\max}
            \sum_{i=1}^j 
            \big(
                E{Z} - Z'_i\I\{Z'_i \leq n\delta\}
            \big)
            >
            n(\frac{\epsilon}{2}-k\delta)
            }
        }
        {r(\log n)}
    \Big\}
    \label{midstep17}
    \\
    &
    =
    \max\{-k, -\frac{\epsilon}{4\delta}, -\infty\}.
    \label{midstep18}
\end{align}
To achieve (\ref{midstep18}) in the above, we have used the limit result (\ref{limit6}), Lemma \ref{lemma:max-sum-zi-greaterndelta-upperbound} and Lemma \ref{lemma:max-sum-zi-smallerndelta-upperbound} respectively to bound the three limsup terms in the (\ref{midstep17}). We can choose arbitrary small $\delta$ to make the value in  \ref{midstep18} to $-k$, and letting $k\rightarrow \infty$ confirms the condition (\ref{require3}). This finishes the proof.
\end{proof}

\subsection{Proofs for Section \ref{section:extended-ldp-j1-multidimension}}\label{section: extended-ldp-product-proof}

We start with a lemma that will be useful. 
\begin{lemma}
\label{lemma:eLDP-upper-bound-for-sets-with-special-cylindrical-form}
\linksinthm{lemma:eLDP-upper-bound-for-sets-with-special-cylindrical-form}
    If the extended large deviation upper bounds hold for $X_n$ and $Y_n$ with rate functions $I$ and $J$, respectively, and suppose that 
    \begin{equation}
    F \teq \bigcap_{m=1}^M (C_m \times \ycal) \cup (\xcal \times D_m)
    \end{equation}
    where $M$ is a finite integer, and $C_n \subseteq \xcal$ and $D_n \subseteq \ycal$ are not necessarily closed sets. 
    Suppose that $X_n$ and $Y_n$ satisfy the extended LDP with the rate functions $I$ and $J$.
    Then the extended LDP upper bound w.r.t.\ $F$ holds for $(X_n, Y_n)$ with the rate function $K(x,y) = I(x) + J(y)$, i.e., 
    $$
    \limsup_{n\to\infty} a_n^{-1} \P\big((X_n, Y_n) \in F\big)
    \leq
    -\lim_{\epsilon\to 0} 
    \inf_{(x,y) \in F^\epsilon}
    K(x,y)
    $$

\end{lemma}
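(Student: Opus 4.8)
The plan is to decompose $F$ into a finite union of product (``cylinder'') sets by distributing the intersection over the unions, and then combine the one-dimensional extended-LDP upper bounds for $X_n$ and $Y_n$ via the union bound and the independence of $X_n$ and $Y_n$ (which is the setting in which this lemma is applied).

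First I would distribute the intersection over the unions. A pair $(x,y)$ lies in $F$ if and only if, for every $m$, either $x\in C_m$ or $y\in D_m$; setting $S\teq\{m: y\notin D_m\}$, this is equivalent to $x\in\bigcap_{m\in S}C_m$ and $y\in\bigcap_{m\notin S}D_m$. Hence, writing $\Phi_S\teq\bigcap_{m\in S}C_m$ and $\Psi_S\teq\bigcap_{m\notin S}D_m$ with the convention that an intersection over $\emptyset$ is the whole space,
\[
F \;=\; \bigcup_{S\subseteq\{1,\ldots,M\}}\Phi_S\times\Psi_S ,
\]
a union of at most $2^M$ product sets. Two elementary facts about $\epsilon$-fattenings then transfer this to neighbourhoods: for a finite family, $\big(\bigcup_j A_j\big)^\epsilon=\bigcup_j A_j^\epsilon$, since $d\big(x,\bigcup_j A_j\big)=\min_j d(x,A_j)$; and, with respect to the product metric $\max(d^{(1)},d^{(2)})$, $(A\times B)^\epsilon=A^\epsilon\times B^\epsilon$, since $d\big((x,y),A\times B\big)=d^{(1)}(x,A)\vee d^{(2)}(y,B)$. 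Consequently $F^\epsilon=\bigcup_S\Phi_S^\epsilon\times\Psi_S^\epsilon$ for every $\epsilon>0$, and each $\Phi_S^\epsilon$, $\Psi_S^\epsilon$ is closed. Note that $C_m,D_m$ need not be closed, which is exactly why the argument is organised around their closed $\epsilon$-fattenings rather than applying the LDP to $C_m,D_m$ directly.

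Next, fix $\epsilon>0$. Using $\pr{(X_n,Y_n)\in F}\le\pr{(X_n,Y_n)\in F^\epsilon}$, the union bound, and independence,
\[
\pr{(X_n,Y_n)\in F} \;\le\; \sum_{S}\pr{X_n\in\Phi_S^\epsilon}\,\pr{Y_n\in\Psi_S^\epsilon}.
\]
Taking $\limsup_n a_n^{-1}\log(\cdot)$ and using the principle of the largest term for this finite sum ($a_n^{-1}\log\sum_S b_n^{(S)}\le a_n^{-1}M\log 2+\max_S a_n^{-1}\log b_n^{(S)}$, with $a_n\to\infty$), followed by the extended-LDP upper bounds for $X_n$ and $Y_n$ applied to the closed sets $\Phi_S^\epsilon$, $\Psi_S^\epsilon$ together with the inclusion $(\Phi_S^\epsilon)^\delta\subseteq\Phi_S^{\epsilon+\delta}$, I obtain
\[
\limsup_{n\to\infty}\frac{\log\pr{(X_n,Y_n)\in F}}{a_n}
\;\le\;\max_{S}\Big[-\inf_{\Phi_S^{2\epsilon}}I-\inf_{\Psi_S^{2\epsilon}}J\Big]
\;=\;-\min_{S}\Big[\inf_{x\in\Phi_S^{2\epsilon}}I(x)+\inf_{y\in\Psi_S^{2\epsilon}}J(y)\Big].
\]
Finally, since $K(x,y)=I(x)+J(y)$ separates, $\inf_{\Phi_S^{2\epsilon}}I+\inf_{\Psi_S^{2\epsilon}}J=\inf_{(x,y)\in\Phi_S^{2\epsilon}\times\Psi_S^{2\epsilon}}K(x,y)$, so the minimum over $S$ equals $\inf_{(x,y)\in\bigcup_S\Phi_S^{2\epsilon}\times\Psi_S^{2\epsilon}}K=\inf_{(x,y)\in F^{2\epsilon}}K(x,y)$. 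Thus $\limsup_n a_n^{-1}\log\pr{(X_n,Y_n)\in F}\le-\inf_{F^{2\epsilon}}K$ for every $\epsilon>0$; letting $\epsilon\downarrow0$ and noting $\lim_{\epsilon\downarrow0}\inf_{F^{2\epsilon}}K=\lim_{\epsilon\downarrow0}\inf_{F^{\epsilon}}K$ gives the claim.

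The step I expect to require the most care is the bookkeeping around the two distinct ``$\epsilon$'s'': the fattening $F^\epsilon$ I introduce to pass from $F$ to a neighbourhood, and the inner limiting $\delta$ that is built into the extended-LDP upper bound itself. One has to compose the two fattenings correctly — $(A^\epsilon)^\delta\subseteq A^{\epsilon+\delta}$ — and verify that, after sending $\epsilon\downarrow0$ at the very end, everything collapses back precisely to $\lim_{\epsilon\downarrow0}\inf_{F^\epsilon}K$; everything else (distributivity, the product-metric identity for fattenings, and the separability of $K$) is routine.
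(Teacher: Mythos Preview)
Your proof is correct and follows essentially the same route as the paper: both expand $F$ by distributivity into the finite union $\bigcup_{S\subseteq\{1,\dots,M\}}\Phi_S\times\Psi_S$, apply the union bound together with independence, use the coordinate extended-LDP upper bounds on each factor, and recombine via $F^\epsilon=\bigcup_S\Phi_S^\epsilon\times\Psi_S^\epsilon$. The only cosmetic difference is that you first pass from $F$ to $F^\epsilon$ so as to work with closed sets $\Phi_S^\epsilon,\Psi_S^\epsilon$ and then carry along the inner $\delta$-fattening, whereas the paper applies the extended upper bound directly to the (possibly non-closed) $\Phi_S,\Psi_S$, using implicitly that $A^\epsilon=\bar A^\epsilon$ so the bound is insensitive to taking closures; this makes the paper's bookkeeping one step shorter but the two arguments are equivalent.
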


\begin{proof}[Proof of Lemma~\ref{lemma:eLDP-upper-bound-for-sets-with-special-cylindrical-form}]
\linksinpf{lemma:eLDP-upper-bound-for-sets-with-special-cylindrical-form}
Note that
\begin{equation}
    \label{eq:two-representations-of-cylindar-sets}
    F = \bigcup_{\ell\in\{0,1\}^M} \Big(\bigcap_{m: \ell_m =0} C_m \times \bigcap_{m: \ell_m =1} D_m\Big).
\end{equation}
Therefore, 
\begin{align*}
    &
    \limsup_{n\to\infty} a_n^{-1} \log 
        \P\big((X_n, Y_n) \in  F \big)
    \\
    &
    =
    \limsup_{n\to\infty} a_n^{-1} \log 
        \sum_{\ell\in \{0,1\}^M} 
            \P\big(X_n \in \bigcap_{m:\ell_m = 0} C_m\big)
            \cdot
            \P\big(Y_n \in \bigcap_{m:\ell_m = 1} D_m\big)
    \\
    &
    \leq
    \sup_{\ell \in \{0,1\}^N}
    \left\{
        \limsup_{n\to\infty} a_n^{-1} \log 
            \P\big(X_n \in \bigcap_{m:\ell_m = 0} C_m\big)
            \cdot
            \P\big(Y_n \in \bigcap_{m:\ell_m = 0} D_m\big)
    \right\}
    \\
    &
    =
    \sup_{\ell \in \{0,1\}^N}
    \left\{
        -\lim_{\epsilon\to0} \inf_{x\in \big(\bigcap_{m: \ell_m = 0} C_m\big)^\epsilon} I(x)
        -\lim_{\epsilon\to0} \inf_{x\in \big(\bigcap_{m: \ell_m = 1} D_m\big)^\epsilon} J(x)
    \right\}
    \\
    &
    =
    -\lim_{\epsilon\to0} 
    \inf_{\ell \in \{0,1\}^N}
        \inf_{(x,y)\in \big(\bigcap_{m: \ell_m = 0} C_m\big)^\epsilon \times  \big(\bigcap_{m: \ell_m = 1} D_m\big)^\epsilon} 
            I(x)+ J(x)
    \\
    &
    =
    -\lim_{\epsilon\to0} 
    \inf_{(x,y) \in F^\epsilon}
        I(x) + J(y)
\end{align*}
\end{proof}

Now we are ready to prove Proposition~\ref{theorem:product-extended-ldp}. 
\begin{proof}[Proof of Proposition~\ref{theorem:product-extended-ldp}.]
\linksinpf{theorem:product-extended-ldp}
By induction, it is enough to show this for $d=2$. 
To ease the notation, we will denote $X_n^{(1)}$ with $X_n$, $X_n^{(2)}$ with $Y_n$, 
$\mathcal X^{(1)}$ with $\mathcal X$, $\mathcal X^{(2)}$ with $\mathcal Y$, $I^{(1)}$ with $I$, $I^{(2)}$ with $J$, and $\bar I$ with $K$.

For the lower semicontinuity of $K$, note that the sublevel set of $K$ can be written as follows:
\begin{align*}
\Phi_K(\alpha) 
&
\teq \{(x,y)\in \xcal\times\ycal: K(x,y) \leq \alpha\} 
\\
&
=
\bigcap_{n=1}^\infty \bigcup_{m=0}^{n+1} 
\underbrace{
\Big\{x\in \xcal: I(x) \leq \frac mn \alpha\Big\}
}_{
\Psi_I\big(\frac mn\alpha\big)
}
\times 
\underbrace{
\Big\{y \in \ycal:J(y) \leq \frac{n+1-m}{n} \alpha\Big\}
}_{
\Psi_J\big(\frac{n+1-m}{n}\alpha\big)
},
\end{align*}
which is closed since $I$ and $J$ are lower semicontinuous, and each of the unions involve only finite number of closed sets.

For the lower bound of the extended LDP, given an open set $G\subseteq\xcal\times\ycal$ and its element $(x,y) \in G$, there exist open neighborhoods $G_\xcal \subseteq \xcal$ and $G_\ycal\subseteq \ycal$ of $x$ and $y$ such that $G_\xcal \times G_\ycal \subseteq G$. 
\begin{align*}
\liminf_{n\to\infty} 
a_n^{-1} \log
    \P\big(
    (X_n, Y_n) \in G
    \big)
&
\geq 
\liminf_{n\to\infty} 
a_n^{-1} \log
    \P(
    X_n \in G_\xcal
    )
    \cdot
    \P(
    Y_n \in G_\ycal
    )
\\
&
=
\liminf_{n\to\infty} 
a_n^{-1} \log
    \P(
    X_n \in G_\xcal
    )
+ 
\liminf_{n\to\infty} 
a_n^{-1} \log
    \P(
    Y_n \in G_\ycal
    )
\\
&
\geq
-
\inf_{x'\in G_\xcal} I(x') 
-
\inf_{y'\in G_\ycal} I(y')
\\
&
= 
-K(x,y)
\end{align*}
Taking infimum over $(x,y) \in G$, we arrive at the desired lower bound. 

For the upper bound of the extended LDP, we introduce a few extra notations. 
Let 
$\mathcal I \teq \{I(x): x \in \xcal\}$, 
$\mathcal J \teq \{J(x): x \in \ycal\}$,
and
$\mathcal K \teq \{K(x,y): x \in \xcal, y \in \ycal\}$
denote the ranges of $I$, $J$, and $K$.
Fix an $F\subseteq \xcal \times\ycal$ and set
\begin{equation}
k'
\teq 
\lim_{\epsilon \to 0}\inf_{(x,y)\in F^\epsilon} K(x,y)   
\label{eq:infimum-of-K-over-F-epsilon}
\end{equation}
If $k = 0$, then the extended LDP upper bound is trivially satisfied, and hence, we assume w.l.o.g.\ $k>0$.  
From the assumption (iii), there exists the largest element $k \in \mathcal K$ such that $k < k'$.
Note also that
\begin{equation*}
    \Psi_K(k) 
    = \bigcup_{(i,j) \in B(k)} \Psi_I(i) \times \Psi_J(j)
\end{equation*}
where 
$B(k) = \{(i,j) \in \mathcal I\times \mathcal J: i+ j \leq k\}$.
\elaborate{
\begin{equation*}
\big(\Psi_K(k)\big)^\delta
=
\bigcup_{(i,j) \in B(k)} \Psi_I(i)^\delta \times \Psi_J(j)^\delta
\end{equation*}
}%
Note also that \eqref{eq:infimum-of-K-over-F-epsilon} implies that there exists $\delta > 0$ such that $d(F, \Psi_K(k)) > \delta$.                                    
This, in turn, implies that 
\begin{equation*}
    F \subseteq \big(\xcal \times \ycal\big) \setminus \big(\Psi_K(k)\big)^\delta
    = 
    \bigcap_{(i,j)\in B(k)} \Big(\big(\xcal \setminus \Psi_I(i)^\delta\big) \times \ycal \Big) \cup \Big(\xcal \times \big(\ycal\setminus\Psi_J(j)^\delta\big) \Big).
\end{equation*}
From this and Lemma~\ref{lemma:eLDP-upper-bound-for-sets-with-special-cylindrical-form}, we get
\begin{align*}
\liminf_{n\to\infty} a_n^{-1} \log \P\big((X_n, Y_n) \in 
F)
&
\leq
\liminf_{n\to\infty} a_n^{-1} \log \P\big((X_n, Y_n) \in 
\big(\xcal \times \ycal\big) \setminus \big(\Psi_K(k)\big)^\delta
\big)
\\
&
\leq
-\lim_{\epsilon \to 0} \inf_{(x,y) \in \big((\xcal \times \ycal) \setminus (\Psi_K(k))^\delta\big)^\epsilon} K(x,y) 
< -k.
\end{align*}
Since $-k \leq -k' = -\lim_{\epsilon \to 0}\inf_{(x,y)\in F^\epsilon} K(x,y)$, this implies the extended LDP upper bound:
$$
\liminf_{n\to\infty} a_n^{-1} \log \P\big((X_n, Y_n) \in 
F)
\leq
-\lim_{\epsilon \to 0}\inf_{(x,y)\in F^\epsilon} K(x,y).
$$
\end{proof}

\subsection{Proofs for Section \ref{section:nonexist-standard-ldp}}
\label{section: nonexist-standard-ldp-proof}
\begin{proof}[Proofs of Lemma \ref{lemma:F-disjoint-property}]
\linksinpf{lemma:F-disjoint-property}
Suppose that $\eta \in \hat \D_{\sleq 1}$. It is sufficient to show that $\eta \notin \bar F$. 
We start with the following observations:
\begin{enumerate}[(1)]
\item\label{fact1-Fn}
    If $\xi\in F_n$, then $\xi(\frac{1}{2}) \geq\log n - \frac{1}{2}\nu_1\mu_1 - \frac{1}{3}n^{-\frac{1}{3}}$. 
    In particular, 
    $\xi(\frac12) > 1$ for any $\xi \in F$. 

\item\label{fact2-Fn} 
    If $\xi\in F_n$, then $\xi$ has a jump between $(\frac{3}{4}, 1]$ of size no smaller than $\frac{1}{3}n^{-\frac{1}{3}}$
\end{enumerate}
Note that since \ref{fact1-Fn} implies that $F_n$'s are bounded away from the zero function. 
Therefore, we will focus on the case $\eta \not\equiv 0$ w.l.o.g. 
This allows us to write $\eta = z \I_{[v,1]}$ for $z>0$ and $v\in [0,1]$. 
We conclude the proof by considering the two possible cases---$v\in (1/2, 1]$ and $v \in [0,1/2]$---separately:

Suppose that $v\in (1/2, 1]$.
For any $\xi \in F$, set $z \teq (\xi(1/2), 1/2)\in \Gamma(\xi)$.
From \ref{fact1-Fn}, we can easily check $d\big(z, \Gamma(\eta)\big)\geq v-1/2$. 
From this and Lemma~\ref{lemma:m1prime-gap-criteria}, we have 
$d_{M_1'}(\eta, \xi) \geq v-1/2$.
Since this is true for any $\xi \in F$, we conclude that $\eta \notin \bar F$. 

Suppose that $v \in [0,1/2]$.
We will proceed with proof by contradiction. 
Suppose that $\eta \in \bar F$ so that there exists a sequence $\xi_n \in F$ such that $d_{M_1'}(\xi_n, \eta) \to 0$ as $n\to\infty$. 
We claim that $\xi_n \in F_{m_n}$ for some $m_n$ such that $m_n \to \infty$ as $n\to \infty$. 
To see this, recall \ref{fact2-Fn} and the fact that $\eta$ is constant between 1/2 and 1.
Therefore, Lemma~\ref{lemma:constant-jump-m1prime-distance} implies that $d_{M_1'}(\xi_n, \eta)$ is bounded away from 0 if the claim doesn't hold.
On the other hand,
$$
    d_{M_1'}(\xi_n, \eta)
    \geq
    d\big((\xi_n(1/2), 1/2), \Gamma(\eta)\big) 
    \geq
    |\xi_n(1/2) - z|
    \geq 
    \log m_n - \frac{1}{2}\nu_1\mu_1 - \frac{1}{3}m_n^{-\frac{1}{3}} - z 
    \to 
    \infty,
$$ 
where the second inequality is from Lemma~\ref{lemma:m1prime-gap-criteria} and the third inequality is from the claim. 
This is contradictory to our earlier assumption, and hence, $\eta$ cannot be in $\bar F$.

\end{proof}

\begin{proof}[Proof of Lemma \ref{lemma:F-disjoint-lowerbound}]
\linksinpf{lemma:F-disjoint-lowerbound}
Based on the L\'evy-Ito decomposition (\ref{equ:levy-ito-decomposition}), $\bar X_n$ has the following distribution representation:
\begin{equation}\label{barrn2}
    \bar{X}_n \stackrel{\ms{D}}{=} \bar{J}_n + \bar{H}_n
\end{equation}
where
\linkdest{notation:jnbar-and-hnbar}
\begin{align}
    &
    \notationdef{jnbar}{\bar{J}_n(t)} 
    \delequal
    \frac{1}{n}\int_{[1,\infty)}x\hat{N}([0,nt]\times dx)
    \label{jnbar}
    \\
    &
    \notationdef{hnbar}{\bar{H}_n(t)}
    \delequal
    \frac{aB(nt)}{n} + \frac{1}{n}\int_{(0,1]}x\big(\hat{N}([0,nt]\times dx) - nt\nu(dx)\big) - t\int_{[1,\infty)}x\nu(dx)
    \label{hnbar}
\end{align}
Since $\bar{J}_n$ and $\bar{H}_n$ are independent, we have
\begin{align}
    \limsup_{n\rightarrow\infty} 
    \frac
        {\log\pr{\bar{X}_n\in F}}
        {r(\log n)}
    &\geq
    \limsup_{n\rightarrow\infty} 
    \frac
        {\log\pr{\bar{X}_n\in F_n}}
        {r(\log n)}
    \nonumber
    \geq
    \limsup_{n\rightarrow\infty} 
    \frac
        {\log\pr{\bar{J}_n\in B_n, \bar{H}_n\in C_n}}
        {r(\log n)}
    \nonumber
    \\
    &
    \geq
    \limsup_{n\rightarrow\infty} 
    \frac
        {\log\pr{\bar{J}_n\in B_n} + \log\pr{\bar{H}_n\in C_n}}
        {r(\log n)}
    \nonumber
    \\
    &
    =
    \underbrace
        {\limsup_{n\rightarrow\infty} \frac{\log\pr{\bar{J}_n\in B_n}}{r(\log n)}
        }_{(\text{I})}
    +
    \underbrace
        {\lim_{n\rightarrow\infty} \frac{ \log\pr{\bar{H}_n\in C_n}}{r(\log n)}}_{(\text{II})}\label{midstep=bn-cn}.
\end{align}
Due to Lemma~\ref{lemma:hnbar-in-Cn-pr-lowerbounded}, we have (II)$=0$. 
To evaluate (I), 
recall $Q_n^\leftarrow(\cdot)$'s definition in (\ref{definition:Qn_and_inverse}) and $\hat{N}$'s representation in (\ref{poisson-measure-pointmass}). 
$\bar{J}_n$ has the following representation.
\begin{equation*}
    \newnota{jnbar-counterexample}{\bar{J}_n(t)} \defnota{jnbar-counterexample}{\stackrel{d}{=} \sum_{i=1}^\infty Q_n^\leftarrow(\Gamma_i)\I_{[1,\infty)}(Q_n^\leftarrow(\Gamma_i))\I_{[U_i, 1]}(t)}
\end{equation*}
Here $\Gamma_i= Y_1+\cdots + Y_i$ and $Y_i$'s are i.i.d.\ Exp(1) variables. 
From the definitions of $A_n$ and $B_n$'s in (\ref{setan}) and (\ref{setbn}),
\begin{align*}
    &
    \{Y_1\in \big(Q_n(n\cdot 2\log n), Q_n(n\log n)\big], Y_2 \in\big[0, Q_n(n\cdot n^{-\frac{1}{3}}) - Q_n(n\log n)\big), U_1\in(\frac{1}{4}, \frac{1}{2}], U_2\in(\frac{3}{4},1]\}
    \\
    &
    \subset
    \{ Y_1\in \big(Q_n(n\cdot 2\log n), Q_n(n\log n)\big], Y_1+ Y_2\in \big[0, Q_n(n\cdot n^{-\frac{1}{3}})\big], U_1\in(\frac{1}{4},\frac{1}{2}], U_2\in(\frac{3}{4},1]\}
    \\
    &
    \subset
    \{\frac{Q^\leftarrow_n(Y_1)}{n}\in  [\log n,2\log n)\text{ and } \frac{Q^\leftarrow_n(Y_1 + Y_2)}{n}\in  [n^{-\frac{1}{3}},\infty), U_1\in(\frac{1}{4}, \frac{1}{2}], U_2\in(\frac{3}{4},1]\}
    \\
    &
    \subset
    \{\pi(\bar{J}_n)\in A_n\}= \{\bar{J}_n\in \pi^{-1}(A_n) = B_n\}.
\end{align*}
where the second inclusion is due the fact that $c\in \big(Q_n(b),  Q_n(a)\big]$ if and only if $Q_n^\leftarrow(c) \in [a, b)$  for  any $c$ and $b > a \geq 0$.
Therefore, the term (I) by can be lower-bounded as below:
\begin{align*}
    &
    \limsup_{n\rightarrow\infty} 
    \frac
        {\log\pr{\bar{J}_n\in B_n}}
        {r(\log n)}
    \nonumber
    \\
    &
    \geq
    \limsup_{n\rightarrow\infty}
    \frac
        {\log
        \pr{
            Y_1\in (Q_n(n\cdot 2\log n), Q_n(n\log n)]
        }
        +
        \log
        \pr{
            Y_2 \in[0, Q_n(n\cdot n^{-\frac{1}{3}}) - Q_n(n\log n)) 
        }
        }
        {r(\log n)}
    \nonumber
    \\
    &
    =
    \limsup_{n\rightarrow\infty}
    \frac{1}{r(\log n)}
    \Big[\
        \log\int_{Q_n(n\cdot 2\log n)}^{Q_n(n\log n)} e^{-y_1}dy_1
        +
        \log\int_0^{Q_n(n^{\frac{2}{3}}) - Q_n(n\log n)} e^{-y_2}dy_2 
    \Big]
    \nonumber
    \\
    &
    \geq
    \underbrace{
    -\lim_{n\rightarrow\infty}
    \frac{Q_n(n\log n)}{r(\log n)}
    }_{\text{(III)}}
    +
    \underbrace{
    \lim_{n\rightarrow\infty}
    \frac
        {\log\big(Q_n(n\log n) - Q_n(n\cdot 2\log n)\big)}
        {r(\log n)}
    }_{\text{(IV)}}
    +
    \underbrace{
    \lim_{n\rightarrow\infty}
    \frac
        {\log\big(1 - e^{-Q_n(n^{\frac{2}{3}}) + Q_n(n\log n)}\big)}
        {r(\log n)}.
    }_{\text{(V)}}
\end{align*}
Now we are left with identifying (III), (IV), and (V). 
Note that due to \eqref{limit3} of Lemma~\ref{lemma:limit-result-4},
$$
\text{(III)} 
= \lim_{n\rightarrow\infty} \frac{Q_n(n\log n)}{r(\log (n\log n))} \frac{r(\log (n\log n))}{{r(\log n)} } 
=0.
$$
On the other hand,
\begin{align}
    \text{(IV)}
    &
    =
    \lim_{n\rightarrow\infty}
    \frac
        {
        \log
        \big(
            c n^{\beta+1} (\log n)^\beta e^{-\lambda (\log n+\log\log n)^\gamma}  -c n (n\cdot 2\log n)^\beta e^{-\lambda (\log n + \log\cdot 2\log n)^\gamma}
        \big)
        }
        {r(\log n)}
    \nonumber
    \\
    &
    =\lim_{n\rightarrow\infty}
    \frac
        {
        \log c 
        +
        (\beta+1)\log n
        +
        \beta\log\log n-\lambda (\log n +\log\log n)^\gamma
        }
        {r(\log n)}
    \label{mid-step-3}
    \\
    &
    \quad
    + 
    \lim_{n\rightarrow\infty}
    \frac   
        {
        \log
        \big(
            1 - 2^\beta
            e^{\lambda (\log n 
            +
            \log\log n)^\gamma
            -
            \lambda (\log n 
            +
            \log\log n +\log 2)^\gamma} 
        \big)
        }
        {r(\log n)}
    \label{mid-step-4}
    \\
    &
    = -1,
    \nonumber
\end{align}
since (\ref{mid-step-3}) is obviously $-1$ and (\ref{mid-step-4}) is 0 as $\lambda(\log n + \log\log n)^\gamma-\lambda (\log n + \log\log n +\log 2)^\gamma$ tends to $-\infty$ as $n\rightarrow\infty$.
Finally, 
\begin{align*}
 \text{(V)}
 &
 =
 \lim_{n\rightarrow\infty} 
 \frac
    {\log\big(Q_n(n^{\frac{2}{3}}) - Q_n(n\log n)\big)}
    {r(\log n)}
 \\
 &
 =
 \lim_{n\rightarrow\infty}
 \frac
    {\log
     \big(
        c n ^{\frac{2}{3}\beta+1} e^{-\lambda(\frac{2}{3})^\gamma r(\log n)}
        -
        c n^{\beta+1} (\log n)^\beta 
        e^{-\lambda (\log n+\log\log n)^\gamma} 
     \big)
     }
    {r(\log n)}
 \\
 &
 =
 \lim_{n\rightarrow\infty}
 \frac
    {\log
     \big(
      c n ^{\frac{2}{3}\beta+1}
      e^{-\lambda(\frac{2}{3})^\gamma r(\log n)} \big)
      +
      \log\big(1 - n^{\frac{1}{3}\beta}
      e^{\lambda r(\log n)
      \big(
        (\frac{2}{3})^\gamma - (1+\frac{\log\log n}{\log n})^\gamma\big)}
      \big)
    }
    {r(\log n)}
  \\
  &
  =
  \lim_{n\rightarrow\infty}
  \frac
      {\log c + (\frac{2}{3}\beta+1)\log n  -\lambda(\frac{2}{3})^\gamma r(\log n)}
      {r(\log n)}
  +
  0 
  =
  -\big(\frac{2}{3}\big)^\gamma 
  \geq 
  -1
  .
\end{align*}
This concludes the proof of the lemma.
\end{proof}

\begin{lemma}\label{lemma:hnbar-in-Cn-pr-lowerbounded}
\linksinthm{lemma:hnbar-in-Cn-pr-lowerbounded}
For $C_n$ in (\ref{setcn}) and $\bar{H}_n$ in (\ref{hnbar}),
\begin{equation*}
    \lim_{n\rightarrow\infty}\frac{\log \pr{\bar{H}_n\in C_n}}{r(\log n)} = 0
\end{equation*}
\end{lemma}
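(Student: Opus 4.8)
The plan is to prove the stronger fact that $\pr{\bar{H}_n\in C_n}\to 1$ as $n\to\infty$. The lemma follows from this immediately: since $\log\pr{\bar{H}_n\in C_n}\le 0$ always, we have $\limsup_{n\to\infty}\frac{\log\pr{\bar{H}_n\in C_n}}{r(\log n)}\le 0$, while $\pr{\bar{H}_n\in C_n}\to 1$ forces $\log\pr{\bar{H}_n\in C_n}\to 0$, so, using $r(\log n)\to\infty$, the $\liminf$ of the quotient is $\ge 0$ as well.

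First I would strip the deterministic drift out of $\bar{H}_n$. Since $\mu_1\nu_1=\int_{[1,\infty)}x\,\nu(dx)$, the path $-\mu_1\nu_1 e$ at the center of the ball $C_n$ in \eqref{setcn} is exactly $t\mapsto -t\int_{[1,\infty)}x\,\nu(dx)$, which is the last term of \eqref{hnbar}; hence $\bar{H}_n+\mu_1\nu_1 e=M_n$ with
$$
M_n(t)\teq \frac{aB(nt)}{n}+\frac1n\int_{(0,1]}x\big(\hat N([0,nt]\times dx)-nt\,\nu(dx)\big),
$$
so that $\{\bar{H}_n\in C_n\}=\{\sup_{t\in[0,1]}|M_n(t)|\le \tfrac{1}{3} n^{-1/3}\}$. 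Writing $M_n(t)=n^{-1}L(nt)$ where $L(s)\teq aB(s)+\int_{(0,1]}x\big(\hat N([0,s]\times dx)-s\,\nu(dx)\big)$, the process $L$ is a centered Lévy martingale with jumps bounded by $1$, and it is square-integrable: $\E{L(s)}=0$ and $\E{L(s)^2}=\sigma^2 s$ with $\sigma^2\teq a^2+\int_{(0,1]}x^2\,\nu(dx)<\infty$, the finiteness being part of the defining property $\int\min(1,x^2)\,\nu(dx)<\infty$ of the Lévy measure. Consequently $\sup_{t\in[0,1]}|M_n(t)|=n^{-1}\sup_{s\in[0,n]}|L(s)|$.

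Next I would invoke Doob's $L^2$ maximal inequality, $\E{\sup_{s\in[0,n]}|L(s)|^2}\le 4\,\E{L(n)^2}=4\sigma^2 n$, and then Chebyshev's inequality to obtain
$$
\pr{\sup_{s\in[0,n]}|L(s)|>\tfrac{1}{3} n^{2/3}}\le \frac{4\sigma^2 n}{n^{4/3}/9}=\frac{36\sigma^2}{n^{1/3}}\longrightarrow 0 .
$$
Therefore $\pr{\bar{H}_n\in C_n}=\pr{\sup_{s\in[0,n]}|L(s)|\le \tfrac{1}{3} n^{2/3}}\ge 1-36\sigma^2 n^{-1/3}\to 1$, which completes the argument.

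There is no real obstacle in this lemma. The only point deserving a word of justification is the square-integrability and martingale property of the compensated small-jump part of $L$, which is what makes Doob's inequality applicable; this follows directly from $\int_{(0,1]}x^2\,\nu(dx)<\infty$. Everything else is a routine second-moment estimate.
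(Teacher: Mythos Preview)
Your proof is correct and follows essentially the same approach as the paper: both reduce to showing $\pr{\bar H_n\notin C_n}\to 0$ by cancelling the drift $-\mu_1\nu_1 e$, then apply a Doob-type maximal inequality to the remaining square-integrable martingale to get a bound of order $n^{-1/3}$. The only cosmetic difference is that the paper applies Doob's submartingale inequality directly to the squared process, whereas you use Doob's $L^2$ maximal inequality followed by Chebyshev, which costs an extra factor of $4$ in the constant but is otherwise identical.
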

\begin{proof}
\linksinpf{lemma:hnbar-in-Cn-pr-lowerbounded}
It is enough to shows that $\pr{\bar{H}_n\notin C_n}\rightarrow 0$ as $n\rightarrow\infty$. 
\begin{align*}
    \pr{\bar{H}_n\notin C_n}
    &
    =
    \pr{
        \sup_{t\in[0,1]}|\bar{H}_n(t) + t\nu_1\mu_1| 
        >
        \frac{1}{3} \frac{1}{n^\frac{1}{3}}
    }
    \\
    & 
    =
    \pr{
        \sup_{t\in[0,1]}
        \bigg|
            \frac{aB(nt)}{n} 
            +
            \frac{1}{n}\int_{(0,1]}x\big(\hat{N}([0,nt]\times dx) - nt\nu(dx)\big)
        \bigg| 
        >
        \frac{1}{3}
        \frac{1}{n^\frac{1}{3}}
    }
    \\
    &
    =
    \pr{
        \sup_{t\in[0,1]}
        \bigg(aB(nt)
        +
        \int_{(0,1]}x\big(\hat{N}([0,nt]\times dx) - nt\nu(dx)\big)\bigg)^2
        >
        \frac{1}{9} n^\frac{4}{3}
    }
    \\
    &
    \leq
    \frac
        {\E{\big(aB(n) + \int_{(0,1]}x\big(\hat{N}([0,n]\times dx) - nt\nu(dx)\big)\big)^2}}
        {\frac{1}{9} n^\frac{4}{3}}
    \\
    &
    =
    \frac
        {n\E{\big(aB(1) + \int_{(0,1]}x\big(\hat{N}([0,1]\times dx) - t\nu(dx)\big)\big)^2}}
        {\frac{1}{9} n^\frac{4}{3}}
    =
    \frac{\text{Const}}{n^\frac{1}{3}} \to 0,
\end{align*}
where the inequality is due to the Doob's submartingale inequality. 
\end{proof}

\subsection{Supporting lemmata}
\subsubsection{Asymptotic limits associated with $Q_n$ and $\tilde Q$}
This section covers several limits associated with $Q_n$ and $\tilde{Q}$. 
Recall $\nu$ satisfies Assumption~\ref{assumption-lognormaltail} and $Q_n(x) = n\nu[x,\infty)$ for $x > 0$. 

\begin{lemma}
\label{lemma:limit-result-4}
\linksinthm{lemma:limit-result-4}
The following asymptotics hold:
\begin{align}
    &
    \lim_{n\rightarrow\infty} Q_n(nx)
    =
    0
    \label{limit1}
    \\
    &
    \lim_{n\rightarrow \infty}
    \frac{Q_n(nx)}{r(\log n)}
    =  0
    \label{limit2}
    \\
    &
    \lim_{n\rightarrow \infty}
    \frac{\log Q_n(nx)}{r(\log n)}
    =
    -1
    \label{limit3}
    \\
    &
    \lim_{n\rightarrow \infty}
    \frac
        {\log\big(Q_n(nx_1)\pm  Q_n(nx_2)\big)}
        {r(\log n)}
    =
    - 1
    \text{ for }
    0 < x_1< x_2
    \label{limit4}
    \\
    &
    \lim_{n\rightarrow\infty}
    \frac
        {\log\P\big(\Gamma_i \leq  Q_n(nc)\big)}
        {r(\log n)}
    = -i
    \label{limit5}
\end{align}
\end{lemma}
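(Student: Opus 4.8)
The plan is to reduce all five asymptotics to two elementary facts about the regularly varying function $r$ of index $\gamma>1$ and then read off \eqref{limit1}--\eqref{limit5} in turn. Throughout I use the identity $Q_n(nx)=n\exp\!\big(-r(\log n+\log x)\big)$, valid for all large $n$ (so that $\log(nx)$ lies in the range where \eqref{lognormal-lambda-gamma} applies). The two facts are: (a) since $\gamma>1$, the map $t\mapsto r(t)/t$ is regularly varying of positive index $\gamma-1$, hence $r(t)/t\to\infty$, so in particular $\log n=o\big(r(\log n)\big)$; and (b) by the Uniform Convergence Theorem for regularly varying functions, $r(\lambda t)/r(t)\to\lambda^{\gamma}$ uniformly for $\lambda$ in compact subsets of $(0,\infty)$, and applying this with $\lambda=1+c/t\to1$ gives $r(t+c)/r(t)\to1$ for every fixed $c\in\mathbb R$.

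Given (a) and (b), the bounds \eqref{limit1}--\eqref{limit3} follow directly. We have $\log Q_n(nx)=\log n-r(\log n+\log x)=\log n\,\big(1-r(\log n+\log x)/\log n\big)\to-\infty$, since $r(\log n+\log x)\sim r(\log n)$ by (b) while $r(\log n)/\log n\to\infty$ by (a); this gives \eqref{limit1}, and \eqref{limit2} is then immediate because $r(\log n)\to\infty$. For \eqref{limit3}, divide through: $\log Q_n(nx)/r(\log n)=\log n/r(\log n)-r(\log n+\log x)/r(\log n)\to 0-1=-1$, again by (a) and (b).

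For \eqref{limit4}, factor out the larger term: $Q_n(nx_1)\pm Q_n(nx_2)=Q_n(nx_1)\big(1\pm e^{-D_n}\big)$, where $D_n:=r(\log n+\log x_2)-r(\log n+\log x_1)\geq0$, the sign being a consequence of the monotonicity of $r$ (which comes from $\nu[\,\cdot\,,\infty)=\exp(-r(\log\cdot))$ being nonincreasing). Dividing $\log\big(Q_n(nx_1)\pm Q_n(nx_2)\big)$ by $r(\log n)$, the $\log Q_n(nx_1)$ part contributes $-1$ by \eqref{limit3}, so it remains to check $\log\big(1\pm e^{-D_n}\big)=o\big(r(\log n)\big)$. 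For the $+$ sign this is trivial, as $1+e^{-D_n}\in[1,2]$. For the $-$ sign the crux is that $D_n\to\infty$: writing $\Delta:=\log(x_2/x_1)>0$, this holds because $r'$ is regularly varying of index $\gamma-1>0$ with $r'(t)\sim\gamma r(t)/t\to\infty$ (Monotone Density Theorem), so $D_n\geq\Delta\,\inf_{[\log n+\log x_1,\,\log n+\log x_2]}r'\to\infty$, whence $\log(1-e^{-D_n})\to0$. I expect this last step --- proving $D_n\to\infty$, and in particular excluding the degenerate possibility that $\nu$ assigns no mass to $[nx_1,nx_2)$ --- to be the main obstacle; it is the only place where the fine structure of regular variation enters, and it is mildly sensitive to regularity hypotheses on $r$, which are satisfied by every lognormal-type tail considered in the paper.

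Finally, for \eqref{limit5}, recall that $\Gamma_i$ has the $\mathrm{Gamma}(i,1)$ law, so $\P(\Gamma_i\leq t)=\int_0^t \frac{s^{i-1}e^{-s}}{(i-1)!}\,ds\sim \frac{t^i}{i!}$ as $t\downarrow0$. Since $Q_n(nc)\to0$ by \eqref{limit1}, this yields $\log\P\big(\Gamma_i\leq Q_n(nc)\big)=i\log Q_n(nc)-\log(i!)+o(1)$; dividing by $r(\log n)$ and invoking \eqref{limit3} gives the limit $-i$.
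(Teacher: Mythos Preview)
Your proposal is correct and mirrors the paper's proof: the same factorization $Q_n(nx_1)\pm Q_n(nx_2)=Q_n(nx_1)\big(1\pm e^{-D_n}\big)$ for \eqref{limit4} and the same Erlang small-ball estimate $\P(\Gamma_i\leq t)\sim t^i/i!$ for \eqref{limit5}. You are in fact more careful than the paper on the step $D_n\to\infty$ --- the paper simply asserts it without justification, whereas you correctly flag that it requires a regularity hypothesis on $r$ beyond bare regular variation (e.g., smooth variation, or the explicit parametric form $r(x)=\lambda x^\gamma+\beta x+\log c$ used elsewhere in the paper).
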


\begin{proof}
\linksinpf{lemma:limit-result-4}
The first three (\ref{limit1}), (\ref{limit2}), and (\ref{limit3}) are trivial.
For (\ref{limit4}), 
\begin{align*}
     &\lim_{n\rightarrow \infty} 
     \frac
        {\log\big(Q_n(nx_1)  \pm Q_n(nx_2)\big)}
        {r(\log n)}
    \\
    &
    =  
    \lim_{n\rightarrow \infty} 
    \frac
        {\log\big( n \exp(-r(\log nx_1)) \pm n \exp( -r(\log nx_2)\big)}
        {r(\log n)}        
    \nonumber
    \\
    &
    =
    \lim_{n\rightarrow \infty} 
    \frac
        {\log\Big(
            n 
            \cdot 
            \exp\big(-r(\log nx_1)\big) 
            \cdot
            \big( 1 \pm \exp\big\{ r(\log n x_1) -r(\log nx_2) \big\}\big)
        \Big)}
        {r(\log n)}
    \nonumber
    \\
    &
    =
    \lim_{n\rightarrow \infty} 
    \frac
        {\log\big(\exp\big(-r(\log nx_1)\big)\big)}
        {r(\log n)}
    + \lim_{n\rightarrow \infty} 
    \frac
        {\log\big( 1 \pm \exp\big\{ r(\log n x_1) -r(\log nx_2) \big\}\big)}
        {r(\log n)}
    \nonumber
    \\
    &
    =
    -1 
\end{align*}
Note that the last equality holds because $\lim_{n\rightarrow\infty} \big(r(\log nx_1) - r(\log nx_2) \big)= -\infty$.
For (\ref{limit5}), recall that $\Gamma_i$ follows the Erlang-$i$ distribution, and hence,
\begin{equation}
    \lim_{n\rightarrow\infty} \frac{\log\pr{\Gamma_i \leq  Q_n(nc)}}{r(\log n)} 
    = 
    \lim_{n\rightarrow\infty} \frac{\log\big( \frac1{i!}\int_0^{Q_n(nc)} s^{i-1}e^{-s}ds\big)}{r(\log n)}
    = 
    \lim_{n\rightarrow\infty} \frac{\log\big( \int_0^{Q_n(nc)} s^{i-1}e^{-s}ds\big)}{r(\log n)}.
    \label{eq:first-eq-in-proof-lemma:limit-result-4}
\end{equation}
Note that since $Q_n(nc)\downarrow 0$ as $n\to\infty$,
$$
\frac1{2i} \big(Q_n(nc)\big)^{i}
\leq
\frac12\int_0^{Q_n(nc)} s^{i-1}ds
\leq 
\int_0^{Q_n(nc)} s^{i-1}e^{-s}ds
\leq 
\int_0^{Q_n(nc)} s^{i-1}ds
\leq 
\frac1{i} \big(Q_n(nc)\big)^{i}
$$
for sufficiently large $n$'s. 
Combining this with \eqref{eq:first-eq-in-proof-lemma:limit-result-4}, we arrive at (\ref{limit5}) as follows:
$$
\lim_{n\rightarrow\infty} \frac{\log\Big(\int_0^{Q_n(nc)} s^{i-1}e^{-s}ds\Big)}{r(\log n)} 
= 
\lim_{n\rightarrow\infty} \frac{\log\Big(\big(Q_n(nc)\big)^i\Big)}{r(\log n)}
.
$$
The last equality is due to \eqref{limit3}.
\end{proof}

We assume the tail of the random variable $Z$ satisfies Assumption~\ref{assumption-lognormaltail2} and  $\tilde{Q}(x) =\pr{Z \geq x}\sim \exp(-r(\log x))$. The limit results in terms of $\tilde{Q}$ is listed in the following lemma:

\begin{lemma}
\label{lemma:limit-result-5}
\linksinthm{lemma:limit-result-5}
For $x >0 $ and $i\in\mb{N}$,
\begin{align}
    &
    \lim_{n\rightarrow\infty}
    \frac
        {\log\tilde{Q}(nx)}
        {-\log \tilde{Q}(n)}
    =
    -1
    \label{limit6}
    \\
    &
    \lim_{n\rightarrow\infty}
    \frac
    {\log(\tilde{Q}(nx_1)- \tilde{Q}(nx_2))}
    {-\log \tilde{Q}(n)}
    =
    -1
    \text{ for }
    x_2 > x_1 > 0
    \label{limit7}
    \\
    &
    \lim_{n\rightarrow\infty}
    \frac
        {n\log(1-\tilde{Q}(nx))}
        {-\log \tilde{Q}(n)}
    = 
    0
    \label{limit8}
    \\
    &
    \lim_{n\rightarrow\infty}
    \frac
        {\log\pr{V_{(i+1)}\leq \tilde{Q}(nx)}}
        {-\log \tilde{Q}(n)}
    \leq
    -(i+1)
    \label{limit9}
\end{align}
\end{lemma}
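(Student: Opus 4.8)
The plan is to deduce all four asymptotics of Lemma~\ref{lemma:limit-result-5} from two facts about the regularly varying function $r$ (of index $\gamma>1$) together with routine estimates, using throughout that $-\log\tilde Q(n)=r(\log n)(1+o(1))$ by hypothesis. The two facts I would record first are: (a) since $\gamma>1$, the map $t\mapsto r(t)/t$ is regularly varying of positive index $\gamma-1$, hence $r(t)/t\to\infty$; and (b) for every fixed $c\in\R$, $r(t+c)/r(t)\to 1$ as $t\to\infty$ (write $r(t)=t^{\gamma}\ell(t)$ with $\ell$ slowly varying; then $r(t+c)/r(t)=(1+c/t)^{\gamma}\,\ell(t(1+c/t))/\ell(t)\to 1$ by the uniform convergence theorem for slowly varying functions). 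Equivalently, $\log\tilde Q(nx)=-r(\log n)(1+o(1))$ for every fixed $x>0$, which is essentially \eqref{limit6}.

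Given these, \eqref{limit6} is immediate from (b): $\log\tilde Q(nx)/(-\log\tilde Q(n))=-r(\log n+\log x)(1+o(1))/(r(\log n)(1+o(1)))\to -1$. For \eqref{limit8} I would note that $r(\log n+\log x)\geq \frac12 r(\log n)$ eventually (by (b)), so $r(\log n+\log x)-\log n=\log n\big(r(\log n)/(2\log n)-1\big)\to\infty$ by (a); hence $n\tilde Q(nx)=\exp(\log n-r(\log n+\log x)(1+o(1)))\to 0$, and therefore the numerator $n\log(1-\tilde Q(nx))\sim -n\tilde Q(nx)\to 0$ while the denominator $-\log\tilde Q(n)\to\infty$, giving limit $0$. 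For \eqref{limit9}, observe $\{V_{(i+1)}\leq q\}=\{\#\{j\le n-1:V_j>q\}\le i\}$, so
$$\P(V_{(i+1)}\le q)=\sum_{m=n-1-i}^{n-1}\binom{n-1}{m}q^{m}(1-q)^{\,n-1-m}\le (i+1)\,n^{\,i}\,q^{\,n-1-i},$$
using $\binom{n-1}{m}\le (n-1)^{\,n-1-m}\le n^{\,i}$ and $q\le 1$. Taking $q=\tilde Q(nx)$, then logarithms, then dividing by $-\log\tilde Q(n)=r(\log n)(1+o(1))$: the contribution $(\log(i+1)+i\log n)/r(\log n)\to 0$ by (a), while $(n-1-i)\cdot\log\tilde Q(nx)/(-\log\tilde Q(n))\to-\infty$ since $n-1-i\to\infty$ and the ratio $\to -1$ by \eqref{limit6}. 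Hence $\limsup_n \log\P(V_{(i+1)}\le\tilde Q(nx))/(-\log\tilde Q(n))=-\infty\le-(i+1)$, which is \eqref{limit9}; only the stated one-sided bound is needed downstream.

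The one genuinely delicate point is the lower bound in \eqref{limit7}. Its upper bound is trivial from monotonicity, $\tilde Q(nx_1)-\tilde Q(nx_2)\le\tilde Q(nx_1)$, and \eqref{limit6}. For the lower bound I would use that eventually $\tilde Q(nx_1)\ge\frac12 e^{-r(\log(nx_1))}$ and $\tilde Q(nx_2)\le 2e^{-r(\log(nx_2))}$, so $\tilde Q(nx_2)\le\frac12\tilde Q(nx_1)$ — and hence $\tilde Q(nx_1)-\tilde Q(nx_2)\ge\frac12\tilde Q(nx_1)$ — as soon as $r(\log(nx_2))-r(\log(nx_1))\ge\log 8$; the bound $\liminf_n\ge -1$ then follows by applying \eqref{limit6} to $\tilde Q(nx_1)$. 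So everything reduces to the claim that the \emph{additive} increment $r(t+c_2)-r(t+c_1)\to\infty$ as $t\to\infty$ for fixed $c_2>c_1$. This is the main obstacle: unlike the multiplicative fact (b), divergence of additive increments is \emph{not} a formal consequence of regular variation (a regularly varying function of index $\gamma$ may have short plateaus recurring along a subsequence), so the proof must invoke the regularity of $r$. The route I would take: use the smooth variation theorem to pass to an asymptotically equivalent $C^{1}$ regularly varying $r$ with $r'(t)\sim\gamma r(t)/t\to\infty$ (by (a)), apply the mean value theorem to get $r(t+c_2)-r(t+c_1)=r'(\xi_t)(c_2-c_1)\to\infty$, and transfer back to $r$; the transfer is the subtle step, or alternatively one may use the monotone density theorem for the non-decreasing function $x\mapsto-\log\tilde Q(x)$. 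In all cases of interest — e.g.\ $r(t)=\lambda t^{\gamma}$, or $r(t)=\lambda t^{\gamma}-\beta t-\log c$ as in Theorem~\ref{proposition-nonexistence-of-standard-LDP} — $r$ is eventually $C^{1}$ and strictly increasing and this step is routine; I would state eventual continuity and strict monotonicity of $r$ as a standing regularity assumption to make the argument airtight.
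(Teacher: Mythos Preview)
Your arguments for \eqref{limit6}, \eqref{limit7}, and \eqref{limit8} are correct and in fact cleaner than the paper's: the paper's proof silently specializes to the parametric form $r(t)=\lambda t^{\gamma}-\beta t-\log c$ (note the explicit $\log^2$ in its computation of \eqref{limit6}), whereas you work directly with regular variation. Your flagging of the additive-increment issue in \eqref{limit7} is apt; the paper needs exactly the same fact $r(t+c_2)-r(t+c_1)\to\infty$ in its proof of \eqref{limit4} and simply asserts it, so you are being more careful, not less. Your proposed fix via smooth variation (or the monotone density theorem applied to the non-increasing tail $\tilde Q$) is the right kind of argument.

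There is, however, a genuine slip in \eqref{limit9}. The paper's text says the $V_{(i)}$ are in \emph{descending} order, but this is a typo: the paper's own proof uses that $V_{(i+1)}\sim\mathrm{Beta}(i+1,n-i-1)$, which is the $(i{+}1)$-th \emph{ascending} order statistic, and every application requires ascending (e.g.\ $\tilde Q^{\leftarrow}(V_{(1)})$ must be the \emph{largest} jump in $\tilde J_n^k$, which forces $V_{(1)}$ to be the smallest uniform since $\tilde Q^{\leftarrow}$ is decreasing). Under the intended ascending convention your set identity is wrong: $\{V_{(i+1)}\le q\}=\{\#\{j:V_j\le q\}\ge i+1\}$, not $\{\#\{j:V_j>q\}\le i\}$. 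The bound you derive, $-\infty$, is a red flag in itself --- it would make the event ``the $(i{+}1)$-th largest jump exceeds $nr$'' impossibly rare, contradicting the matching extended-LDP lower bound in Lemma~\ref{lemma:k-big-jump-rw-ldp}. The corrected combinatorial estimate
\[
\P\big(V_{(i+1)}\le q\big)=\sum_{m=i+1}^{n-1}\binom{n-1}{m}q^{m}(1-q)^{n-1-m}\ \le\ \binom{n-1}{i+1}q^{\,i+1}\ \le\ n^{\,i+1}q^{\,i+1}
\]
(or the paper's direct integration of the Beta density) gives, after dividing by $-\log\tilde Q(n)$ and using \eqref{limit6} together with $\log n/r(\log n)\to 0$, the bound $-(i+1)$ rather than $-\infty$.
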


\begin{proof}
\linksinpf{lemma:limit-result-5}
The result (\ref{limit6}) is similar to the result (\ref{limit3}), more specifically:
\begin{align*}
    &
    \lim_{n\rightarrow \infty}
    \frac
        {\log \tilde{Q}(nx)}
        {-\log\tilde{Q}(n)}
    =
    \lim_{n\rightarrow \infty}
    \frac
        {\log c +\beta \log n + \beta\log x - \lambda(\log n + \log x)^2}
        {-\log c -\beta \log n + \lambda\log^2 n}
    \nonumber
    \\
    &
    =
    \lim_{n\rightarrow \infty}
    \frac
        {-\lambda(\log n + \log x)^2}
        { \lambda\log^2 n}
    =
    \lim_{n\rightarrow\infty}
    -\Big(1 + \frac{\log x}{\log n}\Big)^2 
    =
    - 1
\end{align*}
The result (\ref{limit7}) is immediate from the result (\ref{limit4}) if we realize the quantity $\tilde{Q}(nx)$ and $Q_n(nx)$ only differ with a factor of $n$, thus after taking the log, its effect is negligible. 

For the result (\ref{limit8}), we have the function $\log(1-x)$ around $x=0$ takes the form $-x+o(x)$, hence the term 
\begin{equation*}
    \lim_{n\rightarrow\infty}\frac{n\log(1-\tilde{Q}(nx))}{-\log \tilde{Q}(n)}  = \lim_{n\rightarrow\infty}\frac{n\tilde{Q}(n\delta)}{-\log\tilde{Q}(n)}  = \lim_{n\rightarrow\infty}\frac{n\cdot c(n\delta)^\beta e^{-\lambda \log^2 (n\delta)}}{\lambda \log^2 n} = 0
\end{equation*}

For the last result (\ref{limit9}), we have $V_{(i+1)}$ following the $Beta(i+1, n-i-1)$ distribution which is of the density $\frac{\Gamma(n-1)}{\Gamma(i+1)\Gamma(n-i-1)}y^i(1-y)^{n-i-2}$, hence

\begin{align*}
    &
    \lim_{n\rightarrow\infty}
    \frac
        {\log\pr{V_{(i+1)}\leq \tilde{Q}(nx)}}
        {-\log \tilde{Q}(n)}
    =
    \limsup_{n\rightarrow\infty}
    \frac
        {\log\int_0^{\tilde{Q}(nr)}
            \frac
                {\Gamma(n-1)}
                {\Gamma(i+1)\Gamma(n-i-1)}
            y^i(1-y)^{n-i-2}dy
        }
        {-\log\tilde{Q}(n)}
    \\
    &
    \leq
    \limsup_{n\rightarrow\infty}
    \frac
        {\log\int_0^{\tilde{Q}(nr)}
            \frac
                {\Gamma(n-1)}
                {\Gamma(i+1)\Gamma(n-i-1)}
            y^idy
        }
        {-\log\tilde{Q}(n)}
    \\
    &
    =
    \limsup_{n\rightarrow\infty}
    \frac
        {\log
        \frac
            {\Gamma(n-1)}
            {\Gamma(i+1)\Gamma(n-i-1)(i+1)}
            \tilde{Q}^{i+1}(nr)
        }
        {-\log\tilde{Q}(n)}
    \\
    &
    =
    \lim_{n\rightarrow\infty}
    \frac
        {\log\frac{\Gamma(n-1)}{\Gamma(i+1)\Gamma(n-i-1)(i+1)}}
        {-\log\tilde{Q}(n)} 
    +
    \lim_{n\rightarrow\infty}
    \frac
        {(i+1)\log \tilde{Q}(nr)}
        {-\log\tilde{Q}(n)}
    \\
    &
    =
    0 - (i+1)
    =
    -(i+1)
\end{align*}

\end{proof}

\subsubsection{Basic concentration inequalities}\label{appendix}
This section reviews two well-known concentration inequalities.

\begin{lemma}\label{etemadi}
    [Etemadi's inequality] Let $X_1,\cdots, X_n$ be independent real-valued random variables defined on some common probability space, and let $x\geq 0$. Let $S_k$ denote the partial sum $S_k = X_1 + \cdots + X_k$. Then
    \begin{equation*}
        \pr{\underset{1\leq k\leq  n}{\max} |S_k|\geq 3x} \leq 3\underset{ 1\leq k\leq n}{\max} \pr{|S_k|\geq x}
    \end{equation*}
\end{lemma}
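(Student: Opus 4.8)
The plan is to carry out the classical first-passage decomposition. Introduce the stopping time $\tau := \min\{k : 1\le k\le n,\ |S_k|\ge 3x\}$, with $\tau := \infty$ when no such index exists, so that $\{\max_{1\le k\le n}|S_k|\ge 3x\}$ equals the disjoint union $\bigcup_{k=1}^n\{\tau = k\}$. Write $M := \max_{1\le k\le n}\pr{|S_k|\ge x}$ and $p := \pr{\tau\le n}$; the target is $p\le 3M$, and we may assume $x>0$ since $x=0$ is trivial.

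The first step is to control each slice $\{\tau=k\}$. On this event $|S_k|\ge 3x$, so either $|S_n|\ge x$, or else $|S_n|<x$, in which case the triangle inequality forces $|S_n-S_k|\ge |S_k|-|S_n|>3x-x=2x$. Hence $\{\tau=k\}\subseteq\big(\{\tau=k\}\cap\{|S_n|\ge x\}\big)\cup\big(\{\tau=k\}\cap\{|S_n-S_k|>2x\}\big)$. Summing over $k$, bounding the disjoint first family by $\pr{|S_n|\ge x}$, and using that $\{\tau=k\}$ depends only on $X_1,\dots,X_k$ while $S_n-S_k=X_{k+1}+\dots+X_n$ depends only on $X_{k+1},\dots,X_n$ (so the two probabilities factorize), I would obtain
\begin{equation*}
p \;\le\; \pr{|S_n|\ge x} + \sum_{k=1}^n \pr{\tau=k}\,\pr{|S_n-S_k|>2x}.
\end{equation*}
Since $|S_n-S_k|\le|S_n|+|S_k|$, the event $\{|S_n-S_k|>2x\}$ is contained in $\{|S_n|>x\}\cup\{|S_k|>x\}$, so $\pr{|S_n-S_k|>2x}\le 2M$; together with $\sum_{k=1}^n\pr{\tau=k}=p$ this yields the self-referential bound $p\le M+2Mp$.

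The last step is to solve this inequality for $p$ with the right constant, which requires a case split: if $M\ge \frac{1}{3}$ then $3M\ge 1\ge p$ and there is nothing to prove, while if $M<\frac{1}{3}$ then $1-2M>\frac{1}{3}>0$ and rearranging gives $p\le M/(1-2M)<3M$. In either case $p\le 3M$. I do not anticipate a genuine obstacle here; the only points that need care are verifying the measurability/independence structure that legitimizes the factorization (the first-passage time uses exactly the first $k$ coordinates, hence is independent of the later increments) and noticing that the naive bound $p\le M+2Mp$ still collapses to the clean constant $3$ once the regime $M\ge\frac{1}{3}$ is peeled off.
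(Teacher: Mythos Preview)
Your proof is correct. The paper does not actually supply a proof of this lemma; it is stated in Section~4.6.2 as one of two ``well-known concentration inequalities'' that are merely reviewed, so there is no argument in the paper to compare against. Your first-passage decomposition is the standard one.

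One small simplification: after reaching
\[
p \;\le\; \pr{|S_n|\ge x} + \sum_{k=1}^{n}\pr{\tau=k}\,\pr{|S_n-S_k|>2x}
\;\le\; M + 2M\sum_{k=1}^{n}\pr{\tau=k},
\]
you can bound $\sum_{k=1}^{n}\pr{\tau=k}\le 1$ directly (rather than by $p$), which yields $p\le M+2M=3M$ in one line and makes the self-referential inequality $p\le M+2Mp$ and the subsequent case split on $M\gtrless \tfrac13$ unnecessary. Your route is not wrong, just slightly longer than it needs to be.
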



\begin{lemma}\label{berstein}
    [Bernstein's inequality] Let $X_1,\cdots, X_n$ be independent real-valued random variables with finite variance such that $X_i\leq b$ for some $b > 0$ almost surely for all $i\leq n$. Let $S = \sum_{i=1}^n (X_i - \E{X_i})$ and $v = \sum_{i=1}^n \E{X_i^2}$, then for any $t > 0$
    \begin{equation*}
        \pr{S \geq t} \leq \exp\Big\{-\frac{t^2}{2(v+bt/3)}\Big\}
    \end{equation*}
\end{lemma}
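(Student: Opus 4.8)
The plan is to prove Bernstein's inequality by the classical Cram\'er--Chernoff exponential-moment argument, in four moves: exponentiate and apply Markov's inequality, factorize the moment generating function by independence, bound each one-dimensional factor using the upper bound on the summands, and finally substitute a near-optimal value of the free parameter rather than differentiating.

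First, fix $\theta>0$ and write $Y_i\teq X_i-\E{X_i}$, so that $S=\sum_{i=1}^n Y_i$ with $\E{Y_i}=0$. Markov's inequality applied to the nonnegative random variable $e^{\theta S}$ gives $\pr{S\geq t}\leq e^{-\theta t}\,\E{e^{\theta S}}$, and independence of $Y_1,\dots,Y_n$ gives $\E{e^{\theta S}}=\prod_{i=1}^n \E{e^{\theta Y_i}}$. Thus it suffices to produce an upper bound on each factor $\E{e^{\theta Y_i}}$.

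Second --- the technical core --- I would use that the (centered) summands are bounded above by $b$, which is exactly how the hypothesis $X_i\leq b$ is exploited in the places this lemma is applied, where $\E{X_i}=0$ and hence $Y_i=X_i\leq b$. Let $\psi(u)\teq (e^u-1-u)/u^2$, extended by $\psi(0)=\frac{1}{2}$; $\psi$ is nondecreasing on $\mb{R}$, so $Y_i\leq b$ and $\theta>0$ force $\psi(\theta Y_i)\leq\psi(\theta b)$ pointwise, whence $e^{\theta Y_i}=1+\theta Y_i+\theta^2 Y_i^2\,\psi(\theta Y_i)\leq 1+\theta Y_i+\theta^2 Y_i^2\,\psi(\theta b)$. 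Taking expectations and using $\E{Y_i}=0$, $\E{Y_i^2}\leq\E{X_i^2}$, and $1+x\leq e^x$,
\[
\E{e^{\theta Y_i}}\ \leq\ \exp\!\big(\theta^2\,\E{X_i^2}\,\psi(\theta b)\big).
\]
The geometric-series bound $\psi(u)=\sum_{k\geq 0}\frac{u^k}{(k+2)!}\leq\frac{1}{2}\sum_{k\geq 0}(u/3)^k=\frac{1}{2(1-u/3)}$, valid for $0\leq u<3$, then gives $\E{e^{\theta S}}\leq\exp\!\big(\frac{\theta^2 v}{2(1-\theta b/3)}\big)$ for every $0<\theta<3/b$, and therefore $\pr{S\geq t}\leq\exp\!\big(-\theta t+\frac{\theta^2 v}{2(1-\theta b/3)}\big)$.

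Finally, I would substitute the standard choice $\theta=\frac{t}{v+bt/3}$, observe that it lies in $(0,3/b)$ (since $bt/(v+bt/3)<3$), and verify by direct algebra that $1-\theta b/3=\frac{v}{v+bt/3}$, so the exponent telescopes to $-\theta t+\frac{\theta^2 v}{2(1-\theta b/3)}=-\frac{t^2}{2(v+bt/3)}$, which is the claimed bound. The step I expect to be the main obstacle is the moment-generating-function estimate of the previous paragraph: one must handle the monotonicity of $\psi$ and the power-series bound for $\psi(\theta b)$ carefully, and --- for a fully self-contained statement --- keep in mind that $X_i$ is controlled only from above, so the estimate genuinely needs a bound on $Y_i=X_i-\E{X_i}$ (which coincides with $X_i$ in every application of the lemma in this paper). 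Everything else --- Markov, independence, and the final optimization --- is routine bookkeeping with no probabilistic subtlety.
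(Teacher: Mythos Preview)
Your proof is the standard Cram\'er--Chernoff argument and is correct. The paper does not prove this lemma at all: it is listed under ``Basic concentration inequalities'' as a well-known result being reviewed, alongside Etemadi's inequality, with no proof given. So there is nothing to compare against, and your proposal supplies exactly the classical justification one would cite.

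One remark on the point you already flagged: as stated, the lemma assumes $X_i\leq b$ but your MGF bound needs $Y_i=X_i-\E{X_i}\leq b$. In the paper's only use of this lemma (the proof of Lemma~\ref{lemma:max-sum-zi-smallerndelta-upperbound}), the variables fed in are already centered, so $Y_i=X_i$ and the issue evaporates. Your acknowledgment of this is accurate and sufficient for the paper's purposes.
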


\bibliography{ref}

\begin{thebibliography}{}

\bibitem[Bazhba et~al., 2019]{MR4032925}
Bazhba, M., Blanchet, J., Rhee, C.-H., and Zwart, B. (2019).
\newblock Queue length asymptotics for the multiple-server queue with
  heavy-tailed {W}eibull service times.
\newblock {\em Queueing Syst.}, 93(3-4):195--226.

\bibitem[Bazhba et~al., 2020]{MR4187125}
Bazhba, M., Blanchet, J., Rhee, C.-H., and Zwart, B. (2020).
\newblock Sample path large deviations for {L}\'{e}vy processes and random
  walks with {W}eibull increments.
\newblock {\em Ann. Appl. Probab.}, 30(6):2695--2739.

\bibitem[Bazhba et~al., 2022]{MR4493394}
Bazhba, M., Rhee, C.-H., and Zwart, B. (2022).
\newblock Large deviations for stochastic fluid networks with {W}eibullian
  tails.
\newblock {\em Queueing Syst.}, 102(1-2):25--52.

\bibitem[Blanchet and Shi, 2012]{blanchet2012measuring}
Blanchet, J. and Shi, Y. (2012).
\newblock Measuring systemic risks in insurance-reinsurance networks.
\newblock {\em Preprint}.

\bibitem[Borovkov and Borovkov, 2008]{MR2424161}
Borovkov, A.~A. and Borovkov, K.~A. (2008).
\newblock {\em Asymptotic analysis of random walks}, volume 118 of {\em
  Encyclopedia of Mathematics and its Applications}.
\newblock Cambridge University Press, Cambridge.
\newblock Heavy-tailed distributions, Translated from the Russian by O. B.
  Borovkova.

\bibitem[Borovkov and Mogulskii, 2010]{MR2797595}
Borovkov, A.~A. and Mogulskii, A.~A. (2010).
\newblock On large deviation principles in metric spaces.
\newblock {\em Sibirsk. Mat. Zh.}, 51(6):1251--1269.

\bibitem[Dembo and Zeitouni, 2010]{MR2571413}
Dembo, A. and Zeitouni, O. (2010).
\newblock {\em Large deviations techniques and applications}, volume~38 of {\em
  Stochastic Modelling and Applied Probability}.
\newblock Springer-Verlag, Berlin.
\newblock Corrected reprint of the second (1998) edition.

\bibitem[Denisov et~al., 2008]{MR2440928}
Denisov, D., Dieker, A.~B., and Shneer, V. (2008).
\newblock Large deviations for random walks under subexponentiality: the
  big-jump domain.
\newblock {\em Ann. Probab.}, 36(5):1946--1991.

\bibitem[Embrechts et~al., 1997]{MR1458613}
Embrechts, P., Kl\"{u}ppelberg, C., and Mikosch, T. (1997).
\newblock {\em Modelling extremal events}, volume~33 of {\em Applications of
  Mathematics (New York)}.
\newblock Springer-Verlag, Berlin.
\newblock For insurance and finance.

\bibitem[Foss and Korshunov, 2012]{MR2931277}
Foss, S. and Korshunov, D. (2012).
\newblock On large delays in multi-server queues with heavy tails.
\newblock {\em Math. Oper. Res.}, 37(2):201--218.

\bibitem[Ganesh et~al., 2004]{MR2045489}
Ganesh, A., O'Connell, N., and Wischik, D. (2004).
\newblock {\em Big queues}, volume 1838 of {\em Lecture Notes in Mathematics}.
\newblock Springer-Verlag, Berlin.

\bibitem[Hult et~al., 2005]{MR2187307}
Hult, H., Lindskog, F., Mikosch, T., and Samorodnitsky, G. (2005).
\newblock Functional large deviations for multivariate regularly varying random
  walks.
\newblock {\em Ann. Appl. Probab.}, 15(4):2651--2680.

\bibitem[Lindskog et~al., 2014]{MR3271332}
Lindskog, F., Resnick, S.~I., and Roy, J. (2014).
\newblock Regularly varying measures on metric spaces: hidden regular variation
  and hidden jumps.
\newblock {\em Probab. Surv.}, 11:270--314.

\bibitem[Massart, 1990]{MR1062069}
Massart, P. (1990).
\newblock The tight constant in the {D}voretzky-{K}iefer-{W}olfowitz
  inequality.
\newblock {\em Ann. Probab.}, 18(3):1269--1283.

\bibitem[Mogulskii, 1993]{MR1207223}
Mogulskii, A.~A. (1993).
\newblock Large deviations for processes with independent increments.
\newblock {\em Ann. Probab.}, 21(1):202--215.

\bibitem[Nagaev, 1969]{MR0282396}
Nagaev, A.~V. (1969).
\newblock Limit theorems that take into account large deviations when
  {C}ram\'er's condition is violated.
\newblock {\em Izv. Akad. Nauk UzSSR Ser. Fiz.-Mat. Nauk}, 13(6):17--22.

\bibitem[Nagaev, 1977]{MR0438438}
Nagaev, A.~V. (1977).
\newblock A property of sums of independent random variables.
\newblock {\em Teor. Verojatnost. i Primenen.}, 22(2):335--346.

\bibitem[Puhalskii and Whitt, 1997]{MR1442318}
Puhalskii, A.~A. and Whitt, W. (1997).
\newblock Functional large deviation principles for first-passage-time
  processes.
\newblock {\em Ann. Appl. Probab.}, 7(2):362--381.

\bibitem[Resnick, 2007]{MR2271424}
Resnick, S.~I. (2007).
\newblock {\em Heavy-tail phenomena}.
\newblock Springer Series in Operations Research and Financial Engineering.
  Springer, New York.
\newblock Probabilistic and statistical modeling.

\bibitem[Rhee et~al., 2019]{MR4038038}
Rhee, C.-H., Blanchet, J., and Zwart, B. (2019).
\newblock Sample path large deviations for {L}\'{e}vy processes and random
  walks with regularly varying increments.
\newblock {\em Ann. Probab.}, 47(6):3551--3605.

\bibitem[Sato, 2013]{MR3185174}
Sato, K.-i. (2013).
\newblock {\em L\'{e}vy processes and infinitely divisible distributions},
  volume~68 of {\em Cambridge Studies in Advanced Mathematics}.
\newblock Cambridge University Press, Cambridge.
\newblock Translated from the 1990 Japanese original, Revised edition of the
  1999 English translation.

\bibitem[Whitt, 2002]{MR1876437}
Whitt, W. (2002).
\newblock {\em Stochastic-process limits}.
\newblock Springer Series in Operations Research. Springer-Verlag, New York.
\newblock An introduction to stochastic-process limits and their application to
  queues.

\bibitem[Zwart et~al., 2004]{MR2052908}
Zwart, B., Borst, S., and Mandjes, M. (2004).
\newblock Exact asymptotics for fluid queues fed by multiple heavy-tailed
  on-off flows.
\newblock {\em Ann. Appl. Probab.}, 14(2):903--957.

\end{thebibliography}


\end{document}